\documentclass[]{amsart}


\usepackage{amsmath}
\usepackage{amssymb}
\usepackage{amsthm}
\usepackage{amsfonts}

\usepackage[T1]{fontenc}
\usepackage[utf8]{inputenc}
\usepackage{lmodern}
\usepackage{textcomp}

\usepackage{xcolor}

\usepackage{tikz}
\usetikzlibrary{babel}			

\usepackage{enumitem}

\usepackage{icomma}

\linespread{1.1}

\setlength{\parindent}{0mm} \textwidth158mm \evensidemargin0mm
\oddsidemargin0mm

\swapnumbers
\theoremstyle{plain}

\newtheorem{lemma}{Lemma}[section]
\newtheorem{thm}[lemma]{Theorem}
\newtheorem{prop}[lemma]{Proposition}
\newtheorem{cor}[lemma]{Corollary}
\newtheorem{rem}[lemma]{Remark}
\newtheorem*{mainthm}{Main Theorem}

\theoremstyle{definition}
\newtheorem{definition}[lemma]{Definition}

\newtheorem{ex}[lemma]{Example}

\newcommand{\N}{\mathbb{N}}

\newcommand{\coloneqq}{:=}
\newcommand{\nequiv}{\not \equiv}
\newcommand{\Sym}{\textrm{Sym}}


\newcommand{\erz}[1]{\langle {#1} \rangle}
\newcommand{\nt}{\trianglelefteq}



\newcommand{\syl}{\operatorname{Syl}}
\newcommand{\aut}{\operatorname{Aut}}




\newcommand{\inv}[1]{{#1}^{-1}}
\newcommand{\isom}{\cong}





\setlist[enumerate,1]{label = (\alph*), ref = (\alph*)}
\setlist[enumerate,2]{label = (\alph*), ref = (\labelenumi \alph*)}


\newenvironment{subproof}
{\begin{proof}}
	{\end{proof}}

	\usepackage{ulem}
	\usepackage{newclude}
	\usepackage{yfonts}
\begin{document}
\begin{center}
\Large{\textbf{$L_9$-free groups}}

\vspace{0.2cm} \small{Imke Toborg, Rebecca Waldecker, Clemens B. Tietze\footnote{This article is partly based on the MSc thesis of the third author.}}
\end{center}
\normalsize
\newcommand{\FL}{\textfrak{L}}
\textbf{Abstract:}
In this article we classify all $L_9$-free finite groups.

\section*{intro}

There are some algebraic laws that hold in a lattice $L$ if and only if $L$ does not have a sublattice of a specific shape. For example, a lattice is modular if and only if it does not have a sublattice isomorphic to the so-called pentagon $L_5$.

If $L$ is a lattice, then we call a group $L$-free if and only if its subgroup lattice does not contain a lattice isomorphic to $L$.
For example, the finite $L_5$-free groups are exactly the modular groups, and these have been classified by Iwasawa in 1941, see \cite{Iwasawa}.
The subgroup lattice
of the dihedral group of order 8, often denoted by $L_{10}$, and some of its sublattices are of particular interest. One reason is that, if $p$ is a prime number, then a finite $p$-group is $L_5$-free if and only if it is $L_{10}$-free.

There are several sublattices of $L_{10}$ containing $L_5$:\\

\begin{minipage}{1.5cm}
\begin{tikzpicture}
					\coordinate		(E) at (0.5,0);
					\coordinate		(F) at (0.5,3);
					\coordinate	(A) at (0,2);
					\coordinate			(T) at (0,1);
					\coordinate		(V) at (1,1.5);
					\coordinate[label=below:{$L_{5}$}]		(X) at (1,0);
					\foreach \x in {E,F,A,T,V} \fill (\x) circle (2pt);
						\draw [thick]	(E)  -- (T) -- (A)  -- (F) -- (V) -- (E);
				\end{tikzpicture}
\end{minipage}
\begin{minipage}{1.5cm}
\begin{tikzpicture}
					\coordinate		(E) at (0.5,0);
					\coordinate		(F) at (0.5,3);
					\coordinate	(A) at (0,2);
					\coordinate	(C) at (1,1.5);
					\coordinate			(T) at (0,1);
					\coordinate	(D) at (0.5,1);
					\coordinate[label=below:{$L_{6}$}]		(X) at (1,0);
					\foreach \x in {E,F,A,C,D,T} \fill (\x) circle (2pt);
						\draw [thick]	(E) -- (T) -- (A) -- (F) -- (C) -- (E) -- (D) -- (A);
				\end{tikzpicture}
\end{minipage}
\begin{minipage}{1.5cm}
\begin{tikzpicture}
					\coordinate		(E) at (0.5,0);
					\coordinate		(F) at (0.5,3);
					\coordinate	(A) at (0,2);
					\coordinate	(C) at (1,2);
					\coordinate			(T) at (0,1);
					\coordinate	(D) at (0.5,1);
					\coordinate		(U) at (1,1);
					\coordinate[label=below:{$L_{7}$}]		(X) at (1,0);
					\foreach \x in {E,F,A,C,D,T,U} \fill (\x) circle (2pt);
						\draw [thick]	(E) -- (T) -- (A) -- (F) -- (C) -- (U) -- (E) -- (D) -- (A) -- (D) -- (C);
				\end{tikzpicture}
\end{minipage}
\begin{minipage}{2cm}
\begin{tikzpicture}
					\coordinate		(E) at (0.5,0);
					\coordinate		(F) at (0.5,3);
					\coordinate	(A) at (0,2);
					\coordinate	(C) at (1,2);
					\coordinate			(S) at (-0.5,1);
					\coordinate			(T) at (0.,1);
					\coordinate	(D) at (0.5,1);
					\coordinate		(U) at (1,1);
					\coordinate[label=below:{$L_{8}$}]		(X) at (1,0);
					\foreach \x in {E,F,A,C,D,T,U,S} \fill (\x) circle (2pt);
						\draw [thick]	(E) -- (T) -- (A) -- (F) -- (C) -- (U) -- (E) -- (S) -- (A) -- (D) -- (C) -- (D) -- (E) ;
				\end{tikzpicture}
\end{minipage}
\begin{minipage}{1.5cm}
\begin{tikzpicture}
					\coordinate		(E) at (0.5,0);
					\coordinate		(F) at (0.5,3);
					\coordinate	(A) at (0,2);
					\coordinate		(B) at (0.5,2);
					\coordinate	(C) at (1,2);
					\coordinate			(T) at (0,1);
					\coordinate	(D) at (0.5,1);
					\coordinate		(U) at (1,1);
					\coordinate[label=below:{$M_8$}]		(X) at (1,0);
					\foreach \x in {E,F,A,B,C,D,T,U} \fill (\x) circle (2pt);
						\draw [thick]	(E) -- (T) -- (A) -- (F) -- (C) -- (U) -- (E) -- (D) -- (B) -- (F) -- (A) -- (D) -- (C);
				\end{tikzpicture}
\end{minipage}
\begin{minipage}{2.5cm}
\begin{tikzpicture}
					\coordinate		(E) at (0.5,0);
					\coordinate		(F) at (0.5,3);
					\coordinate	(A) at (0,2);
					\coordinate	(C) at (1,2);
					\coordinate			(S) at (-0.5,1);
					\coordinate			(T) at (0,1);
					\coordinate	(D) at (0.5,1);
					\coordinate		(U) at (1.5,1);
					\coordinate		(V) at (1,1);
					\coordinate[label=below:{$M_{9}$}]		(X) at (1,0);
					\foreach \x in {E,F,A,C,D,S,T,U,V} \fill (\x) circle (2pt);
						\draw [thick]	(E) -- (S) -- (A) -- (F) -- (C) -- (U) -- (E) -- (D) -- (A) -- (T) -- (E) -- (V) -- (C) -- (D);
				\end{tikzpicture}
\end{minipage}
\begin{minipage}{2cm}
\begin{tikzpicture}
					\coordinate		(E) at (0.5,0);
					\coordinate		(F) at (0.5,3);
					\coordinate	(A) at (0,2);
					\coordinate		(B) at (0.5,2);
					\coordinate	(C) at (1,2);
					\coordinate			(T) at (0,1);
					\coordinate	(D) at (0.5,1);
					\coordinate		(U) at (1,1);
					\coordinate		(S) at (-0.5,1);
					\coordinate[label=below:{$L_{9}$}]		(X) at (1,0);
					\foreach \x in {E,F,A,B,C,D,S,T,U} \fill (\x) circle (2pt);
						\draw [thick]	(E) -- (S) -- (A) -- (F) -- (B) -- (D) -- (A) -- (T) -- (E) -- (D) -- (C) -- (U) -- (E) -- (U) -- (C) -- (F);
				\end{tikzpicture}
\end{minipage}
\begin{minipage}{2.5cm}
\begin{tikzpicture}
					\coordinate		(E) at (1,0);
					\coordinate		(F) at (1,3);
					\coordinate	(A) at (0.5,2);
					\coordinate		(B) at (1,2);
					\coordinate	(C) at (1.5,2);
					\coordinate			(S) at (0.5,1);
					\coordinate			(T) at (0,1);
					\coordinate	(D) at (1,1);
					\coordinate		(U) at (1.5,1);
					\coordinate		(V) at (2,1);
					\coordinate[label=below:{$L_{10}$}]		(X) at (1,0);
					\foreach \x in {E,F,A,B,C,D,S,T,U,V} \fill (\x) circle (2pt);
						\draw [thick]	(E) -- (S) -- (A) -- (F) -- (B) -- (D) -- (A) -- (T) -- (E) -- (D) -- (C) -- (U) -- (E) -- (V) -- (C) -- (F);
				\end{tikzpicture}
\end{minipage}

\vspace{0.5cm}
In 1999 Baginski and Sakowicz \cite{BS} studied finite groups that are $L_6$-free and $L_7$-free at the same time, and later Schmidt \cite{Lfree} classified the finite groups that are $L_6$- or $L_7$-free. Together with Andreeva and the first author he also characterised, in \cite{L8M8}, all finite groups that are $L_8$-free or $M_8$-free. Finally, the finite $M_9$-free groups have been classified by P\"olzing and the second author in \cite{M9}.
Furthermore, there is a general discussion of $L_{10}$-free groups by Schmidt, which can be found in \cite{L10}
and \cite{L10pq}.

In this paper we investigate finite $L_9$-free groups.
Since $L_9$ is a sublattice of $L_{10}$, the groups that we consider are $L_{10}$-free and therefore
we can use Corollary C in \cite{L10}
as a starting point for our analysis:
 Every finite $L_{10}$-free group $G$ has normal Hall subgroups $N_1\leq N_2$ such that $N_1=\erz{P\in \mathrm{Syl}(G)\mid P\unlhd G}$, $N_2/N_1$ is a $2$-group and $G/N_2$ is meta-cyclic.

\smallskip Our strategy is to choose $N:=N_1$ maximal with respect to the above constraints, and then we show that $N$ has a complement $K$ that is a direct and coprime product of
groups of the following structure: cyclic groups, groups isomorphic to $Q_8$ or semi-direct products $Q\rtimes R$, where $Q$ has prime order and $R$ is cyclic of prime-power order such that $1\neq \Phi(R)=C_Q(R)$.
Furthermore, $[N,K]\cap C_N(K)$ is a $2$-group and $C_{O_2(N)}(K)$ is cyclic or elementary abelian of order $4$ and
every Sylow subgroup of $[N,K]$ is elementary abelian or isomorphic to $Q_8$.
If the action of $K$ on $N$ satisfies some more conditions, then we say that $NK$ is in class $\FL$.
\\The aim of our article is to prove the following theorem:

\begin{mainthm}
A finite group is in class $\FL$ if and only if it is $L_9$-free.
\end{mainthm}

\section{Notation and preliminary results}

In this article we mostly follow the notation from Schmidt's book \cite{Schmidt} and from
\cite{KurzStell}.
All groups considered are finite and $G$ always denotes a finite group, moreover $p$ and $q$ always denote prime numbers.
We quickly recall some standard concepts:

$L(G)$ denotes the \textbf{subgroup lattice of $G$}, consisting of the set of subgroups of $G$ with inclusion as the partial ordering.
The \textbf{infimum} of two elements $A,B \in L(G)$
is $A \cap B$ (their intersection) and the \textbf{supremum} is $A\cup B=\langle A,B\rangle$ (the subgroup generated by $A$ and $B$).

If $L$ is any lattice, then $G$ is said to be \textbf{$L$-free} if and only if $L(G)$ does not have any sub-lattice that
is isomorphic to $L$.

A lattice $L$ is said to be \textbf{modular} if and only if for all $X,Y,Z \in L$ such that $X \le Z$, the
following (also called the \textbf{modular law}) is true: $(X \cup Y ) \cap Z = X \cup (Y \cap Z).$
We say that a group $G$ is \textbf{modular} if and only if $L(G)$ is modular.

\medskip The modular law is similar to Dedekind's law (see 1.1.11 of \cite{KurzStell}). For all $X,Y,Z \leq G$ such that $X \le Z$ it says that
$X  Y  \cap Z = X  (Y \cap Z).$ We will use Dedekind's law frequently throughout this article without giving an explicit reference each time.

If $N\leq G$, then we say that an element $g\in G$ induces \textbf{power automorphisms} on $N$ if and only if $U^g=U$ for all subgroups $U$ of $N$.
Furthermore $\mathrm{Pot}_G(N):=\{g\in G\mid U^g=U \text{~f.a.~} U\leq N \}$ is a subgroup of $G$ because $\mathrm{Pot}_G(N)=\bigcap\limits_{U \le N} N_G(U)$.

\begin{lemma}\label{coprime} Let $K$ be a finite group that acts coprimely on the $p$-group $P$. Then
$P=[P,K]C_P(K)$. If $P$ is abelian, then this product is direct.
If $[P,K]\le \Phi(P)$, then $[P,K]=1$.
Furthermore $[P,K]=[P,K,K]$ and for all $K$-invariant normal subgroups $N$ of $P$ we have that $C_{P/N}(K)=C_P(K)N/N$.
\end{lemma}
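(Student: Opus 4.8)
The statements in (1), (2), (4) and (5) are the classical package of coprime-action facts, so the plan is to take the fixed-point-lifting identity in (5) as the foundation and derive the others from it, leaving (3) as a short final consequence of (1). Throughout I write $PK$ for the semidirect product afforded by the action and use that, since $N$ is $K$-invariant and normal in $P$, it is in fact normal in $PK$; the same holds for $[P,K]$ and $[P,K,K]$, which are therefore $P$-invariant. For (5) the inclusion $C_P(K)N/N \le C_{P/N}(K)$ is immediate, so only the reverse needs work, and I would prove it by induction on $|N|$. If $N$ contains a minimal normal subgroup $M$ of $PK$ with $M \ne N$, then $1 < M < N$ and I factor the lift through $P/M$: given $\bar x \in C_{P/N}(K)$, the inductive hypothesis applied to $N/M \trianglelefteq P/M$ produces $y \in xN$ with $yM \in C_{P/M}(K)$, and then the inductive hypothesis applied to $M \trianglelefteq P$ replaces $yM$ by some $w \in C_P(K)$ with $w \in xN$. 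This reduces everything to the case where $N$ itself is a minimal normal subgroup of $PK$.

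In that base case, since $P$ is a nilpotent $p$-group the subgroup $N \cap Z(P)$ is a nontrivial $K$-invariant normal subgroup of $PK$ contained in $N$, so minimality forces $N \le Z(P)$; being minimal normal and abelian, $N$ is elementary abelian and central in $P$. Here the argument is a norm computation. Given $\bar x \in C_{P/N}(K)$ the coset $xN$ is $K$-invariant, and the element $z := \prod_{k\in K} x^{k}$ is well defined because the factors $x^{k}$ all lie in the single coset $xN$ and differ by central elements of $N$, hence commute. Reindexing the product shows $z \in C_P(K)$, while $z \equiv x^{|K|} \pmod N$. Since the order of $\bar x$ is a power of $p$ and $\gcd(|K|,p)=1$, I choose $m$ with $m|K| \equiv 1$ modulo that order; then $z^{m} \in C_P(K)$ and $\overline{z^{m}} = \bar x$, which is the desired lift.

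With (5) in hand the remaining parts follow formally. For (1), in $\bar P := P/[P,K]$ the group $K$ acts trivially, so $\bar P = C_{\bar P}(K)$, and applying (5) to $[P,K] \trianglelefteq P$ gives $C_{\bar P}(K) = C_P(K)[P,K]/[P,K]$, whence $P = [P,K]C_P(K)$. For (4) I write a general element of $P$ as $ab$ with $a \in [P,K]$ and $b \in C_P(K)$ and compute $[ab,k] = [a,k]^{b}$, which lies in the $P$-invariant subgroup $[P,K,K]$; since such $[ab,k]$ generate $[P,K]$, I get $[P,K] \le [P,K,K]$ and hence equality. For (2), when $P$ is abelian the norm map $\nu(x) = \prod_{k\in K} x^{k}$ restricts on $C_P(K)$ to $x \mapsto x^{|K|}$, an automorphism by coprimality, and kills $[P,K]$ because $k \mapsto kk'$ permutes $K$; thus any $x \in [P,K] \cap C_P(K)$ satisfies $x^{|K|}=1$ and so $x=1$, making the product direct. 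Finally for (3): if $[P,K] \le \Phi(P)$ then $P = [P,K]C_P(K) = \Phi(P)C_P(K)$, and since $\Phi(P)$ consists of nongenerators this forces $P = C_P(K)$, i.e. $[P,K]=1$.

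I expect the only genuine obstacle to be the minimal-normal case of (5): arranging that the coset average is well defined and verifying that it is $K$-fixed is the single place where coprimality and the $p$-group hypothesis are essential (nilpotence to push $N$ into $Z(P)$, and invertibility of $|K|$ on $p$-elements to recover $\bar x$ from $\bar x^{|K|}$). Everything after that is purely formal commutator manipulation.
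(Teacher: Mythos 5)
Your proof is correct, and the comparison here is necessarily one-sided: the paper does not prove this lemma at all, it simply collects the statements from 8.2.2, 8.2.7, 8.2.9 and 8.4.2 of Kurzweil--Stellmacher, so any self-contained argument is a different route. Your development is sound. The induction for $C_{P/N}(K)=C_P(K)N/N$ reduces correctly to the case of a minimal normal subgroup $N$ of $PK$, which lies in $Z(P)$ and is elementary abelian because $P$ is a $p$-group; the averaged element $z=\prod_{k\in K}x^{k}$ is well defined since the factors differ by central elements of $N$, it is $K$-fixed by reindexing, and it is congruent to $x^{|K|}$ modulo $N$, so raising to the $m$-th power with $m|K|\equiv 1$ modulo the order of $\bar x$ produces a genuine lift. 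The deductions of $P=[P,K]C_P(K)$, of $[P,K]=[P,K,K]$, of directness via the norm map on an abelian $P$, and of the Frattini statement are all standard and correct. What your approach buys is elementarity: the cited textbook results are developed for coprime action on arbitrary groups and rest on the conjugacy part of the Schur--Zassenhaus theorem, whereas for a $p$-group everything follows from the two facts that a nontrivial normal subgroup meets the centre and that $|K|$ is invertible on $p$-elements. One small point of hygiene: at the outset you assert that $[P,K,K]$ is $P$-invariant, which is not obvious before part (4) is established; but your computation only ever conjugates $[a,k]$ by $b\in C_P(K)$, and there $[a,k]^{b}=[a^{b},k]\in[[P,K],K]$ because $b$ centralises $k$ and $[P,K]\unlhd PK$, so the argument stands as written.
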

\begin{proof}These statements are a collection from 8.2.2,  8.2.7, 8.2.9 and 8.4.2 of \cite{KurzStell}.
\end{proof}

\begin{lemma}\label{lem:MQvar}
Let $p \in \pi(G)$ and suppose that $G=PK$, where $P$ is a normal Sylow $p$-subgroup of $G$, $K \le G$ is a $p'$-group and $P_0:=[P,K] \neq 1$.
Suppose further that $\Phi(P_0)\leq C_P(K)$ and that $K$  acts irreducibly on $P_0/\Phi(P_0)$.
If $g \in P \setminus C_P(K)$, then  $P_0\leq \erz{[g,K],K}\leq \erz{g,K}$.
	\end{lemma}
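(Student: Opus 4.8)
The plan is to prove the two inclusions $P_0 \le \erz{[g,K],K} \le \erz{g,K}$ separately. The right-hand inclusion is immediate: since $P \unlhd G$, for every $k \in K$ we have $g^k \in P$ and hence $[g,k] = \inv{g}g^k \in \erz{g,K}$, so $\erz{[g,K],K}\le\erz{g,K}$. The substance is the left-hand inclusion, and I would extract its engine first. As $P$ is a $p$-group and $K$ a $p'$-group, $K$ acts coprimely on $P$ and on the normal subgroup $P_0=[P,K]$; by Lemma~\ref{coprime} we get $[P_0,K]=[P,K,K]=[P,K]=P_0$, the product $P=P_0\,C_P(K)$, and $\Phi(P_0)\unlhd G$. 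Writing $V:=P_0/\Phi(P_0)$, on which $K$ acts irreducibly by hypothesis, I claim $C_V(K)=0$: since $C_V(K)$ is a $K$-submodule, irreducibility forces it to be $0$ or $V$, and if it were $V$ then $[P_0,K]\le\Phi(P_0)$, whence $[P_0,K]=1$ by Lemma~\ref{coprime}, contradicting $[P_0,K]=P_0\neq 1$.

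Next I would locate $g$ relative to $P_0$. Using $P=P_0\,C_P(K)$, write $g=g_0c$ with $g_0\in P_0$ and $c\in C_P(K)$. Because $\Phi(P_0)\le C_P(K)$, if $g_0$ lay in $\Phi(P_0)$ then $g=g_0c$ would lie in $C_P(K)$, contrary to $g\in P\setminus C_P(K)$; hence the image $\bar g_0$ of $g_0$ in $V$ is nonzero. Since $C_V(K)=0$, this $\bar g_0$ is not fixed by $K$, so $[g_0,k]\notin\Phi(P_0)$ for some $k\in K$. The commutator identity $[g_0c,k]=[g_0,k]^c[c,k]$ together with $c\in C_P(K)$ gives $[g,k]=[g_0,k]^c\in P_0$, and conjugation by $c\in P$ preserves the normal subgroup $\Phi(P_0)$, so $[g,k]\notin\Phi(P_0)$ as well. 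Thus $\erz{[g,K]}\le P_0$ has nonzero image in $V$.

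Finally, set $L:=\erz{[g,K],K}$. Then $L\cap P_0$ is normalized by $K$ (as $P_0\unlhd G$ and $K\le L$) and contains $\erz{[g,K]}$, so $(L\cap P_0)\Phi(P_0)/\Phi(P_0)$ is a nonzero $K$-submodule of $V$ and therefore equals $V$ by irreducibility. Hence $(L\cap P_0)\Phi(P_0)=P_0$, and since the elements of $\Phi(P_0)$ are non-generators this yields $L\cap P_0=P_0$, i.e. $P_0\le L$, as required. The only delicate point, and the step I expect to be the main obstacle, is the passage from an arbitrary $g\in P\setminus C_P(K)$ to its $P_0$-component $g_0$: one must guarantee that $[g,K]$ does not collapse into $\Phi(P_0)$, and this is precisely where the two hypotheses $\Phi(P_0)\le C_P(K)$ and the irreducibility of the $K$-action (via $C_V(K)=0$) are needed together.
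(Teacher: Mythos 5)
Your proof is correct and follows essentially the same route as the paper's: decompose $g=g_0c$ via $P=P_0C_P(K)$, show $[g,K]$ survives modulo $\Phi(P_0)$, then invoke irreducibility and the non-generator property of the Frattini subgroup. The only cosmetic difference is that you derive $C_{P_0/\Phi(P_0)}(K)=1$ from irreducibility plus the statement ``$[P,K]\le\Phi(P)$ implies $[P,K]=1$'' of Lemma~\ref{coprime}, whereas the paper gets it directly from the coprime-action decomposition of the abelian quotient; both are valid.
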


\begin{proof}
Let $g \in P \setminus C_P(K)$ and $R:=\erz{[g,K],K}$. Then
we first remark that $R\leq \erz{g,K}$.
Lemma \ref{coprime} shows that $P=C_P(K)P_0$ and hence
we have elements $c\in C_P(K)$ and $h\in P_0$ such that $g=ch$.
We note that $P_0 \unlhd G$ and therefore $\Phi(P_0)$ is a normal subgroup of $G$, moreover $P_0$ is a $p$-group and hence $P_0/\Phi(P_0)$ is elementary abelian.
Let $-:G\to G/\Phi(P_0)$ denote the natural homomorphism.
As $\overline{P_0}=[\bar P,\bar K]$ is abelian
and $\bar K$ acts coprimely on it, we see that
$C_{[\bar P,\bar K]}(\bar K) \cap [\bar P,\bar K,\bar K]=1$, again by Lemma \ref{coprime}. Therefore
$C_{[\bar P,\bar K]}(\bar K) =C_{[\bar P,\bar K,\bar K]}(\bar K)=1$.
We recall that $ch=g\notin C_P(K)$ and thus $h\notin C_P(K)$, and then by hypothesis
$h \notin \Phi(P_0)$ and in particular $1 \neq \bar h \in \overline{P_0}$.
It follows that $[\bar h,\bar K]\neq 1$ because
$C_{[\bar P,\bar K]}(\bar K)=1$, see above.
We conclude  that $1\neq [\bar h,\bar K]=[\bar g,\bar K]=\overline{[g,K]}\le\overline{P_0}\cap \bar R$.
By hypothesis $K$ acts irreducibly on $\overline{P_0}$, hence $\bar K$ does as well and we see that
 $\overline{P_0}=\overline{P_0\cap R\Phi(P_0)}=\overline{(P_0\cap R)\Phi(P_0)}$.
The main property of the Frattini subgroup (see for example 5.2.3 in \cite{KurzStell}) finally gives that $P_0=P_0 \cap R$. \end{proof}

\begin{lemma}\label{power} Let $Q$ be a $2$-group that is elementary abelian, cyclic or isomorphic to $Q_8$.
Then $Q$ does not admit power automorphisms of odd order.
\end{lemma}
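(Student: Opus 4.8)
The plan is to prove something slightly cleaner than asked: in each of the three admissible cases the \emph{whole} group of power automorphisms of $Q$ is a $2$-group. Since every element of odd order in a $2$-group is trivial, this immediately yields that $Q$ admits no nontrivial power automorphism of odd order. I would simply run through the three structures of $Q$ one at a time, because the argument is short but genuinely different in each.

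For $Q$ elementary abelian the claim is essentially trivial. Each subgroup of order $2$ has the form $\erz{x}=\{1,x\}$, and a power automorphism fixes $\erz{x}$ and hence its unique involution $x$; as this holds for every $x\in Q$, the only power automorphism is the identity. For $Q\isom C_{2^n}$ cyclic I would not even single out power automorphisms, but use that $\ord{\aut(Q)}=\varphi(2^n)=2^{n-1}$ is a power of $2$, so every automorphism of $Q$ has $2$-power order.

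The remaining case $Q\isom Q_8$ is the real point, and it is where I expect the obstacle to lie, since $\aut(Q_8)\isom S_4$ \emph{does} contain elements of order $3$; one therefore cannot argue as in the cyclic case that the full automorphism group is a $2$-group. The idea is that a power automorphism $\alpha$ must fix each of the three maximal subgroups $\erz{i},\erz{j},\erz{k}$, all of which are cyclic of order $4$. Hence $\alpha$ restricts to a power automorphism of $\erz{i}\isom C_4$, and because $\ord{\aut(C_4)}=2$ this restriction squares to the identity; the same holds on $\erz{j}$. Thus $\alpha^2$ fixes both $i$ and $j$, and since these generate $Q_8$ we obtain $\alpha^2=1$. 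Consequently every power automorphism of $Q_8$ has order dividing $2$.

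Putting the three cases together, the group of power automorphisms of $Q$ is always a $2$-group, which is exactly what is needed. The only subtle step, and the one I would take care over, is the $Q_8$ case: the mechanism is to force $\alpha$ into the kernel of the action on the three cyclic maximal subgroups and then reduce to the trivially small automorphism group of $C_4$, rather than trying to control the order inside $\aut(Q_8)\isom S_4$ directly.
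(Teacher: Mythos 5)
Your proof is correct. The paper disposes of the two abelian cases in one stroke by citing 2.2.5 of Kurzweil--Stellmacher (power automorphisms of abelian groups are universal, $x\mapsto x^k$, and on a $2$-group such a map has $2$-power order), whereas you argue each case directly and elementarily; both are fine, and your cyclic case in fact does not even need the power-automorphism hypothesis. The genuine difference is in the $Q_8$ case. The paper starts from an automorphism of \emph{odd order}, uses $\aut(Q_8)\isom \Sym_4$ and the fact that the order-$3$ elements act as $3$-cycles on the three maximal subgroups, and concludes that such an automorphism cannot be a power automorphism. You go the other way around: you start from a \emph{power} automorphism $\alpha$, observe that it stabilises the cyclic maximal subgroups $\erz{i}$ and $\erz{j}$, restrict to them, and use $\ord{\aut(C_4)}=2$ to get $\alpha^2=1$ on a generating set, hence $\alpha^2=1$. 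Your route buys a self-contained argument that never needs to identify $\aut(Q_8)$ or how its Sylow $3$-subgroups act, and it proves the slightly stronger statement that $\mathrm{Pot}(Q)$ is a $2$-group in all three cases; the paper's version is shorter if one is willing to quote the structure of $\aut(Q_8)$ as known. Both establish exactly what the lemma asserts.
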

\begin{proof}If $Q$ is abelian, then the assertion follows from 2.2.5 of \cite{KurzStell}. If $Q\cong Q_8$, then $Aut(Q)\cong \Sym_4$, and any automorphism of order $3$ interchanges the maximal subgroups of $Q$.
\end{proof}

\begin{lemma}\label{lem:dedekind}\label{lem:Hilfslemma}
Suppose that $G=NK$, where $N$ is a normal Hall subgroup of $G$ and $K$ is a complement.
Let $N_1,N_2\leq N$, $Q,R\leq K$ and $x\in N$.
Then the following hold:
\begin{enumerate}
\item[(a)] If $N_1Q$ and $N_2R^x$ are subgroups of $G$, then $N_1Q\cap N_2R^x=N_1(Q\cap R)\cap N_2(Q\cap R)^x$.
\item[(b)] If $N_2R^x\leq G$ and $\langle x^{Q\cap R}\rangle \cap N_2=1$, then $Q\cap N_2R^x=Q\cap R^x\leq C_{Q\cap R}(x)$.
\item[(c)] If $K$ is abelian and acts irreducibly on the abelian group $N/\Phi(N)$ or if $N$ is abelian and $K$ induces power automorphisms on it, then $C_K(N)=C_K(x)$ or $x\in \Phi(N)$.
\item[(d)] If $Q\leq R$, then $\erz{Q,R^x}=\erz{[x,Q]^{R^x}}R^x$. 
\end{enumerate}

\end{lemma}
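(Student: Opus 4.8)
The engine throughout will be the uniqueness of the coprime Hall factorisation $G=NK$: since $N\cap K=1$ and $NK=G$, every $g\in G$ has a unique expression $g=nk$ with $n\in N$, $k\in K$, and I shall call $n$ its $N$-component and $k$ its $K$-component. The one preliminary computation I need is that, for $r\in K$ and $x\in N$, the conjugate $r^x=x^{-1}rx=(x^{-1}x^{r^{-1}})r$ has $N$-component $x^{-1}x^{r^{-1}}\in N$ and $K$-component $r$. For (a) the inclusion from right to left is immediate since $Q\cap R\le Q$ and $(Q\cap R)^x\le R^x$. For the reverse inclusion I would take $g$ in the left-hand side and read off its $K$-component twice: from $g=n_1q$ (with $n_1\in N_1$, $q\in Q$) it equals $q$, and from $g=n_2r^x$ (with $n_2\in N_2$, $r\in R$) it equals $r$ by the preliminary computation. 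Uniqueness forces $q=r\in Q\cap R$, whence $g\in N_1(Q\cap R)$ and $g=n_2q^x\in N_2(Q\cap R)^x$.

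Part (b) I would deduce from (a) with $N_1=1$, which gives $Q\cap N_2R^x=(Q\cap R)\cap N_2(Q\cap R)^x$. For $g$ in this set I write $g=n_2s^x$ with $s\in Q\cap R$; comparing $K$-components forces $g=s$ and pins the $N$-component, so that $m:=x^{-1}x^{s^{-1}}$ equals $n_2^{-1}\in N_2$. As $m$ also lies in $\langle x^{Q\cap R}\rangle$ (both $x$ and $x^{s^{-1}}$ being $(Q\cap R)$-conjugates of $x$), the hypothesis $\langle x^{Q\cap R}\rangle\cap N_2=1$ gives $m=1$, i.e. $s$ centralises $x$. Thus $Q\cap N_2R^x\le C_{Q\cap R}(x)$, and the easy reverse inclusions $C_{Q\cap R}(x)\le Q\cap R^x\le Q\cap N_2R^x$ close the argument and yield the stated identities.

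For (d), set $M:=\langle[x,Q]^{R^x}\rangle$. I would first note that $M\le N$ and that $M$ is normalised by $R^x$ by construction, so that $MR^x$ is a subgroup. The identities $[x,q]=(q^x)^{-1}q$ and $q=q^x[x,q]$, which use $Q\le R$ so that $q^x\in R^x$, show simultaneously that $M\le\langle Q,R^x\rangle$ and that every $q\in Q$ lies in $MR^x$; since $R^x$ lies in both subgroups as well, comparing them gives $\langle Q,R^x\rangle=MR^x$.

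The substantial part is (c). Since $C_K(N)\le C_K(x)$ always, the plan is to prove that $x\notin\Phi(N)$ forces $C_K(x)\le C_K(N)$, so I fix $k\in C_K(x)$ and pass to $\overline N=N/\Phi(N)$, where $\overline x\neq 1$. In the first scenario $K$ is abelian, so $C_{\overline N}(k)$ is a $K$-invariant subgroup of $\overline N$ containing $\overline x$; irreducibility forces $C_{\overline N}(k)=\overline N$, i.e. $[N,k]\le\Phi(N)$. Here $N/\Phi(N)$ abelian makes $N$ nilpotent and irreducibility makes it a $p$-group, so Lemma \ref{coprime} applies and gives $[N,k]=1$, that is $k\in C_K(N)$. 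In the second scenario a power automorphism fixes every Sylow subgroup of $N$, so I would argue on each primary component separately, where $k$ acts as a universal power; coprimality of $|k|$ with $|N|$ together with $\overline x\neq 1$ should force that power to be trivial, and Lemma \ref{coprime} again finishes. I expect the delicate point to be exactly this power-automorphism case: matching the single fixed element $x$ outside $\Phi(N)$ against the scalar action of $k$ on the various primary components of $N$ is where the coprimality hypothesis has to be exploited with care.
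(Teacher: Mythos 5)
Your treatment of (a), (b) and (d) is correct, and for (a) and (b) it takes a genuinely different route from the paper: where the paper grinds through a long chain of applications of Dedekind's law (using $NQ=NQ^x$ and repeatedly refactoring the intersection), you exploit the unique factorisation $g=nk$ coming from $N\cap K=1$, together with the observation that $r^x$ has $K$-component $r$. This is shorter and more transparent, and it shows in passing that the identity in (a) is really a set-theoretic statement about products, so the hypothesis that $N_1Q$ and $N_2R^x$ are subgroups is not needed for your argument. Your (d) is essentially the paper's computation. The irreducible branch of (c) also matches the paper in substance: both reduce to $[N,k]\le\Phi(N)$ and then kill the commutator by coprime action, you via $C_{\overline N}(k)$ and the paper via $C_N(C_K(x))$.

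The difficulty you flag in the power-automorphism branch of (c) is a genuine gap, and in the stated generality it cannot be closed, because the assertion is false for non-primary $N$. Take $G=C_3\times(C_5\rtimes C_4)$ with $N=C_{15}$ the normal Hall $\{3,5\}$-subgroup and $K\cong C_4$ acting trivially on the $C_3$-part and faithfully on the $C_5$-part: every automorphism of the cyclic group $N$ is a power automorphism, $N$ is abelian, $\Phi(N)=1$, and for $x$ of order $3$ one has $C_K(x)=K\neq 1=C_K(N)$ although $x\notin\Phi(N)$. The obstruction is exactly the one you isolate: a universal power $a\mapsto a^n$ fixing $x$ is only forced to be trivial on those primary components of $N$ that $\erz{x}$ meets nontrivially. (The paper's own proof asserts at this point that every element of $K$ centralising $x$ centralises $N$, with no further justification, so it suffers from the same defect.) The argument does close whenever $N$ is a $p$-group, which is the only setting in which part (c) is invoked later in the paper: by 1.5.4 of Schmidt the action of $k\in C_K(x)$ is $a\mapsto a^n$ for a single $n$; from $x^n=x$ and $x\neq 1$ one gets $n\equiv 1\pmod p$; and a universal power automorphism of an abelian $p$-group of exponent $p^e$ with $n\equiv 1\pmod p$ lies in the Sylow $p$-subgroup of $(\mathbb Z/p^e\mathbb Z)^{\times}$, so the coprimality of $|K|$ and $|N|$ forces it to be trivial. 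To complete your proof you should therefore carry out the per-component argument only on the primes dividing $o(x)$ and either restrict the statement to primary $N$ or accept the weaker conclusion that $k$ centralises those components; the coprimality alone does not rescue the remaining primary components.
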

\begin{proof}
Suppose that $N_1Q$ and $N_2R^x$ are subgroups of $G$.

For (a) we do the following calculation:

$\begin{array}{rcl}
N_1Q\cap N_2R^x&=& (N_1Q\cap NQ)\cap  N_2R^x
=N_1Q\cap (NQ\cap N_2R^x)
=N_1Q\cap N_2(NQ^x\cap R^x)\\
&=&N_1Q\cap N_2(NQ^x\cap (K^x\cap R^x))=N_1Q\cap N_2((NQ^x\cap K^x)\cap R^x)\\
&=&N_1Q\cap N_2(Q^x\cap R^x)=N_1Q\cap (N(Q\cap R)^x\cap N_2(Q\cap R)^x)
\\ &=&(N_1Q\cap N(Q\cap R))\cap N_2(Q\cap R)^x=N_1(Q\cap N(Q\cap R))\cap N_2(Q\cap R)^x
\\ &=&N_1((Q\cap K)\cap N(Q\cap R))\cap N_2(Q\cap R)^x
\\&=&N_1(Q\cap (K\cap N(Q\cap R)))\cap N_2(Q\cap R)^x
\\ &=&N_1(Q\cap (Q\cap R))\cap N_2(Q\cap R)^x
=N_1(Q\cap R)\cap N_2(Q\cap R)^x.\end{array}$

\smallskip
Then (a) yields that
$Q\cap N_2R^x=(Q\cap R)\cap N_2(Q\cap R)^x$.
Therefore, if $\erz{x^{Q\cap R}}\cap N_2=1$ and if $a,b\in Q\cap R$ and $y\in N_2$ are such that $a=yb^x=yx^{-1}x^{b^{-1}}b$, then the fact that $ab^{-1}=yx^{-1}x^{b^{-1}}\in K\cap N=1$ implies that $a=b$ and that $y^{-1}=[x,b^{-1}]\in N_2\cap \erz{x^{Q\cap R}}=1$.
We deduce that $a=b^x=a^x\in Q\cap R^x$ and that $[x,a]=1$. Hence $Q\cap N_2R^x=(Q\cap R)\cap N_2(Q\cap R)^x=Q\cap R^x\leq C_{Q\cap R}(x)$.
This is (b).

If $K$ is abelian, then $x\in C_N(C_K(x))$ and $C_N(C_K(x))$ is $K$-invariant. Thus, if $K$ acts irreducibly on $N/\Phi(N)$ and $x\notin \Phi(N)$, then $C_N(C_K(x))\Phi(N)=N$, and the fact that $[C_K(x),N]=1$ implies that $C_K(x)=C_K(N)$.
If $K$ induces power automorphisms on the abelian group $N$, then 1.5.4 of \cite{Schmidt} implies that these are universal.
If $x=1$, then $x \in \Phi(N)$, and otherwise $x\neq 1$ and every element of $K$ that centralises $x$ also centralises $N$. Altogether (c) holds.

Suppose finally  that $Q\leq R$.
Then, for all $g\in Q$, we have that $g=g^x \cdot (g^{-1})^x \cdot g \in R^x [x,Q]\le \erz{[x,Q]^{R^x}}R^x$.
It follows that $\erz{Q,R^x}\leq \erz{[x,Q]^{R^x}}R^x$.
\\On the other hand $(g^{-1})^x\in Q^x\leq R^x$ for all $g\in Q$ and therefore $[x,g]=(g^{-1})^x g\in \erz{Q, R^x}$.

This implies that $[x,Q]\leq \erz{Q,R^x}$. Since $R^x\leq \erz{Q,R^x}$, we deduce that $\erz{[x,Q]^{R^x}}R^x\leq \erz{Q,R^x}$, which is (d).
\end{proof}

\section{Battens and batten groups}

\begin{definition}\label{defi:batten}
\begin{enumerate}
\item We say that $G$ is a \textbf{batten} if and only if $G$ is a cyclic $p$-group, or isomorphic to $Q_8$, or $G=QR$, where $Q$ is a normal subgroup of prime order and $R$ is a cyclic $p$-group of order coprime to $|Q|$ and such that $C_R(Q)=\Phi(R)\neq 1$.

\item We say that $G$ is a \textbf{batten group} if and only if $G$ is a direct product of battens of pairwise coprime order.

\item If $G$ is a batten group, then we say that $B \le G$ is a \textbf{batten of $G$} if and only if $B$ is a batten that is one of the direct factors of $G$.
\end{enumerate}
\end{definition}

\textbf{Warning:} It is possible for a subgroup of a batten group $G$ to be a batten, abstractly, but not to be a batten of $G$. This can happen when it is a $p$-subgroup for some prime $p$ of a batten as in the third case of Definition \ref{defi:batten}.

\begin{ex}\begin{enumerate}
    \item
Suppose that $Q:=\langle x\rangle$ is a group of order $19$ and that $R=\langle y\rangle$ is a subgroup of $\aut(X)$ of order $27$. Further suppose that $x^y:=x^7$. Then $B:=QR$ is a non-nilpotent batten. For this we calculate that $x^{y^3}=(x^7)^{y^2}=(x^{49})^y=(x^{11})^y=x^{77}=x$. Then the fact that $x^y=x^7\neq x$ implies that $C_R(Q)=\langle y^3\rangle=\Phi(R)$.
We note that $B$ is a batten group and that $Q$ is a subgroup of $B$ that is a batten, but not a batten of $B$ because $[Q,R] \neq 1$.

\item Let $B=QR$ be as in (a),  let $T\cong Q_8$ and let $S$ be a cyclic group of order $625$. Then $B\times T\times S$ is a batten group.
\end{enumerate}
\end{ex}

\begin{rem}
Let $G$ be a batten group and let $B$ be a batten of $G$ such that $|\pi(B)|= 2$.
\\(a) $B$ is not nilpotent, but $B$ has a unique normal Sylow subgroup.
\\(b) A Sylow subgroup $Q$ of $B$ is cyclic, and therefore $Q$ is batten. But $Q$ is not a direct factor of $G$ and hence $Q$ is not a batten of $G$.

\end{rem}

\begin{definition}\label{defi:B(G)}

Suppose that $G$ is a non-nilpotent batten.
Then there is a unique prime $q \in \pi(G)$ such that
$G$ has a normal Sylow $q$-subgroup $Q$, and $Q$ is cyclic of order $q$. In this case we set $\mathcal B(G):=Q$.
\end{definition}

From the definition we can immediately see that $\mathcal B(G)$
is a characteristic subgroup of a non-nilpotent batten $G$ and that it has prime order.

\begin{lemma}\label{lem:batten}
Suppose that $G$ is a non-nilpotent batten, that $r \in \pi(G)$ and that $R \in \syl_r(G)$ has order at least $r^2$. Then $Z(G)=C_R(\mathcal B(G)) =\Phi(R)=O_r(G)$.
\end{lemma}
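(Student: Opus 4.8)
The plan is to first pin down the prime $r$, after which everything reduces to the single identity $C_R(\mathcal B(G)) = \Phi(R)$ that is built into the definition of a batten, plus two short containment arguments. Write $G = QR$ as in the third case of Definition \ref{defi:batten}, so that $Q = \mathcal B(G)$ has prime order $q$, the group $R$ is cyclic of order a power of some prime $p \neq q$, and $C_R(Q) = \Phi(R) \neq 1$. Since the normal Sylow $q$-subgroup $Q$ has order exactly $q$, the hypothesis $|R| \geq r^2$ forces $r = p$; thus the Sylow $r$-subgroup named in the statement is cyclic of order at least $p^2$. Because $Q$ is normal while $Z(G)$ and $O_r(G)$ are conjugation-invariant, and $\Phi(R)^g = \Phi(R^g)$ and $C_R(Q)^g = C_{R^g}(Q)$, I may assume this Sylow subgroup to be the complement $R$ from the definition. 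With this identification the equality $C_R(\mathcal B(G)) = \Phi(R)$ is simply the defining property of the batten.

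Next I would prove $\Phi(R) = O_r(G)$. For the inclusion $\Phi(R) \le O_r(G)$, note that $\Phi(R)$ is characteristic in the abelian group $R$, and that $Q$ centralises $C_R(Q) = \Phi(R)$ by definition; hence $\Phi(R)$ is normalised by both $R$ and $Q$, so $\Phi(R) \unlhd G$ and therefore $\Phi(R) \le O_r(G)$. For the reverse inclusion, set $P_0 := O_r(G)$, which lies inside the Sylow $r$-subgroup $R$. Since $P_0$ and $Q$ are both normal in $G$ and have coprime order, $[Q, P_0] \le Q \cap P_0 = 1$, so $Q$ centralises $P_0$ and thus $P_0 \le C_R(Q) = \Phi(R)$.

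Finally I would settle $Z(G) = \Phi(R)$. The inclusion $\Phi(R) \le Z(G)$ holds because $\Phi(R)$ centralises $Q$ (by definition) and $R$ (being abelian), hence all of $G = QR$. For the reverse inclusion, observe that $Z(G) \cap Q$ is a subgroup of the group $Q$ of prime order; were it equal to $Q$ then $R$ would centralise $Q$, forcing $C_R(Q) = R$ and contradicting $C_R(Q) = \Phi(R) \neq R$. Hence $Z(G) \cap Q = 1$, so $Z(G) \cong Z(G)Q/Q \le G/Q$ is a $p$-group, and being normal it satisfies $Z(G) \le O_r(G) = \Phi(R)$. Chaining the four equalities together gives $Z(G) = C_R(\mathcal B(G)) = \Phi(R) = O_r(G)$.

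None of the individual steps is deep; the only points that require a little care are the clean reduction to $r = p$ together with the legitimacy of replacing an arbitrary Sylow $r$-subgroup by the distinguished complement $R$, and the coprime commutator argument $[Q, O_r(G)] = 1$, which is precisely where the normality of both factors and the coprimality of $|Q|$ and $|R|$ are genuinely used.
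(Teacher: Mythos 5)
Your proof is correct and follows essentially the same route as the paper: both rest on the observations that $\Phi(R)=C_R(\mathcal B(G))$ is central because $R$ is cyclic, that $\mathcal B(G)\nleq Z(G)$ forces $Z(G)$ to be an $r$-group, and that $[\mathcal B(G),O_r(G)]\le \mathcal B(G)\cap R=1$ pins down $O_r(G)$. The only (harmless) difference is one of ordering — you establish $O_r(G)=\Phi(R)$ first and deduce $Z(G)\le O_r(G)$ from normality, whereas the paper gets $Z(G)=\Phi(R)$ directly from the maximality of $\Phi(R)$ in the cyclic group $R$ and treats $O_r(G)$ last.
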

\begin{proof}
Since $|R| \ge r^2$, we see that $R\neq \mathcal B(G)$.
Then Definition \ref{defi:batten} implies that $R$ is cyclic and that
there is a prime $q \in \pi(G)\setminus\{r\}$ such that  $Q:=\mathcal B(G)\in \syl_q(G)$.

Now $G=Q \rtimes R$ and $C_R(\mathcal B(G))=\Phi(R)$.
We recall that $R$ is cyclic, and then this implies that $\Phi(R)\le Z(G)$.
Since $G$ is not nilpotent, we see that $\mathcal B(G)$ is not contained in $Z(G)$. Thus $Z(G)$ is an $r$-group, because $\mathcal B(G)$ has prime order.
In addition $R\nleq Z(G)$, because $G$ is not nilpotent. Since $\Phi(R)$ is a maximal subgroup of the cyclic group $R$, it follows that $\Phi(R)=Z(G)$.
Moreover we have that $[Q,O_r(G)] \le Q \cap R=1$, whence $O_r(G) \le C_R(Q)=\Phi(R)= Z(G) \le C_R(Q)$.
This proves all statements.
\end{proof}

\begin{lemma}\label{battenremark}
If $G$ is a batten group and $P \le G$ is a Sylow $p$-subgroup of $G$ for some prime $p$, then $G$ has a subgroup of order $p$.
In addition,  $\Omega_1(P)\leq Z(G)$ or there is some non-nilpotent batten $B$ of $G$ such that $\Omega_1(P)=P=\mathcal B(B)$.
\end{lemma}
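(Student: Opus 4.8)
The plan is to reduce to a single batten of $G$ and then to inspect the three shapes allowed by Definition~\ref{defi:batten}. Since $p\in\pi(G)$, the Sylow $p$-subgroup $P$ is non-trivial and hence contains a subgroup of order $p$ by Cauchy's theorem; this already gives the first assertion. For the second one I write $G=B_1\times\dots\times B_n$ with battens $B_i$ of pairwise coprime order. Because these orders are pairwise coprime, exactly one factor $B$ satisfies $p\mid \ord{B}$; writing $G=B\times C$ with $C$ the product of the remaining battens, the group $C$ has order coprime to $p$, so every $p$-element of $G$ lies in $B$. Thus $P\le B$ and $P\in\syl_p(B)$, and since $B$ is a direct factor we have $Z(B)\le Z(G)$. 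It therefore suffices to locate $\Omega_1(P)$ inside $B$.

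Suppose first that $B$ is nilpotent. By Definition~\ref{defi:batten} this forces $B$ to be a cyclic $p$-group or $B\isom Q_8$, because a batten of the form $QR$ as in the last case of the definition has $C_R(Q)=\Phi(R)$ properly contained in $R$, so $[Q,R]\neq 1$ and $QR$ is not nilpotent. In either remaining case $B$ is a $p$-group, so $P=B$. If $B$ is cyclic, it is abelian and $\Omega_1(P)\le B\le Z(G)$. If $B\isom Q_8$, then $\Omega_1(P)=Z(B)$ is the unique subgroup of order $2$, and again $\Omega_1(P)\le Z(B)\le Z(G)$. Hence the first alternative of the conclusion holds whenever $B$ is nilpotent.

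Now suppose that $B$ is not nilpotent. Then $B=QR$, where $Q=\mathcal B(B)$ is normal of prime order and $R$ is cyclic of order $r^m$ with $m\ge 2$, and the only primes dividing $\ord{B}$ are $\ord{Q}$ and $r$; thus $p\in\{\ord{Q},r\}$. Lemma~\ref{lem:batten}, applied to $B$ and $R$, yields $Z(B)=\Phi(R)$. If $p=\ord{Q}$, then $Q$ is the normal Sylow $p$-subgroup of $B$, so $P=Q=\mathcal B(B)$ and $\Omega_1(P)=P=\mathcal B(B)$; this is precisely the second alternative, witnessed by the non-nilpotent batten $B$. If instead $p=r$, then $P$ is a Sylow $r$-subgroup of $B$ and hence cyclic of order $r^m$. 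Since $R$ is cyclic of order at least $r^2$ we have $\Omega_1(R)\le\Phi(R)=Z(B)$, so $\Omega_1(R)$ is central in $B$. As $P$ is conjugate to $R$ inside $B$, say $P=R^g$ with $g\in B$, the centrality of $\Omega_1(R)$ gives $\Omega_1(P)=(\Omega_1(R))^g=\Omega_1(R)\le Z(B)\le Z(G)$, and the first alternative holds.

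The only step requiring genuine care is the sub-case $p=r$, where $P$ need not coincide with $R$, so one cannot read off $\Omega_1(P)$ directly from the structure of $R$. The resolution is the observation that $\Omega_1(R)\le\Phi(R)$, which holds because $R$ is cyclic of order at least $r^2$, combined with the identification $\Phi(R)=Z(B)$ from Lemma~\ref{lem:batten}; together these place $\Omega_1(R)$ in the centre of $B$, so that it is fixed under conjugation and $\Omega_1$ of every Sylow $r$-subgroup of $B$ equals $\Omega_1(R)$. All other steps are immediate from the definitions and from $Z(B)\le Z(G)$.
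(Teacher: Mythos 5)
Your proof is correct and takes essentially the same route as the paper: locate $P$ inside the unique batten $B$ of $G$ with $p\mid\ord{B}$, then split according to whether $B$ is nilpotent (so $P=B$ and $\Omega_1(P)\leq Z(B)\leq Z(G)$) or not (so $P=\mathcal B(B)$ or $\Omega_1(P)\leq \Phi(P)=Z(B)$ via Lemma \ref{lem:batten}). The conjugation argument in the sub-case $p=r$ is sound but not needed, since Lemma \ref{lem:batten} is already stated for an arbitrary Sylow $r$-subgroup of the batten and hence applies to $P$ directly.
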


\begin{proof}
Since $P\leq G$, it follows that $P$ is cyclic or isomorphic to $Q_8$. Therefore $\Omega_1(P)$ has order $p$.
\\If $P$ is a batten, then $\Omega_1(P)\leq Z(P)\leq Z(G)$.
In particular $\Omega_1(P)$ is the unique subgroup of $G$ of its order.
Otherwise there is a non-nilpotent batten $B$ of $G$ such that $P\leq B$.
If $P$ has order $p$, then $P=\Omega_1(P)=\mathcal B(B)$ is a normal subgroup of $B$ and so of $G$. Again $\Omega_1(P)$ is the unique subgroup of $K$ of order $q$.
Otherwise we have that $\Omega_1(P)\leq \Phi(P)=Z(B)\leq Z(G)$ by Lemma \ref{lem:batten}.
In particular $\Omega_1(P)$ is the unique subgroup of $K$ of its order.
\end{proof}

\begin{lemma}\label{lem:battensub}
Suppose that $H$ is a batten and that $U \lneq H$. Then $U$ is a cyclic batten group.
Furthermore, all subgroups of batten groups are batten groups.
\end{lemma}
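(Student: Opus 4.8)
The plan is to prove the two statements in turn, and the key preliminary observation is that a \emph{cyclic} batten group is nothing more than a finite cyclic group: any cyclic group is the internal direct product of its Sylow subgroups, each of which is a cyclic $p$-group and hence a batten, and these factors have pairwise coprime order. Thus, to establish the first assertion, it suffices to show that every proper subgroup $U$ of a batten $H$ is cyclic.

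I would argue by cases according to Definition \ref{defi:batten}. If $H$ is a cyclic $p$-group there is nothing to do, since subgroups of cyclic groups are cyclic; and if $H \isom Q_8$, then every proper subgroup has order dividing $4$ and is therefore cyclic. The substantive case is $H = QR$ with $Q$ normal of prime order $q$ and $R$ cyclic of $p$-power order satisfying $C_R(Q) = \Phi(R) \neq 1$. Here $Q$ is the unique subgroup of $H$ of order $q$, so a proper subgroup whose order is coprime to $q$ is a $p$-group, lies in a conjugate of the cyclic group $R$, and is thus cyclic. If instead $q$ divides $\ord{U}$, then $Q \le U$, and I would pick a Sylow $p$-subgroup $S$ of $U$, so that $U = QS$.

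The heart of the argument --- and the step I expect to be the main obstacle --- is to control this Sylow $p$-subgroup $S$ when $U$ is proper. Since $U \lneq H$ we have $\ord{S} < \ord{R}$, and as $S$ lies in some conjugate $R^g$ of the cyclic group $R$, it is a proper subgroup of $R^g$ and hence contained in its unique maximal subgroup $\Phi(R^g)$. Now the defining property of a non-nilpotent batten yields $\Phi(R^g) = C_R(Q)^g = C_{R^g}(Q)$, using that $Q$ is normal; hence $S$ centralises $Q$. Consequently $U = Q \times S$ is the direct product of a group of order $q$ and a cyclic $p$-group, so $U$ is cyclic, which is what we needed.

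For the second assertion, let $G = B_1 \times \dots \times B_k$ be a batten group with the $B_i$ of pairwise coprime order, and let $U \le G$. Since each $B_i$ is the normal Hall subgroup of $G$ for the part $\pi(B_i)$ of the partition of $\pi(G)$, coprimality forces $U = (U \cap B_1) \times \dots \times (U \cap B_k)$, the factors again having pairwise coprime order. Each factor $U \cap B_i$ is either $B_i$ itself, a batten, or a proper subgroup of $B_i$ and hence, by the first part, a cyclic batten group. Expanding each $U \cap B_i$ into its battens therefore exhibits $U$ as a direct product of battens of pairwise coprime order, so $U$ is a batten group.
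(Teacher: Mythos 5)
Your proof is correct and follows essentially the same route as the paper: in the non-nilpotent case both arguments reduce to the observation that the $p$-part of a proper subgroup containing $Q$ must land in $\Phi(R)=C_R(Q)$ (the paper via Dedekind's law and Lemma \ref{lem:batten}, you via conjugating the Sylow $p$-subgroup into $R$), after which $U$ is a coprime direct product of cyclic groups. The treatment of the second assertion via the coprime direct decomposition $U=(U\cap B_1)\times\dots\times(U\cap B_k)$ is likewise the paper's argument.
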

\begin{proof}
Assume for a contradiction that $U$ is not a cyclic batten group.
Then $U$ is not a cyclic batten, and therefore $H$ is neither cyclic of prime power order nor isomorphic to $Q_8$.
Thus $H$ is not nilpotent, in particular $|\pi(H)|=2$ and all Sylow subgroups of $H$ are cyclic. This implies that $U$ is not a $p$-group.
Let $\pi(H)=\{q,r\}$, let $Q:=\mathcal B(H)$ and $R \in \syl_r(H)$ be such that $H=QR$ and $C_R(Q)=\Phi(R) \neq 1$.
Now $\pi(U)=\{q,r\}$ as well and therefore $\mathcal B(H)\leq U$.
Then Dedekind's law gives that $U=\mathcal B(H) \cdot (U \cap R)$ is a proper subgroup of $H=\mathcal B(H) \cdot R$, and it follows that $U\cap R$ is a proper subgroup of $R$. In particular, since $R$ is cyclic, we have that $1\neq U\cap R\leq \Phi(R)$. 
Then Lemma \ref{lem:batten} gives that $\Phi(R)=Z(H)$.
Altogether $U\leq \mathcal B(H)\Phi(R)=\mathcal B(H)\times \Phi(R)$.
But then $U$ is a direct product of cyclic groups of prime power order, i.e. a cyclic batten group, and this is a contradiction.

Next suppose that $G$ is a batten group and that $U$ is a subgroup of $G$. Then $U$ is a direct product of subgroups of the battens of $G$ whose orders are pairwise coprime.
Consequently $U$ is a batten group as well, by the arguments above.  \end{proof}

We remark that sections of battens, or batten groups, are not necessarily batten groups. For example,  $Q_8/Z(Q_8)$ is not a batten group.

\begin{lemma}\label{lem:battenheart}\label{lem:battensyl}
Suppose that $K$ is a batten group and that $Q \leq K$ is a $q$-group.

If $Q$ is not normal in $K$, then there is a non-nilpotent batten $B$ of $K$ such that $B=\mathcal B(B) Q$ and $N_K(Q)=C_K(Q)$.
\\If $Q\unlhd K$, then $|K:C_K(Q)| \in \{1,4\}$ or this index is a prime number.\end{lemma}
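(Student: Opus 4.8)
The plan is to reduce everything to a single batten and then run through the three types in Definition \ref{defi:batten}. Since $K=B_1\times\dots\times B_n$ is a direct product of battens of pairwise coprime order, at most one factor, say $B$, has order divisible by $q$; and every element of $Q$ has $q$-power order, so its projection to each remaining factor is trivial. Hence $Q\le B$. If no factor has order divisible by $q$, then $Q=1$ and the index is $1$, so from now on I assume $Q\le B$ with $q\mid\ord{B}$. Because the other battens centralise $B$, one has $N_K(Q)=N_B(Q)\times\prod_{j}B_j$ and $C_K(Q)=C_B(Q)\times\prod_{j}B_j$ (products over the factors $\neq B$); consequently $Q\unlhd K$ if and only if $Q\unlhd B$, the index $\GrInd{K}{C_K(Q)}$ equals $\GrInd{B}{C_B(Q)}$, and $N_K(Q)=C_K(Q)$ if and only if $N_B(Q)=C_B(Q)$. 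This lets me argue entirely inside $B$.

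Next I would treat the type of $B$. If $B$ is a cyclic $q$-group, then $Q$ is the unique subgroup of its order, so $Q\unlhd B$, and $B$ is abelian, giving index $1$. If $B\cong Q_8$ (so $q=2$), then every subgroup of $B$ is normal; running through $Q\in\{1,\,Z(B),\,\text{a cyclic group of order }4,\,B\}$ gives $\GrInd{B}{C_B(Q)}\in\{1,1,2,4\}$, all covered by the normal case. The interesting type is $B=\mathcal B(B)R$ a non-nilpotent batten, where $Q_0:=\mathcal B(B)$ has prime order $q_0$ and $R$ is cyclic of $r$-power order; here $\Phi(R)\neq1$ forces $\ord{R}\ge r^2$, so Lemma \ref{lem:batten} applies and yields $Z(B)=C_R(Q_0)=\Phi(R)=O_r(B)$.

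In this last type I split on whether $q=q_0$ or $q=r$. If $q=q_0$, then $Q\le Q_0$ because $Q_0$ is the normal, hence unique, Sylow $q_0$-subgroup; so $Q\in\{1,Q_0\}$, both normal, and $C_B(Q_0)=Q_0\times C_R(Q_0)=Q_0\times\Phi(R)$ gives $\GrInd{B}{C_B(Q_0)}=\GrInd{R}{\Phi(R)}=r$, a prime. If $q=r$, then $Q$ lies in some Sylow $r$-subgroup $R^g$, which is cyclic with unique maximal subgroup $\Phi(R^g)=\Phi(R)=Z(B)$. Either $Q\lneq R^g$, whence $Q\le Z(B)$ is central and normal with index $1$; or $Q=R^g$ is a full Sylow $r$-subgroup, which is not normal because $B$ is not nilpotent. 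In this final subcase $\mathcal B(B)Q=Q_0R^g=B$, since $\ord{Q_0R^g}=\ord{Q_0}\,\ord{R^g}=\ord{B}$, so $B$ is the required non-nilpotent batten of $K$. It remains to show $N_B(Q)=C_B(Q)$: by Dedekind's law $N_B(Q)=(N_B(Q)\cap Q_0)Q$, and any $1\neq u\in Q_0$ normalising $Q$ would give $Q\unlhd\erz{Q,u}=QQ_0=B$, contradicting the non-normality of $Q$; hence $N_B(Q)=Q$ is abelian and $C_B(Q)=Q=N_B(Q)$.

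The hard part will be exactly this non-normal subcase: recognising that a non-normal $q$-subgroup of a batten group must be a full Sylow $r$-subgroup of a non-nilpotent batten $B=Q_0\rtimes R$, and then pinning down $N_B(Q)=C_B(Q)$. Everything else is bookkeeping driven by the cyclicity of the Sylow subgroups together with Lemma \ref{coprime} and Lemma \ref{lem:batten}; collecting the cases shows that when $Q\unlhd K$ the index $\GrInd{K}{C_K(Q)}$ is always $1$, $4$, or a prime, as claimed.
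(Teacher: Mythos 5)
Your proposal is correct and follows essentially the same route as the paper: reduce to the unique batten $B$ of $K$ containing $Q$ via the coprime direct decomposition, then case-split on the type of $B$ using Lemma \ref{lem:batten}. Your explicit verification that $N_B(Q)=C_B(Q)$ in the non-normal subcase (via $N_B(Q)\cap\mathcal B(B)=1$) fills in a step the paper only asserts, but the overall argument is the same.
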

\begin{proof} Since $K$ is a batten group, there is a batten $B$ of $K$ such that $Q\leq B$. Moreover there is a subgroup $L$ of $K$ such that $K=L\times B$. Then $L\leq C_K(B)\leq C_K(Q)$ ($\ast$).

We first suppose that $Q$ is not a normal subgroup of $K$.
Since $K$ is a direct product of battens, it follows that $Q$ is not normal in $B$.  Thus $B$ is neither abelian nor hamiltonian (otherwise all subgroups of $B$ would be normal), and
it follows that $B$ is not nilpotent. Now $Q$ is a proper subgroup of $B$ because $Q \not \unlhd B$.
We conclude from Lemma \ref{lem:batten} that neither $Q\leq \mathcal B (B)$ nor $Q\leq Z(B)$, whence $B=\mathcal B(B) Q$ and therefore $N_B(Q)=Q=C_B(Q)$. Consequently ($\ast$) and Dedekind's modular law yield that $N_K(Q)=LN_B(Q)=LC_B(Q)=C_K(Q)$.

Suppose now that $Q\unlhd K$. Using ($\ast$) we see that $ |K:C_K(Q)|=|B:C_B(Q)|$.
Hence we may suppose that $B$ is not abelian. If $B$ is not nilpotent, then $Z(B)$ and a Sylow $q$-subgroup of $B$ centralise $Q$. Thus Lemma \ref{lem:batten} yields that $ |K:C_K(Q)|=|B:C_B(Q)|$ equals the prime in $\pi(B)\setminus\{q\}$.
Let $B\cong Q_8$ and suppose that $Q\nleq Z(B)$. Then $Q$ has order $4$ or $8$.
In the first case $|K:C_K(Q)|=|B:C_B(Q)|=|B:Q|=2$ and in the second case $|K:C_K(Q)|=|B:C_B(Q)|=|B:Z(B)|=4$, which completes the proof.
\end{proof}

 \section{$L_{10}$ and its sublattices}

Throughout this article we will use the notation
from the next definition whenever we refer to
$L_{10}$ and its sublattices:

		\begin{definition}
			\label{defi:L10}
	The lattice $L_{10}$ is defined to be isomorphic to $L(D_8)$, with notation as indicated in the picture.		

\begin{minipage}{5.2cm}	
				\begin{tikzpicture}
					\coordinate[label=left:{E}]		(E) at (0,0);
					\coordinate[label=right:{F}]		(F) at (0,3);
					\coordinate[label=above left:{A}]	(A) at (-1.5,2);
					\coordinate[label=right:{B}]		(B) at (0,2);
					\coordinate[label=above right:{C}]	(C) at (1.5,2);
					\coordinate[label=below:{S}]			(S) at (-2,1);
					\coordinate[label=below:{T}]			(T) at (-1,1);
					\coordinate[label=below right:{D}]	(D) at (0,1);
					\coordinate[label=below:{U}]		(U) at (1,1);
					\coordinate[label=below:{V}]		(V) at (2,1);
					
					\foreach \x in {E,F,A,B,C,D,S,T,U,V} \fill (\x) circle (2.15pt);
					
					\draw [thick]	(E) -- (S) -- (A) -- (F) -- (B) -- (D) -- (A) -- (T) -- (E) -- (D) -- (C) -- (U) -- (E) -- (V) -- (C) -- (F);
				\end{tikzpicture}
	\end{minipage}\hfill\begin{minipage}{10cm}			
 Now we define

\smallskip\begin{minipage}{5cm}			\begin{enumerate}
				\item $L_5 := \{E, S, U, A, F\}$,
				\item[(c)] $L_7 := L_5 \cup \{D, C\}$,
				\item[(e)] $M_8 := L_{10} \setminus \{T, V\}$,
				\item[(g)] $M_9 := L_{10} \setminus \{B\}$,
			\end{enumerate}
   \end{minipage}\hfill
   \begin{minipage}{4.5cm}			\begin{enumerate}
				\item[(b)] $L_6 := L_5 \cup \{T\}$,
				\item[(d)] $L_8 := L_{10} \setminus \{B, V\}$,
				\item[(e)] $L_9 := L_{10} \setminus \{V\}$,
				\end{enumerate}
   \end{minipage}

\smallskip with the corresponding inclusion relations induced from the lattice $L_{10}$.	\end{minipage}
		\end{definition}

\begin{definition}Let $L$ be a lattice. An equivalence relation $\equiv$ on $L$ is called a congruence relation if and only if, for all $A,\ B,\ C,\ D\in L$ such that $A\equiv B$ and $C\equiv D$, we have that $\erz{A,C}\equiv\erz{B,D}$ and $A\cap C\equiv B\cap D$.
\end{definition}

	\begin{lemma}\label{lem:L9SubdirIrred}
Let $\equiv$ be a congruence relation on $L_9=\{A,\ B,\ C,\ D,\ E,\ F,\ U,\ T,\ S\}$  as in Definition \ref{defi:L10} and suppose
that $\equiv$ is not equality. Then $E \equiv D$.
	\end{lemma}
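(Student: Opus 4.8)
The plan is to use the standard fact that, for any lattice congruence, every $\equiv$-class is a convex sublattice, and then to track how the collapse of a single covering pair of $L_9$ propagates until it forces $E \equiv D$. First I would extract this fact from the definition: if $P \equiv Q$, then combining $P\equiv Q$ with $P\equiv P$ gives $P\cap Q\equiv P\equiv \erz{P,Q}$, so each class is closed under meets and joins; and if $P\le Z\le Q$ with $P\equiv Q$, then $Z=\erz{P,Z}\equiv \erz{Q,Z}=Q$, so each class is convex. Since $L_9$ is finite and $\equiv$ is not equality, there are $X<Y$ with $X\equiv Y$; by convexity the whole interval $[X,Y]$ lies in one class, so some covering pair inside a maximal chain from $X$ to $Y$ is collapsed. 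Hence it suffices to show that collapsing \emph{any one} covering pair of $L_9$ forces $E\equiv D$.

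The main tool is that whenever $P\equiv Q$ we may meet or join both sides with a fixed $Z$ to get $P\cap Z\equiv Q\cap Z$ and $\erz{P,Z}\equiv\erz{Q,Z}$; a well-chosen $Z$ transports one collapse onto a neighbouring covering pair (conceptually, it transposes the prime interval onto $[E,D]$). The routine reductions, read off from the diagram in Definition \ref{defi:L10} with $V$ deleted, are: meeting with $D$ turns each of $S\equiv A$, $T\equiv A$, $U\equiv C$ directly into $E\equiv D$ (since $S\cap D=T\cap D=U\cap D=E$ while $A\cap D=C\cap D=D$); joining $E\equiv S$ with $T$ gives $T\equiv A$, and symmetrically $E\equiv T$ gives $S\equiv A$; meeting $D\equiv A$ with $S$ gives $E\equiv S$; and each coatom–$F$ collapse $A\equiv F$, $B\equiv F$, $C\equiv F$ descends by meeting with a suitable coatom (e.g. $B\equiv F$ and $C\equiv F$ meet with $A$ to give $D\equiv A$, and $A\equiv F$ meets with $C$ to give $D\equiv C$).

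I expect the main obstacle to lie in the asymmetric ``right half'' of $L_9$, namely the pairs $E\equiv U$, $D\equiv C$, $D\equiv B$ and the coatom–$F$ pairs that reduce to them: because $C$ sits over only the two atoms $D,U$ (there is no third atom under $C$ to meet against, unlike under $A$) and $B$ sits over $D$ alone, the most obvious meets and joins keep one trapped inside this region. The crux is to leave it by joining a right-hand atom with a left-hand one: from $E\equiv U$, joining with $S$ gives $S=\erz{E,S}\equiv\erz{U,S}=F$ (the join jumps straight to the top), and then meeting with $A$ gives $S\equiv A$, which is already handled; likewise from $D\equiv B$, joining with $U$ gives $C=\erz{D,U}\equiv\erz{B,U}=F$, whence meeting with $A$ gives $D\equiv A$. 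With these ``cross'' moves in place, $D\equiv C$ reduces (meet $U$) to $E\equiv U$, $A\equiv F$ reduces (meet $C$) to $D\equiv C$, and one checks the resulting web of reductions is acyclic and terminates at $E\equiv D$.

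Finally I would assemble the cases into a short reduction table covering all thirteen covering pairs $E\prec S,T,U,D$; $S,T,D\prec A$; $D\prec B$; $D,U\prec C$; $A,B,C\prec F$, verifying that each one is driven to $E\equiv D$. The only genuinely nonroutine steps are the two cross-joins of the previous paragraph; everything else is a single meet or join with $D$, $A$, $S$ or $T$. Since every nontrivial congruence collapses at least one covering pair, this proves that $E\equiv D$, as claimed.
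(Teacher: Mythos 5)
Your proof is correct, but it runs in the opposite direction from the paper's. You argue forward: every congruence class is a convex sublattice (which you correctly derive from the definition), so a non-trivial congruence must collapse some covering pair of $L_9$, and you then check that each of the thirteen covering pairs, once collapsed, is driven to $E\equiv D$ by a short acyclic chain of meets and joins with fixed elements. I verified your reduction table against the relations in Lemma \ref{lem:L9Char} (including the two ``cross'' moves $\erz{U,S}=F$ from L9(iv) and $\erz{B,U}\geq\erz{B,C}=F$ from L9(v)), and every step and the termination claim check out. The paper instead argues by contraposition: assuming $E\nequiv D$, it shows successively that $\{E\}$ and $\{F\}$ are singleton classes, that no two distinct elements with meet $E$ or join $F$ can be congruent, that $\{D\}$ is a singleton, and finally that no cross-congruence between $\{A,B,C\}$ and $\{S,T,U\}$ survives, so $\equiv$ is equality. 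Your route is the standard subdirect-irreducibility argument via projectivity of prime intervals: it needs the preliminary fact that classes are convex sublattices, but in exchange the verification becomes a mechanical finite table. The paper's route needs no such preliminary and is self-contained, at the cost of a less systematic case analysis. Both are complete proofs of the lemma.
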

	\begin{proof}Let $\equiv$ be a congruence relation on $L_9$ and suppose that $E \nequiv D$.

If $X\in\{A,\ B,\ C,\ F\}$, then $E\cap D=E\nequiv D=X\cap D$ and therefore $E\nequiv X$.

First we assume that $X_0\in L_9\setminus\{F\}$ is such that $F\equiv X_0$.
Then there is some  $X \in \{A,\  B,\  C\}$ such that $X_0\leq X$, and then $X=\erz{X_0, X}\equiv \erz{F, X}=F$. We choose $Y\in \{T,U\}$ such that $X\cap Y=E$.
Then $E=X\cap Y\equiv F\cap Y=Y$ and therefore, if $Z\in\{T,U\}\setminus\{Y\}$, then $Z=\erz{Z,E}\equiv \erz{Z,Y}=F$.
Now it follows that $E=Z\cap D\equiv F\cap D=D$, which gives a contradiction.

We have seen that $E$ is not congruent 
to any of the elements $A,B,C,D,F$.
Next we assume that $X \in \{S, T, U\}$ is such that $E \equiv X$ and we choose $Y\in\{A,C\}$ such that $X\nleq Y$.
Then $Y=\erz{Y,E}\equiv\erz{Y,X}=F$, which gives another contradiction.
We conclude that $\{E\}$ and $\{F\}$ are singleton classes with respect to $\equiv$.

If there are
$X, Y \in L \setminus \{E, F\}$ such that $X \neq Y$ and $X \equiv Y$, $X \cap Y = E$ or $\langle X, Y\rangle = F$, then this implies that $X = X \cap X \equiv X \cap Y = E$ or $X = \langle X, X\rangle \equiv \langle X, Y\rangle = F$. As this is impossible, we conclude that such elements $X,\ Y$ do not exist.

In particular $A,\ B,\ C$ are pairwise non-congruent 
and $D,\  S,\ T,\ U$ are also pairwise non-congruent. 

We assume that $D\equiv X\in\{A,\ B,\ C\}$. Then we choose $Y\in \{T,\ U\}$ such that $Y\nleq X$, and this gives the contradiction $F\neq \langle Y, D\rangle \equiv \langle Y, X\rangle = F$. Hence $\{D\}$ is a singleton as well.

Finally, we assume that there are $X\in\{A,\ B,\ C\}$ and  $Y\in\{T,\ S,\ U\}$ such that $X\equiv Y$.
We have seen that $X\cap Y\neq E$ and then it follows that $Y\leq X$ by the structure of $L_9$. Now $D=X\cap D\equiv Y\cap D=E$, which is impossible.
In conclusion, for all $X,\ Y\in L_9$, we have that $X\equiv Y$ if and only if $X=Y$. This means that $\equiv$ is equality.	
\end{proof}
	
In the following lemma we argue similarly to Lemma 2.2 in \cite{Lfree}.

\begin{lemma}\label{lem:L9GdirProdTfOrd}
	Suppose that $n \in \N$ and that $G_1,...,G_n$ are normal subgroups of $G$ of
pair-wise coprime order such that $G=G_1\times ...\times G_n$.
	Then $G$ is $L_9$-free if and only if, for every $i\in\{1,...,n\}$, the group $G_i$ is $L_9$-free.
		\end{lemma}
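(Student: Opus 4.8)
The plan is to prove both implications by passing from the group lattice to a product of lattices and then invoking Lemma~\ref{lem:L9SubdirIrred}. The structural input is that, because $G_1,\dots,G_n$ are normal of pairwise coprime order, every subgroup $H\le G$ decomposes as $H=(H\cap G_1)\times\cdots\times(H\cap G_n)$, where $H\cap G_i$ is the Hall $\pi(G_i)$-subgroup of $H$. First I would check that the assignment $H\mapsto(H\cap G_1,\dots,H\cap G_n)$ is a lattice isomorphism from $L(G)$ onto $L(G_1)\times\cdots\times L(G_n)$: intersections are preserved coordinatewise at once, and for joins one uses that elements of distinct $G_i$ commute, so that $\erz{H,H'}\cap G_i=\erz{H\cap G_i,H'\cap G_i}$ by uniqueness of the coprime decomposition.

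Given this isomorphism, the forward implication is immediate: for each $i$ the subgroups of $G_i$ together with their joins and meets form a sublattice of $L(G)$ (indeed the interval $[1,G_i]$), so if $L(G)$ contains no copy of $L_9$ then neither does any $L(G_i)$; that is, $L_9$-freeness of $G$ forces $L_9$-freeness of every $G_i$.

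For the converse I would argue by contradiction, mimicking Lemma~2.2 of \cite{Lfree}. Suppose every $G_i$ is $L_9$-free but $G$ is not, and fix a lattice embedding $\varphi\colon L_9\to L(G)\cong L(G_1)\times\cdots\times L(G_n)$. Composing with the $i$-th projection $\pi_i$ gives a lattice homomorphism $\varphi_i:=\pi_i\circ\varphi\colon L_9\to L(G_i)$, whose kernel $\equiv_i$ (defined by $X\equiv_i Y$ if and only if $\varphi_i(X)=\varphi_i(Y)$) is a congruence relation on $L_9$. Since $G_i$ is $L_9$-free, the map $\varphi_i$ cannot be injective, for otherwise its image would be a sublattice of $L(G_i)$ isomorphic to $L_9$; hence $\equiv_i$ is not equality, and Lemma~\ref{lem:L9SubdirIrred} yields $E\equiv_i D$ for every $i$.

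Finally I would exploit that $\varphi$ is injective and that the projections jointly separate points: for $X,Y\in L_9$ one has $X=Y$ if and only if $\varphi_i(X)=\varphi_i(Y)$ for all $i$, that is, the meet $\bigcap_i{\equiv_i}$ of the congruences is equality. Applying this to the pair $E,D$ and using $E\equiv_i D$ for all $i$ forces $E=D$ in $L_9$, which is false. This contradiction shows that $G$ is $L_9$-free, completing the proof. The main obstacle I anticipate is the bookkeeping in the first paragraph, namely verifying carefully that the coprime decomposition turns $L(G)$ into a genuine direct product of lattices — in particular that joins are computed coordinatewise; once that isomorphism is in place, the subdirect-irreducibility statement of Lemma~\ref{lem:L9SubdirIrred} does all the real work.
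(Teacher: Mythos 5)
Your proposal is correct and follows essentially the same route as the paper: the paper also obtains $L(G)\cong L(G_1)\times\cdots\times L(G_n)$ (citing Lemma 1.6.4 of Schmidt's book rather than verifying the coprime decomposition by hand), observes that the projections of a putative copy of $L_9$ cannot be injective, and applies Lemma \ref{lem:L9SubdirIrred} to force $D=E$. The only cosmetic difference is that the paper first reduces to $n=2$ by induction, whereas you handle general $n$ directly via the jointly-separating projections; both are fine.
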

	\begin{proof}
Since subgroups of $L_9$-free groups are $L_9$-free we just need to verify the ``if'' part.

Suppose that $G_1$,...,$G_n$ are $L_9$-free. Then Lemma 1.6.4 of \cite{Schmidt} implies that $L(G)\cong L(G_1)\times \cdots \times L(G_n)$.
By induction we may suppose that $n=2$. Assume that $L=\{E,\ T,\ S,\ U,\ D,\ A,\ B,\ C,\ F\}$ is a sublattice of
$L(G)\cong L(G_1)\times L(G_2)$ that is isomorphic to $L_9$ as in Definition \ref{defi:L10}.
Then the projections $\varphi_1$ and $\varphi_2$ of $L$ into
$L(G_1)$ and $L(G_2)$, respectively, are not injective, because $L(G_1)$ and $L(G_2)$ are $L_9$-free.
Let $i\in\{1,2\}$ and define, for all $X,\ Y\in L$:
$$X\equiv_i Y :\Leftrightarrow \varphi_i(X)=\varphi_i(Y).$$ Then $\equiv_i$ is a congruence relation on $L$, because $\varphi_i$ is a lattice homomorphism, but it is not equality because $\phi_i$ is not injective.
Then Lemma \ref{lem:L9SubdirIrred} implies that $\varphi_1(D)=\varphi_1(E)$
and $\varphi_2(D)=\varphi_2(E)$, and
hence $D=E$. This
is a contradiction.
\end{proof}


\begin{lemma}\label{lem:L9Char}
		 $L_9=\{A,\ B,\ C,\ D,\ E,\ F,\ U,\ T,\ D\}$  as in Definition \ref{defi:L10} is completely characterised by the following:
			\begin{enumerate}
				\item[L9 (i)] ~~$D\neq E$.
		\item[L9 (ii)] \label{lem:p:L9Char:1} ~~$\langle S,T\rangle =
\langle S,D \rangle = \langle T,D\rangle = A$ and $S \cap T = S \cap D = T \cap D = E$.
				\item[L9 (iii)] \label{lem:p:L9Char:2} ~~$\langle D, U\rangle = C$ and $D \cap U = E$.
				\item[L9 (iv)] \label{lem:p:L9Char:4} ~~$\langle S,U \rangle= \langle T,U\rangle = F$.
		\item[L9 (v)] \label{lem:p:L9Char:3} ~~$\langle A, B\rangle  = \langle B, C\rangle = F$ and $A \cap B = A \cap C = B \cap C = D$.
	
					\end{enumerate}
		\end{lemma}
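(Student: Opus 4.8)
The plan is to read the statement as an equivalence: nine elements $E, F, A, B, C, D, S, T, U$ of an ambient lattice form a sublattice isomorphic to $L_9$, under the labelling of Definition \ref{defi:L10}, if and only if they satisfy L9(i)--L9(v). Necessity is immediate, since each relation in (i)--(v) can be read off directly from the Hasse diagram of $L_9$, so I would dispose of that direction by inspection. The substance is sufficiency, and here my strategy is to exhibit the labelling as a surjective lattice homomorphism $\varphi$ from $L_9$ onto the nine given elements and then to force injectivity using Lemma \ref{lem:L9SubdirIrred}.

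First I would extract the order relations implied by the hypotheses: from L9(ii) that $S,T,D\le A$; from L9(iii) that $D,U\le C$; and from L9(v) that $D\le A,B,C$ and $A,B,C\le F$; together with $E$ below and $F$ above everything. The key non-obvious step is to recover the relations that (i)--(v) do not state outright, above all $\langle A,C\rangle=F$: since $S\le A$ and $U\le C$, monotonicity of the join together with L9(iv) gives $F=\langle S,U\rangle\le\langle A,C\rangle\le F$.

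With the order relations in hand I would verify, for every pair of the nine elements, that its meet and join computed in the ambient lattice is again one of the nine and agrees with the corresponding meet and join in $L_9$. Each such verification is a two- or three-line monotonicity argument: joins that must equal $F$ are squeezed from below by an instance of L9(iv) (for example $\langle S,C\rangle\ge\langle S,U\rangle=F$, using $U\le C$), and meets that must equal $E$ are squeezed from above through a coatom meet and then an atom meet from L9(ii) or L9(iii) (for example $S\cap U\le A\cap C=D$, hence $S\cap U\le S\cap D=E$). This is the bulk of the work, but it is routine once the pattern is fixed, and I would record only a few representative cases. Completing all pairs shows that the nine elements are closed under meet and join and that $\varphi$ is a lattice homomorphism onto them.

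Finally, the kernel of $\varphi$ is a congruence relation on $L_9$. If it were not equality, Lemma \ref{lem:L9SubdirIrred} would force $E\equiv D$, that is $\varphi(E)=\varphi(D)$; but L9(i) says $D\neq E$, so the kernel is trivial, $\varphi$ is injective, and hence an isomorphism. I expect the main obstacle to be neither conceptual step but rather the careful bookkeeping in the table verification --- in particular, making sure that each ``cross'' relation not among the hypotheses (such as $\langle A,C\rangle=F$, $S\cap U=E$, or $\langle S,C\rangle=F$) is derived before it is used, so that the chain of monotonicity arguments never becomes circular.
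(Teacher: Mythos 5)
Your proposal is correct and takes essentially the same route as the paper: both directions are handled identically, with sufficiency reduced to exhibiting a surjective lattice homomorphism from $L_9$ onto the nine given elements and then invoking Lemma \ref{lem:L9SubdirIrred} together with L9~(i) to rule out a non-trivial kernel congruence. The only difference is in the intermediate bookkeeping --- the paper checks that the comparability relations among the nine elements are exactly those of $L_9$ (pairwise non-inclusions), whereas you verify the meet/join table directly by monotonicity; these amount to the same routine verification.
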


\begin{proof}
 We first remark that $L_9$ satisfies the relations given in L9 (i) -- L9 (v).

Suppose conversely that a lattice $L=\{A,\ B,\ C,\ D,\ E,\ F,\ S,\ T,\ U\}$  satisfies the relations given in L9 (i) -- L9 (v).
Then we see that $E\leq A,\ B,\ C,\ D,\ S,\ T,\ U\leq F$ and $D\leq A,\ B,\ C$ as well as $S,\ T\leq A$ and $U\leq C$.  If these are the unique inclusions and $|L|=9$, then $L\cong L_9$.

For all $X,\ Y\in L$ such that $X\leq Y$ we have that $X\cap Y=X$ and $\langle X,\ Y\rangle=Y$.
Thus L9 (ii) shows that $S$, $T$ and $D$ are pair-wise not subgroups of each other.

Using L9 (iii) we obtain that $D\nleq U$ and $U\nleq D$, and L9 (iv) gives that also $S$, $T$ and $U$ are pair-wise not subgroups of each other.
In addition, by L9 (v), we have that $A$, $B$ and $C$ are pair-wise not subgroups of each other.
Together with the fact that $S,\ T\leq A$ and $U\leq C$, this implies that $A\nleq U$, $C\nleq S,T$ and $B\nleq S,\ T,\ U$.
Moreover we have that $D\leq A,\ B,\ C$ and $S,\ T\leq A$ and $U\leq C$ and $A\neq F\neq B$ and $C\neq F$.
Together with L9 (ii) and L9 (iii), this information yields that $A\ngeq U$ and $C\ngeq S,T$ as well as $B\ngeq S,\ T,\ U$.

We conclude that there is a lattice homomorphism $\varphi$ from $L_9$ to $L$.
Hence we obtain a congruence relation $\equiv$ on $L_9$ by defining that $X\equiv Y$ if and only if $\varphi(X)=\varphi(Y)$, for all $X,Y\in L_9$.

If $\varphi$ is not injective, then Lemma \ref{lem:L9SubdirIrred} implies that $E=D$. This contradicts L9 (i).
Consequently $\varphi$ is injective and $L\cong L_9$.
\end{proof}

	The next lemma gives an example of  a group that is not $L_9$-free.

	\begin{lemma}\label{lem:D12}
			$D_{12}$ is not $L_9$-free.
		\end{lemma}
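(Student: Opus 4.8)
The plan is to exhibit an explicit sublattice of $L(D_{12})$ that is isomorphic to $L_9$, and then invoke the characterisation in Lemma~\ref{lem:L9Char} to confirm the isomorphism. First I would fix a presentation $D_{12} = \erz{r,s \mid r^6 = s^2 = 1,\ s r s = r^{-1}}$, so that the rotation subgroup $\erz{r}$ is cyclic of order $6$ and contains the central involution $r^3$. The natural candidate for the bottom element $E$ of the $L_9$-diagram is the trivial subgroup $1$, and for the top element $F$ the whole group $D_{12}$. The key observation is that $D_{12} \cong C_2 \times S_3$, so its subgroup structure is rich in small subgroups that pairwise intersect trivially and jointly generate the whole group; these are exactly the ingredients needed for the atoms and coatoms of $L_9$.

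Next I would name nine concrete subgroups and match them to the vertices $A,B,C,D,E,F,S,T,U$. For the three atoms $S$, $T$, $U$ sitting below the coatom configuration I would use three distinct subgroups of order $2$ generated by reflections, for instance $S = \erz{s}$, $T = \erz{rs}$ (or another reflection), together with the middle atom $D = \erz{r^3}$, the central subgroup of order $2$; and for $U$ a reflection lying below the $S_3$-type coatom $C$. For the coatoms I would take $A = \erz{s, rs} \cong$ a Klein four or dihedral piece of order suited to satisfy $\erz{S,T} = \erz{S,D} = \erz{T,D} = A$, the element $B$ an index-two subgroup, and $C = \erz{D,U}$ of order $4$ or $6$ as forced by L9~(iii). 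The assignment must be made so that all five relation-clusters of Lemma~\ref{lem:L9Char} hold: the ``triangle'' L9~(ii) among $S,T,D$ all generating $A$ and pairwise meeting in $E$; the join L9~(iii) giving $C$; the joins L9~(iv) giving $F$; and the coatom relations L9~(v) with $A\cap B = A\cap C = B\cap C = D$ and $\erz{A,B}=\erz{B,C}=F$.

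The main obstacle, and the step requiring the most care, is selecting the nine subgroups so that every one of the equalities in L9~(ii)--L9~(v) holds simultaneously, since $D_{12}$ has several subgroups of each small order and a careless choice will violate, say, $A \cap C = D$ or $\erz{S,U} = F$. In particular the central element $r^3$ must play the role of the common meet $D$ of all three coatoms, which pins down that $A$, $B$, $C$ are precisely the three subgroups of order $4$ (or the three subgroups containing the centre with the right index), and the reflections $S$, $T$, $U$ must be chosen in compatible cosets so that their pairwise joins with $D$ behave correctly. I would verify these incidences by direct computation in the group, listing the relevant subgroups explicitly and checking each intersection and join; once the nine relations are confirmed, Lemma~\ref{lem:L9Char} guarantees that the chosen family is a sublattice isomorphic to $L_9$, and hence $D_{12}$ is not $L_9$-free. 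The verification is routine but must be exhaustive across all five relation-clusters.
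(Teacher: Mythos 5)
Your overall strategy (exhibit nine subgroups and verify the relations of Lemma \ref{lem:L9Char}) is the same as the paper's, but the concrete configuration you commit to does not work, and the failure is not a matter of "careful bookkeeping" but a structural obstruction. You place $D=\erz{r^3}$ (the centre) and take $A,B,C$ to be the three Klein four subgroups, with $S,T$ the two non-central involutions of $A$ and $U$ a reflection inside $C$. Then L9~(iv) is unsatisfiable: two reflections $r^is$ and $r^js$ generate all of $D_{12}$ if and only if $i-j$ is odd, and otherwise generate a proper subgroup (of order $6$ if $i-j\equiv 2,4$, of order $4$ if $i-j\equiv 3$). The two reflections in a fixed Klein four group are $r^is$ and $r^{i+3}s$, so for any third reflection $r^ks$ the differences $k-i$ and $k-i-3$ have opposite parities; hence at most one of $\erz{S,U}$, $\erz{T,U}$ can equal $D_{12}$, and L9~(iv) fails for every choice of $U$. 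Your parenthetical fallback ("the three subgroups containing the centre with the right index") does not help, since those are again the Klein four groups.

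The paper avoids this by taking $D=\erz{a^2}$ of order \emph{three} (not the centre), with coatoms the three subgroups of order $6$: the two copies of $\Sym_3$, namely $A=\erz{a^2,b}$ and $C=\erz{a^2,ab}$, together with the cyclic $B=\erz{a}$. Then $S=\erz{b}$ and $T=\erz{a^2b}$ lie in $A$ at rotation-distance $2$ from each other, $U=\erz{ab}$ lies in $C$ at odd distance from both, and all of L9~(i)--(v) check out. So the missing idea in your proposal is precisely the identification of which subgroup must serve as the common meet $D$ of the coatoms; your assertion that the centre "must" play this role is the step that fails. Beyond this, your write-up defers the actual verification of L9~(ii)--(v), which is where the error would have surfaced.
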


		\begin{proof}
			Let $G$ be isomorphic to $D_{12}$ and let $a, b \in G$ be such that $o(a) = 6$, $o(b) = 2$ and $G = \erz{a, b}$.
			Then we find a sublattice in $L(G)$ isomorphic to $L_9$ by checking the equations from Lemma \ref{lem:L9Char}.

		We let
$L \coloneqq \{1, \erz{b}, \erz{a^2b}, \erz{a^2}, \erz{ab}, \erz{a^2, b}, \erz{a}, \erz{a^2, ab}, G\}$
and we define $A := \erz{a^2, b}$ and $C := \erz{a^2, ab}$.

\begin{minipage}{7cm}
				\begin{tikzpicture}
				\coordinate[label=left:{$1$}]					(E) at (0,0);
				\coordinate[label=right:{~$D_{12}$}]				(F) at (0,4.5);
				\coordinate[label=left:{$\erz{a^2, b}$}]	(A) at (-2.2,3);
				\coordinate[label=below right:{$\erz{a}$}]			(B) at (0,3);
				\coordinate[label=right:{$\erz{a^2, ab}$}]		(C) at (2.2,3);
				\coordinate[label=left:{$\erz{b}$}]				(S) at (-2.2,1.5);
				\coordinate[label=below left:{$\erz{a^2b}$}]			(T) at (-0.8,1.5);
				\coordinate[label=below right:{$\erz{a^2}$}]	(D) at (0,1.5);
				\coordinate[label=right:{$\erz{ab}$}]			(U) at (2.2,1.5);
				
				\foreach \x in {E,F,A,B,C,D,S,T,U} \fill (\x) circle (2.15pt);
				
				\draw [thick]	(E) -- (S) -- (A) -- (F) -- (C)
				(E) -- (T) -- (A) -- (D) -- (C) -- (U) -- (E)
				(E) -- (D) -- (B) -- (F);
				\end{tikzpicture}
\end{minipage}\hfill
\begin{minipage}{8.5cm}
			\begin{enumerate}
			\item[L9 (i):] We see that $a^2\neq 1$ and hence $\erz{a}\neq 1$.			

				\item[L9 (ii):] We notice that $A \le G$ is isomorphic to $\Sym_3$ with
cyclic normal subgroup $\erz{a^2}$ of order $3$ and distinct subgroups $\erz{b}$, $\erz{a^2b}$ of order $2$.
Then $\langle \erz{b}, \erz{a^2b}\rangle = \langle \erz{b},\erz{a^2}\rangle = \langle\erz{a^2b}, \erz{a^2}\rangle = A$ and $\erz{b} \cap \erz{a^2b} = \erz{b} \cap \erz{a^2} = \erz{a^2b} \cap \erz{a^2} = 1$.
				\item[L9 (iii):] The subgroup $C$ is also isomorphic to $\Sym_3$, the subgroup $\erz{ab}\leq C$ has order $2$, and moreover $\langle \erz{a^2}, \erz{ab}\rangle = C$ and $\erz{a^2} \cap \erz{ab} = 1$.
							\end{enumerate}
\end{minipage}
\begin{enumerate}
\item[L9 (iv):] We first see that $\langle \erz{b}, \erz{ab}\rangle = \erz{a, b} = G$ and then $\langle \erz{a^2b}, \erz{ab}\rangle = \erz{a^2b\inv{(ab)}, ab} = \erz{a, b} = G$.
\item[L9 (v):] $\erz{a}$ is a cyclic normal subgroup of $G$ of order $6$, and the subgroups $A$ and $C$ also have order $6$. These three subgroups are maximal in $G$. Hence $\erz{A,\erz{a}}=\erz{C,\erz{a}}=G$.
Assume for a contradiction that $A = C$. Then $b \in C = \{1, a^2, a^4, ab, a^3b, a^5b\}$, which is false.
Since $\erz{a^2}$ is the unique subgroup of order $3$ of $G$, we conclude that $A \cap \erz{a} = A \cap C = \erz{a} \cap C = \erz{a^2}$. \end{enumerate}
				
Altogether it follows, with  Lemma \ref{lem:L9Char}, that $L$ is isomorphic to $L_9$, and then $G$ is not $L_9$-free.
		\end{proof}

The next lemma shows how we can construct an entire class of groups that are not $L_9$-free.

\begin{lemma}\label{lem:L9example}
Suppose that $p \neq q$ and that $G=PQ$,
where $P$ is an elementary abelian normal Sylow $p$-subgroup of $G$ and $Q$ is a cyclic Sylow $q$-subgroup of $G$.
Suppose that $Q$ acts irreducibly on $[P,Q]\neq 1$ and that $|C_P(Q)|\geq 3$. Then $G$ is not $L_9$-free.	\end{lemma}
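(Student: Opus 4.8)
The plan is to exhibit an explicit nine‑element sublattice of $L(G)$ and to verify it against the relations L9 (i)--L9 (v) of Lemma \ref{lem:L9Char}, exactly as in the proof of Lemma \ref{lem:D12}. First I would normalise the action. Since $p\neq q$, $Q$ acts coprimely on $P$, so Lemma \ref{coprime} gives $P=P_0\times C_P(Q)$ with $P_0:=[P,Q]$; here $C_P(Q)$ centralises $P$ (abelian) and $Q$, hence $C_P(Q)\le Z(G)$, and $|C_P(Q)|\ge 3$ by hypothesis. Moreover $C_{P_0}(Q)=P_0\cap C_P(Q)=1$, so $Q$ acts without nontrivial fixed points on the irreducible module $P_0$; as $Q$ is cyclic, every element of $Q$ outside $Q_1:=C_Q(P_0)$ acts fixed‑point‑freely, while $Q_1\le Z(P_0Q)$ lies in every conjugate of $Q$.

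Next I would record the two facts about $P_0Q$ that drive the verification. For each $a\in P_0\setminus\{1\}\subseteq P\setminus C_P(Q)$, Lemma \ref{lem:MQvar} (applicable since $\Phi(P_0)=1$ and $Q$ acts irreducibly on $P_0$) gives $P_0\le\langle a,Q\rangle$, hence $\langle a,Q\rangle=P_0Q$. For $x\in P_0\setminus\{1\}$ the complement $Q^x\neq Q$ satisfies $\langle Q,Q^x\rangle=P_0Q$ (combine $\langle[x,Q]\rangle\le\langle Q,Q^x\rangle$ from Lemma \ref{lem:dedekind}(d) with Lemma \ref{lem:MQvar} applied to $x$) and $Q\cap Q^x=Q_1$ (Lemma \ref{lem:dedekind}(b) together with fixed‑point‑freeness, since $C_Q(x)=Q_1$ for $x\neq 1$).

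Now choose $z\in C_P(Q)$ of order $p$ and $a,x\in P_0\setminus\{1\}$, put $E:=Q_1$ and $F:=P_0\langle z\rangle Q$, and set
\[
D:=\langle z\rangle E,\qquad S:=\langle az\rangle E,\qquad T:=\langle az^2\rangle E,\qquad U:=Q,
\]
\[
A:=\langle a,z\rangle E,\qquad B:=\langle z\rangle Q^x,\qquad C:=\langle z\rangle Q.
\]
Each is a subgroup because $E$, $\langle z\rangle$ and $C_P(Q)$ are central in $F$. The meets all reduce to intersecting a $p$‑part with a $q$‑part: the atom relations give $E$, and $A\cap B=A\cap C=B\cap C=D$, where it is precisely $Q\cap Q^x=Q_1=E$ that keeps $B\cap C$ equal to $D=\langle z\rangle E$ rather than something larger. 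The joins $\langle S,T\rangle=\langle S,D\rangle=\langle T,D\rangle=A$ and $\langle D,U\rangle=C$ are immediate, while $\langle A,B\rangle=\langle B,C\rangle=F$ follows from $\langle Q,Q^x\rangle=P_0Q$ and $\langle a,Q\rangle=P_0Q$. Once these are checked, Lemma \ref{lem:L9Char} identifies the nine subgroups with $L_9$, so $G$ is not $L_9$‑free.

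The step carrying the weight is L9 (iv), namely $\langle S,U\rangle=\langle T,U\rangle=F$: both diagonals $\langle az\rangle,\langle az^2\rangle$ must, together with $Q$, generate all of $F$. From $\langle a,Q\rangle=P_0Q$ one gets $P_0\le\langle az^i,Q\rangle$ and hence $z^i\in\langle az^i,Q\rangle$, and since $\langle z^i\rangle=\langle z\rangle$ for $i\in\{1,2\}$ when $p$ is odd, both joins equal $P_0Q\langle z\rangle=F$, with $\langle az\rangle\neq\langle az^2\rangle$ guaranteeing $S\neq T$. This is exactly where $p$ being odd enters. I expect the genuine obstacle to be $p=2$: then $z^2=1$ leaves only the single diagonal $\langle az\rangle$, and for any rank‑$2$ elementary abelian $A\le P$ the third line $D$ is forced into $P_0$ rather than the centre; as $A$ is then a $2$‑group, $\langle A,B\rangle=F$ forces a $q$‑element into $B$, which together with the non‑central involution $D\le P_0$ already yields $P_0Q\le B$, incompatible with $B\cap C=D$. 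For $p=2$ the hypothesis only supplies $|C_P(Q)|\ge 4$, and a genuinely different configuration (or a separate argument) is needed; this case is where I would expect the real work to lie.
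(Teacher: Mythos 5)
Your construction for odd $p$ is correct and is essentially the paper's: both arguments place the lattice inside $\bigl([P,Q]\times C_P(Q)\bigr)Q$ with the element $D$ sitting in $C_P(Q)$, take $S,T$ as ``diagonal'' complements of $D$ and of a piece $V\le[P,Q]$ inside $A$, set $U=Q$, $C=DQ$, $B=DQ^x$ for $x\in[P,Q]^\#$, and drive L9\,(iv) and L9\,(v) with Lemma \ref{lem:MQvar} and Lemma \ref{lem:dedekind}\,(b),(d). (The paper quotients by $C_Q(P)$ to make $E=1$ while you carry $E=C_Q(P)$ along; that is immaterial.) All of your verifications for odd $p$ check out, including $B\cap C=\langle z\rangle(Q\cap Q^x)=\langle z\rangle C_Q(x)=D$, which uses that every element of $Q\setminus C_Q([P,Q])$ acts fixed-point-freely on the irreducible module $[P,Q]$.

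The genuine gap is the case $p=2$, which you correctly diagnose but do not close: your diagonals $\langle az\rangle,\langle az^2\rangle$ collapse because $z^2=1$, and a rank-$2$ group $\langle a,z\rangle$ simply does not contain four pairwise ``independent'' lines. Since the hypothesis $|C_P(Q)|\ge 3$ is explicitly designed to include $|C_P(Q)|=4$ with $p=2$ (and this case is actually used later, e.g.\ in step (1) of the proof of Lemma \ref{lem:p2Qirred}, where the lemma is applied to conclude $|\Omega_1(C_P(Q))|=2$), leaving it open is a real incompleteness, not a cosmetic one. The paper closes it by choosing $V\le[P,Q]$ and $D\le C_P(Q)$ of \emph{minimal order at least $3$} --- so of order $p$ when $p$ is odd but of order $4$ when $p=2$ --- and then, inside $A=V\times D$ (of order $16$ when $p=2$), counting the $70$ Klein four subgroups to find $S$ and $T$ isomorphic to $V$ with $S,T,V,D$ pairwise intersecting trivially; after that the same verification you carried out goes through verbatim, with $\langle S,U\rangle\supseteq[P,Q]$ from Lemma \ref{lem:MQvar} (since $S\nleq C_P(Q)$) and then $D\le A=\langle V,S\rangle\le\langle S,U\rangle$. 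If you replace your rank-$2$ group $\langle a,z\rangle$ by such a rank-$4$ configuration for $p=2$, your proof becomes complete.
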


\begin{proof} Since $Q$ is abelian, we see that $E:=C_Q(P)\unlhd G$. We claim that $G/C_Q(P)$ is not $L_9$-free.
Therefore we may suppose that $E=1$.

Our hypotheses imply that $[P,Q]$ is not centralised by $Q$, and in particular $|[P,Q]|\geq 3$. Moreover $|C_P(Q)|\geq 3$ by hypothesis.
Since $P$ is elementary abelian,
Lemma \ref{coprime} gives that $P=[P,Q]\times C_P(Q)$.

We let $V\leq [P,Q]$ and $D\leq C_P(Q)$ be
subgroups of minimal order such that $|V|\geq 3$ and $|D|\geq 3$ and we set $A:=V\times D$.
If $p$ is odd, then $A$ has order $p^2$, and if $p=2$, then $|A|=2^4=16$.
In the first case $A$ has $\frac{p^2-1}{p-1}=p+1\geq 4$ subgroups isomorphic to $V$.
In the second case $A$  has $\frac{15\cdot 14}{3}=70$ subgroups isomorphic to $V$, where $3\cdot \frac{14}{2}-2=19$ of these subgroups intersect $V$ non-trivially and $19$ of them intersect $D$ non-trivially.
In both cases, we find  subgroups $T$ and $S$  of $A$ isomorphic to $V$ such that $|\{D,T,S,V\}|=4$ and $T\cap V=T\cap D=T\cap S=S\cap D=S\cap V=1=E$.

We recall that $A$ is elementary abelian, and then it follows that L9 (i) and L9 (ii) hold and that $A=\erz{S,V}=\erz{T,V}$ ($\ast$).

We further set $U:=Q$. Then $U\cap D\leq Q\cap P=E=1$ and $\erz{U,D}=UD=:C$, which implies L9 (iii).

\begin{minipage}{8cm}		
If $X\in\{T,S\}$, then $X\nleq C_P(Q)$ and then the irreducible action of $Q$ on $[P,Q]$ and Lemma \ref{lem:MQvar} yield that $V\leq [P,Q]\leq \erz{X,U}$.
Using ($\ast$) it follows that  $D\leq A\leq \erz{V,X,U}=\erz{X,U}$.
Combining all this information gives that $\erz{X,U}=[P,Q]D Q$. Now if we set  $F:=[P,Q]D Q$, then we have L9 (iv).

To prove our claim, it remains to show that property L9 (v) of Lemma \ref{lem:L9Char} is satisfied.

We set $B:=DQ^x$ for some $x\in [P,Q]^\#$.

If $y\in\{1,\ x\}$, then $D\leq A\cap DQ^y=D(A\cap Q^y)=D$ and hence $A\cap B=A\cap C=D$.
We further have that $D\leq B\cap C=D(Q\cap DQ^x)=DC_Q(x)=D$, by Lemma \ref{lem:dedekind}~(b) and (c), because $Q$ acts irreducibly on $[P,Q]$.

\end{minipage}\hfill
\begin{minipage}{7cm}			\begin{tikzpicture}
				\coordinate[label=below right:{~~$1$}]					(E) at (0,0);
				\coordinate[label=above right:{~~$[P,Q]D Q$}]				(F) at (0,4.5);
				\coordinate[label=above left:{$A$}]	(A) at (-2,3);
				\coordinate[label=right:{$DQ^x$}]			(B) at (0,3);
				\coordinate[label=above right:{$DQ$}]		(C) at (2,3);
				\coordinate[label=below left:{$S$}]				(S) at (-2,1.5);
				\coordinate[label=below left:{$T$}]			(T) at (-1,1.5);
				\coordinate[label=below right:{$D$}]	(D) at (0,1.5);
				\coordinate[label=right:{$Q$}]			(U) at (2,1.5);
				
				\foreach \x in {E,F,A,B,C,D,S,T,U} \fill (\x) circle (2.15pt);
				
				\draw [thick]	(E) -- (S) -- (A) -- (F) -- (C)
				(E) -- (T) -- (A) -- (D) -- (C) -- (U) -- (E)
				(E) -- (D) -- (B) -- (F);
				\end{tikzpicture}
\end{minipage}

\vspace{0.5cm}
In addition, the irreducible action of $Q^x$ on $[P,Q]$ and Lemma \ref{lem:MQvar} yield that $[P,Q]\leq \erz{A,Q^x}$.
It follows that  $\erz{A,B}=[P,Q]DQ^x=F$.
Finally we deduce from Part (d) of Lemma \ref{lem:dedekind} that
$$\erz{B,C}=D\erz{Q,Q^x}=D\erz{[x,Q]^{Q^x}}Q^x=D[P,Q]Q^x=F.$$
\end{proof}

\section{Group orders with few prime divisors}


Much of our analysis will focus on non-nilpotent groups with a small number of primes dividing their orders.
The next lemma 
sheds some light on why this situation naturally occurs.
	
\begin{lemma}\label{lem:mind1NT}
			Suppose that $G$ is $L_9$-free. Then $G$ possesses a normal Sylow  subgroup.		\end{lemma}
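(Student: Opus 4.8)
The plan is to exploit the fact that $L_9$ is a sublattice of $L_{10}$, so that every $L_9$-free group is $L_{10}$-free and Corollary C of \cite{L10} (quoted in the introduction) applies. I would argue by contradiction: suppose that $G$ is $L_9$-free but has no normal Sylow subgroup. Applying Corollary C, I obtain normal Hall subgroups $N_1\le N_2$ of $G$ with $N_1=\erz{P\in\syl(G)\mid P\unlhd G}$, with $N_2/N_1$ a $2$-group and $G/N_2$ meta-cyclic. Since $G$ has no normal Sylow subgroup, the generating set of $N_1$ is empty, and hence $N_1=1$.

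The first key step is then to pin down $N_2$. With $N_1=1$ the group $N_2$ is itself a $2$-group, and by hypothesis it is a normal Hall subgroup of $G$; hence $|G:N_2|$ is odd and $N_2$ contains a full Sylow $2$-subgroup of $G$. As $N_2$ is a $2$-group, this forces $N_2$ to be a normal Sylow $2$-subgroup of $G$. To avoid producing a normal Sylow subgroup we must therefore have $N_2=1$, and consequently $G=G/N_2$ is meta-cyclic.

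The second key step is to show that a meta-cyclic group always has a normal Sylow subgroup, which contradicts the assumption and finishes the proof. I would do this by recalling that a meta-cyclic group is supersolvable: if $C\unlhd G$ is cyclic with $G/C$ cyclic, then the subgroups of $C$ are characteristic in $C$ and hence normal in $G$, while the subgroups of the cyclic quotient $G/C$ pull back to normal subgroups of $G$; refining both chains yields a normal series of $G$ with factors of prime order. Since a supersolvable group has a normal Sylow subgroup for the largest prime dividing its order, this is the desired contradiction.

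The argument needs no induction and is a direct deduction from Corollary C. The only points requiring a little care are the observation that a normal Hall $2$-subgroup is the same as a normal Sylow $2$-subgroup (so that it must vanish once $N_1=1$), and the standard facts that meta-cyclic groups are supersolvable and that supersolvable groups possess a normal Sylow subgroup, for which I would cite the relevant statements in \cite{KurzStell}. I expect essentially all of the content to lie in invoking Corollary C correctly, with the meta-cyclic reduction being routine.
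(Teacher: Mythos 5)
Your argument is correct and follows essentially the same route as the paper: invoke Corollary C of \cite{L10}, deduce from the absence of normal Sylow subgroups that $N_1=N_2=1$ so that $G$ is metacyclic, hence supersoluble, and then use the Sylow tower property of supersoluble groups (the paper cites Satz VI.9.1(c) of \cite{Huppert} for this final step) to reach a contradiction. You merely spell out a few details the paper leaves implicit, such as why $N_2$ must vanish.
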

		\begin{proof}
			Assume that this is false. Since $G$ is $L_9$-free and hence $L_{10}$-free,
[\cite{L10}, Corollary C] is applicable. Then $G$ is metacyclic because it does not have any normal Sylow subgroup, and it follows that $G$ is supersoluble. Then Satz VI.9.1(c) in \cite{Huppert} gives a contradiction.
		\end{proof}

	\begin{lemma}\label{lem:pGrModularÄqu}
			Suppose that $G$ is a $p$-group. Then the following statements are equivalent:
			\begin{itemize}
			\item $L(G)$ is modular.
			\item $G$ is $L_5$-free.
			\item $G$ is $L_9$-free.
			\item $G$ is $L_{10}$-free.
	 \end{itemize}
		\end{lemma}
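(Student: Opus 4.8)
The plan is to reduce the four-way equivalence to a short cycle of implications in which all genuine content is borrowed from two known facts, leaving only bookkeeping to insert the $L_9$ condition between the $L_5$ and $L_{10}$ conditions.

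First I would record the equivalence of modularity with $L_5$-freeness. By Dedekind's classical characterisation, a lattice is modular if and only if it has no sublattice isomorphic to the pentagon $L_5$; this is precisely the remark made in the introduction and can be cited from \cite{Schmidt}. Applying it to $L(G)$ shows, for an \emph{arbitrary} group $G$, that $G$ is modular if and only if $G$ is $L_5$-free, so no use of the $p$-group hypothesis is needed at this stage. Next come the two purely formal implications $L_5\text{-free} \Rightarrow L_9\text{-free} \Rightarrow L_{10}\text{-free}$. By Definition \ref{defi:L10} we have sublattice inclusions $L_5 \le L_9 \le L_{10}$, so any sublattice of $L(G)$ isomorphic to $L_{10}$ contains one isomorphic to $L_9$, and any one isomorphic to $L_9$ contains one isomorphic to $L_5$. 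Reading these two statements contrapositively yields the claimed implications, with no group theory involved.

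To close the cycle it remains to establish $L_{10}\text{-free} \Rightarrow L_5\text{-free}$, and this is the single step that genuinely requires $G$ to be a $p$-group: for general finite groups these notions differ, which is exactly what makes the remainder of the paper necessary. For $p$-groups, however, this is the known structural fact that every non-modular $p$-group already exhibits a sublattice of $L(G)$ isomorphic to $L(D_8)=L_{10}$ — equivalently, the introductory remark that a finite $p$-group is $L_5$-free if and only if it is $L_{10}$-free. I would invoke this as a cited ingredient; it rests on Iwasawa's classification of modular $p$-groups (\cite{Iwasawa}, and \cite{Schmidt}). Combining it with the first step gives $L_{10}\text{-free} \Rightarrow \text{modular} \Leftrightarrow L_5\text{-free}$, so the three lattice-freeness conditions collapse to one and all four statements become equivalent.

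The genuinely hard ingredient is this final implication, and within the present lemma it enters only as a black box; the real difficulty therefore lies outside the lemma, in the classification of modular $p$-groups, while the work internal to the proof is merely the formal squeezing of the $L_9$ condition between those for $L_5$ and $L_{10}$. I expect the write-up itself to be very short, the only subtlety being to state the cited $p$-group result in the exact form $L_{10}\text{-free} \Rightarrow \text{modular}$ so that the cycle of implications closes cleanly.
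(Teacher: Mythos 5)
Your proposal is correct and matches the paper's (one-line) proof in substance: both sandwich the $L_9$ condition between $L_5$ and $L_{10}$ via the sublattice inclusions $L_5\le L_9\le L_{10}$, and both import as black boxes the pentagon characterisation of modularity together with the known fact that for $p$-groups $L_{10}$-freeness already forces modularity (the paper cites Theorem 2.1.2 of \cite{Schmidt} and Lemma 2.1 of \cite{L10} for exactly these two ingredients).
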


\begin{proof}This lemma follows from Theorem 2.1.2 in \cite{Schmidt} and Lemma 2.1 in \cite{L10}, since $L_9$ is a sublattice of $L_{10}$ containing $L_5$.
\end{proof}

\begin{lemma}\label{lem:rank1}
Suppose that $p \neq q$ and that $G$ is an $L_9$-free $\{p,q\}$-group.
Let $P$ be a normal Sylow $p$-subgroup of $G$ and let $Q \in \syl_q(G)$.
If $G$ is not nilpotent, then $Q$ is cyclic or $Q\cong Q_8$.
	\end{lemma}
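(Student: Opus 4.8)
The plan is to argue by contradiction: assume $G$ is not nilpotent, so that $P_0:=[P,Q]\neq 1$, and assume that $Q$ is neither cyclic nor isomorphic to $Q_8$; I then produce a sublattice of $L(G)$ isomorphic to $L_9$, contradicting $L_9$-freeness. First I would pin down the structure of $Q$. Since $Q\leq G$ is $L_9$-free, Lemma~\ref{lem:pGrModularÄqu} shows that $Q$ is modular. The $q$-groups possessing a single subgroup of order $q$ are exactly the cyclic and the generalized quaternion groups, and among the latter only $Q_8$ is modular; hence $Q$ has at least two distinct subgroups of order $q$. I would also record the classical fact that a $q$-group acting fixed-point-freely in a coprime action is cyclic or generalized quaternion, so that (again by modularity) $Q$ cannot act fixed-point-freely on any nontrivial coprime quotient.

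Next I would reduce the action. Passing to the quotient $G/\Phi(P)$, which is $L_9$-free, has the same Sylow $q$-subgroup $Q$ up to isomorphism, remains non-nilpotent because $[P,Q]\not\leq\Phi(P)$ by Lemma~\ref{coprime}, and has elementary abelian Sylow $p$-subgroup, I may assume that $P$ is elementary abelian; then $P=[P,Q]\times C_P(Q)$, again by Lemma~\ref{coprime}. Choosing $y\in Q$ acting nontrivially and an irreducible $\langle y\rangle$-submodule $W\leq[P,\langle y\rangle]$, the group $\langle y\rangle$ is cyclic and acts irreducibly and nontrivially on $W=[W,\langle y\rangle]$, with $C_W(\langle y\rangle)=1$. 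The main construction is then: if some such $y$ additionally satisfies $|C_P(\langle y\rangle)|\geq 3$, I pick $C_0\leq C_P(\langle y\rangle)$ of order at least $3$ and form $H:=(W\times C_0)\rtimes\langle y\rangle$. Here $W\times C_0$ is an elementary abelian normal Sylow $p$-subgroup of $H$, the group $\langle y\rangle$ is a cyclic Sylow $q$-subgroup acting irreducibly on $[W\times C_0,\langle y\rangle]=W\neq 1$, and $C_{W\times C_0}(\langle y\rangle)=C_0$ has order at least $3$; so Lemma~\ref{lem:L9example} applies to $H$ and shows that $H$, hence $G$, is not $L_9$-free.

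It remains to handle the case in which no nontrivially acting cyclic subgroup has a centralizer of order at least $3$ in $P$. For $p$ odd this forces every nontrivially acting element to be fixed-point-free on $P$, so $Q/C_Q(P)$ acts fixed-point-freely; by the first paragraph $Q/C_Q(P)$ is cyclic or $Q_8$, and since $Q$ is neither, $C_Q(P)\neq 1$. I would then fix $z\in C_Q(P)$ of order $q$ and run a $D_{12}$-type construction: with $D:=W$, two distinct complements of $W$ in $A:=W\rtimes\langle y\rangle$ as $S$ and $T$, a further complement $U$ inside a second non-abelian coatom $C:=WU\neq A$, and the abelian coatom $B:=W\times\langle z\rangle$, one verifies the relations L9~(i)--(v) of Lemma~\ref{lem:L9Char} exactly as in the proof of Lemma~\ref{lem:D12}. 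The \emph{main obstacle} is precisely this residual, fixed-point-free-type configuration, which is not covered by Lemma~\ref{lem:L9example}: making it work requires both the structure theorem for fixed-point-free actions (to force $C_Q(P)\neq 1$) and a careful verification of all five lattice relations. The prime $p=2$ needs separate care, since there the condition $|C_P(\langle y\rangle)|\geq 3$ means $\geq 4$, so the threshold of Lemma~\ref{lem:L9example} is reached less easily and its elementary-abelian-of-order-$16$ count must be invoked; ruling out a non-cyclic $Q$ acting on a $2$-group with all fixed-point spaces of order at most $2$ and with $C_Q(P)=1$ is the delicate point I expect to require the most work.
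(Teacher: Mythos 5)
Your strategy is genuinely different from the paper's. The paper notes that $G$ is $L_{10}$-free, invokes Theorem B of \cite{L10} to conclude at once that the only configuration with $Q$ neither cyclic nor isomorphic to $Q_8$ has $p=3$ and $q=2$, and then a short argument via Baer's theorem produces a $D_{12}$-section, so that Lemma \ref{lem:D12} finishes. You forgo this external input and attempt a self-contained case division. Your reduction to $P$ elementary abelian via $G/\Phi(P)$ is fine, and your Case A — some non-trivially acting $y$ with $\ord{C_P(\erz{y})}\geq 3$, feeding $(W\times C_0)\rtimes \erz{y}$ into Lemma \ref{lem:L9example} — is correct and clean.

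The residual case, however, contains a genuine gap, not just unfinished bookkeeping. Test your construction on $G=(C_p\rtimes C_{q^2})\times C_q$, with $\erz{y}\cong C_{q^2}$ acting faithfully and fixed-point-freely on $P=C_p$ and $\erz{z}$ central of order $q$: this is the simplest group your residual case must exclude. With $D:=W=P$, $A:=W\rtimes\erz{y}$ and $B:=W\times\erz{z}$, every further coatom $C$ containing $W$ is $W\erz{yz^i}$ ($i\neq 0$) or $W\Omega_1(Q)$, and in each case $A\cap C=W\erz{y^q}\neq D$, so L9\,(v) fails for every admissible choice of $U$; the copy of $L_9$ actually sits inside $P\Omega_1(Q)\cong (C_p\rtimes C_q)\times C_q$ and must be built from elements of order exactly $q$, as in Lemma \ref{lem:D12}. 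Repairing this forces you to (i) produce an order-$q$ element acting non-trivially on a common irreducible $W$ together with an order-$q$ centralising element generating a different subgroup — which needs a separate argument when $\Omega_1(Q)\leq C_Q(P)$, plus a justification that $\erz{z}\cap\erz{y}=1$ — and (ii) handle $p=2$, where $\ord{C_P(\erz{y})}\leq 2$ does not give fixed-point-freeness, so the structure theorem you use to force $C_Q(P)\neq 1$ is unavailable; you explicitly leave (ii) open. None of this is obviously unfixable, but as written the proof does not close: either supply the missing case analysis in full, or do as the paper does and let Theorem B of \cite{L10} collapse everything to $p=3$, $q=2$ before constructing the $D_{12}$-section.
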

		
	\begin{proof}
First we note that all subgroups and sections of $G$ are $L_9$-free and that $G$ is $L_{10}$-free.

Let $G$ be non-nilpotent and assume for a contradiction that $Q$ is neither cyclic nor isomorphic to $Q_8$.
Given that $G$ is $L_{10}$-free, we may apply Theorem B of \cite{L10} and we see that neither (a), (b) nor (c) hold.
Therefore $p = 3$ and $q = 2$.
Now there are $a,b\in Q$ such that $\erz{a,b}$ is not cyclic and $b$ is an involution.
If, for all choices of $b$, we have that $C_P(b)=C_P(Q)$, then $\Omega_1(Q)$ acts element-wise fixed-point-freely on $P/C_P(Q)$, contradicting 8.3.4~(b) of \cite{KurzStell}.
Therefore we may choose $b$ such that $a$ does not centralise $C_P(b)$, and we also choose
$a$ of minimal order under these constraints.
Then $a^2$ centralises $P$ and $a$ inverts an element $x\in C_P(b)$ by a result of Baer (e.g. 6.7.7 of \cite{KurzStell}).
If follows that $a$ inverts $\Omega_1(\erz{x})$ and we may suppose that $x$ has order $3$.
Now $\erz{x,a,b}/C_{\erz{a}}(x)$ is isomorphic to $D_{12}$, contrary to
Lemma \ref{lem:D12}.
			\end{proof}

\begin{lemma}\label{lem:p2Qirred}
Suppose that $p \neq q$ and that $G$ is an $L_9$-free $\{p,q\}$-group.
Furthermore, let $P$ be a normal Sylow $p$-subgroup of $G$ and let $Q \in \syl_q(G)$ be cyclic such that $1\neq [P,Q]$ is elementary abelian.

Then every subgroup of $Q$ acts irreducibly or by inducing (possibly trivial) power  automorphism on $[P,Q]$. Moreover, $C_P(Q)$ is a cyclic $2$-group and $P$ is abelian.
\end{lemma}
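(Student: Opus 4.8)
The plan is to write $P_0 := [P,Q]$ and to use Lemma~\ref{lem:L9example} as the main engine. By Lemma~\ref{coprime} the action is coprime, so $P = P_0 C_P(Q)$ and $[P_0,Q] = [P,Q,Q] = [P,Q] = P_0$; since $P_0$ is elementary abelian this forces $C_{P_0}(Q) = 1$, i.e.\ $Q$ acts fixed-point-freely on $P_0$, which I regard as a semisimple $\mathbb{F}_p[Q]$-module. I would record at the outset that a power automorphism of $P_0$ is just scalar multiplication by an element of $\mathbb{F}_p^{\times}$; in particular for $p=2$ the only scalar is the identity, so there the asserted dichotomy reads ``every $R\le Q$ either centralises $P_0$ or acts irreducibly''. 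The engine is this: if $P'\le P$ is an $R$-invariant elementary abelian subgroup with $R\le Q$ acting irreducibly on $[P',R]\neq 1$ and $|C_{P'}(R)|\ge 3$, then $P'R\le G$ is a subgroup to which Lemma~\ref{lem:L9example} applies, so $P'R$, and hence $G$, is not $L_9$-free, a contradiction.

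I would settle the structure of $C_P(Q)$ first, by a fixed-point trick. Any $p$-subgroup $A\le C_P(Q)$ acts on the nontrivial $p$-group $P_0$, so $C_{P_0}(A)\neq 1$, and it is $Q$-invariant because $A$ centralises $Q$; as $C_{P_0}(Q)=1$, this fixed space contains a \emph{nontrivial} irreducible $Q$-submodule $V$. If $A$ is elementary abelian then $VA = V\times A$ is elementary abelian with $[VA,Q]=V$ irreducible and $C_{VA}(Q)=A$. Choosing $A$ of order $p\ge 3$ when $p$ is odd, and of order $4$ (a rank-$2$ subgroup) when $p=2$, the engine yields a contradiction, so $C_P(Q)=1$ for $p$ odd and $C_P(Q)$ has rank $\le 1$, i.e.\ is cyclic, for $p=2$. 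In both cases $C_P(Q)$ is a cyclic $2$-group, as claimed.

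For abelianness and the easy half of the dichotomy: when $p$ is odd, $C_P(Q)=1$ gives $P=P_0$ elementary abelian, so $P$ is abelian. When $p=2$ I would first note that $Q$ itself acts irreducibly on $P_0$, for otherwise the dichotomy (established below) would make $Q$ scalar, hence trivial on $P_0$, contradicting $[P_0,Q]=P_0\neq 1$; once $Q$ is irreducible, each $c\in C_P(Q)$ has $C_{P_0}(c)$ a nonzero $Q$-submodule, whence $C_{P_0}(c)=P_0$, so $C_P(Q)$ centralises $P_0$ and $P=P_0\times C_P(Q)$ is abelian. For the dichotomy, take $R\le Q$ with $[P_0,R]\neq 1$ that acts reducibly and with $C_{P_0}(R)\neq 1$; picking a nontrivial irreducible $R$-constituent $V\le[P_0,R]$ and combining it with a fixed subgroup of order $\ge 3$ (drawn from $C_{P_0}(R)$ when $p$ is odd, and from $C_{P_0}(R)$ together with $C_P(Q)\le C_P(R)$ when $p=2$) again contradicts the engine. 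Hence, in the remaining non-scalar reducible situation, $R$ must act fixed-point-freely on $P_0$.

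The crux, and the step I expect to be the main obstacle, is ruling out $R\le Q$ acting fixed-point-freely, reducibly and non-scalarly on $P_0$. Here Lemma~\ref{lem:L9example} is \emph{unavailable}, since $R$ has no nonzero fixed points in $P_0$, so one must build a sublattice isomorphic to $L_9$ by hand and verify L9~(i)--(v) of Lemma~\ref{lem:L9Char}. The natural raw material is the family of proper nonzero $R$-submodules of $P_0$: distinct invariant submodules produce several subgroups sharing a common cyclic $q$-subgroup, which I would try to arrange into the two stacked diamonds ($M_3$ above $M_3$, glued at the pivot $D$) underlying $L_9$, using the irreducible action of a large enough subgroup of $Q$ via Lemma~\ref{lem:MQvar} and Dedekind's lemma (Lemma~\ref{lem:dedekind}) to compute the relevant joins. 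The delicate point is making the lower interval $[E,A]$ and the upper interval $[D,F]$ both genuine diamonds while forcing $\langle S,U\rangle=\langle T,U\rangle=F$ for the extra atom $U$; in the fixed-point-free case the joins of small subgroups are easily \emph{too small}, so one must exploit the reducibility (isotypic or mixed constituents) precisely to enlarge these joins without collapsing $D$ to $E$ or $C$ to $F$. Controlling this interplay is where the real work of the lemma lies.
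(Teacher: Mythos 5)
Your engine (Lemma \ref{lem:L9example}) and the fixed-point trick for constraining $C_P(Q)$ are sound, and they do overlap with part of the paper's argument, but the proof is not complete: you explicitly leave the case of a subgroup $R\le Q$ acting fixed-point-freely, reducibly and non-scalarly on $[P,Q]$ as ``the real work'', with only an intention to build $L_9$ by hand. That case is exactly where the lemma has content (think of $C_2^4\rtimes C_3$ with two isomorphic nontrivial constituents), and no construction is actually carried out or checked against L9\,(i)--(v). The paper never attacks this case by a direct lattice construction; it runs a minimal-counterexample induction instead. Maschke gives $[P,Q]=M_1\times\cdots\times M_n$, the two overlapping proper subgroups $H_1=(M_1\times\cdots\times M_{n+r-1})Q$ and $H_2=(M_2\times\cdots\times M_{n+r})Q$ inherit the conclusion, and the universality of power automorphisms on abelian groups (1.5.4 of \cite{Schmidt}) glues their behaviour together to force $n\le 2$ and $C_P(Q)=1$; the case $n=2$ is then eliminated via Lemma 2.8 of \cite{L10} when $C_Q(M_1)=C_Q(M_2)$, and via minimality applied to $PQ_0$ with $Q_0=C_Q(M_2)$ otherwise; finally a reducible, non-power proper subgroup $U$ is excluded because minimality applied to $PU$ forces $1\ne C_P(U)\ne P$, a proper nontrivial $Q$-invariant subgroup contradicting the irreducibility of $Q$. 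Without either this inductive leverage or an actual verified $L_9$ construction, your argument does not close.

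Two smaller gaps. First, for $p=2$ your engine only shows that $C_P(Q)$ has no Klein four subgroup, which leaves it cyclic \emph{or} generalised quaternion; the paper has to exclude $C_P(Q)\cong Q_8$ separately by passing to $G/Z(C_P(Q))$ and invoking minimality, and you do not address this. Second, your dichotomy step for $p=2$ requires an $R$-fixed elementary abelian subgroup of order at least $4$, which need not exist: if $|C_{[P,Q]}(R)|=2$ and $C_P(Q)=1$, then $R$ is reducible and non-trivial but neither fixed-point-free nor reached by your contradiction, so that configuration (which must also be impossible for the lemma to hold) is simply unhandled. This also makes your claim that $Q$ itself acts irreducibly when $p=2$ rest on a dichotomy that is not yet fully established.
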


\begin{proof}
First we note that all subgroups and sections of $G$ are $L_9$-free and that all subgroups and sections of $[P,Q]$ are elementary abelian, by hypothesis.  In addition Lemma 2.2 of \cite{L10} yields that $P=C_P(Q)\times [P,Q]$, as $G$ is also $L_{10}$-free.
In particular $P$ is abelian if $C_P(Q)$ is.

Assume that the lemma is false and let $G$ be a minimal counterexample.

Since $[P,Q]$ is elementary abelian, we introduce the following notation with Maschke's Theorem:
\\Let $n \in \N$ and let
$M_1, \dots, M_n \le [P,Q]$ be $Q$-invariant and such that $[P,Q] = M_1 \times \cdots \times M_n$ and that $Q$ acts irreducibly on $M_1,...,M_n$, respectively.
Lemmas 2.3.5 of \cite{Schmidt} and \ref{lem:pGrModularÄqu} yield that $\Omega_1(C_P(Q))$ is elementary abelian. Now there are $r \in \N$ and cyclic subgroups $M_{n+1},....,M_{n+r}$ of $\Omega_1(C_P(Q))$ such that $\Omega_1(C_P(Q))=M_{n+1}\times \cdots\times M_{n+r}$.
\\We set
$H_1:=(M_1\times \cdots\times M_{n+r-1})Q$ and $H_2:=(M_2\times \cdots\times M_{n+r})Q$.

Then for every $i\in\{1,2\}$ the group $O_p(H_i)$ is elementary abelian.
Moreover $H_i$ is a proper subgroup of $G$ and then the minimal choice of $G$ implies that every subgroup of $Q$ either induces (possibly trivial) power automorphisms on $[O_p(H_i),Q]$ or acts irreducibly on it.

\medskip (1)   $C_P(Q)=1$ and $n\leq 2$.

\begin{subproof}				
We assume for a contradiction that $n+r \geq 3$.

Then $Q$ does not act irreducibly on both $O_{p}(H_1)$ and $O_p(H_2)$, and it follows that $Q$ induces (possibly trivial) power automorphisms on $[O_p(H_i),Q]$.

We suppose first that $C_P(Q)=1$. Then $[O_p(H_i),Q]=O_p(H_i)$ for both $i\in\{1,2\}$. Therefore Lemma 1.5.4 of \cite{Schmidt}, together with the fact that $1\neq M_2\leq O_p(H_1)\cap O_p(H_2)$, provides some $k\in \mathbb N$ such that $a^y=a^k$ for every $a\in O_p(H_1) O_p(H_2)=[P,Q]\Omega_1(C_P(Q))$.
But this means that $Q$, and hence every subgroup of $Q$, induces (possibly trivial) power automorphism on $O_p(H_1)O_p(H_2)=[P,Q]$ in this case. Thus $G$ is not a counterexample, which is a contradiction.

We conclude that $C_P(Q)\neq 1$ and now there is some $i\in\{1,2\}$ such that  $1\neq [O_p(H_i),Q]$ and $C_{O_p(H_i)}(Q)\neq 1$. Then $H_i$ satisfies the hypotheses of our lemma and it follows that $C_{O_p(H_i)}(Q)$ is an non-trivial $2$-group. In particular $p=2$. But then Lemma \ref{power} provides the contradiction that $[O_p(H_i),Q]=1$.

For the proof of (1), we assume for a further contradiction that $r=n=1$.
Then $Q$ acts irreducibly on the elementary  abelian group $[P,Q]$ and
Lemma \ref{lem:L9example}, applied to $([P,Q]\times \Omega_1(C_P(Q)))Q$, gives that $|\Omega_1(C_P(Q))|= 2$.
In particular we have that $p=2$.
Thus the minimal choice of $G$ and Lemma \ref{power} yield, for every proper subgroup $U$ of $Q$, that $U$ centralises $P$ or acts irreducibly on $[P,Q]=[P,U]$.
\\Since $G$ is a counterexample, it follows that $C_P(Q)$ is not cyclic. But $|\Omega_1(C_P(Q))|= 2$ and therefore $C_P(Q)$ is a generalised quaternion group. It follows that $C_P(Q)\cong Q_8$ by Lemma \ref{lem:pGrModularÄqu}.
In this case $1\neq Z:=Z(C_P(Q))\unlhd G$ and $G/Z$ satisfies the hypotheses of our lemma, but not the conclusion. Thus $G$ is not a minimal counterexample, contrary to our choice.
\end{subproof}

(2)  $Q$ acts irreducibly on $P=[P,Q]$.

\begin{subproof}			
Assume for a contradiction that $n =2$.
By hypothesis $Q$ is cyclic, and then we may suppose that  $C_Q(M_1) \leq C_Q(M_2)=:Q_0$.
If $C_Q(M_1)=C_Q(M_2)$, then Lemma 2.8 of \cite{L10} implies that $Q$ induces power automorphisms on $P$. Thus $G$ is not a counterexample, which is a contradiction.

Therefore $C_Q(M_1) \lneq Q_0$ and $1\neq [M_1,Q_0]\leq [P,Q_0]$. The minimal choice of $G$ yields that $Q_0$ acts irreducibly or by inducing power automorphisms on $[P,Q_0]$ and that $C_P(Q_0)$ is a cyclic $2$-group.
Now we notice that $M_2\leq C_P(Q_0)$, but $M_2\nleq 1=C_P(Q)$, whence we deduce a contradiction from 2.2.5 of \cite{KurzStell}.
\end{subproof}

Since $G=PQ$ is a counterexample to the lemma, $Q$ has a proper subgroup $U$ that does not act irreducibly on $[P,Q]=P$ and it also does not induce power automorphisms on $[P,Q]$. In particular it does not act trivially. Since $PU$ is a proper subgroup of our minimal counterexample $G$, it follows that $1\neq C_P(U)\neq P$. But $C_P(U)$ is $Q$-invariant, because $Q$ is abelian.
This is a final contradiction with regard to (2).
\end{proof}

\begin{lemma}\label{lem:L9Ham}
Suppose that $q$ is odd and that $G$ is an $L_9$-free $\{2,q\}$-group.
Suppose further that $P$ is a normal Sylow $2$-subgroup of $G$ such that $[P,Q]$ is hamiltonian and let $Q \in \syl_q(G)$.
Then one of the following holds:

(a) $G$ is nilpotent or

(b) $[P,Q]\cong Q_8$ and
there exists a group $I$ of order at most $2$ such that
$P=[P,Q]\times I$ and $Q$ is a cyclic $3$-group. Moreover $[P,Q]Q/Z([P,Q]Q)\cong  \mathrm{Alt}_4$.
\end{lemma}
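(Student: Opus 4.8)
The plan is to establish conclusion (b); conclusion (a) never actually occurs, because a hamiltonian group is by definition non-abelian, so the hypothesis gives $W:=[P,Q]\neq 1$ and hence $G$ is non-nilpotent. First I would record the elementary structure. By Dedekind's classification of hamiltonian groups, and since $W$ is a $2$-group, $W\cong Q_8\times E$ with $E$ elementary abelian; I write $\langle z\rangle=W'=Z(Q_8)$ and note $Z:=\langle z\rangle\trianglelefteq G$ (as $W\trianglelefteq G$). Because $G$ is non-nilpotent and $q$ is odd, Lemma \ref{lem:rank1} forces $Q$ to be cyclic, and Lemma \ref{coprime} gives $[W,Q]=W$.

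The decisive device is the passage to $\overline G:=G/Z$, where $\overline W=W/Z\cong (Q_8/Z)\times E$ is elementary abelian and equals $[\overline P,\overline Q]$, so the hypotheses of Lemmas \ref{lem:p2Qirred} and \ref{lem:L9example} become available in $\overline G$, whereas they fail in $G$ because $W$ is non-abelian. First I would pin down $q$: the characteristic quotient $W/Z(W)\cong Q_8/Z(Q_8)\cong C_2\times C_2$ carries a coprime $Q$-action with $[W/Z(W),Q]=W/Z(W)$, hence $Q$ acts without fixed points; as $\aut(C_2\times C_2)\cong \Sym_3$ and a non-trivial $q$-element acting fixed-point-freely must have order $3$, this forces $q=3$ and an irreducible action. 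Then Lemma \ref{lem:p2Qirred} applied to $\overline G$ shows $\overline Q$ acts on $\overline W$ irreducibly or as power automorphisms, and the latter is impossible for a fixed-point-free action on a space of dimension $\geq 2$; so the action is irreducible. But $\overline{Z(W)}=Z(W)/Z\cong E$ is a $\overline Q$-submodule that is proper (the quotient is $W/Z(W)\cong C_2\times C_2\neq 0$), so irreducibility forces $E=1$, i.e.\ $W\cong Q_8$.

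The heart of the argument is to determine $P$ exactly. I would first show $C_P(Q)=C_P(W)=:I_0$: the inclusion $C_P(W)\le C_P(Q)$ follows from $[C_P(W),Q]\le W\cap C_P(W)=\langle z\rangle$ together with $[C_P(W),Q]=[C_P(W),Q,Q]$ (Lemma \ref{coprime}), while the reverse uses that $\overline P$ is abelian (Lemma \ref{lem:p2Qirred}), so $P$ has class $\leq 2$ with $P'=\langle z\rangle$ and $[C_P(Q),W]\le P'=\langle z\rangle$; thus $C_P(Q)$ induces inner automorphisms of $Q_8$ and Dedekind's law yields $C_P(Q)=C_P(W)$. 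The main obstacle is the next point, that $I_0$ is elementary abelian of order at most $4$. For the exponent I would invoke that $P$, being an $L_9$-free $2$-group, is modular by Lemma \ref{lem:pGrModularÄqu}: an element $c\in I_0$ of order $4$ centralises $W$, so $\langle W,c\rangle$ is $Q_8\circ C_4$ (if $c^2=z$) or $Q_8\times C_4$ (if $c^2\neq z$), and each of these has a section isomorphic to $D_8$ and is therefore non-modular, contradicting modularity of $P$ and its subgroups and sections. Hence $I_0$ has exponent $2$, so $\overline P=P/Z$ is elementary abelian, and Lemma \ref{lem:L9example} applies to $\overline G$: the irreducible action of $\overline Q$ on $\overline W\cong C_2\times C_2$ together with $|C_{\overline P}(\overline Q)|\geq 3$ would make $\overline G$ not $L_9$-free, a contradiction; therefore $|C_{\overline P}(\overline Q)|=|I_0|/2\leq 2$. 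Thus $I_0$ is elementary abelian of order at most $4$, and since $P=WI_0$ with $W\cap I_0=\langle z\rangle$ we obtain $P=Q_8$ (if $I_0=\langle z\rangle$) or $P=Q_8\times C_2$ (if $|I_0|=4$); in both cases $P=[P,Q]\times I$ with $|I|\leq 2$.

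Finally I would identify the quotient. As $Q$ is a cyclic $3$-group acting on $Q_8$ through its order-$3$ image, the kernel $C_Q(W)$ is central in $WQ$ and $Z(WQ)=\langle z\rangle\times C_Q(W)$, whence $WQ/Z(WQ)\cong (Q_8/\langle z\rangle)\rtimes C_3\cong \mathrm{Alt}_4$, which finishes (b). The expected main difficulty is precisely the determination of $P$ in the third paragraph: the available lemmas (\ref{lem:p2Qirred} and \ref{lem:L9example}) only control groups with elementary abelian $[P,Q]$, so the whole analysis has to be transported to $G/Z$ and pulled back, and the crucial elementary-abelian property of $C_P(Q)$ must be squeezed out of modularity through the forbidden $D_8$-sections hidden in $Q_8\circ C_4$ and $Q_8\times C_4$.
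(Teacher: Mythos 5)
Your argument is correct, and it shares the paper's central device while replacing its main structural input. Both proofs pass to $\bar G=G/\langle z\rangle$, where $\langle z\rangle$ is the derived subgroup (equivalently, the Frattini subgroup of the $Q_8$-part) of $[P,Q]$, precisely so that Lemma \ref{lem:p2Qirred} becomes applicable, and both then use the resulting irreducibility to pin down the complement of $[P,Q]$ in $P$. The difference is that the paper first applies Theorem 2.3.1 of \cite{Schmidt} (Iwasawa's classification of modular $p$-groups) to the modular $2$-group $P$ and obtains $P=P_0\times I$ with $P_0\cong Q_8$ and $I$ elementary abelian in one stroke, only afterwards passing to the quotient; you never invoke that classification. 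Instead you get $[P,Q]\cong Q_8$ from Dedekind's theorem on hamiltonian groups together with irreducibility modulo $\langle z\rangle$, you extract the exponent of $C_P([P,Q])$ from the forbidden $D_8$-sections inside $Q_8\circ C_4$ and $Q_8\times C_4$, and you bound $|C_P(Q)|$ by $4$ via Lemma \ref{lem:L9example}, where the paper instead uses the cyclicity of $C_{\bar P}(\bar Q)$ coming from Lemma \ref{lem:p2Qirred}. What your version buys is independence from the classification of modular $2$-groups, at the price of the explicit central-product computations; the one step you should make explicit is the identification, used silently in your proof of $C_P(Q)=C_P(W)$, of the central automorphisms of $Q_8$ with the inner ones (the kernel of $\aut(Q_8)\to\aut(Q_8/Z(Q_8))$ has order $4$ and hence equals $\mathrm{Inn}(Q_8)$), together with the observation $C_W(Q)=Z(W)$. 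Your remark that alternative (a) is vacuous, since hamiltonian groups are non-abelian by definition, is also correct.
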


\begin{proof}We suppose that $G$ is not nilpotent.

Then $Q$ is not normal in $G$ and Lemma \ref{lem:rank1} implies that $Q$ is cyclic.
Furthermore, $P$ is $L_9$-free and hence it is modular by Lemma \ref{lem:pGrModularÄqu}. Since  $[P,Q]$ is hamiltonian, Theorem 2.3.1 of \cite{Schmidt} provides subgroups $P_0, I \leq P$ such that $P_0 \isom Q_8$, $I$ is elementary abelian and $P = P_0 \times I$.

We recall that the automorphism group of $Q_8$ is isomorphic to $\Sym_4$.
Thus, if $Q_8\cong P_1\leq P$ is $Q$-invariant, but not centralised by $Q$, then $Q_8\cong P_1\leq [P,Q]$ and $1\neq |Q/C_Q(P_1)|=3$.
It follows that $Q$ is a cyclic $3$-group and  $[P_1,Q]Q/Z([P_1,Q]Q)\cong  \mathrm{Alt}_4$.

We conclude that our assertion holds if $I=1$.
Now suppose that $I \neq 1$.
We recall that $P \unlhd G$ and therefore $\Phi(P_0) = \Phi(P) \nt G$. Since $|\Phi(P_0)|=2$, it follows that $\Phi(P_0) \le Z(G)$, and then $\bar G:=G/\Phi(P_0)$ is not nilpotent because $G$ is not.
Furthermore, $\bar G$ is $L_9$-free, $\bar I \isom I \neq 1$, $\bar Q \isom Q$ and $\bar P_0$ is elementary abelian of order $4$. In particular $\bar P$ is elementary abelian, hence it is a non-hamiltonian $2$-group of order at least $8$.
Lemma \ref{lem:p2Qirred} states that $\bar Q$ acts irreducibly on $[\bar P, \bar Q]\neq 1$ or induces power automorphisms on it. The second case is not possible by Lemma \ref{power}.

Hence $Q\cong \bar Q$ acts irreducibly on $[\bar P,\bar Q]=\overline{[P,Q]}$ and, by Lemma \ref{lem:p2Qirred}, we see that $C_{\bar P}(\bar Q)$ is a cyclic $2$-group.
Since $I\neq 1$ and $\Omega_1(P)=\Phi(P_0)\times I$,  we have that $1\neq \bar I=\overline{\Omega_1(P)}$ is $\bar Q$-invariant and $\bar P_0$ is a non-cyclic complement of $\bar I$ in $\bar P$.
This implies that $\bar I=C_{\bar P}(\bar Q)$ is cyclic and elementary abelian at the same time.
Thus $I\cong \bar I$ has order $2$ and with Lemma \ref{coprime} we deduce that $C_P(Q)\Phi(P_0)=I\Phi(P_0)=\Omega_1(P)$ is elementary abelian of order $4$. Then we deduce that $C_P(Q)=\Omega_1(P)$ and then $[P,Q]\cap C_P(Q)\neq 1$. Moreover, since $[P,Q]C_P(Q)=P=P_1\times I\leq P_1C_Q(P)$, it follows that $|[P,Q]|\leq |P_1|\leq 8$.
In conclusion, $[P,Q]$ is a subgroup of order at most $8$ admitting an automorphism of odd order that centralises $\Omega_1([P,Q])$. It follows that $[P,Q]\cong Q_8$ and then, together with the fact that $I\leq Z(G)$, our assertions follow.
\end{proof}

\begin{definition}\label{defi:avoidcyclic}

Suppose that $Q$ is a cyclic $q$-group that acts coprimely on the $p$-group $P$.
We say that the action of $Q$ on $P$ \textbf{avoids $L_9$}  (and we indicate more technical details by writing \textbf{"of type ($\cdot$)"})
if and only if one of the following is true:

\begin{enumerate}
\item[(std)] Every subgroup of $Q$ acts irreducibly or by inducing (possibly trivial) power automorphisms on the elementary abelian group $[P,Q]=P$.
\item[(cent)] Every subgroup of $Q$ acts irreducibly or trivially
on the elementary abelian group $[P,Q]$, $P$ is abelian and $C_P(Q)$ is a non-trivial cyclic $2$-group.
\item[(hamil)] $[P,Q]\cong Q_8$, $P=[P,Q]\times I$, where $I$ is a group of order at most $2$, and $Q$ is a cyclic $3$-group such that $[P,Q]Q/Z([P,Q]Q)\cong \textrm{Alt}_4$.
\end{enumerate}
\end{definition}

\begin{lemma}\label{lem:p2QQ8}
Suppose that $p$ is an odd prime and that $G$ is an $L_9$-free $\{2,p\}$-group.
Let further $P$ be a normal Sylow $p$-subgroup of $G$ and let $Q \in \syl_2(G)$ be isomorphic to $Q_8$ and such that $1\neq [P,Q]$ is elementary abelian.

Then $p\equiv 3\mod 4$, $|P|=p^2$ and $Q$ acts faithfully on $P$.
\end{lemma}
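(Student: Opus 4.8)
The plan is to establish the three assertions in the order faithfulness, then the precise shape of the action, and finally the numerical conclusions $p\equiv 3\pmod 4$ and $|P|=p^2$. Throughout, write $z$ for the unique involution of $Q\cong Q_8$, so that $\langle z\rangle=Z(Q)$ is the unique minimal subgroup of $Q$ and every nontrivial normal subgroup of $Q$ contains it. Consequently $C_Q(P)\neq 1$ would force $z\in C_Q(P)$, and so faithfulness of the action is equivalent to $z$ acting nontrivially on $P$.

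First I would show that $z$ does not centralise $P$. Suppose it does. Since also $z\in Z(Q)$ and $G=PQ$, we get $z\in Z(G)$ and $\langle z\rangle\unlhd G$. The quotient $\bar G:=G/\langle z\rangle$ is again an $L_9$-free $\{2,p\}$-group, its Sylow $2$-subgroup is $Q/\langle z\rangle\cong C_2\times C_2$, and it is non-nilpotent because $[P,Q]\cap\langle z\rangle=1$ gives $[\bar P,\bar Q]\cong[P,Q]\neq 1$. This contradicts Lemma \ref{lem:rank1}, which forces the Sylow $2$-subgroup of a non-nilpotent $L_9$-free $\{2,p\}$-group to be cyclic or isomorphic to $Q_8$. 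Hence $z$ acts nontrivially, and by the remark above $C_Q(P)=1$, i.e. $Q$ acts faithfully.

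Next I would pin down the action. As $[P,\langle z\rangle]\le[P,Q]$ is elementary abelian and nontrivial, Lemma \ref{lem:p2Qirred} applied to $P\langle z\rangle$ shows that $P$ is abelian and that $C_P(z)$ is a cyclic $2$-group, hence trivial since $P$ is a $p$-group. Thus $z$ inverts $P$, so $P=[P,z]$ is elementary abelian and $[P,Q]=P$. Now let $R=\langle g\rangle$ be any of the three maximal subgroups of $Q$, so $R\cong C_4$ and $g^2=z$. Because $g^2=z$ inverts $P$, the element $g$ acts with order exactly $4$, so $[P,R]\neq 1$ is elementary abelian and Lemma \ref{lem:p2Qirred} applied to $PR$ yields $C_P(R)=1$, $[P,R]=P$, and that $R$ acts irreducibly or by power automorphisms on $P$. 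I would then exclude the power-automorphism case: by 1.5.4 of \cite{Schmidt} such an automorphism is universal, so $g$ would act as $a\mapsto a^k$ for a fixed $k$, i.e. as a scalar, hence centrally in $\aut(P)$; choosing $g'\in Q$ with $\langle g,g'\rangle=Q$ and applying $g'gg'^{-1}=g^{-1}$ forces $k\equiv k^{-1}$, so $k\equiv\pm 1\pmod p$, and both $k=1$ (giving $[P,R]=1$) and $k=-1$ (giving $1\neq gz\in C_Q(P)$) contradict what is already established. Therefore each maximal subgroup $R$ acts irreducibly on $P$.

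Finally I would read off the numerical data from representation theory over $\mathbb{F}_p$. Since $R=\langle g\rangle\cong C_4$ acts irreducibly on the elementary abelian $p$-group $P$ with $g$ of order $4$, the module $P$ is an irreducible $\mathbb{F}_p[C_4]$-module whose minimal polynomial is an irreducible divisor of $x^2+1$. If $p\equiv 1\pmod 4$, then $x^2+1$ splits into linear factors, every such module is $1$-dimensional and $g$ acts as a scalar, i.e. a power automorphism, which we have excluded; hence $p\equiv 3\pmod 4$. In that case $x^2+1$ is irreducible over $\mathbb{F}_p$, the module is $2$-dimensional, and therefore $|P|=p^2$. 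The step that needs the most care is the faithfulness argument of the second paragraph, since it is precisely what permits us to reduce to a single module $P=[P,Q]$ on which all of $Q$ acts faithfully; once that and the exclusion of power automorphisms are in place, the determination of $p\bmod 4$ and of $|P|$ is a routine analysis of irreducible $\mathbb{F}_p[C_4]$-modules.
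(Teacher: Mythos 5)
Your proof is correct, and its opening move is exactly the paper's: pass to $G/\langle z\rangle$, note that the Sylow $2$-subgroup of the quotient is $C_2\times C_2$, and invoke Lemma \ref{lem:rank1} to force faithfulness. The second half genuinely diverges in execution. The paper applies Lemma \ref{lem:p2Qirred} to $[P,Q]\langle y\rangle$ for elements $y$ of order $4$, uses Theorem 1.5.1 of \cite{Schmidt} (that $\mathrm{Pot}_G(P)$ is abelian while $Q_8$ is not) to produce a single $y$ of order $4$ not inducing power automorphisms, and then cites Satz II.3.10 of \cite{Huppert} to read off $|P|\le p^2$ and $p\equiv 3\pmod 4$. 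You instead first pin down that $z$ inverts $P$, so that $P=[P,Q]$ is elementary abelian and $C_P(Q)=1$ --- a useful intermediate fact the paper leaves implicit, and precisely what justifies drawing conclusions about $|P|$ rather than only about $|[P,Q]|$; you then rule out the power-automorphism alternative for \emph{every} maximal subgroup $\langle g\rangle$ by combining universality (1.5.4 of \cite{Schmidt}) with the relation $g^{g'}=g^{-1}$ in $Q_8$, and you finish with a direct analysis of irreducible $\mathbb{F}_p[C_4]$-modules annihilated by $x^2+1$. (Your phrasing of the $k=-1$ case via $gz\in C_Q(P)$ is slightly roundabout --- one could more simply note that $k=-1$ makes $z=g^2$ act trivially, contradicting that $z$ inverts $P$ --- but it is a valid contradiction.) The net effect is a more self-contained argument that replaces the two external citations with short explicit computations; the paper's version is shorter but leans harder on the quoted results. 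Both are sound.
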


\begin{proof}
We set $Z:=\Omega_1(Q)$.
If $Z \nt G$, then $Z \le Z(G)$ and we consider $\bar G:=G/Z$.
Then $\bar G$ is an $L_9$-free $\{2,p\}$-group, $\bar P$ is a normal Sylow $p$-subgroup of $\bar G$ and  $\bar Q \in \syl_2(\bar G)$.
Since $\bar Q$ is neither cyclic nor isomorphic to $Q_8$, Lemma \ref{lem:rank1} is applicable and we see that $\bar G$ is nilpotent.
But then $G$ is also nilpotent, contrary to our hypothesis that $[P,Q]\neq 1$.
Thus $\Omega_1(Q)$ is not normal in $G$ and $Q$ acts faithfully on $P$.
Now, for all $y\in Q$ of order $4$, we apply Lemma \ref{lem:p2Qirred} on $[P,Q]\erz{y}$  to deduce that $\erz y$ either induces power automorphisms on $[P,Q]$ or acts irreducibly on it.
Theorem 1.5.1 of \cite{Schmidt} states that  $\mathrm{Pot}_G(P)$ is abelian, but $Q \cong Q_8$ is not, which means that we may choose $y$ such that $y$ does not induce power automorphisms on $P$.
In particular $P$ is not cyclic of prime order.
Moreover $p$ is odd and therefore $4$ divides $(p + 1)(p - 1)= p^2 -1$, and Satz II 3.10 of \cite{Huppert}  yields that  $|P|\le p^2$. It follows that $|P|=p^2$.
More precisely, as $|P|\neq p$, the result implies that $p \equiv 3 \pmod{4}$, and then the proof is complete.
\end{proof}

\begin{definition}\label{defi:avoidQ8}
Suppose that $Q\cong Q_8$ acts coprimely on the $p$-group $P$.
We say that the action of $Q$ on $P$ \textbf{avoids $L_9$}
if and only if $p\equiv 3\mod 4$, $|P|=p^2$ and $Q$ acts faithfully on $P$.
\end{definition}

\begin{lemma}\label{lem:MOP4}	
Suppose $Q\cong Q_8$ and that $P$ is a $p$-group on which $Q$ acts avoiding $L_9$.
Then $P$ is elementary abelian, $\Omega_1(Q)$ inverts $P$, and every subgroup of $Q$ of order at least $4$ acts irreducibly on $P$.
\end{lemma}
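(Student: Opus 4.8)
The plan is to unwind Definition \ref{defi:avoidQ8}, which records that the hypothesis ``$Q$ acts avoiding $L_9$'' means precisely $p\equiv 3\pmod 4$, $|P|=p^2$, and that $Q$ acts faithfully on $P$. First I would settle that $P$ is elementary abelian: a group of order $p^2$ is either cyclic or elementary abelian, and if $P$ were cyclic then $\aut(P)$ would be abelian, so the non-abelian group $Q\isom Q_8$ could not act faithfully on it. Hence $P$ is a two-dimensional vector space over $\GF(p)$, and the faithful action yields an embedding $Q\hookrightarrow \Gl(2,p)$. (Coprimality is automatic here, as $p$ is odd and $Q$ is a $2$-group, so only the order and faithfulness are needed.)

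Next I would show that $\Omega_1(Q)$ inverts $P$. Let $z$ be the unique involution of $Q$, so that $\Omega_1(Q)=\erz{z}=Z(Q)$. Since $p$ is odd, $z$ acts on $P$ as a diagonalisable linear map with eigenvalues in $\{1,-1\}$, and the eigenspaces $C_P(z)$ and $[P,z]$ are $Q$-invariant because $z$ is central in $Q$. If $z$ acted trivially this would contradict faithfulness; and if both eigenspaces were nonzero, then each would be one-dimensional and $Q$-invariant, forcing the image of $Q$ in $\Gl(2,p)$ to be diagonal and hence abelian, again contradicting the faithful action of the non-abelian $Q$. Therefore $z$ acts as $-1$ on all of $P$, that is, $\Omega_1(Q)$ inverts $P$.

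Finally, for the irreducibility statement I would exploit that $p\equiv 3\pmod 4$ makes $-1$ a non-square in $\GF(p)$. The subgroups of $Q\isom Q_8$ of order at least $4$ are its three cyclic subgroups of order $4$ together with $Q$ itself. If some cyclic subgroup $\erz{y}$ of order $4$ failed to act irreducibly, it would leave invariant a one-dimensional subspace $V$, on which $y$ acts by a scalar $\lambda\in\GF(p)^\times$. Since $y^2=z$ inverts $P$ by the previous step, this would give $\lambda^2=-1$, which is impossible. Hence each such $\erz{y}$ acts irreducibly, and since a proper nonzero $Q$-invariant subspace would in particular be $\erz{y}$-invariant, $Q$ itself acts irreducibly as well.

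I expect the main obstacle to be the central-involution case analysis in the second paragraph: one must cleanly rule out the ``mixed eigenvalue'' situation where $z$ has both a $+1$- and a $-1$-eigenspace, using that this would diagonalise and hence abelianise the image of $Q$. Once $\Omega_1(Q)$ is known to invert $P$, the remaining claims reduce entirely to the non-square property of $-1$ modulo $p$.
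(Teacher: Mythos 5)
Your proposal is correct and follows essentially the same route as the paper's proof: unwind Definition \ref{defi:avoidQ8}, rule out the cyclic case for $P$ via the abelian automorphism group of $C_{p^2}$, show the central involution inverts $P$ because a $Q$-invariant line cannot support a faithful (non-abelian) action of $Q_8$, and derive irreducibility of the order-$4$ subgroups from $p\equiv 3\pmod 4$. The paper phrases the last two steps with $[P,\Omega_1(Q)]\cap C_P(\Omega_1(Q))=1$ and $4\nmid|\aut(C_p)|$ rather than eigenspaces and the non-square property of $-1$, but these are only cosmetic differences.
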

\begin{proof} Since a cyclic group of order $p^2$ has an abelian automorphism group by 2.2.3 of  \cite{KurzStell}, it follows that $P$ is elementary abelian.
If $1\neq R$ is a cyclic subgroup of $P$, then $|\mathrm{Aut}(R)|=p-1$ and therefore $R$ does not admit an automorphism of order $4$.
Additionally, $[P,\Omega_1(Q)]$ is $Q$-invariant and, since $Q$ acts faithfully on $P$, we see that $[P,\Omega_1(Q)]\neq 1$.
Furthermore, Lemma \ref{coprime} gives that $[P,\Omega_1(Q)]\cap C_P(\Omega_1(Q))=1$ because $P$ is abelian. Moreover, $Q$ has rank $1$, and then it follows that $Q$ acts faithfully on $[P,\Omega_1(Q)]$.
This implies that $|[P,\Omega_1(Q)]|\neq p$ and consequently $[P,\Omega_1(Q)]=P$.
Hence 8.1.8 of \cite{KurzStell} states that $\Omega_1(Q)$ inverts $P$.
In particular $\Omega_1(Q)$ inverts every cyclic subgroup $R$ of $P$.

In addition, these arguments show that every subgroup $U$ of order $4$ of $Q$ does not normalise any non-trivial proper subgroup of the elementary abelian group $P$.
This means that $U$ acts irreducibly on $P$.
\end{proof}

\begin{cor}\label{kor:2prim}
Suppose that $p \neq q$ and that $G$ is an $L_9$-free $\{p,q\}$-group such that $P\in\syl_p(G)$ is normal in $G$ and $Q\in\syl_q(G)$.
\\Then either $G$ is nilpotent and $P$ and $Q$ are modular or $Q$ is a batten and it acts on $P$ avoiding $L_9$.

In particular, if $G$ is not nilpotent, then $Q$ is isomorphic to $Q_8$ or cyclic and $[P,Q]$ is elementary abelian or isomorphic to $Q_8$, where in the second case $q=3$.
\end{cor}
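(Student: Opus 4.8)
The plan is to split along whether $G$ is nilpotent. If $G$ is nilpotent, then $G = P \times Q$, so $P$ and $Q$ are $L_9$-free as subgroups of $G$, and Lemma \ref{lem:pGrModularÄqu} immediately gives that both are modular; this is the first alternative. From now on assume that $G$ is not nilpotent, so that $[P,Q] \neq 1$. Lemma \ref{lem:rank1} then tells us that $Q$ is cyclic or $Q \isom Q_8$; in either case $Q$ is a $q$-group that is cyclic or isomorphic to $Q_8$, hence a batten by Definition \ref{defi:batten}. It remains to show that $Q$ acts on $P$ avoiding $L_9$ and, for the ``in particular'' clause, that $P_0:=[P,Q]$ is elementary abelian or isomorphic to $Q_8$.

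Once we know that $P_0$ is elementary abelian or hamiltonian, the conclusion follows by feeding this into the three structure lemmas. If $Q \isom Q_8$, then $q=2$ and $p$ is odd, so the $p$-group $P_0$ cannot be hamiltonian and is therefore elementary abelian; Lemma \ref{lem:p2QQ8} then yields $p \equiv 3 \bmod 4$, $|P| = p^2$ and faithful action, i.e. $Q$ avoids $L_9$ in the sense of Definition \ref{defi:avoidQ8}. If $Q$ is cyclic and $P_0$ is elementary abelian, then Lemma \ref{lem:p2Qirred} gives that $P$ is abelian, that $C_P(Q)$ is a cyclic $2$-group, and that every subgroup of $Q$ acts irreducibly or by a power automorphism on $P_0$; when $C_P(Q)=1$ we have $P=P_0$ and land in type (std) of Definition \ref{defi:avoidcyclic}, and when $C_P(Q)\neq 1$ we get $p=2$ and $q$ odd, so by Lemma \ref{power} the power automorphisms are trivial and we land in type (cent). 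Finally, if $Q$ is cyclic and $P_0$ is hamiltonian, then $p=2$, $q$ is odd, and Lemma \ref{lem:L9Ham} together with non-nilpotency forces $P_0 \isom Q_8$, $Q$ a cyclic $3$-group, and the $\mathrm{Alt}_4$-quotient, i.e. type (hamil) with $q=3$. In every case the ``in particular'' statement is then read off directly.

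The crux, and the step I expect to be hardest, is therefore proving that $P_0$ is elementary abelian or isomorphic to $Q_8$, since all three lemmas take this structure as a hypothesis. Since $G$ is $L_{10}$-free, Lemma 2.2 of \cite{L10} gives $P = C_P(Q) \times [P,Q]$ (exactly as in the proof of Lemma \ref{lem:p2Qirred}), so $C_{P_0}(Q)=1$, i.e. $Q$ acts fixed-point-freely on the modular $p$-group $P_0$. I would then show that a modular $p$-group admitting a coprime fixed-point-free action by a cyclic group or by $Q_8$ must be elementary abelian or isomorphic to $Q_8$. For $Q \isom Q_8$ the central involution acts fixed-point-freely, hence inverts $P_0$ (so $P_0$ is abelian), after which one excludes exponent $\geq p^2$. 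For cyclic $Q$ I would pass to $\bar P_0 := P_0/\Phi(P_0)$, on which $Q$ still acts fixed-point-freely, and apply Lemma \ref{lem:p2Qirred} to $\bar P_0 Q$ to control the action on the Frattini quotient, the remaining task being to rule out $\Phi(P_0)\neq 1$ (and the non-abelian, non-$Q_8$ modular possibilities when $p=2$). Concretely, a cyclic $p$-group of order $p^2$ carrying a coprime fixed-point-free action already contains $L_9$: one verifies the criteria of Lemma \ref{lem:L9Char} using its subgroups of order $pq$ (there are at least three once $p\geq 3$), each of which meets the unique subgroup of order $p$ in that subgroup. I would deploy such explicit non-$L_9$-free configurations, in the spirit of Lemmas \ref{lem:D12} and \ref{lem:L9example}, on suitable $Q$-invariant sections of $P_0$, organised as a minimal counterexample, to force $\Phi(P_0)=1$. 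Producing the right section when $Q$ acts irreducibly with a higher-dimensional socle is the delicate point, and is where the real work of the corollary lies.
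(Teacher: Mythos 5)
Your overall architecture coincides with the paper's: split on nilpotency, use Lemma \ref{lem:pGrModularÄqu} in the nilpotent case, use Lemma \ref{lem:rank1} to get $Q$ cyclic or isomorphic to $Q_8$, and then feed the structure of $[P,Q]$ into Lemmas \ref{lem:p2Qirred}, \ref{lem:p2QQ8} and \ref{lem:L9Ham}. That part of your write-up is correct, including the case analysis landing in types (std), (cent) and (hamil).

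The genuine gap is exactly where you locate it: the dichotomy for $P_0=[P,Q]$. What the three lemmas actually need as input is that $[P,Q]$ is elementary abelian or a hamiltonian $2$-group (Lemma \ref{lem:L9Ham} then does the reduction from ``hamiltonian'' to $Q_8$, so you should not aim at the stronger statement ``elementary abelian or $\isom Q_8$'' directly). The paper obtains this dichotomy by citation: Lemma 2.2 of \cite{L10} — the very lemma you invoke for the decomposition $P=C_P(Q)\times[P,Q]$ — also asserts that $[P,Q]$ is elementary abelian or a hamiltonian $2$-group for $L_{10}$-free groups, and that is all the paper's proof uses. Your alternative plan, to rederive this from the coprime fixed-point-free action of $Q$ on the modular $p$-group $P_0$, is only sketched: the explicit $L_9$-configuration in $C_{p^2}\rtimes C_q$ is a plausible building block, but the reduction of a general minimal counterexample to such a configuration (in particular when $\Phi(P_0)\neq 1$ and $Q$ acts irreducibly on $P_0/\Phi(P_0)$ of rank greater than one, and the exclusion of the non-abelian non-hamiltonian modular $2$-groups) is left open by your own admission. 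As it stands the proposal does not close this step; citing the structural half of \cite{L10} Lemma 2.2 would repair it immediately and reduce your argument to the paper's.
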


\begin{proof}
By hypothesis $G$ is $L_9$-free, hence $P$ and $Q$ are, too.
Then Lemma \ref{lem:pGrModularÄqu} implies that $P$ and $Q$ are modular.
\\Suppose that $G$ is not nilpotent. Then Lemma \ref{lem:rank1} applies: $Q$ is cyclic or isomorphic to $Q_8$ and hence it is a batten.
Moreover Lemma 2.2 of \cite{L10} states that $[P,Q]$ is a hamiltonian $2$-group or  elementary abelian.
In the first case Lemma \ref{lem:L9Ham} gives the assertion.
In the second case our statement follows from Lemmas \ref{lem:p2Qirred}  and \ref{lem:p2QQ8}.
\end{proof}

\begin{lemma}\label{lem:lactirreorpower}
Let $Q$ be a nilpotent batten that acts on the $p$-group $P$ avoiding $L_9$, and suppose that $U$ is a subgroup of $Q$.

Then $U$ induces power automorphisms on $P$ or it acts irreducibly on $[P,Q]/\Phi([P,Q])$.
\end{lemma}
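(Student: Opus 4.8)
The plan is to run through the possibilities for $Q$ and for the way its action avoids $L_9$, since in each configuration the group $[P,Q]/\Phi([P,Q])$ and the subgroup lattice of $Q$ are completely explicit. Because $Q$ is a \emph{nilpotent} batten, Definition \ref{defi:batten} leaves only two shapes: $Q$ is a cyclic $q$-group, or $Q\cong Q_8$. For each shape I first record what $[P,Q]/\Phi([P,Q])$ is, so that the two alternatives in the conclusion become concrete, and then I check every subgroup $U\le Q$ against them.

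If $Q\cong Q_8$, then ``avoids $L_9$'' is Definition \ref{defi:avoidQ8}, and Lemma \ref{lem:MOP4} tells me that $P$ is elementary abelian with $\Omega_1(Q)$ inverting it; hence $\Phi(P)=1$ and $[P,Q]=P=[P,Q]/\Phi([P,Q])$. Now I run over the subgroups of $Q_8$: the trivial subgroup induces trivial power automorphisms; the central subgroup $\Omega_1(Q)$ inverts $P$, and inversion fixes every subgroup, so it induces power automorphisms; and by Lemma \ref{lem:MOP4} every subgroup of order at least $4$ acts irreducibly on $P$. Thus the $Q_8$ case falls out of Lemma \ref{lem:MOP4} directly.

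For $Q$ cyclic I split along the three types of Definition \ref{defi:avoidcyclic}. In type (std), $[P,Q]=P$ is elementary abelian, so $[P,Q]/\Phi([P,Q])=P$ and the defining property is word-for-word the desired dichotomy. In type (cent), $[P,Q]$ is again elementary abelian, so $\Phi([P,Q])=1$, and each $U\le Q$ acts irreducibly or trivially on $[P,Q]$; if irreducibly we are done, and if trivially then $U$ centralises $[P,Q]$ and, being inside $Q$, also centralises the direct factor $C_P(Q)$ of $P=C_P(Q)\times[P,Q]$ (Lemma \ref{coprime}), hence centralises all of $P$ and induces trivial power automorphisms. In type (hamil), $[P,Q]\cong Q_8$, so $\Phi([P,Q])=Z([P,Q])$ has order $2$ and $[P,Q]/\Phi([P,Q])$ is elementary abelian of order $4$. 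Here the $3$-group $Q$ acts nontrivially on $[P,Q]$ (otherwise $[P,Q,Q]=1$, against Lemma \ref{coprime}), so its image in $\aut(Q_8)\cong\Sym_4$ is a nontrivial $3$-group, hence of order $3$, which forces $C_Q([P,Q])=\Phi(Q)$. If $U\le\Phi(Q)$ then $U$ centralises $[P,Q]$ and, as a $3$-group, also the factor $I$ of order at most $2$, so it centralises $P$; the only remaining subgroup is $U=Q$, whose image of order $3$ acts on $[P,Q]/\Phi([P,Q])$ the way the complement $C_3$ acts on the Klein four-group inside $[P,Q]Q/Z([P,Q]Q)\cong\mathrm{Alt}_4$, namely irreducibly.

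Most of this is routine subgroup bookkeeping; the one spot needing genuine care is type (hamil), where I must pin down $C_Q([P,Q])=\Phi(Q)$ so that the subgroups split cleanly into those inside $\Phi(Q)$ (centralising all of $P$) and $Q$ itself, and then read off irreducibility of the order-$3$ action on the four-element quotient from the $\mathrm{Alt}_4$ structure rather than re-deriving it over $\mathbb{F}_2$. The only other point where the conclusion about $P$ is genuinely stronger than the hypothesis about $[P,Q]$ is the (cent) upgrade from ``trivial on $[P,Q]$'' to ``power automorphism on $P$'', which is handled by absorbing the cyclic $2$-group $C_P(Q)$ into the centralised part.
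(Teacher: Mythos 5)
Your proof is correct and follows essentially the same route as the paper's: split into $Q\cong Q_8$ (handled by Lemma \ref{lem:MOP4}) and $Q$ cyclic, then read the dichotomy off Definition \ref{defi:avoidcyclic} type by type, with (hamil) being the only case needing the extra observation that proper subgroups of $Q$ centralise $P$ while $Q$ acts irreducibly on $[P,Q]/\Phi([P,Q])$. You merely spell out a few details (the (cent) upgrade via $P=[P,Q]\times C_P(Q)$ and the identification $C_Q([P,Q])=\Phi(Q)$ in type (hamil)) that the paper leaves implicit.
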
	

\begin{proof}
First suppose that $Q\cong Q_8$. Then Lemma \ref{lem:MOP4} implies that every subgroup of order at least $4$ of $Q$, and in particular $Q$ itself, acts irreducibly on $P=[P,Q]/\Phi([P,Q])$. Moreover, the involution of $Q$ inverts $P$ by Lemma \ref{lem:MOP4}, and then the statement holds.

Next we suppose that $Q$ is cyclic. Then Definition \ref{defi:avoidcyclic} gives the assertion unless the action of $Q$ on $P$ avoids $L_9$ of type (hamil). In this case every proper subgroup of $Q$ centralises $P$, while $Q$ acts irreducibly on $[P,Q]/Z([P,Q])=[P,Q]/\Phi([P,Q])$.
\end{proof}

Next we investigate groups of order divisible by more than two primes. This needs some preparation.

\begin{lemma}\label{lem:exam}
Suppose that $P$ and $R$ are distinct Sylow subgroups of $G$, that $Q \in \syl_q(G)$ is  cyclic and that it normalises $P$ and $R$, but does not centralise them.
Suppose further that $R$ normalises every $Q$-invariant subgroup of $P$.
If $C_Q(P)= C_Q(R)$, then $G$ is not $L_9$-free.
\end{lemma}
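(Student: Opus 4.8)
The plan is to build an explicit sublattice isomorphic to $L_9$ and check it against the characterisation in Lemma~\ref{lem:L9Char}, exactly as in the proofs of Lemmas~\ref{lem:D12} and \ref{lem:L9example}; the third prime $R$ is what will supply the extra coatom that a modular picture lacks. First I would set up the arena. Writing $P\in\syl_p(G)$ and $R\in\syl_r(G)$, the set $G_0:=PQR$ is a subgroup with $P\unlhd G_0$ (because $R$ normalises the $Q$-invariant subgroup $P$ and $Q$ normalises $R$), and it suffices to show $G_0$ is not $L_9$-free. Put $C_0:=C_Q(P)=C_Q(R)$. Since $Q$ is abelian and $C_0$ centralises $P$ and $R$, we get $C_0\le Z(G_0)$, and the interval $[C_0,G_0]\isom L(G_0/C_0)$ is a sublattice of $L(G_0)$; so I may pass to $G_0/C_0$ and assume from now on $C_Q(P)=C_Q(R)=1$, i.e. $Q$ acts faithfully on both $P$ and $R$. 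This is the only place the hypothesis $C_Q(P)=C_Q(R)$ is used.

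Next I fix the ingredients. Let $y$ generate $\Omega_1(Q)$; then $y$ acts non-trivially on $P$ and on $R$. I choose an elementary abelian $Q$-irreducible subgroup $P_1\le [P,Q]$ on which $y$ (hence $Q$) acts non-trivially, which exists after passing to a suitable $Q$-chief factor of $[P,Q]$. As $Q$ is cyclic and now faithful on $P_1$, it acts fixed-point-freely there, so $C_{P_1}(Q)=1$ and $Q\cap Q^{v}=C_Q(v)=1$ for every $1\neq v\in P_1$ (Lemma~\ref{lem:dedekind}(b)); by hypothesis $R$ normalises $P_1$. Because $y$ does not centralise $R$ we have $C_R(\langle y\rangle)\lneq R$, and since $C_R(z)\le C_R(\langle y\rangle)$ for all $1\neq z\in Q$ (as $\langle y\rangle=\Omega_1(Q)$), any $x\in R\setminus C_R(\langle y\rangle)$ satisfies $C_Q(x)=1$, hence $Q\cap Q^{x}=1$ and $Q^{x}\neq Q$. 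I fix such an $x$ and set $W:=\erz{Q,Q^{x}}$, $R_1:=W\cap R\neq 1$ and $F:=P_1W$; then $P_1\unlhd F$, $P_1\cap W=1$ and $W=R_1Q$.

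Now I would propose the nine subgroups $E:=1$, $D:=P_1$, $U:=Q$, $C:=P_1Q$, $A:=P_1Q^{x}$, $B:=P_1R_1$, and, for two distinct $v_1,v_2\in P_1\setminus\{1\}$, $S:=Q^{xv_1}$, $T:=Q^{xv_2}$, with top $F=P_1R_1Q$. Conditions L9~(i) and (iii) are immediate. For L9~(ii), $S$ and $T$ are distinct complements of $P_1$ in $A$ (using $C_{P_1}(Q^{x})=C_{P_1}(Q)^{x}=1$), the irreducible action gives $\erz{S,D}=\erz{T,D}=\erz{S,T}=A$, and $S\cap T=(Q\cap Q^{v_3})^{xv_1}=C_Q(v_3)^{xv_1}=1$ where $v_3:=(v_2v_1^{-1})^{x^{-1}}\neq 1$. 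For L9~(v) the three meets $A\cap B$, $B\cap C$, $A\cap C$ all collapse to $D$ by Dedekind together with $P_1\cap W=1$ and $Q\cap Q^{x}=1$, while $\erz{A,B}=\erz{B,C}=\erz{A,C}=P_1W=F$ since $\erz{Q,R_1}=\erz{Q^{x},R_1}=\erz{Q,Q^{x}}=W$.

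The one genuinely non-formal step — and the main obstacle — is L9~(iv): $\erz{S,U}=\erz{T,U}=F$, that is $\erz{Q,Q^{xv}}=F$ for $v\neq 1$. Reducing modulo $P_1$ shows $\erz{Q,Q^{xv}}P_1=F$, so I must prove $P_1\le J:=\erz{Q,Q^{xv}}$. Since $J\cap P_1$ is $Q$-invariant and $P_1$ is irreducible, either $P_1\le J$ or $J$ is a complement to $P_1$ in $F$; in the latter case $J=W^{u}$ for some $u\in P_1$ by coprime conjugacy of complements, and intersecting $J$ with $P_1Q$ and with $P_1Q^{x}$ (where $W$ meets these in exactly $Q$, resp.\ $Q^{x}$) forces first $u=1$, hence $J=W$, and then $Q^{xv}\le W$, i.e.\ $v\in C_{P_1}(Q^{x})=1$, a contradiction. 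Thus $P_1\le J$ and $J=F$. Once this fixed-point-free/complement-conjugacy argument is in place, Lemma~\ref{lem:L9Char} yields $\{E,D,S,T,U,A,B,C,F\}\isom L_9$, so $G_0$, and therefore $G$, is not $L_9$-free.
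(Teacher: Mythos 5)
Your construction parallels the paper's own proof quite closely: both reduce to $C_Q(P)=C_Q(R)=1$ and then exhibit the nine subgroups of Lemma \ref{lem:L9Char} using a $Q$-invariant $p$-subgroup as $D$ and conjugates of $Q$ by elements of $P$ and of $R$ as the remaining atoms (up to conjugating everything by your $x\in R$, the two configurations essentially coincide). Your verifications of L9\,(ii)--(v), including the complement-conjugacy argument for L9\,(iv), are correct \emph{provided} your module $P_1$ exists.

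That existence is the genuine gap. An elementary abelian $Q$-irreducible subgroup $P_1\le [P,Q]$ on which $\Omega_1(Q)$ acts non-trivially need not exist: take $P\cong Q_8$ and $Q$ cyclic of order $3$ acting faithfully. Then $[P,Q]=P$, the only elementary abelian $Q$-invariant subgroups of $P$ are $1$ and $Z(P)$, and $Q$ centralises both, so every step of your argument that invokes irreducibility of $P_1$ (the joins in L9\,(ii), the dichotomy $P_1\le J$ or $J\cap P_1=1$ in L9\,(iv), and the fixed-point-freeness $C_Q(v)=1$) has nothing to act on. Your parenthetical repair --- ``after passing to a suitable $Q$-chief factor of $[P,Q]$'' --- does not close this as stated, because a chief factor is a section, not a subgroup: to realise the resulting lattice inside $L(G)$ you must pass to the quotient of $PQR$ by the lower term $Y$ of that factor, so $Y$ must be normal in $PQR$ (in particular normal in $P$), which the terms of a $Q$-composition series of $[P,Q]$ generally are not. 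The reduction can be repaired (take a $PQR$-chief factor of $[P,Q]$ not centralised by $\Omega_1(Q)$, then a $Q$-irreducible Maschke summand, and re-verify all hypotheses in the quotient), but none of that work is in your text. The paper sidesteps the issue entirely: it never asks for irreducibility, instead taking $D:=[x,Q]$ for a single carefully chosen element $x=[x_0,u]$ of $[P,Q]$, arranging $D=[x,Q]=[x^u,Q]=[x(x^u)^{-1},Q]$ with the relevant centralisers in $Q$ trivial, and computing every join via Lemma \ref{lem:dedekind}~(d). Either fill in the chief-factor reduction or switch to these commutator subgroups; as written, the proof does not cover cases such as $[P,Q]\cong Q_8$.
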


\begin{proof}
We suppose that $C_Q(P)= C_Q(R)=:E$.
Then $E$ is a normal subgroup of $G$ because $Q$ is abelian.

We claim that $G/E$ is not $L_9$-free, and for this we may suppose that $E=1$.
Then $Q\neq 1$ acts faithfully on $P$ and $R$.
Now we need a technical step before we move on:

There are a $Q$-invariant subgroup $D$ of $P$ and elements $x,y\in D$ such that
$C_Q(x)=C_Q(y)=C_Q(xy^{-1})=1$ and $D=[x,Q]=[y,Q]=[xy^{-1},Q]$. ~~($\ast$)

 Since $Q$ is cyclic, there is some $u\in Q$ such that $\erz{u}=Q$.
Let $x_0\in [P,Q]$ be such that $\Omega_1(Q)$ does not centralise $x_0$.
\\Then $[x_0,Q]=[x_0,\erz{u},Q]=[\erz{[x_0,u]},Q]=[[x_0,u],Q]$ by Lemma \ref{coprime}, and  for all integers $n$ we have the following:
$(x_0^{-1}x_0^u)^{u^n}=1$ iff $x_0^{u^n}=x_0^{u^{n+1}}$ iff $x_0^u=1$.
It follows that  $[x_0,u]\in [[x_0,u],Q]=[x,Q]$ and $C_Q([x_0,u])=C_Q(x_0)=1$.

Now we set $x:=[x_0,u]$, $y:=x^u$ and $D:=[x,Q]$.
Then $D$ is $Q$-invariant and we have that $x,y\in D$, $C_Q(x)=C_Q(y)=1$ and $D=[x,Q]=[y,Q]$.
If we set $z:=xy^{-1}$, then $z=[x^{-1},u]$ and we can use the information from the end of the previous paragraph:\\
$[z,Q]=[x^{-1},Q]=D$ and $C_Q(z)=C_Q(x^{-1})=C_Q(x)=1$. This concludes the proof of ($\ast$).

\vspace{0.2cm}
We use ($\ast$) and its notation and, similarly, we find a $Q$-invariant subgroup $R_0$ of $R$ and an element $h\in R_0$ such that $C_Q(h)=1$ and $R_0=[h,Q]$.

\smallskip \begin{minipage}{11.3cm}
We set  $S:=Q^x$, $T=Q^y$, $A:=DQ$, $B:=DR_0$, $U:=Q^h$, $C:=DQ^h$
and $F:=DR_0Q$, and we claim that $\{A,\ B,\ C,\ D,\ E,\ F,\ S,\ T,\ U\}$ is isomorphic to $L_9$.
The properties L9 (i) and  L9 (iii) of Lemma \ref{lem:L9Char} follow from the choice of $D$, since $h$ and $Q$ normalise $D$.	

\smallskip For L9 (ii) we first note that $D\cap Q^x=D\cap Q^y=1=E$ and $\erz{D,Q^x}=\erz{D,Q^y}=DQ$, since $x\in D$ and hence $y=x^u\in D$.
Next, Lemma \ref{lem:dedekind}~(b) yields that
$T\cap S=Q^x\cap Q^y\leq C_Q(xy^{-1})^y=1$.
Part (d) of the same lemma shows that
$\erz{T,S}=\erz{Q^x,Q^y}=\erz{[xy^{-1},Q]^{Q^{xy^{-1}}}}Q^{xy^{-1}}=DQ^{xy^{-1}}=DQ=A$, as $xy^{-1}\in D$.

 For all $z\in\{x,y\}$ we calculate that
$\erz{Q^z,Q^h}=\erz{[zh^{-1},Q]^{Q^{zh^{-1}}}}Q^{zh^{-1}}=$
$\erz{([z,Q]^{h^{-1}}[h^{-1},Q])^{Q^{zh^{-1}}}}Q^{zh^{-1}}$
$=\erz{D,R_0}Q^{zh^{-1}}=F$ by Lemma \ref{lem:dedekind}~(d).  			
Thus L9 (iv) of Lemma \ref{lem:L9Char} is true.
\end{minipage}\hfill
\begin{minipage}{4cm}
				\begin{tikzpicture}
				\coordinate[label=left:{$1$}]					(E) at (0,0);
				\coordinate[label=right:{$DR_0Q$}]				(F) at (0,4.5);
				\coordinate[label=above left:{$DQ$}]	(A) at (-1.5,3);
				\coordinate[label=below right:{$DR_0$}]			(B) at (0,3);
				\coordinate[label=above:{$DQ^h$}]		(C) at (1.5,3);
				\coordinate[label=below:{$Q^x$}]				(S) at (-2,1.5);
				\coordinate[label=below:{$Q^y$}]			(T) at (-1,1.5);
				\coordinate[label=below right:{$P$}]	(D) at (0,1.5);
				\coordinate[label=below:{$Q^h$}]			(U) at (1.5,1.5);
				
				\foreach \x in {E,F,A,B,C,D,S,T,U} \fill (\x) circle (2.15pt);
				
				\draw [thick]	(E) -- (S) -- (A) -- (F) -- (C)
				(E) -- (T) -- (A) -- (D) -- (C) -- (U) -- (E)
				(E) -- (D) -- (B) -- (F);
				\end{tikzpicture}
\end{minipage}

\smallskip We moreover have that $\erz{A,B}=\erz{D,Q,R_0}=F=\erz{D,Q^h,R_0}$
and  $A\cap B=DQ\cap DR_0=D(Q\cap DR_0)=D=D(Q^h\cap DR_0)=C\cap B$.
Finally $A\cap C= DQ\cap DQ^h=D(Q\cap DQ^h)\leq DC_Q(h)=D$ by Lemma\ref{lem:dedekind}~(b).

Using  Lemma \ref{lem:L9Char} we conclude that $G/E$ is not $L_9$-free, and hence $G$ is not $L_9$-free.
		\end{proof}

\begin{cor}\label{kor:3primuntenil}
Suppose that $p,~q$ and $r$ are pairwise distinct primes and that $G$ is a  directly indecomposable $L_9$-free $\{p,q,r\}$-group.
Suppose further that $P \in \syl_p(G)$ and  $R \in \syl_r(G)$ are normal in $G$ and let $Q \in \syl_q(G)$.

Then $Q$ is cyclic and $C_Q(P)\neq C_Q(R)$.
\end{cor}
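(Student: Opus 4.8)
The plan is to run the argument in three steps: show that $Q$ centralises neither $P$ nor $R$, deduce from this that $Q$ is cyclic (ruling out $Q\cong Q_8$), and finally invoke Lemma~\ref{lem:exam} to obtain $C_Q(P)\neq C_Q(R)$. Throughout I would use that $P$ and $R$, being normal of coprime order, commute elementwise, i.e. $[P,R]=1$.

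First I would show that $[P,Q]\neq 1\neq[R,Q]$. If $Q$ centralised $P$, then $P$ would centralise each of $R$ and $Q$, so $P\le C_G(RQ)$; since $P\unlhd G$ and $R\unlhd G$, the subgroup $RQ$ would be normalised by $G=P\cdot RQ$, hence $RQ\unlhd G$, and coprimality would give the non-trivial direct decomposition $G=P\times RQ$, contradicting indecomposability. The symmetric argument excludes $Q$ centralising $R$. Thus both $PQ$ and $RQ$ are non-nilpotent.

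Next, applying Lemma~\ref{lem:rank1} to $PQ$ (with its normal Sylow $p$-subgroup $P$) shows that $Q$ is cyclic or $Q\cong Q_8$. To eliminate the latter, I would assume $Q\cong Q_8$ and apply Corollary~\ref{kor:2prim} to both $PQ$ and $RQ$, which forces $Q$ to act faithfully on $P$ and on $R$ with $|P|=p^2$ and $|R|=r^2$. Choosing a cyclic subgroup $C\le Q$ of order $4$, Lemma~\ref{lem:MOP4} tells me that $P$ and $R$ are elementary abelian and that $C$ acts irreducibly on each of them; together with the faithfulness of $Q$ this gives $C_C(P)=1=C_C(R)$. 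Now the subgroup $PRC\le G$ meets every hypothesis of Lemma~\ref{lem:exam}: $P$ and $R$ are distinct Sylow subgroups, the cyclic group $C$ normalises both but (via its central involution, which inverts them) centralises neither, the relation $[P,R]=1$ makes $R$ normalise every $C$-invariant subgroup of $P$ automatically, and $C_C(P)=C_C(R)$. The lemma then declares $PRC$ not $L_9$-free, contradicting that it is a subgroup of the $L_9$-free group $G$. Hence $Q$ is cyclic.

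Finally, suppose for contradiction that $C_Q(P)=C_Q(R)$. I would apply Lemma~\ref{lem:exam} to $G$ itself: $P$ and $R$ are distinct Sylow subgroups, the cyclic $Q$ normalises them (they are normal) but does not centralise them by the first step, and $[P,R]=1$ again makes the normalising hypothesis on $Q$-invariant subgroups of $P$ automatic. The lemma then gives that $G$ is not $L_9$-free, a contradiction, so $C_Q(P)\neq C_Q(R)$. I expect the only real obstacle to be the elimination of $Q\cong Q_8$, where one must extract the precise action data ($|P|=p^2$, faithfulness, and the irreducible action of the order-$4$ subgroup supplied by Lemma~\ref{lem:MOP4}) in order to build a cyclic Sylow $2$-subgroup $C$ with $C_C(P)=C_C(R)$ that feeds Lemma~\ref{lem:exam}; the first and third steps are short, both resting on the fact that the two normal Sylow subgroups $P$ and $R$ commute.
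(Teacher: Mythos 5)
Your proposal is correct and follows essentially the same route as the paper: direct indecomposability forces $[P,Q]\neq 1\neq[R,Q]$, Lemma \ref{lem:rank1} (via Corollary \ref{kor:2prim}) reduces to $Q$ cyclic or $Q\cong Q_8$, the quaternion case is killed by feeding a cyclic subgroup of order $4$ (acting faithfully and irreducibly on both $P$ and $R$ by Lemma \ref{lem:MOP4}) into Lemma \ref{lem:exam}, and the cyclic case is settled by applying Lemma \ref{lem:exam} to $G$ itself. The paper's proof is just a terser version of the same argument, using a maximal subgroup $Q_1$ of $Q$ where you use $C$.
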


\begin{proof}
Since $G$ is directly indecomposable, we see that $Q$ acts non-trivially on both $P$ and $R$.
Moreover, $PQ$ and $RQ$ are $L_9$-free by hypothesis, and then we conclude that $Q$ is cyclic or isomorphic to $Q_8$.
\\In the first case, our assertion follows from Lemma \ref{lem:exam}, and in the second case, we choose a maximal subgroup $Q_1$ of $Q$. Then $Q_1$ acts irreducibly on $P$ and $R$ by Lemma \ref{lem:MOP4}, and the same Lemma shows that $\Phi(Q)$ inverts $P$ and $R$.
Thus $Q_1$ acts on $P$ and on $R$ avoiding $L_9$, respectively, and it acts faithfully. This contradicts Lemma \ref{lem:exam}.
\end{proof}

We explain another example where a subgroup lattice contains $L_9$.

\begin{lemma}\label{example}
Suppose that $p$, $q$ and $r$ are pairwise distinct primes and that $G$ is a $\{p,q,r\}$-group.
Suppose further that $P \in \syl_p(G)$ is normal in $G$ and that $Q \in \syl_q(G)$ and $R\in\syl_r(G)$ are cyclic groups such that $R\unlhd RQ$.
Suppose that $|R|=r$ and $C_Q(R)=1$.

If $R$ acts irreducibly on $P$, but non-trivially, and if $1\neq [P,Q]$ is elementary abelian, then $G$ is not $L_9$-free.
\end{lemma}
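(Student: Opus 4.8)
The statement asserts that such a $G$ is not $L_9$-free, so the plan is to produce nine subgroups of $G$ forming a sublattice isomorphic to $L_9$ and to check the defining relations L9 (i)--L9 (v) of Lemma \ref{lem:L9Char}. I begin by extracting the structural features of $G$. Since $P$ is a normal Sylow subgroup and $R\unlhd RQ$, we have $G=P\rtimes RQ$ with $G/P\cong RQ=R\rtimes Q$. As $R$ acts irreducibly and non-trivially on $P$ and $\ord R=r$, the subgroup $C_P(R)$ is $R$-invariant and proper, hence trivial; thus $P$ is elementary abelian and $PR$ is a Frobenius group with kernel $P$. Because $P\unlhd G$, every conjugate of $R$ lies in $PR$ and is one of its complements, and $N_G(R)=RQ$ since $N_P(R)=C_P(R)=1$. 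Dually, $C_R(Q)$ is a $Q$-invariant subgroup of the prime-order group $R$ and is not all of $R$ (else $C_Q(R)=Q\neq 1$), so $C_R(Q)=1$ and $RQ$ is a Frobenius group with kernel $R$, whose complements are the conjugates $Q^h$, $h\in R$. In both Frobenius groups $PR$ and $RQ$, distinct complements meet trivially and any two generate the whole group: for instance $\erz{R^a,R^b}\cap P$ is $R^a$-invariant, hence $1$ or $P$ by irreducibility, and it cannot be $1$, so $\erz{R^a,R^b}=PR$; the analogous argument with $R$ of prime order gives $\erz{Q,Q^h}=RQ$ for $h\neq 1$.

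With this in hand I fix some $h\in R\setminus\{1\}$ and elements $z_1,z_2\in P$ (pinned down below) and put
$$E:=1,\; D:=P,\; S:=R^{z_1},\; T:=R^{z_2},\; U:=Q^h,\; A:=PR,\; B:=PQ,\; C:=PQ^h,\; F:=G.$$
The guiding idea is that the lower diamond $\{E,S,T,D,A\}$ realises an $M_3$ inside the Frobenius group $A=PR$, with the complements $R^{z_1},R^{z_2}$ and the kernel $P$ as three pairwise-complementary atoms, while the upper diamond $\{D,A,B,C,F\}$ is, via the correspondence theorem, the sub-$M_3$ of the interval $[P,G]\cong L(RQ)$ formed by the atoms $R,Q,Q^h$ of $G/P$. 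Taking $D=P$ rather than a proper subgroup is the essential choice: $R$ normalises no proper non-trivial subgroup of $P$, so the ``$R$-flavoured'' coatom cannot be built over a proper $D$; anchoring everything above $P$ places the two diamonds in the two Frobenius sections $PR$ and $RQ$ and removes this obstruction.

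The verification of most conditions is then routine. Condition L9 (i) is $P\neq 1$. For L9 (ii) the relations $S\cap T=S\cap D=T\cap D=1$ and $\erz{S,T}=\erz{S,D}=\erz{T,D}=PR$ are exactly the Frobenius facts for $PR$ recorded above (once $z_1\neq z_2$, so that $R^{z_1}\neq R^{z_2}$). Condition L9 (iii) is immediate from $P\cap Q^h=1$ and $\erz{P,Q^h}=PQ^h$. For L9 (v) I use $R\cap Q=R\cap Q^h=Q\cap Q^h=1$ in $RQ$ together with Dedekind's law to get $A\cap B=A\cap C=B\cap C=P=D$, and $\erz{A,B}=\erz{B,C}=G=F$, the join $\erz{B,C}$ relying on $\erz{Q,Q^h}=RQ$. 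Note also that the nine subgroups are distinct (e.g. $PQ\neq PQ^h$ because $Q^h\neq Q$ in $G/P$) and carry the incidences of $L_9$.

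The crux is L9 (iv), namely $\erz{S,U}=\erz{T,U}=G$, and this dictates the choice of $z_1,z_2$. I would first show that $Q^h$ normalises $R^z$ precisely when $z\in C_P(Q^h)$, a proper subgroup of $P$ since $[P,Q]\neq 1$ forces $[P,Q^h]\neq 1$. For $z_i\notin C_P(Q^h)$ there is then $g\in Q^h$ with $(R^{z_i})^g\neq R^{z_i}$; both are complements of the Frobenius group $PR$, so $\erz{R^{z_i},(R^{z_i})^g}=PR\ge P$, whence $P\le\erz{R^{z_i},Q^h}$, and as the image of $\erz{R^{z_i},Q^h}$ in $G/P\cong RQ$ is $\erz{R,Q^h}=RQ$ we obtain $\erz{S,U}=\erz{R^{z_i},Q^h}=G$. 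Finally such $z_1,z_2$ exist and can be chosen distinct, because $\lvert P\setminus C_P(Q^h)\rvert\ge\lvert P\rvert-\lvert P\rvert/p\ge 2$, using $\lvert P\rvert\ge 3$ (the case $P\cong C_2$ being impossible, as $\aut(C_2)=1$ admits no non-trivial $R$-action). Then Lemma \ref{lem:L9Char} produces a sublattice isomorphic to $L_9$, so $G$ is not $L_9$-free. The single step that needs genuine care is the identification of the ``bad'' set $\{z:Q^h\text{ normalises }R^z\}$ with $C_P(Q^h)$ (conjugating by $z^{-1}$ and using $N_G(R)=RQ$); everything else is bookkeeping with the two Frobenius structures.
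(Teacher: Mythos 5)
Your proposal is correct and follows essentially the same route as the paper: it builds the identical configuration $E=1$, $D=P$, $A=PR$, with two $P$-conjugates of $R$ as the atoms $S,T$, a conjugate of $Q$ as $U$, and $PQ$, $PQ^{h}$ as the remaining coatoms (the paper's choice $U=Q$, $C=PQ$, $B=PQ^{z}$ with $z\in R^{\#}$ is your configuration up to conjugation), and your crux --- that $\langle R^{z},Q^{h}\rangle=G$ unless $z\in C_P(Q^{h})$ --- is the same Sylow/normaliser computation the paper carries out when it rules out $\langle R^{c},Q\rangle$ being a $p'$-Hall subgroup. The only cosmetic difference is that the paper picks its conjugating elements $a,b$ inside $[P,Q]^{\#}$ (and checks $|[P,Q]|\geq 3$) rather than in $P\setminus C_P(Q^{h})$.
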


\begin{proof} We first remark that $G$ is soluble, because $P\unlhd PR\unlhd PRQ=G$.
We will construct the lattice $L_9$ in $L(G)$ using Lemma \ref{lem:L9Char}.
For this we set $E:=1$ and $D:=P$.
Then $D\neq E$ and we see that L9 (i) is true.

\smallskip Next, we recall that $1\neq [P,Q]$ by hypothesis.
Assume that $|[P,Q]|= 2$. Then $Q$, which normalises $[P,Q]$, must centralise it, and then Lemma \ref{coprime} gives a contradiction.

Therefore $|[P,Q]|\gneq 2$.
As a consequence, we find $a,b\in [P,Q]^\#$ such that $a\neq b$, and then we set
$S:=R^a$ and $T:=R^b$. Now $D\cap S=1=E=D\cap T$ and
$\erz{D,T}=PR^b=PR=PR^a=\erz{D,S}$. We set $A:=PR$.
In addition, since $R$ acts irreducibly, but non-trivially on $P$, it follows that $C_R(ba^{-1})\lneq R$.  Then the fact that  $|R|=r$ gives that $C_R(ba^{-1})=1=E$.

Lemma \ref{lem:dedekind}~(b) shows that $S\cap T=(R\cap R^{ba^{-1}})^a\leq C_{R}(ba^{-1})^a=E$, and now we recall that $[ba^{-1},R]\neq 1$.
Moreover, $R$ acts irreducibly on $P$, and then Part (e) of the same lemma yields the following:

$\erz{T,S}=\erz{R,R^{ba^{-1}}}^a=(\erz{[ba^{-1},R]^{R^{ba^{-1}}}}R^{ba^{-1}})^a=PR=A$.
We conclude that  L9 (ii) holds.

\smallskip For L9 (iii)  we set $U:=Q$ and $C:=\erz{D,Q}=PQ$. Then we note that $U\cap D=Q\cap P=1=E$.

\medskip\begin{minipage}{5.5cm}
\begin{tikzpicture}
				\coordinate[label=right:{$1$}]					(E) at (0,0);
				\coordinate[label=left:{$PQR$}]				(F) at (0,4.5);
				\coordinate[label=above left:{$PR$}]	(A) at (-2,3);
				\coordinate[label=right:{$PQ^z$}]			(B) at (0,3);
				\coordinate[label=above right:{$PQ$}]		(C) at (2,3);
				\coordinate[label=above left:{$R^a$}]				(S) at (-2.5,1.5);
				\coordinate[label=above right:{$R^b$}]			(T) at (-1.5,1.5);
				\coordinate[label=below right:{$P$}]	(D) at (0,1.5);
				\coordinate[label=right:{$Q$}]			(U) at (2,1.5);
				
				\foreach \x in {E,F,A,B,C,D,S,T,U} \fill (\x) circle (2.15pt);
				
				\draw [thick]	(E) -- (S) -- (A) -- (F) -- (C)
				(E) -- (T) -- (A) -- (D) -- (C) -- (U) -- (E)
				(E) -- (D) -- (B) -- (F);
				\end{tikzpicture}
\end{minipage}
\hfill\begin{minipage}{9.5cm}
Assume for a contradiction that $X:=\erz{R^c,Q}$ has odd order for some $c\in\{a,b\}$.

In both cases $X$ is a $p'$-Hall subgroup of the soluble group $G=PRQ$ and therefore $R^c=O_r(X)$.
It follows that $Q$ normalises $R^c$ and then that $Q^{c^{-1}}$ and $Q$ normalise $R$.

Since $N_P(R)$ is $R$-invariant and $R$ acts irreducibly, but non-trivially on $P$, we conclude that $N_G(R)=RQ$.
Thus Sylow's Theorem provides some $y\in R$ such that $Q^{c^{-1}}=Q^y$.
Now $[yc,Q]\leq PR\cap Q=1$. In addition $|R|=r$ and $[R,Q]\neq 1$ by hypothesis. Together this gives that $yc\in C_G(Q)\cap PR\leq PQ\cap PR=P(Q\cap PR)=P$, by Dedekind's modular law.
Altogether we have that
$yc \in C_P(Q)$. We recall that $c \in \{a,b\} \subseteq P$, and then $y=ycc^{-1} \in P$.
But we chose $y \in R$ and now $y \in R \cap P=1$, whence $Q^{c^{-1}}=Q$. In other words, $c\in N_P(Q)$, and this means that $[Q,c] \le Q \cap P=1$ and $c \in C_P(Q)$.
We recall that $c\in [P,Q]^\#$ and that $[P,Q]$ is elementary abelian by hypothesis. Then Lemma \ref{coprime} implies that
$[P,Q]=[P,Q,Q] \times C_{[P,Q]}(Q)$, and this contradicts the fact that
$c \in C_P(Q) \cap [P,Q]$.
\end{minipage}

\smallskip
It follows that $X$ has even order and since $R^c$ acts irreducibly on $P$, we conclude that $P\leq X$.
This implies that $\erz{S,U}=\erz{R^a,Q}=PRQ=G=\erz{R^b,Q}=\erz{T, Q}$ and then
 L9 (iv) holds for $F:=G$.

We finally set $B:=PQ^z$ for some $z\in R^\#$. Then $R=\erz{z}$ and Lemma \ref{lem:dedekind}~(b) and (c), together with our hypothesis, show that
$P\leq PQ\cap PQ^z=P(Q\cap PQ^z)\leq PC_Q(z)=PC_Q(R)=P$. Thus we have that $B\cap C=P=D$.
We further see that $A\cap C=PR\cap PQ=P(R\cap PQ)=P=D$ and $A\cap B=PR\cap PQ^z=P(R\cap PQ^z)=P=D$.
Since $\erz{A,B}=\erz{PR, PQ^z}=PQR=G$ and $\erz{B,C}=\erz{PQ, PQ^z}=P\erz{[Q,z]^{Q^z}}Q^z=P[Q,R]Q=PRQ=G$ by Lemma \ref{lem:dedekind}~(d), we finally obtain L9 (v).

Altogether Lemma \ref{lem:L9Char} gives the assertion.
\end{proof}

\begin{prop}\label{lem:3primPunique}
Suppose that $p,~q$ and $r$ are pairwise distinct primes and that $G$ is a non-nilpotent $L_9$-free $\{p,q,r\}$-group with normal Sylow $p$-subgroup $P$.
Suppose further that $R \in \syl_r(G)$ and $Q\in\syl_q(G)$ are not normal in $G$, that $R\unlhd RQ$ and $[R,Q]\neq 1$.

Then $RQ$ is a batten, $P$ is elementary abelian of order $p^r$, $R$ and $Q$ act irreducibly on $P$ and $\Phi(Q)$ induces non-trivial power automorphisms on $P$.
\end{prop}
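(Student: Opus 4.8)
The plan is to play the three two-prime ``slices'' $RQ$, $PR$ and $PQ$ against the two forbidden-configuration lemmas, Lemma~\ref{lem:exam} and Lemma~\ref{example}. First I would note that $PR$ is a subgroup (as $P\unlhd G$), that it is normal in $G$ (since $Q$ normalises both $P$ and $R$, the latter because $R\unlhd RQ$), and hence $G=PR\rtimes Q$. I would then record that $R$ acts non-trivially on $P$: if $[P,R]=1$ then $P$ centralises and so normalises $R$, and together with $R\unlhd RQ$ this forces $R\unlhd G$, contrary to hypothesis. The analogous fact $[P,Q]\neq1$ I would single out as the first real step, since it is exactly what makes the eventual assertion ``$Q$ acts irreducibly on $P$'' non-vacuous; I expect to obtain it by ruling out, via $L_9$-freeness, the configuration in which $Q$ centralises $P$ while still acting non-trivially on $R$.

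Since $R\unlhd RQ$ and $[R,Q]\neq1$, the group $RQ$ is a non-nilpotent $L_9$-free $\{q,r\}$-group with normal Sylow subgroup $R$, so Corollary~\ref{kor:2prim} applies: $Q$ is cyclic or isomorphic to $Q_8$, and it acts on $R$ avoiding $L_9$. To prove that $RQ$ is a batten I must rule out $Q\cong Q_8$ and then verify the three defining conditions $|R|=r$, $Q$ cyclic and $C_Q(R)=\Phi(Q)\neq1$. Ruling out $Q\cong Q_8$ is where I would use Lemma~\ref{lem:exam} in the spirit of Corollary~\ref{kor:3primuntenil}: if $Q\cong Q_8$, then by Lemma~\ref{lem:MOP4} a cyclic maximal subgroup $Q_1\cong C_4$ acts faithfully and irreducibly on $R$ and on $[P,Q_1]$, so the only $Q_1$-invariant subgroups of $P$ are $1$ and $P$, both of which $R$ normalises, while $C_{Q_1}(P)=1=C_{Q_1}(R)$; Lemma~\ref{lem:exam} then contradicts $L_9$-freeness. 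With $Q$ cyclic, the conditions $|R|=r$ and $C_Q(R)=\Phi(Q)$ follow from the precise form of ``avoiding $L_9$'' (Definition~\ref{defi:avoidcyclic}) for the action of the cyclic group $Q$ on $R$, combined with a further appeal to Lemma~\ref{lem:exam} / Lemma~\ref{example} to prevent $C_Q(R)$ from dropping below $\Phi(Q)$.

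Knowing $Q$ is cyclic, I would apply Corollary~\ref{kor:2prim} to the slices $PR$ and $PQ$: each of the battens $R$ and $Q$ acts on $P$ avoiding $L_9$, and comparing the possible types (Definition~\ref{defi:avoidcyclic}, ruling out the hamiltonian/$Q_8$ alternative because \emph{both} $R$ and $Q$ act non-trivially) forces $P$ to be elementary abelian, whence $P=C_P(R)\times[P,R]=C_P(Q)\times[P,Q]$ by Lemma~\ref{coprime}. The crucial input is then Lemma~\ref{example}: since $R$ acts irreducibly on $[P,R]$ and $[P,Q]$ is elementary abelian, $L_9$-freeness forbids $C_Q(R)=1$, which both confirms the batten condition $C_Q(R)=\Phi(Q)\neq1$ and excludes the fixed-point-free behaviour that would otherwise occur. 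Finally, $\Phi(Q)=C_Q(R)$ centralises $R$, so by Schur's lemma it acts as scalars on the irreducible $R$-module $[P,R]$; since it also respects the $Q$-module structure, this scalar action extends to all of $P$, yielding the non-trivial power automorphisms, and tracking the irreducible $R$- and $Q$-actions against $|R|=r$ pins down $|P|=p^{r}$.

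The main obstacle is the simultaneous control of the two coprime actions on $P$: neither two-prime corollary by itself constrains how $R$ and $Q$ act together, so the argument has to feed the joint configuration into Lemmas~\ref{lem:exam} and \ref{example} repeatedly in order to exclude every reducible or fixed-point-free possibility and thereby extract both the irreducibility and the exact order. In particular, establishing $[P,Q]\neq1$ at the outset and converting ``$C_Q(R)$ too small'' into an explicit copy of $L_9$ are the points where the three primes genuinely interact and where I expect the bulk of the work to lie.
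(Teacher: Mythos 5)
Your overall architecture is the same as the paper's (analyse the two-prime slices $RQ$, $PR$, $PQ$ via Corollary \ref{kor:2prim} and feed the joint configuration into Lemmas \ref{lem:exam} and \ref{example}), and several individual observations are correct, but two of the load-bearing steps are respectively wrong and missing. The wrong one is the Schur's lemma argument for the power automorphisms: $P$ is in general \emph{not} absolutely irreducible as an $\mathbb{F}_p[R]$-module (for $|R|=r$ acting faithfully and irreducibly, $\dim_{\mathbb{F}_p}P$ is the order of $p$ modulo $r$), so $\mathrm{End}_{\mathbb{F}_pR}(P)$ is a proper field extension of $\mathbb{F}_p$ and the centraliser of $R$ in $\mathrm{GL}(n,p)$ is a Singer cycle, not the group of scalars. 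Hence ``$C_Q(R)$ commutes with $R$'' does not yield power automorphisms. The paper instead derives from $L_9$-freeness the dichotomy that every subgroup of $Q$ acts irreducibly on $P$ or induces power automorphisms (Lemmas \ref{lem:p2Qirred} and \ref{lem:lactirreorpower}), rules out the irreducible alternative for $C_Q(R)$ via Huppert II 3.11 (an irreducibly acting $C_Q(R)$ would force $RQ=C_{RQ}(C_Q(R))$ to be cyclic), and then gets non-triviality from Lemma \ref{lem:exam}. The missing step is the order of $P$: ``tracking the irreducible actions'' is not an argument, and the paper needs a genuine external input here, namely Lemma 3.1 of \cite{Lfree} applied to $P\langle y\rangle$ after first showing that $q$ divides $p-1$; the exponent that comes out is $q$ (the number of maximal subgroups chains in $Q/\Phi(Q)$ acting on $R$), and this computation is also what finally excludes the case $R\cong Q_8$.

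That last case is a second concrete gap: Definition \ref{defi:avoidcyclic} type (hamil) explicitly permits $[R,Q]\cong Q_8$ with $Q$ a cyclic $3$-group, so ``$|R|=r$'' does \emph{not} follow from the precise form of avoiding $L_9$ as you claim; the paper can only eliminate $R\cong Q_8$ at the very end by comparing $|P|=p^q$ with the $|P|=p^2$ forced by Lemma \ref{lem:p2QQ8}. Two further points you gloss over: (i) $[P,Q]\neq 1$ is obtained in the paper not from $L_9$-freeness but from the Three Subgroups Lemma ($[R,C_Q(P)]\le C_R(P)$, so $[P,Q]=1$ would force $R=[R,Q]=[R,C_Q(P)]$ to centralise $P$, contradicting $R\not\unlhd G$), and your proposed route is not obviously workable; (ii) the equalities $C_P(R)=C_P(Q)=1$, which you need both to invoke Lemma \ref{example} (it requires $R$ irreducible on all of $P$) and to conclude that $R$ and $Q$ act irreducibly on $P$ rather than on a proper factor, occupy steps (3)--(5) of the paper's proof, including a reduction modulo $Z(G)$ and an application of Lemma \ref{example} to kill the type (cent) possibility $C_P(Q)\neq 1$. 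Without these, the argument does not close.
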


\begin{proof}

We proceed in a series of steps.

\medskip(1) The groups $PQ$ and $PR$ are not nilpotent, $Q$ is cyclic and $R\cong Q_8$ or $|R|=r$. In addition $R=[R,Q]$ and $[R,C_Q(P)]\leq C_{R}(P)$.

\begin{subproof}
By hypothesis $R$ is not normal in $G$, but $Q$ normalises $R$. Hence $P \nleq N_G(R)$ and in particular
$PR$ is not nilpotent. But $PR$ is $L_9$-free, because $G$ is.  Moreover, $RQ$ is non-nilpotent and $L_9$-free, again by hypothesis.
Then Corollary \ref{kor:2prim} implies that $R$ and $Q$ are
battens, that $R$ acts on $P$ avoiding $L_9$ and that $Q$ acts on $R$ avoiding $L_9$.
More specifically, $R$ and $Q$ are cyclic or isomorphic to $Q_8$, and $[P,R]$ as well as $[R,Q]$ are elementary abelian or isomorphic to $Q_8$.

It follows that $R\cong Q_8$ or that $R$ is cyclic of order $r$.
In both cases Lemma \ref{coprime} yields $R=[R,Q]$ and the avoiding $L_9$ action of $Q$ on $R$ gives that $Q$ is cyclic.

In addition  $[P,C_Q(P),R]=1$, $[P,R,C_Q(P)]\leq [P,C_Q(P)]=1$ and then the Three Subgroups Lemma (see for example 1.5.6 of \cite{KurzStell}) implies that $1=[R,C_Q(P),P]$. Thus $[R,C_Q(P)]\leq C_{R}(P)$.

If it was true that $[P,Q]=1$, then  $R=[R,Q]=[R,C_Q(P)]$ would centralise $P$. But this is a contradiction.
\end{subproof}

\medskip(2) $C_R(P)=1$ and  $C_Q(P) \le Z(G)$.

\begin{subproof}
Since $[P,R]\neq 1$ by (1), we can apply Corollary \ref{kor:2prim} to $PR$, and this shows that $R$ acts on $P$ avoiding $L_9$.
Otherwise (1) implies that $R\cong Q_8$ and then  Definition \ref{defi:avoidQ8} gives that $R$ acts faithfully on $P$.
If $|R|=r$, then $R$ acts faithfully of $P$ because $[P,R]\neq 1$.
In both cases we see that $C_R(P)=1$, and then the last statement of (1) implies  that $[R,C_Q(P)] \le C_R(P)=1$.
Then $C_Q(P)$ centralises $P$ and $R$, and $Q$ is cyclic by (1), and therefore it follows that $C_Q(P) \le Z(G)$.
\end{subproof}

(3) $Z(G)=1$ or $p\neq 2$.

\begin{subproof}
We suppose that $p=2$.
Let $-:G\to G/Z(G)$ be the natural homomorphism.
We show that $\bar G$ satisfies the hypotheses of our lemma.

From (1) we see that none of the groups $P$, $Q$ or $R$ is contained in $Z(G)$. We even have that $R \cap Z(G)=1$ by (1).
In particular $p,q,r \in \pi(\bar G)$ and $\bar G$ is $L_9$-free.
We see that $\bar P$ is a normal Sylow $p$-subgroup of $G$ and
that $\bar Q \in \syl_q(G)$ and $\bar R\in\syl_r(G)$ are such that $\bar R\unlhd \bar R\bar Q\leq G$.
Let $X\in\{R,Q\}$. If $\bar X\unlhd \bar G$, then $XZ(G)\unlhd G$ and $X$ is a characteristic Sylow subgroup of $XZ(G)$ and hence normal in $G$. This is a contradiction.

We deduce that all hypotheses of the lemma hold for $\bar G$ and that $[\bar R,\bar Q]=\overline{[R,Q]}=\bar R \neq 1$.
Therefore, if $Z(G) \neq 1$, then the minimal choice of $G$ implies that  $\Phi(\bar Q)$ induces non-trivial power automorphisms on the elementary abelian group $\bar P$. Then Lemma \ref{power} yields that $p\neq 2$.
\end{subproof}

(4) $C_P(R)=1$,  and the groups $[P,Q]$ and $P=[P,R]$ are elementary abelian.
\begin{subproof}
As $PR$ and $PQ$ are not nilpotent by (1), Corollary \ref{kor:2prim} implies that $X$ acts on $P$ avoiding $L_9$  and that  $[P,X]$ is elementary abelian or isomorphic to $Q_8$  for  both $X\in\{Q,R\}$.

Assume for a contradiction that $[P,X]$ is isomorphic to $Q_8$ for some $X\in\{Q,R\}$.
Then $X$ acts on $P$ of type (hamil). Hence we obtain a  group $I$ of order $1$ or $2$ such that $P\cong Q_8\times I$.
It follows that $\mathrm{Aut}(P)$ is a $\{2,3\}$-group. But this is impossible because $Q$ and $R$ both act coprimely and non-trivially on $P$ by (1), and $p=2$, $q$ and $r$ are pairwise distinct.

We conclude that $[P,Q]$ and $[P,R]$ are elementary abelian, and then it follows that $P$ is abelian, by Lemma \ref{lem:p2Qirred}, applied to $PR$.

Assume for a  further   contradiction that $C_P(R)\neq 1$.
As $R$ avoids $L_9$ in its action on $P$, it follows from Lemma \ref{lem:MOP4} that $R$ is not isomorphic to $Q_8$. Now (1) yields that $R$ is cyclic and we may apply Lemma \ref{lem:p2Qirred} to $PR$, because $[P,R]$ is elementary abelian. It follows that  $C_P(R)$ is a cyclic $2$-group and thus $q$ and $r$ are odd.

Furthermore $C_P(R)$ is normalised by $Q$, because $Q$ normalises $P$ and $R$. Since $q$ is odd, it follows that $C_P(R)$ is centralised by $Q$.
We recall that $P$ is abelian, and then (3) yields that $C_P(R)\leq Z(G)=1$. This is a contradiction.

Altogether $C_P(R)=1$ and Lemma \ref{coprime} gives that $P=[P,R]$ is elementary abelian.
\end{subproof}

(5) $C_P(Q)=1$ and $Q$ acts on $P$ avoiding $L_9$ of type (std).

\begin{subproof}
Since $PQ$ is not nilpotent and $Q$ is cyclic by (1), Corollary \ref{kor:2prim} implies that the action of $Q$ on $P$ avoids $L_9$.  But $P$ is elementary abelian by (4), and therefore the action is not of type (hamil).

We assume for a contradiction that $C_P(Q)\neq 1$.
Then it follows that $PQ$ is not of type (std).
 We  consequently have type (cent) and we see that  $C_P(Q)$ is a cyclic $2$-group. In particular $p=2$, and thus $q$ and $r$ are odd. Then $|R|=r$ by (1).

Next we claim that $G$ satisfies the hypotheses of Lemma \ref{example}.

First, $q$ and $r$ are pairwise distinct odd primes and $G$ is a finite $\{2,q,r\}$-group. From above, (1) and our assumption we see that $P \in \syl_2(G)$ is normal in $G$ and that $Q \in \syl_q(G)$ and $R\in\syl_r(G)$ are cyclic groups such that $R\unlhd RQ$.
We have shown that $R$ has order $r$.

We recall that $C_P(Q)$ is a cyclic $2$-group (first paragraph). As $r$ is odd  and $C_P(R)=1$ by (4), we see that $C_P(Q)$ is not $R$-invariant.
But $C_P(C_Q(R))$ is $R$-invariant, and now the irreducible action of $R$ on $P$ and the fact that $1\neq C_P(Q)\leq  C_P(C_Q(R))$ show that $P= C_P(C_Q(R))$. Then (2) and (4) imply that $C_Q(R)\leq C_Q(P)\leq Z(G)=1$.

By (4) $P$ is an elementary abelian $2$-group, and $P=[P,R] \neq 1$ by (1) and (4).
In particular $R$ does not induce power automorphism on $P$ by Lemma \ref{power}.
As $C_P(R)=1$ by (4), we deduce that $R$ acts irreducibly on $P$ (using Lemma \ref{lem:p2Qirred}). Finally, (1) and (4) yield that
$1\neq [P,Q]$ is elementary abelian.

All hypotheses of Lemma \ref{example} are satisfied now, and we infer that $G$ is not $L_9$-free. This is a contradiction.

Thus $C_P(Q)=1$ and we deduce that the action of $Q$ on $P$ is not of type (cen). It remains that $Q$ acts on $P$ avoiding $L_9$ of type (std).
\end{subproof}

(6) $R$ and $Q$ act irreducibly on $P$. If $X\leq Q$ induces power automorphisms on $P$, then $X$ centralises $R$.

\begin{subproof}
We recall from (2) that $C_Q(P) \le Z(G)$ and $C_R(P)=1$. Consequently
$C_{RQ}(P)=C_Q(P) \le Z(RQ)$ and
$RQ/C_Q(P)$ is isomorphic to a subgroup of $\mathrm{Out}(P)$.
Since $RQ$ is not nilpotent, we see that $RQ/C_Q(P)$ is not nilpotent.

The group $P$ is an elementary abelian $p$-group by (4) and hence, if $X\leq RQ$ induces power automorphisms on it, then
it follows that
$XC_Q(P)/C_Q(P)\leq Z(RQ/C_Q(P))$, see page 177 of \cite{Huppert}. We denote this fact by ($\ast$).
Then we deduce that $[R,X]\le C_Q(P) \le Z(RQ)$ (see above) and therefore
$[R,X]=[X,R]=[X,R,R]=1$ by Lemma \ref{coprime}.

In addition, the fact ($\ast$) shows that neither $R$ nor $Q$ induces power automorphisms on $P$.
It follows from (5) and Definition \ref{defi:avoidcyclic}~(std) that $Q$ acts irreducibly on $P$.
Furthermore (1), together with Corollary  \ref{kor:2prim}, yields that $R$ acts on $P$ avoiding $L_9$. Since $C_P(R)=1$ by (4), this action has type (std) or (hamil). In the first case, the irreducible action follows from Definition  \ref{defi:avoidcyclic}~(std), and in the second case, it follows from Lemma \ref{lem:MOP4}.
\end{subproof}

(7) $C_Q(R)$ induces non-trivial power automorphisms on $P$.

\begin{subproof}
We set $Q_0:=C_Q(R)$ and we assume that $Q_0$ acts irreducibly on $P$.
Then II 3.11 of \cite{Huppert} implies that $RQ= C_{RQ}(Q_0)$ is isomorphic to a subgroup of the multiplicative group of some field of order $|P|$, and it follows that $RQ$ is cyclic. This is a contradiction.

Thus (6) and Lemma \ref{lem:lactirreorpower}  show that $Q_0$ induces power automorphisms on $P$.
Since $R$ normalises every $Q$-invariant subgroup of $P$ by (7), we see from Lemma \ref{lem:exam} and (2) that $C_Q(R) \gneq C_Q(P)$.
\end{subproof}

(8) $|P|=p^q$ and $\Phi(Q)=C_Q(R)$.
\begin{subproof} We recall that $Q$ is cyclic, by (1).
Moreover $[R,Q]\neq 1$, whence
$C_Q(R)<Q$ and therefore we
may choose $y \in Q$  such that $ C_Q(R)<\erz{y}$.

By (5) and Lemma \ref{lem:lactirreorpower}, it follows that every subgroup of $\erz{y}$ either acts irreducibly on $P$ or induces power automorphism on it (in particular normalising every
subgroup of $P$).
Then $P\erz{y}$ satisfies  (b) of Lemma 3.1 in \cite{Lfree}, which implies that it
satisfies one of the possibilities 3.1~(i)-3.1~(iii).
By (5) and the choice of $y$, we see that 3.1~(i) is not true.
Further (7) provides some $x\in C_Q(R)$ that induces a power automorphism of order $q$ on $P$.
This implies that $q$ divides $p-1$ and therefore
$P\erz{y}$ satisfies  (ii) of Lemma 3.1~(b) in \cite{Lfree}.
It follows that $|P|=p^q$ and that, if $k$ is the largest positive integer such that $q^k$ divides $p-1$, then
 $y$ induces an automorphism of order $q^{k+1}$ on $P$.
We conclude that $q^{k+1}=|\erz{y}:C_\erz{y}(P)|=|\erz{y}:C_R(P)|$, because $Q$ is cyclic.
Finally, we deduce that $o(y)$ is uniquely determined, that $Q=\erz{y}$ and that$C_Q(P)=\Phi(Q)$.
\end{subproof}

 By (6), we see that $R$ and $Q$ act irreducibly on $P$, and (1) gives that $Q$ is cyclic. Then (8)  and (7) say that $\Phi(Q)$ induces non-trivial power automorphisms on $P$.
 In addition  $P$ is elementary abelian by (4), and it has order $p^r$ by (8).
If $|R|=r$, then $R$ is a cyclic group of order $r$ and $RQ$ is a batten. In particular $G$ satisfies the assertion of our lemma.

But $G$ is a counterexample, and then it follows that $R\cong Q_8$ and $r=2$.
Then (4) yields that $PR$ fulfills the hypothesis of Lemma \ref{lem:p2QQ8}, and consequently $p^r=p^2=|P|=p^q$ by (8). This is our final contradiction, because $q\neq r$.
\end{proof}

\begin{definition}\label{defi:avoidbatten}
Suppose that $B$ is a non-nilpotent batten  that acts coprimely on the $p$-group $P$.
We say that the action of $B$ on $P$ \textbf{avoids $L_9$}
if and only if $[P,Z(B)]\neq 1$ and if one of the following occurs:

\begin{enumerate}
\item[(Cy)] $[P,\mathcal B(B)]=1$ and  $Q$ acts on $P$ avoiding $L_9$ for every Sylow subgroup $Q$ of $B$ different from  $\mathcal B(B)$ or
\item [(NN)] $P$ is elementary abelian of order $p^{|B:\mathcal B(B)Z(B)|}$ and the Sylow subgroups of $B$ act irreducibly on $P$, while $Z(B)$ induces power automorphisms on $P$.
\end{enumerate}
As in definition \ref{defi:avoidcyclic}, we specify the type of the $L_9$-avoiding action by writing
that \textbf{"$B$ acts on $P$ avoiding $L_9$ of type ($\cdot$)"}.
\end{definition}

\begin{lemma}\label{lem:avoid1prop}
Let $B$ be a batten that acts non-trivially and avoiding $L_9$ on the $p$-group $P$.
Then the following hold:
\begin{enumerate}
\item If $C_P(B)\neq 1$, then $p=2$.
\item Either $P=[P,B]\times C_P(B)$, where $[P,B]$ is elementary abelian and $C_P(B)$ is cyclic, or  $P=[P,B]\times I$, where $I$ is a group of order at most $2$ and $[P,B]\cong Q_8$.
\item $C_P(B)$ is centralised by every automorphism of $P$ of order coprime to $p$ that leaves $C_P(B)$ invariant.
\end{enumerate}
\end{lemma}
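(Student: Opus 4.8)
The plan is to unwind the definition of an $L_9$-avoiding action of $B$ on $P$ according to the three shapes a batten may take, reducing the non-nilpotent case to the cyclic one. The guiding observation is that $C_P(B)\neq 1$ forces $p=2$ precisely because it only happens in the types (cent) and (hamil); in every other case $C_P(B)=1$, so (a) and (c) are vacuous and (b) asserts only that $P=[P,B]$ is elementary abelian or isomorphic to $Q_8$. Thus I expect the real work to sit in those two types, together with the reduction step.

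First I would settle the nilpotent battens. If $B\cong Q_8$, then Definition \ref{defi:avoidQ8} and Lemma \ref{lem:MOP4} give that $P$ is elementary abelian and $\Omega_1(B)$ inverts it; as $p$ is odd this yields $C_P(B)\leq C_P(\Omega_1(B))=1$, so $P=[P,B]$ and all three assertions are immediate. If $B$ is cyclic, Definition \ref{defi:avoidcyclic} puts us in type (std), (cent) or (hamil). In (std) and (cent) the group $P$ is abelian, so Lemma \ref{coprime} gives $P=[P,B]\times C_P(B)$ with $[P,B]$ elementary abelian; in (std) this is $P=[P,B]$ with $C_P(B)=1$, while in (cent) $C_P(B)$ is by definition a non-trivial cyclic $2$-group, forcing $p=2$, so (a) and (b) hold. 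In (hamil) the definition yields $P=[P,B]\times I$ with $[P,B]\cong Q_8$ and $|I|\le 2$ (the second alternative of (b)) and $p=2$; the $\mathrm{Alt}_4$-structure shows that $B$ acts fixed-point-freely on $[P,B]/Z([P,B])$ and centralises $I$, whence $C_P(B)=Z([P,B])\times I$ is elementary abelian of order at most $4$ and contains $\Phi(P)=Z([P,B])$.

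Next I would treat the non-nilpotent battens through Definition \ref{defi:avoidbatten}. Fix a Sylow subgroup $R$ of $B$ different from $\mathcal B(B)$; by Lemma \ref{lem:batten} we have $Z(B)=\Phi(R)\le R$. In type (NN) the subgroup $R$ acts irreducibly on $P$, so its fixed-point submodule $C_P(R)$ is $1$ or $P$; the latter is impossible, since it would give $[P,Z(B)]\le[P,R]=1$, so $C_P(B)\le C_P(R)=1$ and we land in the first alternative of (b). In type (Cy) we have $[P,\mathcal B(B)]=1$, and writing $b=nr$ with $n\in\mathcal B(B)$, $r\in R$ gives $[x,b]=[x,r]$ for every $x\in P$; hence $[P,B]=[P,R]$ and $C_P(B)=C_P(R)$. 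Since the lemma's hypothesis that $B$ act non-trivially forces $R$ to act non-trivially, $R$ is a cyclic batten acting on $P$ avoiding $L_9$, and assertions (a)--(c) for $B$ become \emph{verbatim} those for $R$, already proved in the cyclic case.

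It remains to prove (c), which is vacuous whenever $C_P(B)=1$. In the (cent) case $C_P(B)$ is a cyclic $2$-group, and $\aut$ of a cyclic $2$-group is itself a $2$-group, so any automorphism of $P$ of odd order restricts trivially to $C_P(B)$. In the (hamil) case $C_P(B)$ is elementary abelian of order at most $4$ and contains the characteristic subgroup $\Phi(P)=Z([P,B])$ of order $2$; an odd-order automorphism of $P$ stabilising $C_P(B)$ fixes $\Phi(P)$ pointwise, and the stabiliser of a non-zero vector in $\aut(C_P(B))\cong\Sym_3$ has order $2$, so the automorphism centralises $C_P(B)$. I expect the main obstacle to be exactly this bookkeeping in the (hamil) and $Q_8$ situations: correctly pinning down $C_P(B)$ inside $P\cong Q_8\times I$ and checking that the relevant $p'$-automorphisms are forced to act trivially, rather than any single conceptually hard step.
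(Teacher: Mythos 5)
Your proposal is correct and follows essentially the same route as the paper: the identical case split over the batten types, the same reduction of the non-nilpotent (Cy) case to a cyclic Sylow subgroup, and the same observations that (std), (NN) and the $Q_8$ case force $C_P(B)=1$. Your explicit stabiliser-of-a-nonzero-vector argument in $\aut(C_P(B))\cong\Sym_3$ for part (c) in the (hamil) case is exactly the detail the paper's proof leaves implicit.
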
	
	
\begin{proof}
If $B \cong Q_8$, then Definition \ref{defi:avoidQ8} and Lemma \ref{lem:MOP4} imply that
$P$ is elementary abelian and that
$B$ acts irreducibly on it. We conclude that $P=[P,B]$ is elementary abelian and we deduce from Lemma \ref{coprime} that $C_P(B)=1$. Hence, in this case, all statements of our lemma hold.

Now suppose that $B$ is not nilpotent and that it acts of type (NN). Then Definition \ref{defi:avoidbatten} states that, once more, $P$ is elementary abelian and
$B$ acts irreducibly on it. Again we see that $P=[P,B]$ is elementary abelian, and as before all statements hold.

Next we suppose $B$ is not nilpotent and that it acts of type (Cy), or that $B$ is cyclic.
In the first case $B$ has a cyclic Sylow subgroup $Q$ that acts on $P$ avoiding $L_9$ such that $C_P(Q)=C_P(B)$ and $[P,B]=[P,Q]$ by  Definition \ref{defi:avoidbatten}.
In the second case we set $Q:=B$.
\\Then, in both cases, $Q$ is a cyclic group that acts on $P$ avoiding $L_9$  such that $C_P(Q)=C_P(B)$ and $[P,B]=[P,Q]$.
If $Q$ acts of type (std), then $P=[P,Q]$ is elementary abelian by Definition \ref{defi:avoidcyclic}. Again we deduce the statements of our lemma.

Suppose that $Q$ acts of type (cent). Then Definition \ref{defi:avoidcyclic} yields that $[P,Q]=[P,B]$ is elementary abelian, that $P$ is abelian and that $C_P(Q)=C_P(B)$ is a cyclic $2$-group.
In particular $P=[P,B]\times C_P(B)$ by Lemma \ref{coprime}.
It also follows that $C_P(B)$ is centralised by every automorphisms of $P$ of odd order that leaves $C_P(B)$ invariant. These are the statements of our lemma.

Finally, suppose that $Q$ acts of type (hamil).
Then Definition \ref{defi:avoidcyclic} yields that $[P,Q]\cong Q_8$ and $P=[P,Q]\times I$, where $I$ is a group of order at most $2$.  In particular statement (a) is true.
Moreover, we deduce that $C_P(Q)\leq \Omega_1(P)= \Phi([P,Q])\times I$, where $\Phi([P,Q])$ is cyclic of order $2$. In particular $\Phi([P,Q])$ is centralised by $B$ and by every automorphisms of $P$.
We conclude that $C_P(Q)$ is elementary abelian of order at most $4$ and that every automorphism of $P$ centralises a cyclic subgroup of order $2$.
This implies (b).
\end{proof}

\section{Avoiding $L_9$}

We now work towards a classification of arbitrary $L_9$-free groups, and therefore we need to understand in more detail the group structures that appear when "$L_9$ is avoided" in the sense of the previous section.

\begin{definition}\label{defi:avoidbattengroup}
Suppose that $K$ is a batten group that acts coprimely on the $p$-group $P$.
We say that the action of $K$ on $P$ \textbf{avoids $L_9$}
if and only if $[P,K]\neq 1$ and every batten of $K$ either centralises $P$ or avoids $L_9$ in its action on $P$.
\end{definition}

\begin{lemma}\label{avoidbattensubgroup}Let $K$ be a batten group that acts coprimely on the $p$-group $P$ avoiding $L_9$. Suppose further that $L\leq K$ and $L_0\unlhd L$ such that $[P,L]\neq 1=[P,L_0]$.
Then $L/L_0$ acts on $P$ avoiding $L_9$. In particular $L/L_0$ is a batten group.
\end{lemma}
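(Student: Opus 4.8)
The plan is to reduce to a single batten via the coprime direct decomposition of $K$, and then to run through the possible types of batten and of $L_9$-avoiding action. Write $K=B_1\times\cdots\times B_n$ as a direct product of battens of pairwise coprime order. Since the factors have coprime orders, every subgroup of $K$ is the direct product of its intersections with the $B_j$, so $L=\prod_j L_j$ and $L_0=\prod_j L_{0,j}$, where $L_j:=L\cap B_j$ and $L_{0,j}:=L_0\cap B_j\unlhd L_j$; consequently $L/L_0\cong\prod_j L_j/L_{0,j}$ is again a product of groups of pairwise coprime order. As $L_0$ centralises $P$ we have $[P,L/L_0]=[P,L]\neq 1$, so by Definition \ref{defi:avoidbattengroup} it suffices to show, for each $j$, that $L_j/L_{0,j}$ is a batten group whose action on $P$ either centralises $P$ or avoids $L_9$; the batten decompositions of the factors then assemble into one for $L/L_0$. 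Fix $j$ and write $B:=B_j$, $V:=L_j$, $V_0:=L_{0,j}$, and note $V_0\le C_V(P)$. By Definition \ref{defi:avoidbattengroup}, $B$ either centralises $P$ or acts on $P$ avoiding $L_9$; if it centralises $P$, then $V/V_0$ does too, and one only needs that $V/V_0$ is a batten group, which is immediate when $B$ is cyclic and is the delicate point otherwise.

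Suppose $B$ acts avoiding $L_9$. If $B\cong Q_8$ the action is faithful by Definition \ref{defi:avoidQ8}, hence $V_0\le C_B(P)=1$ and $V/V_0=V\le Q_8$; Lemma \ref{lem:MOP4} (the central involution inverts the elementary abelian $P$, and every subgroup of order at least $4$ acts irreducibly) then shows that $V$ is cyclic or equal to $Q_8$ and acts avoiding $L_9$ of type (std) or as in Definition \ref{defi:avoidQ8}. If $B$ is cyclic, $V$ and $V/V_0$ are cyclic, hence battens, and the task is to inherit the type: in types (std) and (cent) I would use Lemma \ref{lem:lactirreorpower} together with the fact that a non-trivially, irreducibly acting subgroup satisfies $[P,V]=[P,B]$ (by coprime action, Lemma \ref{coprime}) to see that $V/V_0$ acts of the same type, whereas in type (hamil) every proper subgroup of $B$ centralises $P$, so $[P,V]\neq 1$ forces $V=B$ and the passage to $B/V_0$ with $V_0\le C_B(P)=\Phi(B)$ preserves $[P,B]B/Z([P,B]B)\cong\mathrm{Alt}_4$ (since $V_0$ is central in $[P,B]B$ and the quotient by a central subgroup with $\mathrm{Alt}_4$-quotient has that same quotient).

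The non-nilpotent case carries the real weight. Here $B=\mathcal B(B)R$ with $\mathcal B(B)$ of prime order $q$ and $R$ cyclic, $Z(B)=\Phi(R)=C_R(\mathcal B(B))$ and $[P,Z(B)]\neq 1$, and by Lemma \ref{lem:battensub} either $V=B$ or $V$ is cyclic. The key preliminary is to pin down $C_B(P)$: using that $\mathcal B(B)$ acts trivially in type (Cy), or irreducibly and faithfully in type (NN), that $O_r(B)=Z(B)$, and that $[P,Z(B)]\neq 1$, one shows that $C_B(P)$ is a \emph{proper} subgroup of $Z(B)$; hence $V_0\le C_V(P)\le Z(B)$ is central and $V_0<\Phi(R)$. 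For $V=B$ this makes $B/V_0$ a non-nilpotent batten again, because $V_0<\Phi(R)$ forces $|R/V_0|\ge r^2$, and the type ((Cy) resp. (NN)) is inherited since the $P$-action of $\mathcal B(B)$ and of $R$ is unchanged modulo the central, $P$-centralising subgroup $V_0$ and $[P,Z(B/V_0)]=[P,Z(B)]\neq 1$. For cyclic $V$ I would split $V=V_q\times V_r$ into its $q$- and $r$-parts: then $V_q\le\mathcal B(B)$ either centralises $P$ (type (Cy)) or contributes the $\mathcal B(B)$-part of an (NN)-action, while $V_r$ lies in a Sylow $r$-subgroup, which again acts avoiding $L_9$, so the cyclic analysis of the previous paragraph applies to $V_r$.

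I expect the main obstacle to be exactly the phenomenon flagged in the warning after Definition \ref{defi:batten}: a proper quotient of a batten need not be a batten group, the prototypes being $Q_8/Z(Q_8)\cong C_2\times C_2$ and, for non-nilpotent battens, $B/Z(B)\cong\mathcal B(B)\rtimes(R/\Phi(R))$. The whole difficulty is therefore concentrated in controlling $V_0$ so that $V/V_0$ stays a batten group with its $L_9$-avoiding type intact. This is automatic for faithful $Q_8$-battens (where $V_0=1$) and for cyclic battens (where quotients remain cyclic), and for non-nilpotent battens that act avoiding $L_9$ the decisive point is the proper inclusion $V_0\le C_B(P)<Z(B)$, which keeps $R/V_0$ of order at least $r^2$ and so prevents the collapse of the two-prime structure. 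The most calculation-heavy step will be verifying that types (hamil) and (NN) survive the quotient — the $\mathrm{Alt}_4$-quotient and the irreducibility of the Sylow actions after factoring out $V_0$ — but this reduces to the observation that $V_0$ lies both in the kernel of the action and in the centre; the remaining care is needed precisely where a \emph{centralising} $Q_8$ or non-nilpotent batten meets a normal subgroup $V_0$ cutting it down to such a forbidden section, and it is here that the hypotheses must be used most sharply.
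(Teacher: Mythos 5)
Your overall strategy --- decompose $K$ into its battens of pairwise coprime order, reduce to a single batten $B$ with $V=L\cap B$ and $V_0=L_0\cap B$, and then argue type by type --- is essentially the paper's, and your treatment of the $Q_8$ and cyclic battens is sound. The genuine gap is in the non-nilpotent case, in your ``key preliminary'' that $C_B(P)$ is a proper subgroup of $Z(B)$. That claim is false precisely in type (Cy): there $[P,\mathcal B(B)]=1$ by Definition \ref{defi:avoidbatten}, so $\mathcal B(B)\leq C_B(P)$, while $\mathcal B(B)\cap Z(B)=1$ because $B$ is non-nilpotent; in fact $C_B(P)=\mathcal B(B)\,C_R(P)$, and only its $r$-part lies below $Z(B)=\Phi(R)$. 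Consequently $V_0$ need not be contained in $\Phi(R)$: the hypotheses of the lemma allow, for instance, $L=B$ and $L_0=\mathcal B(B)$ (this is normal in $B$ and centralises $P$ in type (Cy), while $[P,B]\neq 1$ because $[P,Z(B)]\neq 1$). In that situation $B/V_0\cong R$ is cyclic, not a non-nilpotent batten, so your step ``$V_0<\Phi(R)$ forces $|R/V_0|\geq r^2$, hence $B/V_0$ is a non-nilpotent batten of the same type'' fails. The lemma is still true there, but it requires the separate observation that $B/V_0$ is then a quotient of $R$ acting exactly as $R$ does, so the cyclic analysis applies --- which is precisely how the paper's proof disposes of the subcase in which $L/L_0$ has prime power order.

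A second, softer point: you explicitly defer ``the delicate point'' of showing that $V/V_0$ is a batten group when the batten $B$ centralises $P$ and is not cyclic, and you never return to it. This is where the statement is most fragile, since a $Q_8$ or a non-nilpotent batten that centralises $P$ has sections (such as $Q_8/Z(Q_8)\cong C_2\times C_2$) that are not batten groups; the paper's own induction also silently restricts to battens with $[P,B]\neq 1$, and in all of its applications $L_0$ is trivial or of prime order inside a batten that acts non-trivially, so the issue never materialises there. Still, a complete write-up must either settle this case or record the restriction on $L_0$ under which it is vacuous, rather than leave it as an acknowledged hole.
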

\begin{proof}By induction we may suppose that $K$ is a batten and that either $L_0=1$ and $L$ is a maximal subgroup of $K$ or that $L_0$ is a minimal normal subgroup of $K=L$.
Thus either $|L_0|$ has order $q$ or $|K:L|=q$. Since $L_0\leq C_K(P)$, we first remark that $L/L_0$ induces automorphisms on $P$.

If $K\cong Q_8$, then $K$ acts faithfully on $P$ by Definition \ref{defi:avoidQ8}. Thus $L_0\leq C_K(P)=1$ and it follows that $L$ is a cyclic group of order $4$.
Thus Lemma \ref{lem:MOP4} yields  that $\Omega_1(L)$ inverts $P$ and that $L$ acts irreducibly on the elementary abelian group $P=[P,L]$. Then we see that $L\cong L/L_0$ acts on $P$ avoiding $L_9$ of type (std).

Next suppose that $K$ is cyclic. Then $L/L_0$ is cyclic.
If $K$ acts of type (std) on $P$, then $L$ and every subgroup of $L$ act irreducibly or via inducing power automorphisms on the elementary abelian group $P=[P,K]$. Since $[P,L]\neq 1$ and power automorphisms are universal, by Lemma 1.5.4 of \cite{Schmidt}, it follows that $P=[P,L]$.
Moreover, the action of $L$ on $P$ is equivalent to that of $L/L_0$, and then it follows that $L/L_0$ acts on $P$ avoiding $L_9$ of type (std).
\\If $K$ acts  on $P$ of type (cent), then $L$ and all its subgroups act irreducibly or trivially on the elementary abelian group $[P,K]$. Again the fact that $[P,L]\neq 1$ implies that $P=[P,L]$, and then $C_P(L)=C_P(K)$ by Lemma \ref{coprime}.
Since the action of $L$ on $P$ is equivalent to that of $L/L_0$, it follows that $L/L_0$ acts on $P$ avoiding $L_9$ of type (cent).
\\We suppose now that $K$ acts of type (hamil). Then $K$ is a cyclic $3$-group and $K/C_K(P)$ has order $3$.
It follows that $L=K$. But again,  the action of $L/L_0=K/L_0$ on $P$ is equivalent to the action of $K$ on $P$, which means that it has type (hamil).

We finally suppose that $K$ is a non-nilpotent batten. Let $R$ be a Sylow subgroup of $K$ such that $K=\mathcal B(K)\cdot R$.
Suppose first that $L/L_0$ is a $q$-group.
Then our choice of $L$ and $L_0$ implies that $L/L_0\cong R$.
If $K$ acts on $P$ of type (Cy) in this case, then it follows that $L/L_0\cong R$ is cyclic and that it acts on $P$ avoiding $L_9$, according to Definition \ref{defi:avoidbatten}. Otherwise, if $K$ acts of type (NN), then $\mathcal B(K)\nleq C_K(P)$ and then $L_0=1$. It follows that $L=R$ acts irreducibly on the elementary abelian group $P$, whence $P=[P,L]=[P,L/L_0]$. In addition $\Phi(L)$ and all of its subgroups induce power automorphism on $P$.
Altogether the cyclic group $L/L_0\cong L$ acts on $P$ of type (std).
\\Now we suppose that $L/L_0$ does not have prime power order.
Then $L_0\leq \Phi(R)=Z(K)$. Now if  $L/L_0$ is nilpotent, then $L\neq K$ and therefore $L_0=1$.
It follows from Lemma \ref{lem:batten} that $L=Z(K)\times \mathcal B(K)$.
We have already proven that $R$ acts on $P$ avoiding $L_9$, and then $Z(K)$ also acts on $P$ avoiding $L_9$.
In addition $\mathcal B(K)$ either centralises $P$ or it acts irreducibly on the elementary abelian group $P=[P,\mathcal B(K)]$. Since $\mathcal B(K)$ has prime order, it follows that the cyclic group $\mathcal B(K)$ acts on $P$ avoiding $L_9$ of type (std). Altogether $L/L_0\cong L=Z(K)\times \mathcal B(K)$ acts on $P$ avoiding $L_9$.
\\Finally, suppose that $L/L_0$ is not nilpotent. Then $L$ is not nilpotent and hence Lemma \ref{lem:battensub} implies that $L=K$. We conclude that $L_0\neq 1$. Since $[P,Z(K)]\neq 1$ by Definition \ref{defi:avoidbatten}, it follows that $L_0$ is a proper subgroup of $Z(K)$ and that $L/L_0=K/L_0\cong \mathcal B(K) \rtimes R/L_0$ is a non-nilpotent batten.
If $[P, \mathcal B(K)]=1$, then our investigation above imply that the cyclic group $R/L_0$ acts on $P$ avoiding $L_9$, and then $K/L_0$ acts on $P$ avoiding $L_9$. Otherwise $1\neq [P,\mathcal B(K)]$ is elementary abelian of order $p^{|K:\mathcal B(K)Z(K)|}=p^{|K/L_0:\mathcal B(K)Z(K)/L_0|}$, moreover  $\mathcal B(K)\cong \mathcal B(K)L_)/L_0$ and $R/L_0$ act irreducibly on $P$. At the same time  $Z(K/L_0)=Z(K)/L_0$ induces power automorphisms on $P$.
Altogether $K/L_0$ acts on $P$ avoiding $L_9$ of type (NN).
\end{proof}

\begin{lemma}\label{lem:Schmidt}
Let $K$ be a batten group that acts coprimely on the $p$-group $P$ avoiding $L_9$.
Then the following assertions are true:
\begin{enumerate}
\item If $L \unlhd K$, then $[P,K]=[P,L]$ or $[P,L]=1$.
\item $[P,K]$ is elementary abelian or isomorphic to $Q_8$.
\end{enumerate}
\end{lemma}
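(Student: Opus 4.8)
The plan is to write the batten group as the direct product $K=B_1\times\cdots\times B_n$ of its battens and to control $[P,K]$ through the factors. The first step is a product formula $[P,K]=[P,B_1]\cdots[P,B_n]$: each $[P,B_i]$ is normal in $PB_i$ and is invariant under the remaining, commuting factors, so the product $W$ of these subgroups is $K$-invariant; since every $B_i$ centralises $P/[P,B_i]$ and hence $P/W$ we get $[P,K]\le W$, while $W\le[P,K]$ is clear. Let $J$ be the set of indices with $[P,B_i]\neq 1$. Then $[P,K]=\prod_{i\in J}[P,B_i]\neq 1$ forces $J\neq\emptyset$, and for $i\in J$ the batten $B_i$ acts non-trivially on $P$ avoiding $L_9$. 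The same formula applied to $L=\prod_i(L\cap B_i)$ (with $L\cap B_i\unlhd B_i$ of pairwise coprime order) gives $[P,L]=\prod_{i\in J}[P,L\cap B_i]$.

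The engine of the argument is the observation that $C_P(B_i)=C_P(B_j)$ for all $i,j\in J$: the factor $B_j$ induces $p'$-automorphisms on $P$ and, commuting with $B_i$, leaves $C_P(B_i)$ invariant, so Lemma \ref{lem:avoid1prop}\,(c) yields $C_P(B_i)\le C_P(B_j)$, and symmetry gives equality. Write $C$ for this common value, which equals $C_P(K)=\bigcap_i C_P(B_i)$. Fixing $i\in J$, Lemma \ref{lem:avoid1prop}\,(b) gives a dichotomy: either $P=[P,B_i]\times C$ with $[P,B_i]$ elementary abelian and $C$ cyclic (so $P$ is abelian), or $[P,B_i]\cong Q_8$ with $P=[P,B_i]\times I_i$, $|I_i|\le 2$, where by the hamiltonian clause of Definition \ref{defi:avoidcyclic} the batten $B_i$ is a cyclic $3$-group and $p=2$.

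For statement (b) I treat the two cases. In the abelian case $P=[P,B_i]\times C=[P,K]\times C$ by Lemma \ref{coprime}, and since $[P,B_i]\le[P,K]$ are both complements of $C$ in the abelian group $P$ they coincide, so $[P,K]=[P,B_i]$ is elementary abelian. In the $Q_8$ case I claim $J=\{i\}$: here $C=C_P(B_i)=Z(Q_8)\times I_i$, of order at most $4$; for a second index $j\in J$ we would have $C_P(B_j)=C$, and Lemma \ref{lem:avoid1prop}\,(b) applied to $B_j$ then forces either $[P,B_j]\cong Q_8$ (making $B_j$ a second cyclic $3$-group, impossible by coprimality) or $P=[P,B_j]\times C_P(B_j)$ with $C_P(B_j)$ cyclic and $P$ abelian (impossible, as $C$ need not be cyclic and $P$ is non-abelian). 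Hence $[P,K]=[P,B_i]\cong Q_8$, completing (b).

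For statement (a), take $L\unlhd K$ with $[P,L]\neq 1$. In the $Q_8$ case $J=\{i_0\}$, and since $B_{i_0}$ is a cyclic $3$-group with $|B_{i_0}:C_{B_{i_0}}(P)|=3$, every proper subgroup of $B_{i_0}$ centralises $P$; as $[P,L]=[P,L\cap B_{i_0}]\neq 1$ this forces $L\cap B_{i_0}=B_{i_0}$, so $[P,L]=[P,B_{i_0}]=[P,K]$. In the abelian case I first show $C_P(L)=C_P(K)$: by Lemma \ref{avoidbattensubgroup} the quotient $L/C_L(P)$ is a batten group acting on $P$ avoiding $L_9$ and non-trivially, so the centraliser argument of the second paragraph applies to it and shows that $C_P(L)$ is centralised by every $p'$-automorphism of $P$ leaving it invariant; since $K$ normalises $L$ and hence $C_P(L)$, it centralises $C_P(L)$, giving $C_P(L)\le C_P(K)$, and the reverse inclusion is trivial. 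Then $P=[P,L]\times C=[P,K]\times C$ with $[P,L]\le[P,K]$, so the two complements of $C$ agree and $[P,L]=[P,K]$. The step I expect to be the main obstacle is precisely this coordination of several simultaneously acting battens — proving that the non-trivial factors share one centraliser and hence produce one common commutator subgroup; once Lemma \ref{lem:avoid1prop}\,(c) is brought to bear the problem collapses to the elementary fact that two nested complements of a subgroup of an abelian group coincide, but isolating that mechanism and separately excluding a second non-trivial batten in the $Q_8$ case is the crux.
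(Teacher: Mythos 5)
Your argument is correct in substance, but it takes a more explicit route than the paper, and one step needs patching. The paper's proof skips your batten-by-batten decomposition entirely: it picks a single batten $B$ of $L$ with $[P,B]\neq 1$, notes that $B$ is characteristic in $L$ and hence normal in $K$, deduces from Lemma \ref{lem:avoid1prop}~(c) that the $K$-invariant subgroup $C_P(B)$ is centralised by $K$, so $C_P(B)=C_P(K)$, and then finishes with the single identity $[P,K]=[[P,B]C_P(K),K]=[[P,B],K]\le [P,B]$, which needs no case distinction between abelian $P$ and the hamiltonian configuration; part (b) then falls out of (a) together with Lemma \ref{lem:avoid1prop}~(b). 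Your ``engine'' is the same application of Lemma \ref{lem:avoid1prop}~(c), but your finishing move (two nested complements of $C$ in an abelian group coincide) forces you to prove (b) first and to split off the $Q_8$ case separately. What your version buys is the explicit product formula $[P,K]=\prod_i[P,B_i]$ and the fact that all non-trivially acting battens share one centraliser — both true and occasionally useful; what it costs is length and the extra case.

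The point to patch is in the $Q_8$ branch of (a), where you assert that $B_{i_0}$ is a cyclic $3$-group and conclude that every proper subgroup of $B_{i_0}$ centralises $P$. Lemma \ref{lem:avoid1prop}~(b) alone does not give cyclicity: its proof reaches the $Q_8$ conclusion either when $B_{i_0}$ is cyclic of type (hamil) or when $B_{i_0}$ is a non-nilpotent batten of type (Cy) whose cyclic Sylow subgroup $Q$ acts of type (hamil). In the latter situation $Q$ is a proper subgroup of $B_{i_0}$ that does not centralise $P$, so your inference ``$[P,L\cap B_{i_0}]\neq 1$ forces $L\cap B_{i_0}=B_{i_0}$'' would fail. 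That situation is in fact vacuous, but this needs an argument: type (hamil) gives $|Q:C_Q(P)|=3$ and hence $\Phi(Q)\le C_Q(P)$, while Definition \ref{defi:avoidbatten} requires $[P,Z(B_{i_0})]\neq 1$ with $Z(B_{i_0})=\Phi(Q)$ by Lemma \ref{lem:batten} — a contradiction. Either add this observation, or avoid it altogether by noting that in any case $C_{B_{i_0}}(P)$ has index $3$ in $B_{i_0}$, so $[P,L\cap B_{i_0}]\neq 1$ forces $(L\cap B_{i_0})C_{B_{i_0}}(P)=B_{i_0}$ and therefore $[P,L\cap B_{i_0}]=[P,B_{i_0}]=[P,K]$. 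With that repair the proof is complete.
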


\begin{proof} Let $L\unlhd K$ be such that $[P,L]\neq 1$.
Then $L$ is a batten group by Lemma \ref{lem:battensub} and therefore there is a batten $B$ of $L$ such that $[P,B]\neq 1$.
Assume for a contradiction that $[P,B]\leq [P,L]\lneq [P,K]\leq P$.
Then the fact that $P\neq [P,B]$ implies that $C_P(B)\neq 1$ by Lemma \ref{coprime}.
In addition $B$  avoids $L_9$ in its action  on $P$, by Lemma \ref{avoidbattensubgroup}.
Since $B$ is a batten of $L$, it is characteristic in $L$, and therefore $B \unlhd K$.
In particular $C_P(B)$ is $K$-invariant and hence it is centralised by $K$ by Lemma \ref{lem:avoid1prop}~(c).
This implies that $C_P(B)=C_P(K)$.
Finally $[P,K]=[[P,B]C_{P}(B),K]=[[P,B]C_P(K),K]=[[P,B],K]\leq [P,B]\lneq [P,K]$, which is  a contradiction.
In particular (a) is true.

Together with Lemma \ref{lem:avoid1prop}~(b), the statement in (b) follows from (a).
\end{proof}

\begin{lemma}\label{lem:laction}
Let $K$ be a batten group that acts on the $p$-group $P$ avoiding $L_9$, and suppose that $H$ is a subgroup of $K$. Then $H$ centralises $P$ or $[P,H]=[P,K]$.
\\Moreover, $H$ induces power automorphisms on $P$ or it acts irreducibly on $[P,K]/\Phi([P,K])$.
\end{lemma}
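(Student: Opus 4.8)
The plan is to reduce everything to a single batten and then reassemble the direct factors. Write $K=B_1\times\cdots\times B_n$ as a direct product of battens of pairwise coprime order. Since the $B_i$ have coprime orders, $H$ is the direct product $H=H_1\times\cdots\times H_n$ of the subgroups $H_i:=H\cap B_i\le B_i$, exactly as in the proof of Lemma \ref{lem:battensub}. Put $V:=[P,K]$ and $M:=V/\Phi(V)$; by Lemma \ref{lem:Schmidt}~(b) the group $V$ is elementary abelian or isomorphic to $Q_8$, and $K$, hence $H$, normalises $V$ and so acts on $M$. Call a factor $B_i$ \emph{active} if $[P,B_i]\neq 1$. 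An active $B_i$ does not centralise $P$, so it avoids $L_9$ by Definition \ref{defi:avoidbattengroup}, and since $B_i\unlhd K$ is a direct factor, Lemma \ref{lem:Schmidt}~(a) gives $[P,B_i]=V$; in particular $M=[P,B_i]/\Phi([P,B_i])$ for every active $B_i$.

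\textbf{Single-batten dichotomy.}
The crucial intermediate step is the following: if $B$ is a batten acting on $P$ avoiding $L_9$ and $H_0\le B$, then $H_0$ induces power automorphisms on $P$ or acts irreducibly on $[P,B]/\Phi([P,B])$. For nilpotent $B$ this is precisely Lemma \ref{lem:lactirreorpower}. If $B$ acts of type (NN), then $[P,B]/\Phi([P,B])=P$ is elementary abelian and, by Definition \ref{defi:avoidbatten}, every Sylow subgroup of $B$ acts irreducibly; thus if $H_0$ contains a Sylow subgroup of $B$ it acts irreducibly, while otherwise $H_0$ avoids $\mathcal B(B)$, is a $q$-group, and is a proper subgroup of a cyclic Sylow subgroup, hence lies in $Z(B)=\Phi(R)$ and induces power automorphisms. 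If $B$ acts of type (Cy), then $\mathcal B(B)$ centralises $P$, so the action of $H_0$ on $P$ coincides with that of its image in the cyclic batten $B/\mathcal B(B)\cong R$, and the dichotomy follows by applying Lemma \ref{lem:lactirreorpower} to $R$ and transporting the conclusion back to $H_0$.

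\textbf{First assertion.}
Assume $[P,H]\neq 1$. Then some $H_j$ fails to centralise $P$, so $B_j$ is active and $M=[P,B_j]/\Phi([P,B_j])$. Applying the dichotomy to $H_j\le B_j$, either $H_j$ acts irreducibly on $M$, or $H_j$ induces a power automorphism, necessarily nontrivial because $[P,H_j]\neq 1$. In either case $H_j$ acts nontrivially on $M$, since $[V,H_j]\le\Phi(V)$ would force $[P,H_j]=1$ by Lemma \ref{coprime}; hence $[M,H_j]=M$ (by irreducibility, respectively because a nontrivial power automorphism acts as a nonidentity scalar on the elementary abelian $M$, the case $V\cong Q_8$ with $H_j$ a power automorphism being excluded by Lemma \ref{power}). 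As $[[P,K],H_j]\le[P,H_j]\le V$, this gives $[P,H_j]\Phi(V)=V$, and the defining property of the Frattini subgroup ($5.2.3$ of \cite{KurzStell}) yields $[P,H_j]=V$. Since $[P,H_j]\le[P,H]\le[P,K]=V$, we conclude $[P,H]=[P,K]$. If instead $[P,H]=1$, then $H$ centralises $P$ and induces power automorphisms, so the second assertion holds in that case too.

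\textbf{Second assertion and main obstacle.}
We may assume $H$ does not induce power automorphisms on $P$. The power automorphisms of $P$ form a subgroup of $\aut(P)$, and each factor $H_i$ with $[P,H_i]=1$ induces the identity; hence some $H_j$ does not induce power automorphisms, so $[P,H_j]\neq 1$, $B_j$ is active, and $M=[P,B_j]/\Phi([P,B_j])$. The single-batten dichotomy then forces $H_j$ to act irreducibly on $M$, and therefore the larger group $H$ acts irreducibly on $M$ as well. The step I expect to be most delicate is the single-batten dichotomy for non-nilpotent battens of type (NN): one must see that a subgroup not containing a Sylow subgroup is absorbed into the centre $Z(B)$, which rests on the fact that every proper subgroup of the cyclic complement lies in its Frattini subgroup $Z(B)=\Phi(R)$; the type (Cy) case is easier once one notes that $\mathcal B(B)$ acts trivially, so the problem collapses to the cyclic batten $B/\mathcal B(B)$.
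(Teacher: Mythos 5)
Your argument is correct. It follows the same broad strategy as the paper --- reduce to a single batten and split on the types via Lemma \ref{lem:lactirreorpower} and the (Cy)/(NN) dichotomy --- but the technical route differs in two places. For the first claim, the paper picks a $q$-subgroup $Q\le H$ with $[P,Q]\neq 1$ and argues by contradiction through Lemma \ref{lem:battenheart} (locating $Q$ inside a non-nilpotent batten $B=\mathcal B(B)Q$ when $Q$ is not normal in $K$), whereas you decompose $H$ as the direct product of the $H\cap B_i$ along the battens of $K$ and run a direct Frattini-quotient argument on $[P,K]/\Phi([P,K])$; your version avoids Lemma \ref{lem:battenheart} entirely and is constructive rather than by contradiction. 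For the second claim, the paper passes to a batten of $H$ (which is one by Lemma \ref{lem:battensub}) and invokes Lemma \ref{avoidbattensubgroup} to see that it still avoids $L_9$, while you prove the dichotomy for an arbitrary subgroup $H_0$ of a batten $B$ of $K$ directly from Definitions \ref{defi:avoidbatten} and \ref{defi:avoidQ8}: for type (NN) you use that a subgroup containing no Sylow subgroup of $B$ lies in $Z(B)=\Phi(R)$, and for type (Cy) you factor the action through $B/\mathcal B(B)$ and fall back on Lemma \ref{lem:lactirreorpower}. The two approaches cost about the same; yours trades the two auxiliary lemmas for a short explicit analysis of the subgroups of a non-nilpotent batten, and all the delicate points you flag (the coprime-action step $[V,H_j]\le\Phi(V)\Rightarrow[P,H_j]=1$, the exclusion of the $Q_8$-with-power-automorphisms case via Lemma \ref{power}, and the absorption of non-Sylow subgroups into $Z(B)$) check out.
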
	

\begin{proof}Let $H\leq K$. If $H$ centralises $P$, then it induces power automorphisms on $P$.
We may suppose that $[P,H]\neq 1$. Then $H$ has a $q$-subgroup $Q$ such that $[P,Q]\neq 1$.
Therefore, if $Q\unlhd K$, then we have that $[P,K]=[P,Q]$  by Lemma  \ref{lem:Schmidt}~(a).
Then the fact that $[P,Q]\leq [P,H]\leq [P,K]$ yields that $[P,H]=[P,K]$.

Assume for a contradiction that $[P,K]\neq [P,H]$.
Then Lemma \ref{lem:battenheart} implies that $K$ has a non-nilpotent batten $B$ such that $B=\mathcal B(B) Q$.
We moreover deduce that $[P,Q]\lneq [P,K]$ and $[P,B]=[P,K]$ by Lemma \ref{lem:Schmidt}~(a), because $B\unlhd K$.
Since the action of $K$ on $P$ avoids $L_9$, the action of $B$ also does.
If $B$ acts of type (Cy), then we obtain the contradiction that $[P,Q]=[P,B]$.
Thus $B$ acts of type (NN) and in particular $Q$ acts irreducibly on $P$. But this is impossible as well.
It follows that $[P,H]=[P,K]$.

Assume for a further  contradiction  that $H\leq K$ neither induces power automorphisms on $P$ nor does it act irreducibly on $[P,K]/\Phi([P,K])=[P,H]/\Phi([P,H])$.
Then there is a batten $B$ of $H$ that neither induces power automorphisms on $P$ nor does it act irreducibly on $[P,H]/\Phi([P,H])$. Similarly to the arguments above, we deduce that $[P,K]=[P,H]=[P,B]$, and  Lemma \ref{avoidbattensubgroup} gives that
$B$ avoids $L_9$ in its action on $P$.
Therefore Lemma \ref{lem:lactirreorpower} yields that $B$ is not nilpotent.
From Definition \ref{defi:avoidbatten} we further see that $B$ does not act of type (NN), and thus $B$ acts of type (Cy) on $P$.
Consequently $[P,\mathcal B(B)]=1$  and $B$ has a cyclic  Sylow subgroup $Q$ such that $B=\mathcal B(B) Q$ and $Q$ acts on $P$ avoiding $L_9$.
Again we have $[P,K]=[P,B]=[P,Q]$ and Lemma \ref{lem:lactirreorpower} gives that $Q$ induces power automorphisms on $P$ or acts irreducibly on  $[P,Q]/\Phi([P,Q])=[P,K]/\Phi([P,K])$.
In the first case $B=C_B(P)Q$ induces power automorphism on $P$ and in the second case $B$ acts irreducibly on  $[P,K]/\Phi([P,K])$. This is a contradiction.
  \end{proof}

\begin{lemma}\label{lem:avoidL9Cen}
Let $K$ be a batten group that acts on the $p$-group $P$ avoiding $L_9$, and suppose that $H$ is a subgroup of $K$ that acts non-trivially on $R\leq P$.

Then $C_H(R)=C_H(P)$, $C_P(H)=C_P(K)$ and $[P,H]=[P,K]$.
\end{lemma}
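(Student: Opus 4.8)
The plan is to derive all three equalities from Lemma~\ref{lem:laction} and coprimeness, splitting into the two possibilities for $[P,K]$ given by Lemma~\ref{lem:Schmidt}~(b) (elementary abelian or $\cong Q_8$) only where the arguments genuinely differ. First I would record the backbone fact. Since $H$ acts non-trivially on the $H$-invariant subgroup $R$, we have $1\neq [R,H]\leq [P,H]$, so $H$ does not centralise $P$; hence Lemma~\ref{lem:laction} gives $[P,H]=[P,K]$, which is the third assertion. In particular $1\neq [R,H]\leq R\cap[P,K]$, so $R\cap[P,K]\neq 1$, and this non-trivial subgroup of $[P,K]$ is what drives the remaining two parts.

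For $C_P(H)=C_P(K)$ the inclusion $C_P(K)\leq C_P(H)$ is immediate from $H\leq K$. For the reverse I would first fix a decomposition of $P$: choosing a batten $B$ of $K$ with $[P,B]\neq 1$ gives $[P,B]=[P,K]$ by Lemma~\ref{lem:Schmidt}~(a), and, by Lemma~\ref{avoidbattensubgroup} and Lemma~\ref{lem:avoid1prop}~(b)--(c), a $K$-invariant complement $C$ with $P=[P,K]\times C$ and $C=C_P(B)=C_P(K)\leq C_P(H)$ (here Lemma~\ref{lem:avoid1prop}~(c) upgrades $C_P(B)$ to $C_P(K)$ since $B\unlhd K$). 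On the factor $[P,K]$, Lemma~\ref{coprime} yields $[[P,K],H]=[P,H]=[P,K]$, so $C_{[P,K]}(H)=1$ when $[P,K]$ is elementary abelian, while $C_{[P,K]}(H)=Z([P,K])=C_{[P,K]}(K)$ when $[P,K]\cong Q_8$ (an odd-order action with full commutator fixes exactly the centre); in either case $C_{[P,K]}(H)\leq C_P(K)$. Writing $x\in C_P(H)$ as $x=uc$ with $u\in[P,K]$ and $c\in C$, and using that $H$ centralises $c$, one gets $u\in C_{[P,K]}(H)\leq C_P(K)$, hence $x\in C_P(K)$; this proves $C_P(H)=C_P(K)$.

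For $C_H(R)=C_H(P)$, again $C_H(P)\leq C_H(R)$ is clear, so assume for contradiction that $C_H(R)$ does not centralise $P$. Then Lemma~\ref{lem:laction} gives $[P,C_H(R)]=[P,K]$ and that $C_H(R)$ either induces power automorphisms on $P$ or acts irreducibly on $[P,K]/\Phi([P,K])$; yet $C_H(R)$ centralises $R\cap[P,K]\neq 1$. In the power-automorphism case the universality of power automorphisms (Lemma~1.5.4 of \cite{Schmidt}) together with a fixed non-trivial element forces $C_H(R)$ to centralise $[P,K]$ in the elementary abelian case, and Lemma~\ref{power} forces triviality (odd-order power automorphisms of $Q_8$ are trivial) in the $Q_8$ case; either way $[P,C_H(R)]=1$, a contradiction. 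In the irreducible case the fixed space of $C_H(R)$ on $[P,K]/\Phi([P,K])$ is invariant and contains the image of $R\cap[P,K]$, so it is trivial or everything: ``everything'' contradicts non-triviality of the action, and ``trivial'' forces $R\cap[P,K]\leq\Phi([P,K])$, which is impossible when $[P,K]$ is elementary abelian and leaves only $R\cap[P,K]=Z([P,K])$ when $[P,K]\cong Q_8$.

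The main obstacle is exactly this last degenerate configuration in the $Q_8$ case, where the commutator can a priori land inside $Z([P,K])=\Phi([P,K])$. I would eliminate it with the coprime identity $[R,H]=[R,H,H]$ from Lemma~\ref{coprime}: since $[R,H]\leq R\cap[P,K]=Z([P,K])$ is centralised by the odd-order group $H$, we get $[R,H,H]=1$ and hence $[R,H]=1$, contradicting the hypothesis that $H$ acts non-trivially on $R$. This contradiction shows $C_H(R)$ centralises $P$, giving $C_H(R)=C_H(P)$; alongside the coprime commutator trick, the other point requiring care is passing from the single-batten structure result of Lemma~\ref{lem:avoid1prop} to the batten group $K$, which is handled via Lemmas~\ref{lem:Schmidt}~(a) and~\ref{avoidbattensubgroup}.
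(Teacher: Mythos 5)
Your proof is correct, but it takes a genuinely different route from the paper for the second and third equalities. For $C_P(H)=C_P(K)$ the paper argues through the batten-group structure of $K$: it picks a $q$-subgroup $Q$ of $H$ not centralising $P$, notes via Lemma \ref{lem:battensub} and Lemma \ref{avoidbattensubgroup} that $Q$ is a batten acting avoiding $L_9$, and then splits according to whether $Q\unlhd K$ or $Q$ sits inside a non-nilpotent batten $B=\mathcal B(B)Q$ of type (NN) or (Cy), in each case landing on $C_P(Q)\le C_P(K)$; you instead decompose $P=[P,K]\,C_P(B)$ and compute $C_{[P,K]}(H)$ directly from the two possible isomorphism types of $[P,K]$. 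One small imprecision: when $[P,K]\cong Q_8$, the subgroup $C_P(B)$ contains $Z([P,K])$ and so is not a direct complement to $[P,K]$ as you assert; however, your computation only uses $P=[P,K]\,C_P(B)$ and $C_P(B)=C_P(K)\le C_P(H)$, so writing $u=xc^{-1}\in C_P(H)\cap[P,K]$ still goes through and nothing breaks. The larger difference is in the third equality: the paper gets $C_H(R)=C_H(P)$ almost for free by applying the already-proved centraliser equality to $H_0:=C_H(R)$ — if $H_0$ did not centralise $P$ then $R\le C_P(H_0)=C_P(K)\le C_P(H)$, contradicting $[R,H]\neq 1$ — whereas you rerun the power-automorphism/irreducibility dichotomy of Lemma \ref{lem:laction} on $C_H(R)$ and dispose of the residual $Q_8$ configuration $R\cap[P,K]=Z([P,K])$ with the coprime identity $[R,H]=[R,H,H]$. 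Both are sound; the paper's bootstrap is shorter and reuses its own machinery, while your version is more self-contained and makes explicit exactly where the structure of $[P,K]$ (elementary abelian versus $Q_8$) enters.
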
	

\begin{proof}From Lemma \ref{lem:laction} we see that $[P,H]=[P,K]$.
In addition $C_P(K)\leq C_P(H)\leq C_P(Q)$ for every $q$-subgroup $Q$ of $H$ and every prime $q$.
Let $Q$ be a $q$-subgroup of $H$ for some prime $q$ such that $C_P(Q)\neq P$.
Then $Q$ is a batten by Lemma \ref{lem:battensub}, and it acts on $P$ avoiding $L_9$ by Lemma \ref{avoidbattensubgroup}.
If $Q \unlhd K$, then $K$ centralises $C_P(Q)$ by Lemma \ref{lem:avoid1prop}~(c). Thus $C_P(K)\leq C_P(H)\leq C_P(Q)\leq C_P(K)$, and this gives that $C_P(K)=C_P(H)$.
If $Q$ is not a normal subgroup of $K$, then Lemma \ref{lem:battenheart} provides a non-nilpotent batten $B$ of $K$ such that $B=\mathcal B(B) Q$.
Since the action of $K$ on $P$ avoids $L_9$, the action of $B$ also does.
If  $B$ acts of type (NN), then  $Q$ acts irreducibly on $P$ and therefore $C_P(Q)=1\leq C_P(K)$. Again we deduce that $C_P(K)=C_P(H)$.
If $B$ acts of type (Cy), then $[P,\mathcal B(B)]=1$ and hence $C_P(B)=C_P(Q)$.
But now $B$ is a normal subgroup of $K$, and then Lemma \ref{lem:avoid1prop}~(c) gives that  $C_P(B)=C_P(Q)\leq C_P(K)$.
As above we deduce that $C_P(K)=C_P(H)$.

Finally, suppose that $R\leq P$ is $H$-invariant, but not centralised by $H$, and set $ H_0:=C_H(R)\geq C_H(P)$.
Assume for a contradiction that $H_0$ does not centralise $P$.
Then we deduce, as above, that $C_P(K)=C_P(H_0)\geq R$. This is a contradiction, because $H$ does not centralise $R$.
\end{proof}

\begin{cor}\label{cor:p=2nK}
Let $K$ be a batten group that acts non-trivially and avoiding $L_9$ on the $p$-group $P$.
Then the following hold:
\begin{enumerate}
\item If $C_P(K)\neq 1$, then $p=2$.
\item If $C_P(K)=1$, then $P=[P,K]$ is elementary abelian.
\item If $K$ induces power automorphisms on $P$, then $P=[P,K]$ is elementary abelian of odd order. In particular $C_P(K)=1$ in this case.
\end{enumerate}
\end{cor}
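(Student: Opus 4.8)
The plan is to reduce all three parts to the single-batten statements of Lemma~\ref{lem:avoid1prop}, using the centraliser rigidity of Lemma~\ref{lem:avoidL9Cen} and the structure result Lemma~\ref{lem:Schmidt}~(b) as glue. Throughout I would fix a batten $B$ of $K$ that does not centralise $P$; such a batten exists because $K$ acts non-trivially and is the direct product of its battens. As the action of $K$ avoids $L_9$, Definition~\ref{defi:avoidbattengroup} tells us that $B$ acts on $P$ avoiding $L_9$, so Lemma~\ref{lem:avoid1prop} applies to $B$; and since $B\unlhd K$, Lemma~\ref{lem:Schmidt}~(a) gives $[P,B]=[P,K]$.

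For part (a) I would note that $C_P(K)\le C_P(B)$, so $C_P(K)\neq 1$ forces $C_P(B)\neq 1$, and then Lemma~\ref{lem:avoid1prop}~(a) immediately yields $p=2$. For part (b), assuming $C_P(K)=1$, I would apply Lemma~\ref{lem:avoidL9Cen} with $H:=B$ and $R:=P$: since $B$ does not centralise $P$, its hypotheses hold and we obtain $C_P(B)=C_P(K)=1$ and $[P,B]=[P,K]$. Substituting $C_P(B)=1$ into Lemma~\ref{lem:avoid1prop}~(b) leaves only the possibility that $P=[P,B]$ is elementary abelian, because in the alternative $[P,B]\cong Q_8$ the characteristic central involution of $Q_8$ would lie in $C_P(B)$, contradicting $C_P(B)=1$. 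Hence $P=[P,B]=[P,K]$ is elementary abelian.

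For part (c) I would first determine the prime. By Lemma~\ref{lem:Schmidt}~(b) the subgroup $[P,K]$ is elementary abelian or isomorphic to $Q_8$, and $[P,K]\neq 1$ because $K$ acts non-trivially. If $p=2$, then $K$ has odd order (the action being coprime) and induces power automorphisms of odd order on $[P,K]$; since $[P,K]$ is of one of the two types covered by Lemma~\ref{power}, that lemma forces $K$ to centralise $[P,K]$, so $[P,K]=[P,K,K]=1$ by Lemma~\ref{coprime}, a contradiction. Thus $p$ is odd. The contrapositive of Lemma~\ref{lem:avoid1prop}~(a) now gives $C_P(B)=1$, whence $C_P(K)\le C_P(B)=1$, and part~(b) applied to $K$ shows that $P=[P,K]$ is elementary abelian; as $p$ is odd this order is odd, and $C_P(K)=1$ as required.

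The step I expect to require the most care is ruling out the hamiltonian alternative in (b) and (c): I must check that whenever $[P,B]\cong Q_8$ arises its unique central involution is genuinely $B$-fixed, so that it really enlarges $C_P(B)$, and, in (c), that the coprimality of the action lets me invoke Lemma~\ref{power} uniformly for both the elementary abelian and the $Q_8$ type \emph{before} the prime is known to be odd. Once the reduction to a single non-centralising batten $B$ with $[P,B]=[P,K]$ is in place, the remaining arguments are short chains of citations.
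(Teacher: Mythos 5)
Your proposal is correct and follows essentially the same route as the paper: fix a batten $B$ of $K$ not centralising $P$, identify $C_P(K)$ with $C_P(B)$ (via Lemma \ref{lem:avoidL9Cen}) so that Lemma \ref{lem:avoid1prop}~(a),(b) yields parts (a) and (b), and for (c) rule out $p=2$ by combining Lemma \ref{power} with Lemma \ref{lem:Schmidt}~(b) and coprime action. The only difference is cosmetic: you establish that $p$ is odd first and then deduce $C_P(K)=1$, whereas the paper splits on whether $C_P(K)=1$ and derives the same contradiction in the remaining case.
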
	
	
\begin{proof} Let $B$ be a batten of $K$ that does not centralise $P$. Then Lemma \ref{lem:avoidL9Cen} implies that  $C_P(K)= C_P(B)$.
Thus Part (a) and (b) of Lemma \ref{lem:avoid1prop} yield the statements (a) and (b) of our lemma.
For Part (c) we suppose that $K$ induces power automorphisms on $P$. Then $P$ is not an elementary abelian $2$-group by Lemma \ref{power}.
If $C_P(K)=1$, then our assertion holds by (b). Otherwise $p=2$ by (a), and then Lemma \ref{power} implies that $[P,K]$ is neither elementary abelian nor isomorphic to $Q_8$, contradicting Part (b) of Lemma \ref{lem:Schmidt}.

For the final comment we just use that $p$ is odd and then apply (a).
\end{proof}

\begin{lemma}\label{lem:avoidsubP}Let $B$ be a batten that acts on the $p$-group $P$ avoiding $L_9$.
Let $R\leq P$ be $B$-invariant and $R_0\leq C_P(B)$.

Then  $B$ avoids $L_9$ in its action on $R/R_0$.
\end{lemma}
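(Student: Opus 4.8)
The plan is to read off everything from the explicit module structure supplied by Lemma~\ref{lem:avoid1prop}, and then run a case analysis on the type of $L_9$-avoiding action that $B$ induces on $P$. Since $R_0\le C_P(B)$ is centralised by $B$ it is a $B$-invariant normal subgroup of $R$ (as $R_0\le C_P(B)\cap R\le Z(R)$ in all the relevant configurations), so $B$ acts coprimely on $R/R_0$ and the two invariants I care about are computable by Lemma~\ref{coprime}: one has $[R/R_0,B]=[R,B]R_0/R_0$ and $C_{R/R_0}(B)=C_R(B)R_0/R_0$. The first thing to record is $R_0\le R\cap C_P(B)=C_R(B)$; in the abelian cases this gives $R_0\cap[R,B]=1$, so $[R/R_0,B]\cong[R,B]$ while $C_{R/R_0}(B)$ is a quotient of $C_R(B)$. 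The recurring subtlety, which I would flag at the outset, is that the \emph{type of the action can change under the quotient}, so the goal in each case is only to land in \emph{some} admissible type.

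Next I would dispose of the ``rigid'' cases. If $B\cong Q_8$ (Definition~\ref{defi:avoidQ8}), or $B$ is non-nilpotent of type (NN), or $B$ is cyclic of type (std), then Lemmas~\ref{lem:MOP4} and \ref{lem:avoid1prop} give $C_P(B)=1$, forcing $R_0=1$ and $R=[R,B]$. Because $R$ is $B$-invariant it is invariant under every subgroup of $B$, so the irreducibility of $B$ (respectively of each Sylow subgroup of $B$, in the (NN) case) on $P$ leaves only $R\in\{1,P\}$, and in the (std) case each subgroup of $B$ still acts irreducibly or by a restricted power automorphism on $R=[R,B]$; hence the same type persists. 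A non-nilpotent batten of type (Cy) I would reduce to its distinguished cyclic Sylow subgroup $Q$ with $B=\mathcal B(B)Q$: there $\mathcal B(B)$ centralises $P$ (hence $R/R_0$), while $C_P(B)=C_P(Q)$ and $[P,B]=[P,Q]$, so the whole question collapses to the cyclic case for $Q$.

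The first genuinely new work is the cyclic type (cent). Here Lemma~\ref{coprime} gives $R=[R,B]\times C_R(B)$ with $[R,B]$ elementary abelian and $C_R(B)\le C_P(B)$ cyclic of $2$-power order; since $R_0\le C_R(B)$, I get $[R/R_0,B]\cong[R,B]$ and $C_{R/R_0}(B)=C_R(B)/R_0$, still cyclic of $2$-power order. For each subgroup $U\le B$, invariance of $[R,B]\le[P,B]$ under $U$ together with the (cent)-dichotomy (every subgroup acts irreducibly or trivially on $[P,B]$) forces $U$ to act irreducibly or trivially on $[R,B]$. I then read off type (cent) when $C_{R/R_0}(B)\neq1$, and type (std) when $C_{R/R_0}(B)=1$ (so that $[R/R_0,B]=R/R_0$, with every subgroup acting irreducibly or by a trivial power automorphism).

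The main obstacle is the cyclic type (hamil), where $P=[P,B]\times I$ with $[P,B]\cong Q_8$, $|I|\le2$ and $C_P(B)=\Phi([P,B])\times I$, so that $P$ is non-abelian and I must instead enumerate the $B$-invariant subgroups $R$ of $Q_8\times I$ by hand. The key reduction is that $[R,B]=[R,B,B]$ together with $B$ fixing $\Phi([P,B])=Z(Q_8)$ pointwise forces the dichotomy $[R,B]=1$ or $[R,B]=[P,B]\cong Q_8$; in the latter case $R\in\{[P,B],P\}$, and quotienting by $R_0\le C_P(B)$ gives $[R/R_0,B]\cong Q_8$ and type (hamil) when $Z(Q_8)\not\le R_0$, whereas when $Z(Q_8)\le R_0$ the copy of $Q_8$ collapses to a rank-$2$ elementary abelian group on which $B$ acts irreducibly (and its proper subgroups trivially), so the quotient is of type (std) or (cent) according to whether $C_{R/R_0}(B)$ vanishes. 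The delicate point, where the argument must be handled with care, is the central branch $[R,B]=1$: there $R\le C_P(B)$ and one must check that $R/R_0$ is cyclic of $2$-power order (equivalently that the quotient stays within type (cent) or becomes trivial), which is exactly the place where the interplay between $I$, $Z(Q_8)$ and the chosen $R_0$ has to be controlled.
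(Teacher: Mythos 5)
Your case division (by the type of the $L_9$-avoiding action, computing $[R/R_0,B]$ and $C_{R/R_0}(B)$ via Lemma \ref{coprime} and reading the module structure off Lemmas \ref{lem:avoid1prop} and \ref{lem:MOP4}) is exactly the paper's strategy, and the rigid cases ($Q_8$, (NN), (std)), the reduction of (Cy) to its cyclic Sylow subgroup, the (cent) case and the $[R,B]=[P,B]$ branch of (hamil) all run as in the paper. There are two loose ends.

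First, the branch you flag as delicate --- $[R,B]=1$ inside the (hamil) case --- cannot in fact be ``controlled'': if $|I|=2$, $R=\Phi([P,B])\times I$ and $R_0=1$, then $B$ acts trivially on the non-cyclic elementary abelian group $R/R_0$ of order $4$, and this fits none of the types of Definition \ref{defi:avoidcyclic}, so the literal statement fails there. The paper's proof excludes this silently by assuming throughout that $B$ does not centralise $R$ (it writes ``since $B$ does not centralise $R$ \dots it follows that $[P,B]\le R$'' already in the (cent) case and reuses this in (hamil), which is why it never needs your enumeration of invariant subgroups); that assumption holds in every application of the lemma, so the correct resolution is to treat non-triviality of the action on $R$ as a standing hypothesis rather than to attempt the degenerate branch. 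Second, in the (Cy) case reducing to the cyclic Sylow subgroup $Q$ is not quite the whole job: Definition \ref{defi:avoidbatten} additionally requires $[R/R_0,Z(B)]\neq 1$, which the paper extracts from Lemma \ref{lem:avoidL9Cen} (once $Q$ acts non-trivially on $R$ one has $C_Q(R)=C_Q(P)$, and $Z(B)=\Phi(Q)\nleq C_Q(P)$); your sketch omits this check.
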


\begin{proof}Since $B$ centralises $R_0$, the action of $B$ on $R/R_0$ is well-defined.

We first suppose that $B\cong Q_8$.
Then Lemma \ref{lem:MOP4} yields that $B$ acts irreducibly on $P$, and then it follows that $R=P$ and $R_0=1$.
Thus our assertion is true in this case.

Next suppose that $B$ is cyclic.
If $B$ acts of type (std) on $P$, then $R_0\leq C_P(B)=1$. If $B$ acts irreducibly on $P$, then again $P=R$ and there is nothing left to prove. Otherwise $B$ and all of its subgroups induce power automorphisms on $P$, and hence on $R=[R,B]$ as well. It follows that $B$ also acts of type (std) on $R\cong R/R_0$.

Suppose now that $B$ acts of type (cent).
Then, since $B$ does not centralise $R$ and $B$ acts irreducibly on $[P,B]$, it follows that $[P,B]\leq R$.
Moreover $P$ is abelian and then we use the fact that $R_0\leq C_P(B)$. This gives that $R/R_0=[R/R_0,B]\times C_{R/R_0}(B)\cong [R,B]\times C_R(B)/R_0$, where $C_R(B)/R_0$ is a cyclic $2$-group.
Since every subgroup of $B$ that does not centralise $P$ acts irreducibly on $[P,B]\cong [R/R_0,B]$ in this case, there are two possibilities for the action of $B$ on $R\cong R/R_0$: If $R_0\neq C_R(B)$, then $B$ acts of type (cent), and otherwise it acts of type (std).

Suppose now that $B$ acts of type (hamil).
Then the cyclic $3$-group $B$ acts irreducibly on $[P,B]/\Phi([P,B])$,
by Lemma \ref{lem:laction}, and we see again that $[P,B]\leq R$.
It follows that $R\cong Q_8\times I$, where $I$ is a group of order at most $2$, and then $R_0\leq C_R(B)\leq \Phi([R,B])\times I$.
We remark that $[R/R_0,B]B/Z([R/R_0,B]B)\cong [R,B]B/Z([R,B]B)\cong [P,B]B/Z([P,B]B)\cong \mathrm{Alt}_4$.
\\If $R_0\cap [P,B]=1$, then $R/R_0\cong Q_8\times \tilde J$ for some group $J$ of order $\frac{|I|}{|R_0|}$. Thus $B$ acts on $R/R_0$ of type (hamil) in this case.
\\Otherwise we have that $R_0\geq \Phi([P,B])$ and therefore $R/R_0$ is elementary abelian of order $4$ or $8$. Moreover $B$ acts irreducibly on $[R/R_0, B]$, which is a group of order $4$.
In addition every proper subgroup of $B$ centralises $R/R_0$.
Consequently, if $R/R_0=[R/R_0,B]$, then $B$ acts on $R/R_0$ of type (std) or of type (cent).

The final case is that $B$ is not nilpotent, and we suppose that $B$ acts of type (NN) on $P$.
Then Definition \ref{defi:avoidbatten} yields that $B$ acts irreducibly on $P$.
Hence there is nothing left to prove.

Suppose that $B$ acts of type (Cy). Then we choose a Sylow subgroup $Q$ of $B$ such that $B=\mathcal B(B)Q$.
 Then $[R/R_0,\mathcal B(B)]=[R,\mathcal B(B)]=[P,\mathcal B(B)]=1$ and $Q$ acts on $P$ avoiding $L_9$ in such a way that $\Phi(Q)=Z(B)$ does not centralise $P$. Then Lemma \ref{lem:avoidL9Cen} yields that $\Phi(Q)$ does not centralise $R$. In particular, we have that $[R/R_0,Z(B)]\neq 1$. In addition $Q$ acts on $R/R_0$ avoiding $L_9$, by our arguments above.
Altogether  $B$ acts on $R/R_0$ avoiding $L_9$ of type (Cy) in this final case.
\end{proof}

 \section{The first implication}

We now investigate the general case.

\begin{prop}\label{prop:Hinein}
Let $G$ be a finite $L_9$-free group. Then $G=NK$, where $N$ is a nilpotent normal Hall-subgroup of $G$ with modular Sylow subgroups and $K$ is a batten group. Moreover, for all $p \in \pi(N)$, every batten of $K$ acts on $O_p(N)$ avoiding $L_9$ or it centralises $O_p(N)$.
\end{prop}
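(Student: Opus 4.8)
The plan is to take $N:=N_1$ from Corollary~C of \cite{L10} (applicable because $L_9$ is a sublattice of $L_{10}$) and to verify the three assertions in turn. Since $N_1=\erz{P\in\syl(G)\mid P\unlhd G}$ is generated by the normal Sylow subgroups of $G$, and normal Sylow subgroups for distinct primes centralise each other, $N$ is their internal direct product, hence a nilpotent normal Hall subgroup of $G$; each of its Sylow subgroups is an $L_9$-free $p$-group and so modular by Lemma~\ref{lem:pGrModularÄqu}. As $N$ is solvable, the Schur--Zassenhaus theorem yields a complement $K$, unique up to conjugacy, so it suffices to analyse one such $K$.

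The crucial preliminary is that $K$ acts faithfully on $N$, i.e. $C_K(N)=1$. To see this I would use that $C_G(N)\unlhd G$ and that $C_G(N)\cap N=Z(N)$ is a normal Hall subgroup of $C_G(N)$, so for a suitable $K$ one has $C_G(N)=Z(N)\times C_K(N)$; then $C_K(N)$ is a normal Hall, hence characteristic, subgroup of $C_G(N)$ and therefore normal in $G$. If it were nontrivial, Lemma~\ref{lem:mind1NT} would give it a normal, thus characteristic, Sylow subgroup, which would be normal in $G$ and hence lie in $N_1\cap K=1$, a contradiction. Consequently every nontrivial Sylow subgroup $Q$ of $K$ acts nontrivially on some $O_p(N)$, and Corollary~\ref{kor:2prim}, applied to the non-nilpotent group $O_p(N)Q$, shows that $Q$ is cyclic or isomorphic to $Q_8$.

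It then remains to assemble these Sylow subgroups into a direct product of battens and to read off the action. For a pair of primes $q\neq r$, a Hall $\{q,r\}$-subgroup $H$ of the solvable group $K$ is $L_9$-free with cyclic or $Q_8$ Sylow subgroups, so by Lemma~\ref{lem:mind1NT} and Corollary~\ref{kor:2prim} it is either nilpotent (its Sylow subgroups commute) or a non-nilpotent batten $R\rtimes Q$. In the latter case I would choose $p\in\pi(N)$ on which both factors act and invoke Proposition~\ref{lem:3primPunique}: this forces the action to have the irreducible shape of Definition~\ref{defi:avoidbatten}~(NN) and pins the normal factor down to prime order, while Corollary~\ref{kor:3primuntenil} together with Lemmas~\ref{lem:exam} and \ref{example} excludes any third prime from interacting and forces every further Sylow subgroup of $K$ to centralise both $R$ and $Q$. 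This turns the non-commuting relation on the Sylow subgroups of $K$ into a partition into pairwise-commuting blocks of coprime order, each block a single cyclic or $Q_8$ Sylow subgroup or a non-nilpotent batten, so $K$ is a batten group. For the action statement, given a batten $B$ of $K$ and $p$ with $[O_p(N),B]\neq 1$, I would combine Corollary~\ref{kor:2prim} (for nilpotent $B$) or Proposition~\ref{lem:3primPunique} (for non-nilpotent $B$) with Definitions~\ref{defi:avoidcyclic}, \ref{defi:avoidQ8} and \ref{defi:avoidbatten} to recognise the action as avoiding $L_9$.

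The main obstacle will be this global assembly: deducing a genuine direct decomposition of $K$ from the pairwise and triple-prime data, in particular ruling out a $Q_8$ Sylow subgroup that admits a nontrivial action of a third prime (an $SL(2,3)$-type section, which is $L_9$-free but not a batten group) and verifying that distinct battens commute. The three-prime lemmas are exactly the instruments for this, but organising the case distinctions, and separating off the prime $2$, where $C_{O_2(N)}(K)$ need not be trivial and the relevant actions are of type (cent) or (hamil), is where the care is needed.
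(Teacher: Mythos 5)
Your skeleton (Corollary C of \cite{L10} plus Schur--Zassenhaus, then the two-prime result Corollary \ref{kor:2prim} and the three-prime results Proposition \ref{lem:3primPunique} and Corollary \ref{kor:3primuntenil} to assemble $K$) is the same as the paper's, but your ``crucial preliminary'' $C_K(N)=1$ is false, and the argument for it has a concrete error. Counterexample: let $G=Q\rtimes R$ be a non-nilpotent batten, say $C_{19}\rtimes C_{27}$ as in the paper's own example. Then $N_1=C_{19}$, $K=C_{27}$, and $C_K(N)=\Phi(R)\cong C_9\neq 1$, although $G$ is $L_9$-free (it lies in $\FL$). The flaw is in your last step: a normal Sylow subgroup $S$ of $C_K(N)$ is indeed characteristic in $C_K(N)\unlhd G$ and hence normal in $G$, but $S$ is in general not a Sylow subgroup of \emph{$G$} (here $S=\Phi(R)$ has order $9$ while $27$ divides $|G|$), so normality in $G$ only places $S$ inside $O_q(G)$, not inside $N_1=\erz{P\in\syl(G)\mid P\unlhd G}$. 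No contradiction arises.

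This gap propagates, because you use $C_K(N)=1$ to guarantee that every Sylow subgroup $Q$ of $K$ acts nontrivially on some $O_p(N)$, which is what licenses your application of Corollary \ref{kor:2prim}. The correct consequences of the maximal choice of $N$ are weaker: (i) a Sylow subgroup of $K$ that is normal in $K$ and centralises $N$ would be a normal Sylow subgroup of $G$ and hence absorbed into $N$, so it cannot exist; and (ii) a Sylow subgroup $R$ of $K$ that does centralise $N$ is therefore not normal in $K$, so by solubility there is a normal $s$-subgroup $T$ of $K$ with $[T,R]\neq 1$, and one must run Proposition \ref{lem:3primPunique} on $TRQ$ \emph{inside $K$} rather than on $O_p(N)RQ$. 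This case distinction is exactly step (1) of the paper's proof and is the piece your sketch omits; without it you have no control over Sylow subgroups of $K$ that centralise $N$ (nor over the battens containing them), and your subsequent assembly of $K$ into pairwise-commuting blocks cannot get started for those primes. The rest of your outline, including the final direct-decomposition argument that you flag as the main obstacle, follows the paper's route once this is repaired.
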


\begin{proof}We first remark that $G$ is $L_{10}$-free, whence Theorem $A$ of \cite{L10} implies that $G$ is soluble.
Furthermore, Corollary C of \cite{L10} provides normal Hall-subgroups $N$ and $M$ of $G$ such that $N\leq M$ and such that $N$ is nilpotent, $M/N$ is a $2$-group and $G/M$ is metacyclic.
We choose $N$ as large as possible with these constraints.
From Lemma \ref{lem:mind1NT} we see that $N\neq 1$.
We also have that every Sylow subgroup of $N$ is $L_9$-free, and hence it is modular by Lemma \ref{lem:pGrModularÄqu}.
In addition the Schur-Zassenhaus Theorem (see for example 3.3.1. of \cite{KurzStell}) provides a complement $K$ of $N$ in $G$.

\medskip (1) If $RQ$ is a non-nilpotent Hall $\{r,q\}$-subgroup of $K$, where $R$ is a normal Sylow $r$-subgroup of $RQ$ and $Q\in \mathrm{Syl}_q(RQ)$, then $RQ$ is a batten. For all $p\in \pi(N)$, the group $RQ$ centralises $O_p(N)$ or acts on it avoiding $L_9$.

\begin{subproof}
If there is some $p\in \pi(N)$ such that $[O_p(N),R]\neq 1$, then we set $P:=O_p(N)$.
Otherwise the maximal choice of $N$ implies that $R$ is not a normal subgroup of $K$.
Then, using the solubility of $G$, we find a prime $s\in\pi(K)\setminus \{r\}$ and a normal $s$-subgroup $T$ of $K$ such that $[T,R]\neq 1$  (see 5.2.2 of \cite{KurzStell}). Then $s\neq q$ because $1\neq [R,Q]\leq R$ and $1\neq [T,R]\leq T$. In this case we set $P:=T$.

In both cases $p,~r$ and $q$ are pairwise different primes and $PRQ$ is a non-nilpotent $\{p,r,q\}$-subgroup that satisfies the hypothesis of Proposition \ref{lem:3primPunique}. For this we note that $P\unlhd PRQ$, $P\nleq N_G(R)$ and $R\nleq N_G(Q)$. Since $[R,Q]\neq 1$, the assertion in (1) follows.
\end{subproof}

(2) Every Sylow subgroup $S$ of $K$ is a batten, and for all $p\in \pi(N)$ it is true that $S$ centralises $O_p(N)$ or acts on $O_p(N)$ avoiding $L_9$.

\begin{subproof}
Let $S$ be a Sylow subgroup of $K$. If $S$ centralises $N$, then the choice of $N$ provides some Sylow subgroup $R$ of $K$ such that $RS$ is not nilpotent. Then (1) implies that $RS$ is a batten, then that $S$ is cyclic and hence that $S$ is a batten.
\\Let $p\in \pi(N/C_N(S))$. Then  $O_p(N)S$ is an $L_9$-free $\{p,q\}$-group for some prime $q$.
Hence Corollary \ref{kor:2prim} implies the assertion.
\end{subproof}

(3) $K$ is a batten group.

\begin{subproof}
Let $1\neq B\leq K$ be such that there is some $K_1\leq K$ such that $K=K_1\times B$, where $(|K_1|,|B|)=1$ and $B$ is not a direct product of non-trivial subgroups of coprime order.
In particular $B$ is a Hall subgroup of $K$.
If $B$ is nilpotent, then $B$ is a Sylow $q$-subgroup of $K$ for some prime $q$.
In this case (2) implies that $B$ is a batten.

Assume for a contradiction that $B$ is not a batten of $K$.
Then $B$ is not nilpotent and therefore (1) yields that $|B|$ is divisible by at least three different primes.
Since $B\leq G$ is $L_9$-free, Lemma \ref{lem:mind1NT} provides a normal Sylow $r$-subgroup $R$ of $B$ for some prime $r\in \pi(B)$.
We remark that $R$ is a normal subgroup of $K=K_1\times B$.
In addition $B$ is  is not a direct product of non-trivial subgroups with coprime order and hence there are a prime $q\in \pi(B)$ and some $Q\in\mathrm{Syl}_q(B)$ such that $RQ$ is not nilpotent.
Now $B$ is a Hall subgroup of $K$ and thus $RQ$ is a Hall subgroup of $K$.
In particular (1)  implies that $RQ$ is a batten and it follows that  $|R|=r$ and $1\neq C_Q(R)=\Phi(Q)$. We further see, from Definition \ref{defi:avoidbatten} and (1), that for every $p\in \pi(N)$ with the property $[O_p(N),R]\neq 1$ we have that $|O_p(N)|=p^q$.
Since $R$ is a normal Sylow subgroup of $K$, the maximal choice of $N$ provides some $p\in\pi(N)$ such that $R$ does not centralise $P:=O_p(N)$. In particular we have that $|P|=p^q$.

Let $s\in\pi(B)\setminus\{q,r\}$ and let $S$ be a Sylow $s$-subgroup of $B$ such that $QS=SQ$.
 Such a subgroup exists by Satz VI. 2.3 in \cite{Huppert}.
If $S$ does not centralise $R$, then $RS$ is not nilpotent and therefore our arguments above show that $p^s=|P|=p^q$. This is impossible because $r\neq s$. Consequently $[R,S]=1$.

Since $B$ is directly indecomposable, we conclude that $SQ$ is not nilpotent. But $SQ$ is a Hall subgroup of $B$ and then it is a Hall subgroup of $K$. In particular (1) yields that $SQ$ is a batten. From the fact that $1\neq C_Q(R)=\Phi(Q)$ we conclude that $|Q|\neq q$, and then $|S|=s$ and $S\unlhd SQ$.
In addition $C_Q(S)=\Phi(Q)=C_Q(R)$. But $(R\times S)Q$ is an $L_9$-free group, and this situation contradicts Corollary \ref{kor:3primuntenil}.
\end{subproof}

We conclude that, by construction, $G=NK$, where $N$ is a nilpotent normal subgroup of $G$ with modular Sylow subgroups and $K$ is a batten group, by (3), such that for all $p \in \pi(N)$ it is true that every batten of $K$ centralises $O_p(N)$ or acts on $O_p(N)$ avoiding $L_9$, by (1) and (2).
\end{proof}

The converse of Proposition \ref{prop:Hinein} is false, as can be seen in the following example and subsequent lemma.

\begin{ex}Let $H=C_{19}\times C_{19}$, $J=C_5\times C_5$ and let $x,y\in GL(2,19)\times GL(2,5)$ be such that $$x=\left(\left(\begin{array}{cc}-1&0\\0&-1\end{array}\right), \left(\begin{array}{cc}0&3\\1&0\end{array}\right)\right) \text{ and }y=\left(\left(\begin{array}{cc}4&0\\0&4\end{array}\right), \left(\begin{array}{cc}2&3\\1&2\end{array}\right)\right).$$
Then $xy=\left(\left(\begin{array}{cc}-4&0\\0&-4\end{array}\right), \left(\begin{array}{cc}3&1\\2&3\end{array}\right)\right)=yx$.
Hence $G:= (H\times J)\rtimes (\erz{x}\times \erz{y})$ is a group.

Moreover, $N:=H\times J$ is a nilpotent normal subgroup of $G$ with modular Sylow subgroups.
Since
$$x^8= (x^2)^4=\left(\left(\begin{array}{cc}1&0\\0&1\end{array}\right), \left(\begin{array}{cc}3&0\\0&3\end{array}\right)\right)^4=\left(\left(\begin{array}{cc}1&0\\0&1\end{array}\right), \left(\begin{array}{cc}-1&0\\0&-1\end{array}\right)\right)^2=1$$ and
\\$y^9=\left(\left(\begin{array}{cc}7&0\\0&7\end{array}\right), \left(\begin{array}{cc}1&0\\0&1\end{array}\right)\right)^3$,
it follows that $x$ and $y$ have coprime order.\\
Thus $K:=\erz{x}\times \erz{y}$ is cyclic, which means that it is a batten group.

We see that $x$ and $y$ induce non-trivial power automorphisms on $H$. Thus every batten of $K$ acts on $H=O_{19}(N)$ avoiding $L_{19}$ of type (std).
In addition $x^2$ and $y^3$ induce power automorphisms on $J$. Since $x$ and $y$ act irreducibly on $J$, it follows that every batten of $K$ acts on $J=O_{5}(N)$ avoiding $L_{19}$ of type (std), too.

Altogether $G$ satisfies the conclusion of Proposition \ref{prop:Hinein}.

\medskip On the other hand we observe that $\pi(K)=\{2,3\}=\pi(\erz{x,y}/\erz{x^2})=\pi(K/C_K(H))$ and that $C_H(C_K(J))=C_H(\erz{y^3})=1$.
\\If $g\in HJ$ centralises $\erz{x}$ or $\erz{y}$, then $g=1$. Thus the following lemma yields that $G$ is not $L_9$-free.
\end{ex}

\begin{lemma}\label{lem:merlonOhneStar}
Let $H$ be a non-trivial abelian group where all non-trivial  Sylow subgroups are non-cyclic elementary abelian, let $L$ be a cyclic  group inducing power automorphism on $H$ such that $\pi(L)=\pi(L/C_L(H))$  and let $1\neq J$ be an abelian group admitting $L$ as a group of automorphisms such that the action of $L$ on $O_p(J)$ avoids $L_9$ for every $p\in \pi(J)$ and such that $(|H|, |J|)=1$.

Let $\pi:=\{q\in \pi(L)\mid  C_{O_q(L)}(H)<C_{O_q(L)}(J)\}$.
Suppose that  $C_H(C_L(J))=1$ and, for all $g\in (H\times J)^\#$, suppose that $g$ centralises neither $O_\pi(L)$ nor $O_{\pi'}(L)$.

Then $(H\times J)\rtimes L$ is not $L_{9}$-free.
\end{lemma}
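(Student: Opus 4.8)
The plan is to exhibit a sublattice of $L(G)$ isomorphic to $L_9$ and to verify it through the five relations L9 (i)--L9 (v) of Lemma \ref{lem:L9Char}, imitating the two explicit constructions in Lemmas \ref{lem:exam} and \ref{example}. Write $M:=H\times J$, so that $G=M\rtimes L$ with $M$ nilpotent, $L$ cyclic and $(\ord M,\ord L)=1$. Since $L$ is abelian, $C_L(M)$ is central in $G$; as usual I would first pass to $G/C_L(M)$, check that the hypotheses survive (the set $\pi$ and the two fixed-point conditions are unchanged, because $C_L(M)$ centralises both $H$ and $J$), and thereby reduce to the case $C_L(M)=1$, setting $E:=1$. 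The hypothesis that no nontrivial element of $M$ centralises $O_\pi(L)$ already gives $C_M(L)\le C_M(O_\pi(L))=1$, so $L$ acts on $M$ with no nonzero fixed points.

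The decisive structural observation is a dichotomy in how $L$ acts on the two factors. On each Sylow subgroup $O_p(H)$ the group $L$ induces power automorphisms, i.e.\ scalars on the elementary abelian $O_p(H)$; on each $O_p(J)$ the action avoids $L_9$, and since $C_{O_p(J)}(L)=1$ it is forced to be of type (std), so $L$ acts irreducibly or by scalars on the elementary abelian $O_p(J)$ (Definition \ref{defi:avoidcyclic}). Two consequences drive everything. First, for any $g\in M$ whose component in every $O_p(M)$ is nontrivial one has $C_L(g)=\bigcap_p C_L(O_p(M))=C_L(M)=1$, because on an irreducible or a scalar module every nonzero vector has the same centraliser in $L$ as the whole module; this is exactly what makes the intersections $S\cap T$, $A\cap C$ and the like collapse to $E$ via Lemma \ref{lem:dedekind}~(b). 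Second, a conjugate $L^x$ with $x$ in a scalar direction satisfies $\erz{L,L^x}=\erz{x}L^x$ (a ``small'' join), whereas $L^x$ with $x$ in an irreducible $J$-component satisfies $\erz{L,L^x}=O_p(J)L^x$ (a ``big'' join); this asymmetry is what produces a proper $A\lneq F$ and lets me separate the lower diamond from the upper one.

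With these tools I would choose an $L$-invariant subgroup $D\le M$ (built from the full irreducible $J$-pieces together with one scalar direction), a further scalar direction $R_0$, and conjugating elements $x,y\in D$ of full support and $z$ differing from $x$ by an $R_0$-component; then set $S:=L^x$, $T:=L^y$, $U:=L^z$, $A:=DL$, $B:=DR_0$, $C:=DL^z$ and $F:=DR_0L$, exactly as in Lemma \ref{lem:exam}. The relations L9 (i)--L9 (v) then follow by repeated use of Lemma \ref{lem:dedekind}~(a)--(d) and Lemma \ref{coprime}: full support of $x,y,z$ and $xy^{-1}$ gives the trivial meets in the lower diamond; $D$ being $L$-invariant of ``big'' type gives $\erz{D,U}=C$ and the joins $\erz{S,U}=\erz{T,U}=F$; and the scalar character of the extra direction $R_0$ gives the upper diamond $A\cap B=A\cap C=B\cap C=D$ with $\erz{A,B}=\erz{B,C}=F$. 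The precise points at which the hypotheses $C_H(C_L(J))=1$ and $C_M(O_\pi(L))=C_M(O_{\pi'}(L))=1$ are spent are the computations of $A\cap C$ and $B\cap C$: there the centraliser appearing in Lemma \ref{lem:dedekind}~(b) is that of a $J$-element acting on an $H$-direction, and $C_H(C_L(J))=1$ forces it down to $D$, while the two fixed-point conditions guarantee that $x,y,z$ with the required full support relative to $O_\pi(L)$ and to $O_{\pi'}(L)$ separately actually exist.

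The main obstacle will be the bookkeeping of the choice of $D$, $R_0$ and the conjugators so that the lower diamond $E<S,T,D<A$ and the upper diamond $D<A,B,C<F$ form simultaneously while no fourth atom below $C$ can be adjoined --- it is precisely this last point that yields $L_9$ rather than $M_9$ or $L_{10}$, and it is controlled by the rigidity of the irreducible $J$-action on the one side together with the single available scalar direction $R_0$ on the other. Coordinating the scalar action on $H$ with the irreducible action on $J$ under one cyclic $L$, and verifying that the role of $C_H(C_L(J))=1$ is exactly to annihilate the otherwise-surviving part of $A\cap C$, is the delicate core; once the nine subgroups are fixed, the individual meet and join identities are routine applications of Lemma \ref{lem:dedekind}.
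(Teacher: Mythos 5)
Your construction is genuinely different from the paper's, and as described it does not work. You model everything on Lemma \ref{lem:exam}: an $L$-invariant $D\leq M:=H\times J$, $S=L^x$, $T=L^y$ with $x,y\in D$, $U=L^z$ with $z$ differing from $x$ by a component in a further scalar direction $R_0$, then $A=DL$, $C=DL^z$, $B=DR_0$, $F=DR_0L$. The fatal point is L9\,(iv): by Lemma \ref{lem:dedekind}~(d), $\erz{S,U}=\erz{L^x,L^z}=\erz{[zx^{-1},L]^{L^{z}}}L^{z}$, and since $zx^{-1}\in R_0$ and $R_0$ is $L$-invariant, this join lies in $R_0L^{z}$, which meets $M$ in $R_0$ only and hence does not contain $D$; so $\erz{S,U}\neq F$. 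If you instead give $zx^{-1}$ full support in $DR_0$ to repair the join, then L9\,(v) breaks: one computes $A\cap C=D(L\cap DL^z)=D\,C_L(R_0)$, and since the hypotheses only give $\pi(L)=\pi(L/C_L(H))$, the group $C_L(H)$ --- and hence $C_L(R_0)$ for a single scalar line $R_0$ --- need not be trivial, so $A\cap C$ can properly contain $D$. This tension (full conjugates of $L$ cannot sit both below $A$ and below $C$) is exactly the obstruction the hypotheses of the lemma are built around, and a symmetric construction cannot resolve it. A further warning sign is that your argument never uses the non-cyclicity of the Sylow subgroups of $H$, nor the specific definition of $\pi$, nor both fixed-point conditions separately.

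The paper's proof resolves the obstruction by splitting $L=O_\pi(L)\times O_{\pi'}(L)=:L_1\times L_2$ and building an asymmetric configuration in which $E=C_L(HJ)$ and $D=C_L(J)$ are subgroups of $L$, not of $M$: the lower diamond is $S=L_1^{ax}E$, $T=L_1^{a^{-1}x}E$ and $D$ below $A=\erz{a}L_1^xE$, while $U=\erz{b}L_2E$, $C=\erz{b}DL_2$ and $B=\erz{ab}L^y$. Here $a,b\in H$ are chosen with independent components in every $O_p(H)$ (this is where non-cyclicity of the Sylow subgroups of $H$ enters, producing the three pairwise trivially intersecting lines $\erz{a}$, $\erz{b}$, $\erz{ab}$ that separate $A$, $B$, $C$), and $x\in[J,L]$, $y=x^l$ have full support. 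The two fixed-point conditions are spent on proving $\erz{x^L}=\erz{[[l,x],L_1]^{L}}=\erz{[y,L_2]^{L}}$ and on forcing $L\leq\erz{L_1^{cx},L_2}$ in L9\,(iv), while $C_H(C_L(J))=1$ yields $\erz{a}=[a,D]$ and hence $\erz{D,S}=\erz{D,T}=A$. None of these mechanisms is available in your set-up, so the proposal has a genuine gap rather than being an alternative proof.
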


\begin{proof}We first set $L_1:=O_{\pi}(L)$ and   $L_2:=O_{\pi'}(L)$. Since none of the groups $L_1$ nor $L_2$ is centralised by any element of $HJ\setminus\{1\}\neq \varnothing$, it follows that $L_1$ and $L_2$ are both non-trivial.

For every odd prime $p\in\pi(H)$ there is an elementary abelian subgroup $H_p$ of $H$ that has order $p^2$.
In particular there are elements $a_p$ and $b_p$ of $H$ such that $H_p=\erz{a_p}\times \erz{b_p}$.\\
We set $a:=\prod\limits_{p\in\pi(H)} a_p$ and $b:=\prod\limits_{p\in\pi(H)} b_p$.\\ These elements are well-defined (in the sense that the ordering of the primes does not matter) because $H$ is abelian.
For every $p\in \pi(J)$, we further see that $L$ acts on $O_p(J)$ avoiding $L_9$.
Since $J$ is abelian, we deduce from Lemma \ref{lem:Schmidt}~(b) that $[O_p(J), L]$ is elementary abelian. In addition $L$ acts irreducibly on $[O_p(J), L]$ or it induces power automorphisms on $O_p(L)$, by Lemma \ref{lem:laction}.
We choose $x_p\in [O_p(J),L]^\#$. Then $L$ acts irreducibly on $P:=\erz{x_p^L}$.
Next we choose $l\in L$ such that $L=\langle l \rangle$. Then $1\neq x_p^l\in P\leq HJ$ and $1\neq [l,x_p]\in P\leq HJ$. Thus our hypothesis implies that $1\neq [[l,x_p],L_1]\leq P$ and $1\neq [x_p^l,L_2]\leq P$. Altogether we have that $P=\erz{x_p^L}=\erz{(x_p^l)^L}=\erz{[[l,x_p],L_1]^{L}}=\erz{[x_p^l,L_2]^{L}}$.
Moreover, Lemma \ref{lem:avoidL9Cen} yields that $C_L(x_p)=C_L(P)=C_L(O_p(J))$.\\
For every $p\in\pi(L)$ we choose $1\neq x_p\in [O_p(J),L]$, and then we set $x:=\prod\limits_{p\in\pi(J)}x_p$.\\
Since $J$ is abelian, it follows that $C_L(J)=\bigcap_{p\in\pi(J)}C_L(O_p(J))=\bigcap_{p\in\pi(J)}C_L(x_p)=C_L(x)$.
Next we set $y:=x^l$. Then our previous arguments show that $J_0:=\erz{x^L}=\erz{y^L}=\erz{[[l,x],L_1]^{L}}=\erz{[y,L_2]^{L}}$ ($\ast$).

\medskip We will construct a subgroup lattice $L_9$ using Lemma \ref{lem:L9Char}.
For this we set $E:=C_L(HJ)\unlhd HJL$ and $D:=C_{L}(J)$.
\\For every $q\in \pi'$ we have $C_{O_q(L)}(H) \geq  C_{O_q(L)}(J)$ and so $C_{O_q(L)}(J)\leq E$.
This implies that  $C_{L_2}(J)=C_{L_2}(HJ)\leq E$ ($\ast \ast$) and that $D=C_L(J)=C_{L_1}(J)$. We conclude that $C_{L_1}(x)E=(L_1\cap C_L(x))E=(L_1\cap C_L(J))E=C_{L_1}(J)E=DE=D$.
\\If $q\in \pi=\pi(L_1)$, then  $C_{O_q(L)}(H)< C_{O_q(L)}(J)\leq C_{L_1}(x)$ and therefore $C_{L_1}(H)=C_{L_1}(HJ)\leq E$ ($\ast \ast \ast$). Then it follows that  $E\cap L_1=C_{L_1}(HJ)=C_{L_1}(H)< C_{L_1}(x)\leq D\cap L_1$.
In particular $E\neq D$  and  hence (L9(i)) of Lemma \ref{lem:L9Char} holds.

\smallskip Next we set $A:=\erz{a}L_1^xE$, $S:=L_1^{ax}E$ and $T:=L_1^{a^{-1}x}E$.
Then we have that $A$ contains the subgroups $S,T$ and $D$ ($=C_{L_1}(x)E$).
In addition, if $c\in\{a,a^{-1}, a^2\}$, then we see that $\erz{c}=\erz{a}$, because $o(a)$ is odd by construction.
Since $C_{\erz{a}}(D)\leq C_H(D)=C_H(C_L(J))=1$, it follows that $\erz{a}=\erz{c}=[c,D]\times C_{\erz{a}}(D)=[c,D]$, by Lemma \ref{coprime}.
The group  $L_1$ induces power automorphisms on $H$, which means that it normalises $[c,D]$.
Together with Part (d) of Lemma \ref{lem:Hilfslemma}  we conclude that $$\erz{D,L_1^cE}=\erz{[c,D]^{L_1^cE}}L_1^cE=[c,D]L_1^cE=\erz{c}L_1E=A^{x^{-1}}.$$
In particular, since $D$ centralises $x$, it follows that $\erz{D,T}=\erz{D,S}=A$. Moreover, we have that
$A\geq \erz{T,S}=\erz{L_1,L_1^{a^2}E}^{a^{-1}x}\geq \erz{D,L_1^{a^2}E}^{a^{-1}x}=(A^{x^{-1}})^{a^{-1}x}=A$ and therefore we conclude that $\erz{S,T}=A$
as well.
Next Lemma \ref{lem:avoidL9Cen} gives that $C_{L_1}(a)=C_{L_1}(H)\leq E$ by ($\ast \ast \ast$). Furthermore $L_1^c$
is a $\pi$-group  for all $c\in\{a,a^{-1},a^2\}$, whence
$$L_1E\cap L_1^c\leq O_{\pi}(L_1E)\cap L_1^c=L_1\cap L_1^c= C_{L_1}(c)= C_{L_1}(a)\leq E$$ by
Part (b) of Lemma \ref{lem:Hilfslemma}.
Altogether Dedekind's modular law gives that $L_1E\cap L_1^cE=(L_1E\cap L_1^c)E\leq E$  for all $c\in\{a,a^{-1},a^2\}$.
We conclude that $T\cap S=L_1^{ax}E\cap L_1^{a^{-1}x}E\leq E^x=E$, that $D\cap T\leq (L_1E\cap L_1^{a^{-1}}E)^x\leq E^x=E$ and that $D\cap S\leq (L_1E\cap L_1^{a}E)^x\leq E$. With all these properties,
we see that (L9(ii)) of Lemma \ref{lem:L9Char} is true.

\smallskip Now we set $C:=\erz{b}DL_2$ and $U:=\erz{b}L_2E$. Then $C=\erz{D,U}$ and $D\cap U=C_L(J)\cap \erz{b}L_2E=(C_L(J)\cap \erz{b}L_2)E=C_{L_2}(J)E$ by Dedekind's modular law and by Part (b) of Lemma \ref{lem:Hilfslemma}. Hence ($\ast \ast$) implies that (L9(iii)) of Lemma \ref{lem:L9Char} holds.

\smallskip Let $c\in\{a,a^{-1}\}$ and let $X:=\erz{L^{cx}_1,L_2}$. Then $X$ contains a $\pi$-Hall subgroup as well as a $\pi(L_2)$-Hall subgroup of $HJL$.
Since $HJL$ is soluble, there is a $\pi(L)$-Hall subgroup  $K$ of $X$ such that $L_2\leq K$ and  some $g\in HJ$ such that $L^g=K$.
It follows that $L_2\leq K\cap L=L^g\cap L=C_L(g)$ by Lemma \ref{lem:Hilfslemma}~(b).
The hypothesis of our lemma yields that $g=1$ and therefore $L=K\leq X$.
From there we obtain some $h\in X$ such that $L_1^h=L_1^{cx}$ and hence $L_1=L_1\cap L_1^{hx^{-1}c^{-1}}\leq C_{L}(hx^{-1}c^{-1})$ by Lemma \ref{lem:Hilfslemma}~(b).
This forces $cx=h\in X$, and then $c,x\in  X$, because $H$ and $J$ have coprime order and centralise each other.
Altogether $\erz{c}=\erz{a}$ and $\erz{x^L}=J_0$ are subgroups of $X$, and  we conclude that  $X=\erz{a}J_0L$.
Thus
$$\erz{U,T}=\erz{L_1^{a^{-1}x}, \erz{b}L_2E}=\erz{b}\erz{L_1^{a^{-1}x}, L_2,E}=\erz{b}XE=\erz{a,b}J_0L=\erz{b}\erz{L_1^{ax}, L_2,E}=\erz{U,L}.$$

We set $F:=\erz{a,b}J_0L$ in order to obtain Part (L9(iv)) of Lemma \ref{lem:L9Char}.
Moreover, Dedekind's law and Part (a) of Lemma \ref{lem:Hilfslemma} gives that
$$A\cap C=\erz{a}L_1^xE\cap \erz{b}DL_2=(\erz{a}L_1^xE\cap \erz{b}L_2)D$$
$$=(\erz{a}(L_1E\cap L_2)^x\cap \erz{b}(L_1E\cap L_2)) D=
(\erz{a}C_{L_2}(HJ)^x\cap \erz{b}C_{L_2}(HJ))D$$
$$=(\erz{a}C_{L_2}(HJ)\cap \erz{b}C_{L_2}(HJ))D=
(\erz{a}\cap \erz{b}C_{L_2}(HJ))C_{L_2}(HJ)D=C_{L_2}(HJ)D=D.$$
We set $B:=\erz{ab}L^y$. Then
$$A\cap B=(\erz{a}L_1E\cap \erz{ab}L^{yx^{-1}})^x\leq (\erz{a}(L_1E\cap HL^{yx^{-1}}))^x\leq(\erz{a}C_{L_1E}(yx^{-1}))^x\leq (\erz{a}C_L(J))^x=\erz{a}D$$
by Lemma \ref{lem:Hilfslemma}~(b), because $\erz{(yx^{-1})^L}\cap H\leq J\cap H=1$.

\begin{minipage}{7.6cm}
				\begin{tikzpicture}
					\coordinate[label=right:{$C_L(HJ)$}]		(E) at (0,0);
					\coordinate[label=left:{F}]		(F) at (0,4.5);
				\coordinate[label=above left:{$\erz{a}L_1^xE$}]	(A) at (-2,3);
					\coordinate[label=left:{$\erz{ab}L^y$}]		(B) at (0,3);			
					\coordinate[label=above right:{$\erz{b}C_{L_1}(J)L_2$}]	(C) at (2,3);
					\coordinate[label=left:{$L_1^{ax}E$}]			(S) at (-2.5,1.5);
					\coordinate[label=left:{$L_1^{a^{-1}x}E$}]			(T) at (-1,1.5);
					\coordinate[label=below right:{$C_{L}(J)$}]	(D) at (0,1.5);
					\coordinate[label=right:{$\erz{b}L_2E$}]		(U) at (2,1.5);
	
			\foreach \x in {E,F,A,B,C,D,S,T,U} \fill (\x) circle (2.15pt);
					
					\draw [thick]	(F) -- (A) -- (S) -- (E) -- (T) -- (A) -- (D) -- (E) -- (U) -- (C) -- (D) -- (B) -- (F) -- (C) ;
				\end{tikzpicture}

\end{minipage}\hfill
\begin{minipage}{7.9cm}
In a similar way we obtain that

$A\cap B\leq \erz{ab}D$ and therefore

$D\leq A\cap B\leq \erz{a}D\cap \erz{ab}D=(\erz{a}\cap \erz{ab}D)D=D$.
\end{minipage}

\medskip
We further calculate that
$$B\cap C=\erz{ab}L^y\cap \erz{b}DL_2\leq \erz{ab}(L^y\cap HDL_2)\leq\erz{ab}C_{DL_2}(y)\leq \erz{ab}C_L(J)=\erz{ab}D$$ and similarly
$B\cap C\leq \erz{b}D$. Therefore $D\leq B\cap C\leq \erz{ab}D\cap \erz{b}D=(\erz{ab}\cap \erz{b}D)D=D$.

Finally ($\ast$) and Part (d) of Lemma \ref{lem:Hilfslemma} yield that
$$\erz{A,B}=\erz{\erz{a}L_1^xE,\erz{ab}L^y}=\erz{a,b}\erz{L_1^xE,L^y}
=\erz{a,b}\erz{[yx^{-1},L_1]^{L}} L
=\erz{a,b}\erz{[[l,x],L_1]^{L}} L$$
$$=\erz{a,b}J_0L=F=
\erz{a,b}\erz{[y,L_2]^{L}}L
=\erz{a,b}\erz{L^y, DL_2}
=\erz{\erz{ab}L^y, \erz{b}DL_2}
=\erz{B,C}.$$

Altogether $\{A,B,C,D,E,F,S,T,U\}$ satisfies every condition of Lemma \ref{lem:L9Char}, which means that it is isomorphic to $L_9$.
    \end{proof}

The previous lemma and Lemma \ref{lem:exam} motivate the following definition:

\begin{definition}
			\label{defi:Merlongroup}
Here we define a class $\FL$ of finite groups, and each group in $\FL$ has a type.

We say that $G \in \FL$ has type $(N,K)$ if and only if
the following hold:
\begin{enumerate}
\item[(\FL 1)]$G=N\rtimes K$, where $N$ is a normal nilpotent Hall subgroup of $G$ with modular Sylow subgroups and $K$ is a batten group.
\item[(\FL 2)] If $p\in\pi(N)$, then every batten of $K$ centralises $O_p(N)$ or it acts on it avoiding $L_9$.
\item[(\FL 3)] For all Sylow subgroups $Q$ of $K$ and all distinct Sylow subgroups $P$ and $R$ of $N$ that are not centralised by $Q$, we have that $C_Q(P)\neq C_Q(R)$.
\item[(\FL 4)]
Suppose that $H\leq N$ is abelian, that its non-trivial Sylow subgroups are not cyclic and that $L\leq \mathrm{Pot}_K(H)$ is cyclic and such that $\pi(L)=\pi(L/C_L(H))$.
Let $1\neq J \le N$ be
$L$-invariant and abelian, suppose that $(|H|, |J|)=1$, $[H,J]=1$
and $C_H(C_L(J))=1$, and set\\
$\pi:=\{q\in \pi(L)\mid  C_{O_q(L)}(H)<C_{O_q(L)}(J)\}$.

Then
there is some $g\in (HJ)^\#$ that centralises  $O_\pi(L)$ or $O_{\pi'}(L)$.
\end{enumerate}
\end{definition}

\begin{thm}\label{main}
Let $G$ be a finite $L_9$-free group. Then $G \in \FL$.
\end{thm}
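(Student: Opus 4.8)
The plan is to verify the four conditions of Definition \ref{defi:Merlongroup} for the decomposition furnished by Proposition \ref{prop:Hinein}. That proposition applies directly to the $L_9$-free group $G$ and yields $G = N \rtimes K$, where $N$ is a nilpotent normal Hall subgroup with modular Sylow subgroups and $K$ is a batten group, and it also shows that for each $p \in \pi(N)$ every batten of $K$ either centralises $O_p(N)$ or acts on it avoiding $L_9$. This is exactly (\FL 1) and (\FL 2), so only (\FL 3) and (\FL 4) remain to be checked.

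For (\FL 3) I would fix a Sylow subgroup $Q$ of $K$ and distinct Sylow subgroups $P, R$ of $N$, neither centralised by $Q$. The decisive point is that $N$ is nilpotent, so $P$ and $R$ commute elementwise; hence $R$ centralises $P$ and therefore normalises every $Q$-invariant subgroup of $P$. If $Q$ is cyclic and $C_Q(P) = C_Q(R)$, then $PRQ$ meets the hypotheses of Lemma \ref{lem:exam} and would fail to be $L_9$-free, which is impossible inside $G$; thus $C_Q(P) \neq C_Q(R)$. If instead $Q \cong Q_8$, then $Q$ acts on $P$ and on $R$ avoiding $L_9$, so by Lemma \ref{lem:MOP4} a maximal subgroup $Q_1 \le Q$ of order $4$ acts irreducibly, hence faithfully, on each, giving $C_{Q_1}(P) = 1 = C_{Q_1}(R)$; applying Lemma \ref{lem:exam} to $PRQ_1$ again contradicts $L_9$-freeness, so this case does not arise. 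Hence (\FL 3) holds.

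For (\FL 4) I would argue by contraposition through Lemma \ref{lem:merlonOhneStar}. Given $H, L, J$ as in (\FL 4), we have $HJ = H \times J \le N$, and $L$ normalises both factors, so $(H \times J)\rtimes L$ is a subgroup of $G$ and is therefore $L_9$-free. Assume, for contradiction, that no $g \in (HJ)^\#$ centralises $O_\pi(L)$ or $O_{\pi'}(L)$; I then want to feed this configuration into Lemma \ref{lem:merlonOhneStar} to contradict $L_9$-freeness. First I must secure its two structural hypotheses. Discarding from $J$ those Sylow subgroups centralised by $L$ changes neither $\pi$ nor $C_L(J)$, so I may assume $[O_p(J), L] \neq 1$ for every $p \in \pi(J)$; then (\FL 2), together with Lemmas \ref{avoidbattensubgroup} and \ref{lem:avoidsubP} applied to each Sylow subgroup of the cyclic group $L$ and to the $L$-invariant subgroup $O_p(J) \le O_p(N)$, shows that $L$ acts on each $O_p(J)$ avoiding $L_9$.

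The main obstacle is the remaining hypothesis of Lemma \ref{lem:merlonOhneStar}, that the non-trivial Sylow subgroups of $H$ be non-cyclic \emph{elementary abelian}, whereas (\FL 4) grants only non-cyclic. I would resolve this by replacing $H$ with $\tilde H := \Omega_1(H)$, whose Sylow subgroups are non-cyclic and elementary abelian. What makes this harmless is that $L$ acts coprimely on $H$, since $N$ and $K$ have coprime order: a $p'$-automorphism of an abelian $p$-group that centralises $\Omega_1$ centralises the whole group, so $C_{O_q(L)}(\tilde H) = C_{O_q(L)}(H)$ for every $q$. Consequently the index set $\pi$ and the conditions $\pi(L) = \pi(L/C_L(H))$ and $C_H(C_L(J)) = 1$ are all preserved on passing to $\tilde H$, and since $(\tilde H J)^\# \subseteq (HJ)^\#$ the assumed non-existence of a suitable $g$ persists. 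Lemma \ref{lem:merlonOhneStar} then yields that $(\tilde H \times J)\rtimes L$ is not $L_9$-free, contradicting that it is a subgroup of $G$. This establishes (\FL 4), and together with (\FL 1)--(\FL 3) it places $G$ in $\FL$.
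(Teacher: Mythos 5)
Your proof follows the paper's architecture: Proposition \ref{prop:Hinein} delivers (\FL 1) and (\FL 2); your (\FL 3) argument is the paper's (which simply cites Corollary \ref{kor:3primuntenil}, whose proof is exactly the cyclic versus $Q_8$ case split via Lemmas \ref{lem:exam} and \ref{lem:MOP4} that you reproduce); and (\FL 4) is obtained by contradiction through Lemma \ref{lem:merlonOhneStar} in both treatments. The one genuine divergence is how the hypothesis of Lemma \ref{lem:merlonOhneStar} that the non-trivial Sylow subgroups of $H$ be elementary abelian is secured: the paper shows this is automatic, by choosing a subgroup $Q\leq L$ acting non-trivially on a given Sylow subgroup $P$ of $H$ and invoking Corollary \ref{cor:p=2nK}~(c) (power automorphisms together with an $L_9$-avoiding action force $P=[P,L]$ to be elementary abelian), whereas you replace $H$ by $\Omega_1(H)$ and verify that all the relevant data ($\pi$, $\pi(L/C_L(H))$, $C_H(C_L(J))$, and the assumed non-existence of a centralising $g$) survives the replacement, using that a coprime automorphism of an abelian $p$-group centralising $\Omega_1$ is trivial (which also follows directly from the decomposition in Lemma \ref{coprime}). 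Both routes are valid; the paper's yields the extra information that $H$ was already elementary abelian, while yours is slightly more robust and is, in passing, more explicit than the paper about why $L$ acts on each $O_p(J)$ avoiding $L_9$ — your reduction to the part of $J$ not centralised by $L$, followed by Lemmas \ref{avoidbattensubgroup} and \ref{lem:avoidsubP}, supplies a hypothesis of Lemma \ref{lem:merlonOhneStar} that the paper leaves implicit.
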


\begin{proof}From Lemma \ref{prop:Hinein} we see that $G=NK$ and that (\FL 1) and (\FL 2) are satisfied.

For (\FL 3) we let $Q$ be a Sylow subgroup of $K$ and we let $P$ and $R$ be distinct Sylow subgroups of $N$ that are not centralised by $Q$.
Since $N$ is a nilpotent normal Hall subgroup of $G$, it follows that $[P,R]=1$ and that $Q$ normalises $P$ and $R$. Then $(P\times R)Q$ is directly indecomposable and $L_9$-free, whence Corollary \ref{kor:3primuntenil} gives that $C_Q(P)\neq C_Q(R)$.

Finally, we look at (\FL 4) and we assume that it is not true.
Then there is an abelian subgroup  $H$ of $N$ such that the non-trivial Sylow subgroups are not cyclic, and we find a cyclic group  $L\leq \mathrm{Pot}_K(H)$ such that $\pi(L)=\pi(L/C_L(H))$ and a non-trivial  $L$-invariant abelian subgroup $J$ of $N$ such that $(|H|, |J|)=1$, $[H,J]=1$ and $C_H(C_L(J))=1$.
Let $\pi:=\{q\in \pi(L)\mid C_{O_q(L)}(H)<C_{O_q(L)}(J)\}$. Then we have, for all $g\in (HJ)^\#$, that $g$ centralises neither $O_\pi(L)$ nor $O_{\pi'}(L)$ for $\pi:=\{q\in \pi(L)\mid C_{O_q(L)}(H)<C_{O_q(L)}(J)\}$.

We note that $P$ does not centralise $O_{\pi}(L)\leq L$.
Then we find a prime $q\in\pi(L)$ such that a Sylow $q$-subgroup $Q$ of $L$ does not centralise $P$.
Using Lemma \ref{avoidbattensubgroup}, we see that $Q$ acts on $O_p(N)$ avoiding $L_9$ and then Lemma \ref{lem:avoidsubP} yields that $Q$ acts non-trivially on $P$, and avoiding $L_9$.
Now we may apply Corollary \ref{cor:p=2nK}: Since $L$ induces power automorphisms on $P$, Part (c) shows that $P$ is elementary abelian.
Then the hypotheses of Lemma \ref{lem:merlonOhneStar} are satisfied.
It says that
 $(H\times J)\rtimes L$ is not $L_{9}$-free, which is false.
We conclude that (\FL 4) holds.
\end{proof}

 \section{The Class $\FL$}

\begin{lemma}\label{merlonsol}
All groups in the class $\FL$ are soluble.
\end{lemma}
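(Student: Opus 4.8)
The plan is to read off everything from condition (\FL 1), which says that a group $G$ of type $(N,K)$ decomposes as $G=N\rtimes K$ with $N$ a nilpotent normal Hall subgroup and $K$ a batten group. Since $N$ is nilpotent it is soluble, and $G/N\cong K$; so by the standard fact that an extension of a soluble group by a soluble group is again soluble, it suffices to prove that every batten group is soluble.

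A batten group is, by Definition \ref{defi:batten}, a direct product of finitely many battens, and a finite direct product of soluble groups is soluble; hence I only need to check that a single batten $B$ is soluble. I would go through the three cases of Definition \ref{defi:batten}. If $B$ is a cyclic $p$-group then it is abelian, and if $B\cong Q_8$ then it is a $2$-group; in both cases $B$ is nilpotent and in particular soluble. In the remaining case $B=QR$, where $Q$ is normal of prime order and $R$ is cyclic, the subgroup $Q$ is abelian and $B/Q\cong R$ is abelian, so $B$ is metabelian and therefore soluble.

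Combining these observations, $K$ is soluble, and then $G=N\rtimes K$, being an extension of the soluble group $N$ by the soluble group $K$, is soluble. There is essentially no obstacle here: the real work was already done in setting up the class $\FL$ through battens, whose defining shapes are all nilpotent or metabelian. The only point worth a moment of care is to apply the direct-product and extension principles for solubility only where they genuinely apply, but since solubility is preserved under all the operations involved, none of the finer conditions (\FL 2)--(\FL 4) are needed for this statement.
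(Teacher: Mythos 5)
Your proof is correct and takes essentially the same approach as the paper: both decompose $G$ via (\FL 1), note that $N$ is nilpotent and $G/N\cong K$ is soluble, and conclude by closure of solubility under extensions. The only cosmetic difference is that you verify each batten is nilpotent or metabelian directly from Definition \ref{defi:batten}, whereas the paper observes that $K$ is a direct product of groups whose orders have at most two prime divisors.
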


\begin{proof}Let $G \in \FL$ be of type $(N,K)$.
Then $N$ is nilpotent normal Hall subgroup of $G$ and $G/N\cong K$ is a direct product of $p$-groups or of groups whose order is divisible by exactly two primes.
Thus $G/N$ is soluble as well, and it follows that $G$ is soluble.
\end{proof}

\begin{lemma}\label{lem:Nhami}
Let $G \in \FL$ be of type $(N,K)$ and $\pi:=\pi([N,K])$. Then every subgroup of $O_\pi(N)$ is normal in $N$. \end{lemma}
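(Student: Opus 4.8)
The plan is to reduce the assertion to a local statement about the individual Sylow subgroups of $N$ and then to extract the needed Dedekind property directly from the structure of $L_9$-avoiding batten actions.

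First I would exploit that $N$ is nilpotent, so $N=\prod_{q\in\pi(N)}O_q(N)$ is the direct product of its Sylow subgroups and $O_\pi(N)=\prod_{p\in\pi}O_p(N)$. Since each $O_q(N)$ is characteristic in $N\unlhd G$, it is $K$-invariant, and the direct-product structure gives $[N,K]=\prod_{q\in\pi(N)}[O_q(N),K]$; hence a prime $p$ lies in $\pi=\pi([N,K])$ exactly when $K$ does not centralise $O_p(N)$. Given any $U\le O_\pi(N)$, nilpotency lets me write $U=\prod_{p\in\pi}(U\cap O_p(N))$ with each factor the Sylow $p$-subgroup of $U$. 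Because distinct Sylow subgroups of $N$ centralise one another, conjugating $U\cap O_p(N)$ by an arbitrary $n\in N$ amounts to conjugating by the $p$-part of $n$ inside $O_p(N)$. Consequently $U\unlhd N$ will follow as soon as every subgroup of $O_p(N)$ is normal in $O_p(N)$ for each $p\in\pi$, so the whole lemma reduces to showing that $O_p(N)$ is a Dedekind group whenever $p\in\pi$.

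To prove this, I would fix $p\in\pi$ and put $P:=O_p(N)$. By the first step $K$ does not centralise $P$, so property (\FL 2) furnishes a batten $B$ of $K$ acting on $P$ non-trivially and avoiding $L_9$. Then Lemma \ref{lem:avoid1prop}(b) leaves exactly two cases: either $P=[P,B]\times C_P(B)$ with $[P,B]$ elementary abelian and $C_P(B)$ cyclic, whence $P$ is a direct product of abelian groups and so abelian; or $P=[P,B]\times I$ with $[P,B]\cong Q_8$ and $|I|\le 2$, whence $P\cong Q_8$ or $P\cong Q_8\times C_2$, both Hamiltonian. In every case $P$ is a Dedekind group, which is precisely what the reduction requires.

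I do not expect a serious obstacle: the substantive input is already concentrated in Lemma \ref{lem:avoid1prop}, which forces any batten acting non-trivially and avoiding $L_9$ to leave an abelian or Hamiltonian Sylow subgroup behind. The only point needing care is the bookkeeping of the first paragraph, namely verifying that normality of each $p$-Sylow factor of $U$ inside $O_p(N)$ really upgrades to normality of $U$ in all of $N$; this is routine once one invokes the direct-product decomposition of the nilpotent group $N$ together with the relations $[O_q(N),O_p(N)]=1$ for $q\neq p$.
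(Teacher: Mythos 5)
Your argument is correct and follows essentially the same route as the paper: reduce via nilpotency of $N$ to showing that $O_p(U)\unlhd O_p(N)$ for each $p\in\pi$, observe that $p\in\pi$ forces some batten of $K$ to act non-trivially and avoiding $L_9$ on $O_p(N)$, and then read off from Lemma \ref{lem:avoid1prop}~(b) that $O_p(N)$ is abelian or hamiltonian. The extra bookkeeping in your first paragraph is fine but is exactly what the paper compresses into its opening sentence.
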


\begin{proof}
Let $U$ be a subgroup of $O_{\pi}(N)$. Then $U\unlhd N$ if and only if $O_p(U)\unlhd O_p(N)$ for all $p\in \pi$, since $N$ is nilpotent.
Let $p \in \pi$. We note that this implies that $O_p(N)$ is not centralised by $K$. In particular there is a batten of $K$ that
acts non-trivially on $O_p(N)$ and avoiding $L_9$.
If $O_p(N)$ is abelian, then $O_p(U)\unlhd O_p(N)$.
If $O_p(N)$ is not abelian, then
we apply
Lemma \ref{lem:avoid1prop}~(b) to a batten $B$ of $K$ that acts non-trivially on $O_p(N)$.
The first possibility described there implies that $O_p(N)$ is abelian, which is not the case here. Thus the second possibility holds, and then $O_p(N)\cong Q_8\times I$, where $I$ is cyclic of order at most $2$.
We conclude that $O_p(N)$ is hamiltonian and it follows that $O_p(U)\unlhd O_p(N)$.
\end{proof}

\begin{lemma}\label{lem:merlonsub}
Let $G \in \FL$ be of type $(N,K)$ and $X\leq G$.
Then there is some $x\in [N,K]$ such that $X=(N\cap X)(K^x\cap X)$.
\end{lemma}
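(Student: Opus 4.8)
The plan is to exploit that $N$ is a normal Hall subgroup of the soluble group $G$ (solubility by Lemma \ref{merlonsol}), so that complements behave as nicely as possible. First I would pass to $X$: since $N\cap X\trianglelefteq X$ and $|X:N\cap X|=|XN:N|$ divides $|G:N|$, the intersection $N\cap X$ is a \emph{normal Hall} subgroup of $X$. By the Schur--Zassenhaus theorem it has a complement $Y$ in $X$, so that $X=(N\cap X)Y$, where $Y$ is a $\pi(N)'$-subgroup of $G$.

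The core step is to place $Y$ inside a conjugate of $K$ of the special form $K^{x}$ with $x\in[N,K]$. Because $G$ is soluble, $Y$ lies in some Hall $\pi(N)'$-subgroup of $G$; as $N$ is the normal Hall $\pi(N)$-subgroup, every such Hall subgroup is a complement to $N$, and all complements to $N$ are conjugate to $K$ (Schur--Zassenhaus, using that the nilpotent group $N$ is soluble). Hence $Y\le K^{g}$ for some $g\in G$. Now I would reduce the conjugating element into $[N,K]$. Writing $g=nk$ with $n\in N$ and $k\in K$, and using $g=k\,(n^{k})$ with $n^{k}\in N$, one gets $K^{g}=(K^{k})^{n^{k}}=K^{n^{k}}$, so every conjugate of $K$ is in fact an $N$-conjugate. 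Then coprime action (Lemma \ref{coprime}, applied to the Sylow subgroups of $N$) gives $N=[N,K]\,C_N(K)$, and since $N_N(K)=C_N(K)$ the conjugate $K^{m}$ depends only on the coset $C_N(K)m$; writing $m:=n^{k}=ca$ with $c\in C_N(K)$ and $a\in[N,K]$ yields $K^{m}=K^{a}$. Thus $K^{g}=K^{x}$ for some $x\in[N,K]$.

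To finish, I would simply observe that $Y\le X\cap K^{x}$ (as $Y\le X$ and $Y\le K^{g}=K^{x}$), whence $X=(N\cap X)Y\le (N\cap X)(K^{x}\cap X)\le X$, which forces equality and proves the claim.

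I expect the only genuine obstacle to be the reduction of the conjugating element into $[N,K]$. This needs two verifications: that $N_N(K)=C_N(K)$ (if $n\in N$ normalises $K$ then for $y\in K$ one has $y^{n}y^{-1}\in K\cap N=1$, using $N\trianglelefteq G$), and that every right coset $C_N(K)m$ meets $[N,K]$, for which the decomposition $N=[N,K]C_N(K)$ together with the normality of $[N,K]$ in $G$ (so that $C_N(K)[N,K]$ is the full group $N$) is the key input. The remaining ingredients --- the normal Hall structure of $N\cap X$ in $X$, Schur--Zassenhaus, and the conjugacy of Hall complements in the soluble group $G$ --- are entirely standard.
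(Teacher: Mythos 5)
Your proof is correct and follows essentially the same route as the paper: Schur--Zassenhaus applied to the normal Hall subgroup $N\cap X$ of $X$, conjugacy of complements of $N$ in the soluble group $G$ to get $Y\le K^{g}$, and then the decomposition $N=C_N(K)[N,K]$ (with $C_N(K)\le N_G(K)$) to replace $g$ by an element of $[N,K]$. The only cosmetic difference is that you first reduce $g$ to an $N$-conjugate via $K^{g}=K^{n^{k}}$, whereas the paper writes $G=KC_N(K)[N,K]$ directly; the underlying idea is identical.
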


\begin{proof}We set $M:=N\cap X$. Then $M$ is a normal Hall subgroup of $X$, because $N$ is one of $G$.
Then the Schur-Zassenhaus Theorem provides a complement $C$ of $M$ in $X$, and we notice that $C$ and $M$ have coprime orders.
Therefore $\pi(C)=\pi(X)\setminus\pi(M)\subseteq\pi(G)\setminus\pi(N)=\pi(K)$.
It follows that $C$ is contained in a complement for $N$ in $G$. Since $G$ is soluble by Lemma \ref{merlonsol}, such a complement is conjugate to $K$, and thus we find $g\in G$ such that $C\leq K^g$.
The coprime action of $K$ on $N$ yields, together with Lemma \ref{coprime}, that
$N=C_N(K) [N,K]$, and therefore
$G=KN=KC_N(K)[N,K]$. We notice that $[N,K] \unlhd G$ and we
let $x\in [N,K]$ and $y\in KC_N(K)\leq N_G(K)$ be such that $g=yx$.
Then $C=K^g\cap X=K^x\cap X$ and hence
$X=MC=(N \cap C)(K^x \cap X)$.
\end{proof}

\begin{lemma}\label{lem:uG}
Let $G \in \FL$ be of type $(N,K)$ and suppose that $U\leq G$.
Then $U$ is a group in class $\FL$ of type $(U\cap N,U\cap K^g)$ for some $g\in [N,K]$.
\end{lemma}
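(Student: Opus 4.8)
The plan is to apply Lemma \ref{lem:merlonsub} to $U$, which supplies an element $g\in[N,K]$ with $U=MC$ where $M:=N\cap U$ and $C:=K^g\cap U$, and then to verify that $U\in\FL$ of type $(M,C)$ by checking (\FL 1)--(\FL 4). Throughout I would work with the complement $K^g$ in place of $K$: since conjugation by $g\in N$ is an automorphism of $G$ fixing $N$ and every $O_p(N)$, the group $G$ also lies in $\FL$ of type $(N,K^g)$, so all four conditions are available for this type. Condition (\FL 1) is routine: $M=N\cap U\unlhd U$ is a Hall subgroup of $U$ (its order divides $|N|$ and its index divides $|G:N|$, which are coprime), it is nilpotent as a subgroup of $N$, and its Sylow subgroups sit inside the modular, hence $L_9$-free (Lemma \ref{lem:pGrModularÄqu}), Sylow subgroups of $N$, so they are themselves $L_9$-free and thus modular; finally $C=K^g\cap U$ is a batten group by Lemma \ref{lem:battensub}, $M\cap C\le N\cap K^g=1$, and $U=MC$, so $U=M\rtimes C$.

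For (\FL 2) I would first record that, since $K^g=B_1\times\cdots\times B_m$ is a direct product of battens of pairwise coprime order, every subgroup satisfies $C=\prod_j(C\cap B_j)$, so each batten $B$ of $C$ is a batten of some $C\cap B_i$ and in particular $B\le B_i$. Now fix $p\in\pi(M)$ and a batten $B$ of $C$ not centralising $O_p(M)$. As $O_p(M)=O_p(N)\cap U\le O_p(N)$, the batten $B_i\ge B$ does not centralise $O_p(N)$, so by (\FL 2) for $(N,K^g)$ it acts on $O_p(N)$ avoiding $L_9$. Lemma \ref{avoidbattensubgroup} (with $L=B$, $L_0=1$) then shows $B$ acts on $O_p(N)$ avoiding $L_9$, and Lemma \ref{lem:avoidsubP} (with $R=O_p(M)$, which is $U$-invariant hence $B$-invariant, and $R_0=1$) upgrades this to: $B$ avoids $L_9$ in its action on $O_p(M)$.

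The main work is (\FL 3). Let $Q'$ be a Sylow $q$-subgroup of $C$ and let $P'=O_p(M)$, $R'=O_r(M)$ be distinct Sylow subgroups of $M$ not centralised by $Q'$. Let $B_i$ be the batten of $K^g$ with $Q'\le B_i$ and let $Q^*$ be the full Sylow $q$-subgroup of $K^g$, so $Q'\le Q^*\le B_i$. Since $B_i$ acts on $O_p(N)$ and $O_r(N)$ avoiding $L_9$, Lemma \ref{lem:avoidL9Cen} gives $C_{Q'}(O_p(M))=C_{Q'}(O_p(N))$ and likewise for $r$, reducing the claim to $C_{Q'}(O_p(N))\neq C_{Q'}(O_r(N))$. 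I would then split on the shape of $B_i$. If $Q^*$ is cyclic (either $B_i$ is a cyclic $q$-group, or $B_i$ is a non-nilpotent batten with $q$ its ``cyclic'' prime), then since $Q'$ does not centralise $O_p(N)$, comparability of subgroups in the cyclic group $Q^*$ forces $C_{Q'}(O_p(N))=C_{Q^*}(O_p(N))$, and similarly for $r$; now (\FL 3) for $(N,K^g)$ applied to the Sylow subgroup $Q^*$ yields $C_{Q^*}(O_p(N))\neq C_{Q^*}(O_r(N))$, as needed. If instead $Q^*\cong Q_8$, or $Q^*=\mathcal B(B_i)$ has prime order, then any non-trivial action avoiding $L_9$ is faithful, so $C_{Q^*}(O_p(N))=C_{Q^*}(O_r(N))=1$, contradicting (\FL 3) for $(N,K^g)$; hence no such pair $P',R'$ exists and (\FL 3) for $(M,C)$ holds vacuously here. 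The delicate point, and the main obstacle, is exactly this passage from the centraliser in the full Sylow $Q^*$ (where (\FL 3) of $G$ is available) down to the centraliser in the subgroup $Q'$, which is what makes the cyclic-chain observation and the faithfulness dichotomy necessary.

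Finally, (\FL 4) is essentially automatic: any data $H,L,J$ satisfying its hypotheses for $(M,C)$ also satisfy them for $(N,K^g)$, because $M\le N$, $J\le N$, and $L\le\mathrm{Pot}_C(H)=C\cap\mathrm{Pot}_{K^g}(H)\le\mathrm{Pot}_{K^g}(H)$, while the set $\pi$ and the groups $O_\pi(L),O_{\pi'}(L)$ depend only on $L$ and are unchanged. Thus the conclusion of (\FL 4) for $(N,K^g)$, which holds because $G\in\FL$, is verbatim the conclusion required for $(M,C)$. Assembling (\FL 1)--(\FL 4) shows $U\in\FL$ of type $(M,C)=(U\cap N,\,U\cap K^g)$ with $g\in[N,K]$, as claimed.
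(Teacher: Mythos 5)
Your proposal is correct and takes essentially the same route as the paper's proof: the same decomposition via Lemma \ref{lem:merlonsub}, the same use of Lemmas \ref{lem:battensub}, \ref{avoidbattensubgroup} and \ref{lem:avoidsubP} for (\FL 1) and (\FL 2), the same reduction of (\FL 3) via Lemma \ref{lem:avoidL9Cen} combined with the chain structure of the (necessarily cyclic) full Sylow subgroup of $K$, and the same observation that (\FL 4) is inherited verbatim. The only differences are cosmetic (you keep $K^g$ rather than conjugating to $g=1$, and you organise the (\FL 3) case split on the shape of the batten up front instead of deriving cyclicity from the non-vanishing of one centraliser).
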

\begin{proof}Lemma \ref{lem:merlonsub} provides some $g\in [N,K]$ such that $U=(U\cap N)\cdot (U\cap K^g)$. By conjugation we may suppose that $g=1$ and we set $K_1:=U\cap K$.
Then Lemma \ref{lem:battensub} yields that  $K_1=U\cap K^g\leq K^g\cong K$ is a batten group.
Moreover, $M:=U\cap N\leq N$ is a normal nilpotent Hall subgroup of $U$ with modular Sylow subgroups, by (\FL 1).
This means that (\FL 1) holds for $U$, and now we turn to (\FL 2) and let $p\in\pi(M)$.
Suppose that $B$ is a batten of $K_1$ that does not centralise $O_p(M)$. Then it does not centralise $O_p(N)$ and therefore Lemma  \ref{avoidbattensubgroup} implies that $B\cong B/1$ acts on $O_p(N)$ avoiding $L_9$.
Then we may apply Lemma \ref{lem:avoidsubP} to see that $B$ also acts on $O_p(M)/1\cong O_p(M)$ avoiding $L_9$.

This gives property $(\FL2)$ of Definition \ref{defi:Merlongroup} for $U$, and $(\FL4)$
follows because $M\leq N$ and $K_1\leq K$.

For (\FL 3)
we let $Q_1$ be a Sylow subgroup of $K_1$ and we let $p,r\in\pi(M)$ be different primes such that $[O_p(M),Q_1]\neq 1\neq[O_r(M),Q_1]$.
We need to prove that
$C_{Q_1}(O_p(M)) \neq C_{Q_1}(O_r(M))$.

First we let $Q$ be a Sylow subgroup of $K$
that contains $Q_1$. Then $[O_p(N),Q]\neq 1\neq[O_r(N),Q]$ and therefore
$C_{Q}(O_p(N))\neq C_{Q}(O_r(N))$, using Property (\FL 3) for $G$.
In particular, these centralisers cannot both be trivial, and
we may suppose that $C_{Q}(O_p(N))\neq 1$. Then $Q$ does not act faithfully on $O_p(N)$, but the action of $Q$ on $O_p(N)$ avoids $L_9$. Definition \ref{defi:avoidQ8} immediately gives that $Q \not \cong Q_8$.
Then
it follows that $Q$ is cyclic, which means that the subgroup lattice $L(Q)$ of $Q$ is a chain, and $Q_1$ is also cyclic.

We assume for a contradiction that
$C_{Q_1}(O_p(M)) = C_{Q_1}(O_r(M))$.
Then
Lemma \ref{lem:avoidL9Cen}, with $Q_1$ in the role of $H$, $O_p(M)$ in the role of $R$ and $O_p(N)$ in the role of $P$, gives that
$C_{Q_1}(O_p(M))=C_{Q_1}(O_p(N))$.
Similarly
$C_{Q_1}(O_r(M))=C_{Q_1}(O_r(N))$, and then
it follows that
$C_{Q_1}(O_p(N)) = C_{Q_1}(O_r(N))$.
We recall that $C_{Q}(O_p(N))\neq C_{Q}(O_r(N))$ and that $Q$ is cyclic, and now we may suppose that $C_{Q}(O_p(N)) \lneq C_{Q}(O_r(N))$. This forces $C_{Q_1}(O_p(N))\lneq C_{Q}(O_p(N))$, which is impossible.
Thus $C_{Q_1}(O_p(M)) \neq C_{Q_1}(O_r(M))$
and  (\FL 3) holds for $U$ as well.
\end{proof}

\begin{lemma}\label{lem:Nmaxgewaehlt}
Let $G \in \FL$ be of type $(N,K)$ and suppose that $S$ is a normal Sylow $q$-subgroup of $K$ that centralises $N$ for some prime $q$.
Let $K_1$ be a Hall $q'$-subgroup of $K$.
\\Then $G$ is also of type $(N\times S,K_1)$.
In particular, if we choose $(N,K)$ such that $|N|$ is as large as possible, then $\pi(K)=\pi(K/C_K(N))$.
\end{lemma}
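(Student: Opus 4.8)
The plan is to verify the four conditions (\FL 1)--(\FL 4) of Definition \ref{defi:Merlongroup} for the pair $(N\times S,K_1)$ and then to obtain the ``in particular'' clause from a maximality argument. First I would record the structural set-up. Since $N$ is a Hall subgroup and $S\le K$, we have $q\notin\pi(N)$; and since $S\unlhd K$ centralises $N$, it is normal in $G=NK$, so $N\times S$ is a nilpotent normal Hall subgroup of $G$ with complement $K_1$. Its Sylow subgroups are those of $N$ (modular by hypothesis) together with $S$, and $S$, being a $q$-subgroup of the batten group $K$, is cyclic or isomorphic to $Q_8$, hence modular; moreover $K_1$ is a batten group by Lemma \ref{lem:battensub}. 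This yields (\FL 1). The key observation to isolate at the outset is that $S$ lies in the unique batten $B$ of $K$ with $q\mid|B|$, and that normality of the Sylow subgroup $S$ leaves two cases: either $B$ is a $q$-group and $S=B$ (so $K_1$ is $K$ with $B$ deleted), or $B$ is non-nilpotent and $S=\mathcal B(B)$, in which case $K_1$ arises from $K$ by replacing $B$ with its cyclic Hall $q'$-subgroup $R$, so $B=\mathcal B(B)\rtimes R=S\rtimes R$.

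For (\FL 2) and (\FL 3) the first case is immediate, as every batten of $K_1$ is then a batten of $K$ centralising $S=B$, and both conditions follow from those for $G$. The substantive work is the second case, where the new cyclic batten $R$ no longer centralises $S$. For (\FL 2) at the prime $q$ I would note that $R$ acts on $S=\mathcal B(B)$, a group of prime order, irreducibly and non-trivially, which is exactly avoiding $L_9$ of type (std); at the primes $p\in\pi(N)$ the batten $R\le B$ avoids $L_9$ on $O_p(N)$ or centralises it, by Lemma \ref{avoidbattensubgroup} applied to (\FL 2) for $G$. For (\FL 3) the only genuinely new pair to control consists of $S$ together with an $O_p(N)$ both moved by $Q_1=R$; here the main point is that, because $S=\mathcal B(B)$ centralises $N$, the action of $B$ on $O_p(N)$ cannot be of type (NN) (which would force $\mathcal B(B)$ to act irreducibly), so it has type (Cy), and then Definition \ref{defi:avoidbatten} gives $[O_p(N),Z(B)]\neq1$ with $Z(B)=\Phi(R)$, whence $C_R(O_p(N))\subsetneq\Phi(R)=C_R(S)$. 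As $R$ is cyclic, all remaining pairs reduce to (\FL 3) for $G$, the relevant Sylow subgroups of $K_1$ being Sylow subgroups of $K$.

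For (\FL 4) I would first show that the data force $H\le N$: the Sylow $q$-subgroup of the abelian group $H$ lies in $S$, which is cyclic or $Q_8$ and hence has only cyclic abelian subgroups, so the hypothesis that every non-trivial Sylow subgroup of $H$ is non-cyclic makes it trivial. I then split on $J\cap S$. If $J\cap S=1$, then $J\le N$ and, since $\mathrm{Pot}_{K_1}(H)\le\mathrm{Pot}_K(H)$ and the set $\pi$ depends only on $H,J,L$, the configuration is a (\FL 4)-configuration for $G$, so the required $g\in(HJ)^\#$ is supplied by (\FL 4) for $G$. If $J\cap S\neq1$, I may choose $g\in S^\#\subseteq J^\#$ (in the first case $S$ is centralised by $K_1$, so any $g\in(J\cap S)^\#$ works; in the second $S$ has prime order, so $S\le J$); such a $g$ centralises every Sylow subgroup of $L$ except possibly the one on the prime $r$ of $R$, and since $r$ lies in exactly one of $\pi,\pi'$, the element $g$ centralises $O_\pi(L)$ or $O_{\pi'}(L)$, as required.

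Finally, for the ``in particular'' clause I would argue by contradiction from the maximality of $|N|$. If $q\in\pi(K)\setminus\pi(K/C_K(N))$, then the Sylow $q$-subgroup $S_q$ of $K$ satisfies $S_q\le C_K(N)$. If $S_q\unlhd K$, the main statement applied to $S_q$ produces the type $(N\times S_q,K_1)$ with $|N\times S_q|>|N|$, a contradiction. If $S_q$ is not normal, Lemma \ref{lem:battenheart} yields a non-nilpotent batten $B=\mathcal B(B)S_q$ of $K$; since $S_q$ centralises $N$, neither type (Cy) nor type (NN) is possible for the action of $B$ on any $O_p(N)$, so $B$, and in particular its normal Sylow subgroup $\mathcal B(B)\neq1$, centralises $N$, and applying the main statement to $\mathcal B(B)$ again contradicts maximality. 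Hence $\pi(K)=\pi(K/C_K(N))$. I expect the case $S=\mathcal B(B)$ of a non-nilpotent batten to be the main obstacle throughout, as it is the only situation in which $K_1$ acquires a factor acting non-trivially on the part $S$ that has been absorbed into the Hall subgroup, and it is precisely where (\FL 3) and (\FL 4) require genuine new input rather than inheritance from $G$.
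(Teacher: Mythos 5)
Your proposal is correct and follows essentially the same route as the paper: verify (\FL 1)--(\FL 4) for $(N\times S,K_1)$, with the only substantive work occurring when $S=\mathcal B(B)$ for a non-nilpotent batten $B$ of $K$ (where (\FL 2) and (\FL 3) are settled via $[O_p(N),Z(B)]\neq 1$ from Definition \ref{defi:avoidbatten} and (\FL 4) via the observation that $S$ centralises $O_\pi(L)$ or $O_{\pi'}(L)$), and then derive the ``in particular'' clause by the same maximality contradiction through $\mathcal B(B)\leq C_K(N)$.
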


\begin{proof}
We show that $G=(N \times S)K_1$ satisfies ($\FL1$)-($\FL4$) of Definition \ref{defi:Merlongroup}, and we first note that $K_1$ is a batten group by
Lemma \ref{lem:battensub}.
The structure of $K$ forces all Sylow subgroups of $K$ to be cyclic or quaternion, more specifically $S$ is cyclic or isomorphic to $Q_8$. This means that $S$ is modular.
Since $S$ is a normal Sylow $q$-subgroup of $K$ and $N$ is a Hall subgroup of $G$, by hypothesis, it follows that $N \times S$ is a Hall subgroup of $G$ where all Sylow subgroups are modular.

By hypothesis $[N,S]=1$ and $N$ is nilpotent, hence $N\times S$ is nilpotent, too. This is ($\FL1$).
\\For ($\FL2$) we let $B$ be a batten of $K_1$ and $p\in\pi(N \times S)$.
We keep in mind that $B$ is not necessarily a batten of $K$ -- if it is, then it centralises $O_p(N \times S)$ or it acts on it avoiding $L_9$, because of
($\FL2$) for $G$.

Now we suppose that $B$ is not a batten of $K$ and that $[O_p(N \times S),B]\neq 1$. Then $SB$ is a non-nilpotent batten of $K$.  If $p\neq q$, then $SB$ acts on $O_p(N)$ avoiding $L_9$, and $[O_p(N),S]=1$. Then Definition \ref{defi:avoidbatten} implies that $SB$ acts of type (Cy) and it follows that $B$ acts on $O_p(N)$ avoiding $L_9$. Finally suppose that $q=p$. Then $\Phi(B)$ centralises $S=O_p(NS)$, while $B$ induces power automorphisms on the cyclic group $S$ of order $p$. Thus $SB$ satisfies (std) of Definition \ref{defi:avoidcyclic}, and we deduce that $B$ acts on $S$ avoiding $L_9$.

We turn to ($\FL3$). Let $Q$ be a Sylow subgroup of $K_1$ and let $P$ and $R$ be distinct Sylow subgroups of $N \times S$ that are not centralised by $Q$.
First we note that $Q$ is a Sylow subgroup of $K$ because $K_1$ is a Hall subgroup of $K$ by hypothesis.
Therefore, if $PR\leq N$, then we immediately have that $C_Q(R)\neq C_Q(P)$, by ($\FL3$) in $G$.

Without loss suppose that $R \nleq N$, i.e. $R=S$.
Then we recall that $Q$ was chosen not to centralise $P$ and $R=S$, which means that
$Q$ and $S$ cannot come from distinct battens of $K$, but their product must be a non-nilpotent batten of $K$.
Moreover, $[P,QS]\neq 1$.
We obtain from Lemma \ref{lem:batten} and Definition \ref{defi:avoidbatten} that $\Phi(Q)=Z(Q)=C_Q(S)$ does not centralise $P$ and therefore $C_Q(R)\neq C_Q(P)$. This is ($\FL3$).
\\Finally, let $H\leq N \times S$ be such that its non-trivial  Sylow subgroups are not cyclic, let $L\leq \mathrm{Pot}_{K_1}(H)$ be cyclic and such that $\pi(L)=\pi(L/C_L(H))$  and let $1\neq J$ be an $L$-invariant abelian subgroup of $M$ such that $(|H|, |J|)=1$, $[H,J]=1$ and $C_H(C_L(J))=1$.

As $K$ is a batten group, the set  $\pi(K/C_K(S))$ contains at most one element.
We recall that $L\leq K_1\leq K$, and then it follows that,
for every set of primes $\pi$, the group $S$ centralises $O_\pi(L)$ or $O_{\pi'}(L)$.
In order to prove ($\FL4$) of Definition \ref{defi:Merlongroup}, we may thus suppose that $H$ and $J$ are subgroups of $N$.

Then $L\leq K_1\leq K$ shows that $L\leq \mathrm{Pot}_{K}(H)$.
Hence we apply ($\FL4$) of Definition \ref{defi:Merlongroup} to $G$, i.e. to the type $(N,K)$.
If $\pi:=\{q\in \pi(L)\mid  C_{O_q(L)}(H)<C_{O_q(L)}(J)\}$, then
we find some $g\in (HJ)^\#$ that centralises $O_\pi(L)$ or $O_{\pi'}(L)$.

Altogether, $G=(N\times S) K_1$ satisfies Definition \ref{defi:Merlongroup}.

We now suppose that $|N|$ is as large as possible and we assume for a contradiction that $S\leq C_K(N)$ is a Sylow subgroup of $K$. Then $S$ is not normal in $K$, hence there is a non-nilpotent batten $B$ of $K$ such that $B=\mathcal{B}(B)S$.
For all $p\in\pi(N)$ it follows that $[O_p(N),Z(B)]\leq [O_p(N),S]=1$, by Lemma \ref{lem:batten}. Then Definition \ref{defi:avoidbatten} yields that $B$ does not act on $O_p(N)$ avoiding $L_9$, whence $B$ centralises $O_p(N)$. But now $\mathcal{B}(B)\leq C_K(N)$ and $\mathcal B(B)$ is a normal Sylow subgroup of $K$. This contradicts the maximal choice of $N$.
\end{proof}

\begin{lemma}\label{lem:faktorStar}
Let $G \in \FL$ and suppose that $M\unlhd G$.
Then $G/M \in \FL$.
\end{lemma}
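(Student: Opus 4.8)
The plan is to reduce, by induction on $|M|$, to the case where $M$ is a minimal normal subgroup of $G$, and then to write down an explicit $\FL$-type for $G/M$. First I would fix a type $(N,K)$ for $G$ with $|N|$ as large as possible; by Lemma~\ref{lem:Nmaxgewaehlt} this forces $\pi(K)=\pi(K/C_K(N))$, so every Sylow subgroup of $K$ acts non-trivially on $N$. For the induction step, given $M\unlhd G$ I would pick a minimal normal subgroup $M_0\le M$ of $G$; since $G$ is soluble by Lemma~\ref{merlonsol}, $M_0$ is an elementary abelian $p$-group. Once $G/M_0\in\FL$ is established, $M/M_0\unlhd G/M_0$ has smaller order and $(G/M_0)/(M/M_0)\cong G/M$, so the induction hypothesis finishes the general case. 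Thus everything reduces to the minimal normal case, which splits according to whether $p\in\pi(N)$ or $p\in\pi(K)$: in the first case $MN/N=1$ forces $M_0\le N$, and in the second $M_0\cap N=1$ and $[N,M_0]\le N\cap M_0=1$, so $M_0$ is (isomorphic to) a normal $p$-subgroup of $K$ that is centralised by $N$.

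In the case $M_0\le N$ I would take $\bar N:=N/M_0$ and $\bar K:=KM_0/M_0\cong K$, which is still a batten group, so $(\FL 1)$ only requires that the quotient Sylow $O_p(N)/M_0$ stay modular; this holds because $L(O_p(N)/M_0)$ is isomorphic to the interval $[M_0,O_p(N)]$ of the modular lattice $L(O_p(N))$. For $(\FL 2)$, given a batten $B$ of $K$ that does not centralise $O_p(N)$, I would use $O_p(N)=[O_p(N),B]\times C_{O_p(N)}(B)$ together with Maschke's theorem to replace the submodule $M_0\cap[O_p(N),B]$ by a $B$-invariant complement $W$; then $O_p(N)/M_0$ is $B$-isomorphic to $R/R_0$ with $R:=W\times C_{O_p(N)}(B)$ $B$-invariant and $R_0:=M_0\cap C_{O_p(N)}(B)\le C_R(B)$, so Lemma~\ref{lem:avoidsubP} gives that $B$ acts on it avoiding $L_9$ (or trivially); the other Sylow subgroups of $\bar N$ are unchanged. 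Properties $(\FL 3)$ and $(\FL 4)$ would then transfer from $G$ exactly as in the proof of Lemma~\ref{lem:uG}, the key input being the centraliser equalities of Lemma~\ref{lem:avoidL9Cen}.

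In the case $p\in\pi(K)$ the action is cleaner but the batten structure is trickier. Here I would take $\bar N:=NM_0/M_0\cong N$ and $\bar K:=K/M_0$; since $M_0\le C_K(N)$, the action of $\bar K$ on $\bar N$ coincides with that of $K$ on $N$, so $(\FL 2)$ follows directly from Lemma~\ref{avoidbattensubgroup} (taking $L_0=M_0$ and using $[N,M_0]=1$), while $(\FL 3)$ and $(\FL 4)$ transfer verbatim because only the acting group, not its action, has changed. The real work is to check that $\bar K=K/M_0$ is a batten group: $M_0$ lies in a single batten $B_i$ of $K$, and although quotients of battens need not be battens, I would show that the degenerate quotients cannot occur here.

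The hard part is exactly this last point, and it is where the maximal choice of $N$ pays off. If $B_i\cong Q_8$, then by maximality $B_i$ acts non-trivially, hence by Definition~\ref{defi:avoidQ8} faithfully, on some $O_p(N)$, so its central involution $z$ inverts $O_p(N)=[O_p(N),z]$ by Lemma~\ref{lem:MOP4}; thus $z$ does not centralise $N$, and $\langle z\rangle=Z(B_i)$ cannot be normal in $G$, since normality would give $[N,z]\le N\cap\langle z\rangle=1$. As the only elementary abelian subgroups of $Q_8$ are $1$ and $Z(Q_8)$, this excludes $M_0=Z(B_i)$ and leaves $B_i/M_0$ cyclic or $\cong Q_8$. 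An analogous argument, using Lemma~\ref{lem:batten} and the fact that $Z(B)$ must act non-trivially in every $L_9$-avoiding action of a non-nilpotent batten, rules out the degenerate non-nilpotent quotient (a batten modulo its full centre) as well as $M_0=\mathcal B(B_i)$, because in each of these the relevant central or normal Sylow subgroup would have to centralise $N$, contradicting maximality. Hence in every surviving configuration $B_i/M_0$ is again a batten, $\bar K$ is a batten group, and $G/M_0\in\FL$. I therefore expect the whole proof to hinge on combining the minimal-normal reduction with the maximality of $N$ to control the batten containing $M_0$.
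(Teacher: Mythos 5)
Your overall skeleton matches the paper's: reduce by induction to a minimal normal subgroup $M_0$, split according to whether $M_0$ lies in $N$ or in $K$, and use the maximality of $|N|$ (via Lemma \ref{lem:Nmaxgewaehlt}) to control the batten of $K$ containing $M_0$ and to exclude the degenerate quotients $Q_8/Z(Q_8)$ and $B/Z(B)$. Your handling of ($\FL 1$), ($\FL 2$) and of the batten-quotient problem is essentially sound, and your Maschke argument for ($\FL 2$) when $M_0\le [O_p(N),B]$ is a workable substitute for the device the paper actually uses there: if $M$ has \emph{any} complement $C$ in $G$, then $G/M\cong C\in\FL$ already by Lemma \ref{lem:uG}, and one is done. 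That observation is what lets the paper reduce the case $M\le N$ to $M\le C_N(K)$ with $M$ a $2$-group (since $M\nleq C_N(K)$ forces $C_M(K)=1$ and $M\le[O_r(N),K]$, which is either elementary abelian, hence complemented, or isomorphic to $Q_8$, which is impossible for such an $M$); in that surviving case Lemma \ref{lem:avoidsubP} applies directly and no Maschke step is needed.

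The genuine gap is your treatment of ($\FL 4$) (and, less seriously, ($\FL 3$)): you assert that these ``transfer exactly as in the proof of Lemma \ref{lem:uG}'' or ``verbatim'', but the quotient situation is not analogous to the subgroup situation. ($\FL 4$) is a universally quantified condition over configurations $(H,L,J)$ inside the group; for a subgroup $U\le G$ any such configuration in $U$ is already one in $G$, so the transfer is trivial, whereas for $G/M$ one must \emph{lift} a configuration $(\bar H,\bar L,\bar J)$ back to $G$ and verify all the hypotheses for the lifts: the lifted $L$ must be cyclic with $\pi(L)=\pi(L/C_L(H))$, $H$ and $J$ must remain abelian with non-cyclic Sylow subgroups and coprime orders, $C_H(C_L(J))=1$ must be recovered, the set $\pi$ must be shown to equal $\bar\pi$, and the element $g$ produced by ($\FL 4$) in $G$ must have non-trivial image in $G/M$. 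This is precisely where the paper spends the bulk of its proof -- choosing preimages of smallest possible order, proving $HJ=[HJ,L]$, and analysing the exceptional case $M\le\Phi(X)$ with $[O_2(N),K]\cong Q_8$ -- and none of it appears in your sketch. Without either this lifting argument or the complement reduction that confines the $M\le N$ case to $M\le C_N(K)$, the proof is incomplete.
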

\begin{proof}
By induction we may suppose that $M$ is a minimal normal subgroup of $G$.
We recall that $G$ is soluble by Lemma \ref{merlonsol}, and we let $r$ be prime such that $M$ is an elementary abelian $r$-group.
If $M$ has a complement $C$ in $G$, then $G/M\cong C$ and Lemma \ref{lem:uG} gives that $C\in \FL$ and hence $G/M\in \FL$.

Consequently we may suppose that $M$ does not have a complement in $G$.
We choose $N,K\leq G$ such that $G$ has type $(N,K)$ and such that $|N|$ is as large as possible. Then $\pi(K)=\pi(K/C_K(N))$ by Lemma \ref{lem:Nmaxgewaehlt}.

First suppose that $r\in \pi(K)$. Then $M\leq K$ and we see that $[N,M]\leq N\cap M\leq N\cap K=1$, because $N$ is a Hall subgroup of $G$. Hence $M\leq C_K(N)$.
Next we let $B$ be a batten of $K$ that contains $M$.
Then $M\leq C_B(N)$, which means that
for all $p\in\pi(N)$,
$B$ does not act faithfully on $O_p(N)$.

Since there is some $p\in \pi(N)$ such that $B$ acts on $O_p(N)$ avoiding $L_9$, it follows from Definition \ref{defi:avoidQ8} that $B$ is not isomorphic to $Q_8$.
In addition $M\lneq Z(B)$, if $B$ is not nilpotent, by Definition \ref{defi:avoidbattengroup}. Therefore, in this case, the section $B/M$ is a non-nilpotent batten as well.
We conclude that $K/M$ is a batten group.

Assume for a contradiction that $r\in \pi(N)$, but that $M\nleq C_N(K)$. Then we note that $C_M(K)\unlhd G$ by Lemma \ref{lem:Nhami}, and we deduce that $C_M(K)=1$, because $M$ is a minimal normal subgroup of $G$.
This forces $M\leq [O_r(N),K]$.
Then $[O_r(N),K]$ is not elementary abelian, because otherwise $M$ would have a complement in this commutator and hence in $G$.
But we are working under the hypothesis that it does not.

Now Lemma \ref{lem:laction} and Lemma \ref{lem:avoid1prop}~(b) imply that $M\leq [O_r(N),K]\cong Q_8$. But now $Z([O_r(N),K])$ is the unique subgroup of order $2$ of $[O_r(N),K]$, which means that it must be centralised by $K$ and contained in $M$. But this is a contradiction.
Thus $M\leq C_N(K)$ and Corollary \ref{cor:p=2nK} implies that $p=2$.
We summarise that $M\leq C_K(N)$ and that $K/M$ is a batten group or $M\leq C_N(K)$ and $p=2$.

Let $-:G\to G/M$ be the natural homomorphism.
Then $\bar G=\bar N\cdot \bar K$, where $\bar N$ is a normal nilpotent Hall subgroup of $G$ with modular Sylow subgroups, since sections of modular $p$-subgroups are modular.
Moreover $\bar K$ is a batten group. Thus $\bar G$ satisfies $(\FL 1)$ of Definition \ref{defi:Merlongroup}.
We further deduce $(\FL2)$ from Lemma \ref{avoidbattensubgroup} and  Lemma \ref{lem:avoidsubP}.

For $(\FL 3)$ we let  $Q$ be a Sylow subgroup of $K$ and we let $p,s\in \pi(\bar N)$ be distinct primes such that $[O_s(\bar N),\bar Q]\neq [O_p(\bar N),\bar Q]$.
Then $[O_s(N),Q]\neq [O_p(N),Q]$ and therefore $C_Q(O_s(N))\neq C_Q(O_p(N))$. Since $M\leq C_N(Q)$ or $M\leq C_K(O_s(N))\cap C_K(O_p(N))$, it follows that $C_{\bar Q}(O_s(\bar N))\neq C_{\bar Q}(O_p(\bar N))$.

We finally let $1\neq \bar H\leq \bar N$ be abelian with non-cyclic Sylow subgroups and $\bar  L\leq \mathrm{Pot}_{\bar K}(\bar H)$ be cyclic with $\pi(\bar L)=\pi(\bar L/C_{\bar L}(\bar H))$  and we let $\bar J$ be an abelian $\bar L$-invariant subgroup of $\bar N$ such that $(|\bar H|, |\bar J|)=1$
and $C_{\bar H}(C_{\bar L}(\bar J))=1$.
We set $\bar \pi:=\{q\in \pi(\bar L)\mid \forall\, \bar Q\in \mathrm{Syl}_q(\bar L): C_{\bar Q}(\bar H)<C_{\bar Q}(\bar J)\}$.

Then we assume for a contradiction that every non-trivial element $\bar g\in \bar H\bar J$ centralises neither $O_{\pi}(\bar L)$ nor $O_{\pi'}(\bar L)$. Then Lemma \ref{coprime} yields that $[\bar H\bar J,\bar L]=\bar H\bar J$.
We choose pre-images $H$, $L$ and $J$ in $G$ of smallest possible order.
Then $HJ=[HJ,L]$ and $\pi(\bar X)=\pi(X)$ for all $X\in \{H,\ L,\ J\}$, because $G$ is soluble by Lemma \ref{merlonsol}. In particular we have that $(|H|, |J|)=1$.

If $r\in\pi(X)$ for some $X\in \{H,J\}$,
then $r=2$. Then our assumption implies that $C_{O_2(X)}(L)\leq M$.
It follows that  $X\cong \bar X$ or that $M\leq \Phi(X)=\Phi([X,L])$.
In the second case, we apply Lemma \ref{lem:laction} and Lemma \ref{lem:avoid1prop}. Together they show that $[O_2(N),K]=[O_2(N),L]\cong Q_8$ and thus $\pi(L/C_{O_2(N)}(L))=\{3\}$. This means that $O_2(N)$ centralises $O_\pi(L)$ or $O_{\pi'}(L)$. But then we also have that $[O_2(\bar X), O_\sigma(\bar L)]=1$ for some $\sigma\in\{\pi,\pi'\}$, which is a contradiction.
We deduce that $\bar H\cong H$ and $\bar J\cong J$.
In particular $H\leq N$ is abelian, with non-cyclic Sylow subgroups, and $J\neq 1$ is an abelian $L$-invariant subgroup of $N$.
Since $\bar L$ is cyclic, it follows from our arguments above that $L$ is also cyclic. Moreover $\pi(L)=\pi(\bar L)=\pi(\bar L/C_{\bar L}(\bar H))=\pi(L/C_{L}(H))$, because $M\leq C_K(N)$ or $M\cap L=1$.
\\We now investigate the action of $L$ on $H$. Since $\bar H\cong H$ and $M\cap L=1$ or $M\leq C_L(H)$, we see that $L$ induces power automorphisms on $H$.
In addition Lemma \ref{coprime} shows that $$\overline{C_H(C_L(J))}\cong C_{\bar H}(\overline{C_{L}(J)})\cong C_{\bar H}(C_{\bar L}(\bar J))=1$$ and then the fact that $H\cap M=1$ yields that $C_H(C_L(J))=1$.
Altogether we obtain, by applying ($\FL4$) to $G$, some $g\in HJ^\#$ such that $g$ centralises $O_\pi(L)$ or $O_{\pi'}(L)$, where $\pi:=\{q\in \pi( L)\mid \forall\,  Q\in \mathrm{Syl}_q(L): C_{Q}( H)<C_{ Q}(J)\}$.
Since $\bar H\bar J\cong HJ$, it follows that $\bar g\neq 1$ and $[O_\pi(\bar L),\bar g]=1$ or $[O_{\pi'}(\bar L),\bar g]=1$ .
Again we use that $\bar L$ acts on $\bar H\cong H$ and $\bar J\cong L$ equivalently to $L$, because $M\cap L\leq C_L(N)$. Then we see that
$$\bar \pi:=\{q\in \pi(\bar L)\mid \forall\, \bar Q\in \mathrm{Syl}_q(\bar L): C_{\bar Q}(\bar H)<C_{\bar Q}(\bar J)\}
=\pi.$$ This is a contradiction.
\end{proof}

\begin{lemma}\label{lem:newunique}
Let $G \in \FL$ be of type $(N,K)$ such that $C_K(N)=1$, let $q \in \pi(K)$ and let $Q\in\syl_q(K)$.
Then  $1\neq [N,\Omega_1(Q)]$ has prime power order.
\end{lemma}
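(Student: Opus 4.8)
The plan is to reduce the claim to the Sylow subgroups of $N$ and then to play the faithfulness of the $Q$-action off against property $(\FL 3)$. First I would record that, since $K$ is a batten group, every Sylow subgroup of $K$ is cyclic or isomorphic to $Q_8$ (as established in the proof of Lemma \ref{battenremark}); in particular $\Omega_1(Q)\neq 1$, being cyclic of order $q$ when $Q$ is cyclic and equal to $Z(Q)$ of order $2$ when $Q\cong Q_8$. The nontriviality of $[N,\Omega_1(Q)]$ is then immediate: because $C_K(N)=1$ and $\Omega_1(Q)\neq 1$, we have $\Omega_1(Q)\nleq C_K(N)$, so $[N,\Omega_1(Q)]\neq 1$. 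Since $N$ is nilpotent, $N=\prod_{p\in\pi(N)}O_p(N)$ with each $O_p(N)$ normalised by $\Omega_1(Q)$, and hence $[N,\Omega_1(Q)]=\prod_{p\in\pi(N)}[O_p(N),\Omega_1(Q)]$. Thus it remains to show that $[O_p(N),\Omega_1(Q)]\neq 1$ for exactly one prime $p$, for then $[N,\Omega_1(Q)]$ is a nontrivial $p$-group and therefore has prime power order.

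The key step is the implication that $[O_p(N),\Omega_1(Q)]\neq 1$ forces $C_Q(O_p(N))=1$, which I would prove by distinguishing the two shapes of $Q$. If $Q$ is cyclic, then $\Omega_1(Q)$ is its unique minimal subgroup and is contained in every nontrivial subgroup of $Q$; since $[O_p(N),\Omega_1(Q)]\neq 1$ means $\Omega_1(Q)\nleq C_Q(O_p(N))$, the centraliser $C_Q(O_p(N))$ must be trivial. If $Q\cong Q_8$, then the only batten having a quaternion Sylow subgroup is $Q_8$ itself, so $Q$ is a batten of $K$; moreover $[O_p(N),\Omega_1(Q)]\neq 1$ gives $[O_p(N),Q]\neq 1$, so by $(\FL 2)$ the batten $Q$ acts on $O_p(N)$ avoiding $L_9$, and Definition \ref{defi:avoidQ8} says precisely that this action is faithful, i.e. $C_Q(O_p(N))=1$.

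With this in hand, uniqueness follows from $(\FL 3)$. Suppose for contradiction that $[O_p(N),\Omega_1(Q)]\neq 1$ and $[O_r(N),\Omega_1(Q)]\neq 1$ for two distinct primes $p,r\in\pi(N)$. Since $\Omega_1(Q)\le Q$, neither $O_p(N)$ nor $O_r(N)$ is centralised by $Q$, and the previous step yields $C_Q(O_p(N))=1=C_Q(O_r(N))$. But $(\FL 3)$ asserts that two distinct Sylow subgroups of $N$ that are not centralised by $Q$ have different centralisers in $Q$, a contradiction. Hence there is a unique prime $p$ with $[O_p(N),\Omega_1(Q)]\neq 1$, so $[N,\Omega_1(Q)]=[O_p(N),\Omega_1(Q)]$ is a nontrivial $p$-group, as required.

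The only genuinely delicate point I anticipate is the bridge from ``$\Omega_1(Q)$ acts nontrivially on $O_p(N)$'' to ``$Q$ acts faithfully on $O_p(N)$'', since $(\FL 3)$ is phrased through the centralisers $C_Q(\cdot)$ of the full Sylow subgroup $Q$. For cyclic $Q$ this is the elementary observation about the unique minimal subgroup, but for $Q\cong Q_8$ it genuinely relies on the rigidity of quaternion actions encoded in Definition \ref{defi:avoidQ8}; everything else is bookkeeping with the nilpotent decomposition of $N$ and the distributivity of the commutator over the direct factors $O_p(N)$.
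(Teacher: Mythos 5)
Your proof is correct. The core idea coincides with the paper's: show that $\Omega_1(Q)$ acts nontrivially on exactly one Sylow subgroup of $N$, with nontriviality coming from $C_K(N)=1$ and uniqueness from $(\FL 3)$. The one genuine difference is how $(\FL 3)$ is brought to bear. The paper first passes to the subgroup $N\Omega_1(Q)$, which by Lemma \ref{lem:uG} lies in $\FL$ with type $(N,\Omega_1(Q))$; there $\Omega_1(Q)$ \emph{is} the Sylow $q$-subgroup of the complement, it has prime order, so its centraliser on any Sylow subgroup of $N$ it does not centralise is automatically trivial, and $(\FL 3)$ applies with no further work. You instead stay in $G$ and apply $(\FL 3)$ to the full Sylow subgroup $Q$, which forces you to supply the bridge from ``$\Omega_1(Q)$ acts nontrivially on $O_p(N)$'' to ``$C_Q(O_p(N))=1$''; your two-case argument for this (unique minimal subgroup when $Q$ is cyclic; $Q$ a batten of $K$ acting faithfully via $(\FL 2)$ and Definition \ref{defi:avoidQ8} when $Q\cong Q_8$) is sound. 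Your route trades the invocation of the fairly heavy Lemma \ref{lem:uG} for a short structural argument about battens; the paper's route is shorter on the page because the subgroup reduction makes the prime-order trick available immediately. Both are valid proofs.
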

\begin{proof} We apply Lemma \ref{lem:uG} and we see that $N\Omega_1(Q)\in \FL$ has type $(N,\Omega_1(Q))$.
Since $\Omega_1(Q)$ does not centralise $N$, there is a prime $p\in \pi(N)$ such that $[O_p(N),\Omega_1(Q)]\neq 1$.
It follows that $C_{\Omega_1(Q)}(O_p(N))=1$.
Now $(\FL3)$ implies that $\Omega_1(Q)$ centralises $O_r(N)$ for every $r\in \pi(N)\setminus\{p\}$, and this shows that $1 \neq [N,\Omega_1(Q)]\leq [O_p(N),\Omega_1(Q)]\leq O_p(N)$.
\end{proof}

\begin{lemma}\label{lem:newPropertiesZinN}
Let $G \in \FL$ be of type $(N,K)$ such that $C_K(N)=1$, let $q\in \pi(K)$ and let $Q\in\syl_q(K)$.
Let $p\in\pi(N)$ be such that $\Omega_1(Q)$ does not centralise $P:=O_{p}(N)$.
Then the following hold:
\begin{enumerate}
\item $N_G(\Omega_1(Q))=(O_{p'}(N)K)\times C_{P}(K)$.
\item $G=[P,\Omega_1(Q)] N_G(\Omega_1(Q))$.
\item $[P,\Omega_1(Q)]=[P,K]$ acts transitively on $\Omega_1(Q)^G=\{Q_0\leq G\mid |Q_0|=q\}$.
\item If $X \le G$, then there is some $x\in [P,\Omega_1(Q)]$ such that $X=(X\cap P)N_X(\Omega_1(Q)^x)$.
\item Suppose that $X \le G$, that $q$ divides $|X|$ and that $x\in P$. Then $X=(X\cap P)N_X(\Omega_1(Q)^x)$ if and only if $\Omega_1(Q)^x\leq X$.
\end{enumerate}
\end{lemma}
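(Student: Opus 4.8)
\emph{Setup.} The plan is to write $Z \coloneqq \Omega_1(Q)$ and first collect four facts about $Z$. Since $Q$ is a Sylow subgroup of the batten group $K$ it is a batten, hence cyclic or isomorphic to $Q_8$, so $Z$ has prime order $q$. First, $Z \unlhd K$: writing $K=K_1\times B$ with $B$ the batten containing $Q$, the factor $K_1$ centralises $Z$, so it suffices to treat $B$; if $B$ is nilpotent then $Q=B$ and $Z=\Omega_1(B)$ is characteristic in $B$, while if $B$ is non-nilpotent then either $Q=\mathcal B(B)$ has order $q$ (so $Z=Q\unlhd B$) or $Q=R$ is the cyclic Sylow complement, in which case $\Omega_1(R)\le\Phi(R)=Z(B)$ by Lemma \ref{lem:batten}. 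Second, $Z$ centralises $O_{p'}(N)$: by Lemma \ref{lem:uG} the group $NZ$ lies in $\FL$ of type $(N,Z)$, and since $Z$ has prime order its centraliser on a non-centralised Sylow subgroup of $N$ is trivial, so (\FL 3) forces $Z$ to centralise $O_r(N)$ for all $r\in\pi(N)\setminus\{p\}$. Third, $K$ acts on $P$ avoiding $L_9$ by (\FL 2) and Definition \ref{defi:avoidbattengroup}, so Lemma \ref{lem:avoidL9Cen} (with $H=Z$, $R=P$) gives $C_P(Z)=C_P(K)$ and $[P,Z]=[P,K]$. Fourth, $N_P(Z)=C_P(Z)$, because $a\in N_P(Z)$ yields $[Z,a]\le Z\cap N\le K\cap N=1$.

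\emph{Parts (a) and (b).} For (a), the subgroups $O_{p'}(N)$, $C_P(K)$ and $K$ all lie in $N_G(Z)$ by the facts above, and their product is the internal direct product $(O_{p'}(N)K)\times C_P(K)$ since $C_P(K)$ is a central $p$-subgroup whereas $O_{p'}(N)K$ is a $p'$-group. For the reverse inclusion I would use $K\le N_G(Z)$ and Dedekind's law to get $N_G(Z)=N_N(Z)K$, and compute $N_N(Z)=N_P(Z)\times O_{p'}(N)=C_P(K)\times O_{p'}(N)$ from $N=P\times O_{p'}(N)$ together with the second and fourth facts. Part (b) is then immediate: Lemma \ref{coprime} gives $P=[P,Z]C_P(Z)$, whence $G=NK=[P,Z]\,\bigl(C_P(K)O_{p'}(N)K\bigr)=[P,\Omega_1(Q)]\,N_G(\Omega_1(Q))$.

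\emph{Part (c) and the key object.} The engine for (c)--(e) is the subgroup $V\coloneqq ZP$, which is normal in $G=NK$ because $P\unlhd G$, $O_{p'}(N)$ centralises $Z$, and $K$ normalises both $Z$ and $P$. As $|V:P|=q$ and $P$ is a normal $p$-subgroup of $V$, the order-$q$ subgroups of $V$ are exactly the complements to $P$ in $V$, forming a single $P$-conjugacy class by the conjugacy part of Schur--Zassenhaus. Since $V\unlhd G$ contains $Z$, every conjugate of $Z$ lies in $V$; conversely any order-$q$ subgroup of $G$ is the unique such subgroup of a Sylow $q$-subgroup (a conjugate of the cyclic or quaternion group $Q$), hence equals some $Z^g$. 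Thus the order-$q$ subgroups of $G$ are precisely $Z^G$, the complements to $P$ in $V$. A short index count using $P=[P,K]C_P(Z)$ shows $\{Z^w:w\in[P,K]\}$ already exhausts this class, and transitivity of $[P,K]=[P,Z]$ on $Z^G$ drops out of $G=N_G(Z)[P,Z]$ via $Z^{hw}=Z^w$ for $h\in N_G(Z)$. This is (c).

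\emph{Parts (d) and (e).} For (d) I would take a Hall $p'$-subgroup $X_{p'}$ of $X$ (it exists since $X\cap P$ is a normal Sylow $p$-subgroup of $X$). As $X_{p'}$ normalises $V$ and $P$ and acts coprimely on $P$, forming $VX_{p'}\le G$ and applying Schur--Zassenhaus to the normal $p$-subgroup $P$ there gives a complement $C\ge X_{p'}$; then $C\cap V$ is an $X_{p'}$-invariant complement to $P$ in $V$, so $C\cap V=Z^x$ with $x\in[P,K]$ by the previous paragraph, and $X=(X\cap P)X_{p'}\le(X\cap P)N_X(Z^x)\le X$. For (e), the forward direction uses that $q\mid|X|$ forces $q\mid|N_X(Z^x)|$, so an order-$q$ subgroup $A\le N_X(Z^x)\le N_G(Z^x)=O_{p'}(N)K^x\times C_P(Z^x)$ lies in a Sylow $q$-subgroup of $N_G(Z^x)$, whose unique order-$q$ subgroup is an $N_G(Z^x)$-conjugate of $Z^x$, hence $A=Z^x\le X$. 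The backward direction carries the real content: assuming $Z^x\le X$ I would set $V_0\coloneqq X\cap V=(X\cap P)\rtimes Z^x$, note that the order-$q$ subgroups of $X$ are exactly the complements to $X\cap P$ in $V_0$ (all order-$q$ subgroups of $G$ already lie in $V$), and use coprime conjugacy of these under $X\cap P$; transitivity then gives, for each $g\in X$, an element $c\in X\cap P$ with $gc^{-1}\in N_X(Z^x)$, so $g\in(X\cap P)N_X(Z^x)$. The main obstacle I anticipate is precisely this last factorisation---ensuring the conjugating elements lie in $X\cap P$ rather than merely in $X$---which is exactly what the realisation of order-$q$ subgroups as complements inside the normal subgroup $V$ is designed to deliver.
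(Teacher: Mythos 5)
Your proposal is correct, and parts (a)--(c) run essentially as in the paper: the same normaliser computation $N_G(\Omega_1(Q))=N_N(\Omega_1(Q))K$ with $N_P(\Omega_1(Q))=C_P(\Omega_1(Q))=C_P(K)$ via Lemma \ref{lem:avoidL9Cen}, the same coprime factorisation $P=[P,\Omega_1(Q)]C_P(\Omega_1(Q))$ for (b), and the same derivation of transitivity from $G=[P,\Omega_1(Q)]N_G(\Omega_1(Q))$ (your preliminary facts are the content of Lemmas \ref{battenremark}, \ref{lem:newunique} and \ref{lem:avoidL9Cen}, which you reprove rather than cite, but correctly). For (d) and (e), however, you take a genuinely different route. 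The paper obtains (d) from Lemma \ref{lem:merlonsub}, writing $X=(N\cap X)(K^g\cap X)$ and then decomposing $g$ via (b); for the forward direction of (e) it uses a commutator calculation $[yx^{-1},\Omega_1(Q)]=[[yx^{-1},\Omega_1(Q)],\Omega_1(Q)]\le[C_P(\Omega_1(Q)),\Omega_1(Q)]=1$, and for the backward direction it applies (c) inside the subgroup $X$, which lies in $\FL$ by Lemma \ref{lem:uG}. You instead exploit the normal subgroup $V=P\Omega_1(Q)\unlhd G$: all subgroups of order $q$ of $G$ are exactly the complements to $P$ in $V$, so (d) follows by placing a Hall $p'$-subgroup of $X$ inside a complement to $P$ in $V X_{p'}$, the forward direction of (e) follows from Sylow's theorem inside $N_G(\Omega_1(Q)^x)$, and the backward direction from Schur--Zassenhaus conjugacy of complements in $V_0=X\cap V$ under $X\cap P$. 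This buys you independence from the structural lemmas \ref{lem:merlonsub} and \ref{lem:uG} in (d) and (e) -- the argument only needs that order-$q$ subgroups live in the normal subgroup $V$ -- at the cost of invoking the containment-in-a-complement refinement of Schur--Zassenhaus, which you state tersely but which is standard. Both approaches are sound.
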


\begin{proof}We set $P_0:=[P,\Omega_1(Q)]$.
Then  Lemma \ref{lem:newunique} implies that $P_0=[N,\Omega_1(Q)]$ and so $O_{p'}(N)\leq C_G(\Omega_1(Q))\leq N_G(\Omega_1(Q))$.
Furthermore $K$ acts on $P$ avoiding $L_9$ and then we have that $P_0=[P,Q]=[P,K]$ by Lemma \ref{lem:laction}.
Next $K\leq N_G(\Omega_1(Q))$ from Lemma \ref{battenremark}.
\\Since $N$ is nilpotent, we conclude that $N_G(\Omega_1(Q))=(O_{p'}(N)\times N_P(\Omega_1(Q)))K$.
But we also have that $[N_{P}(\Omega_1(Q)),\Omega_1(Q)]\leq P\cap \Omega_1(Q)=1$, whence $N_{P}(\Omega_1(Q))\leq C_P(\Omega_1(Q))= C_P(K)$ by Lemma \ref{lem:avoidL9Cen}. Consequently $N_P(\Omega_1(Q))=C_P(\Omega_1(Q))$ and it follows that $N_G(\Omega_1(Q))=(O_{p'}(N)K)\times C_P(K)$, as stated in (a).

For (b) we
recall that,
by (a), the subgroups $K$ and $O_{p'}(N)$ normalise $\Omega_1(Q)$.
Then $G=P O_{p'}(N)K \le P N_G(Q)\le G$. Moreover $P \unlhd G$
and Lemma \ref{coprime} implies that $P=C_P(\Omega_1(Q))P_0$, where $C_P(\Omega_1(Q)) \le N_G(\Omega_1(Q))$ and therefore
$G=P_0N_G(\Omega_1(Q))$ as stated in (b).

 From there we deduce that $P_0$ acts transitively on $\Omega_1(Q)^G$ by conjugation. The second statement of (c) follows because $G$ is soluble (Lemma \ref{merlonsol}), together with the fact that $\Omega_1(Q)$ is the unique subgroup of its order in the Hall subgroup $K$ of $G$ (Lemma \ref{battenremark}). This means that
 every subgroup of order $q$ of $G$ is conjugate to $\Omega_1(Q)$, completing (c).

 \smallskip
 For (d) and (e) we let $X \le G$.
Lemma \ref{lem:merlonsub} provides some $g\in G$ such that $X=(N\cap X)(K^g\cap X)$.
Moreover, (a) implies that
$K^g$ and $O_{p'}(N) (=O_{p'}(N)^g)$ normalise $\Omega_1(Q)^g$, and then we summarise:
$$X=(N\cap X)(K^g\cap X)
\leq (P\cap X) (O_{p'}(N)\cap X)(K^g\cap X)\leq (P\cap X)N_X(\Omega_1(Q)^g)\leq X.$$
Using (b) we see that $G=N_G(\Omega_1(Q))P_0$, and then we take $y\in N_G(\Omega_1(Q))$ and $x\in P_0$ such that $g=yx$.
Now we deduce that $X=(P\cap X)N_X(\Omega_1(Q)^{yx})=(P\cap X)N_X(\Omega_1(Q)^x)$, as stated in (d).

Finally, suppose that $q$ divides $|X|$ and that $x\in P$.
Suppose first that $X=(X\cap P)N_X(Q^x)$.
 Then $q$ divides $|N_X(\Omega_1(Q)^x)|$, which provides a subgroup $Q_0$ of order $q$ in $N_X(\Omega_1(Q)^x)\leq N_G(\Omega_1(Q)^x)$ and, by (d), there is some $y\in P_0$ such that $\Omega_1(Q)^y=Q_0\leq N_G(\Omega_1(Q)^x)= N_G(\Omega_1(Q))^x$. Then $\Omega_1(Q)$ and $\Omega_1(Q)^{yx^{-1}}$ are subgroups of $N_G(\Omega_1(Q))$ and therefore $$[yx^{-1},\Omega_1(Q)]=[[yx^{-1},\Omega_1(Q)],\Omega_1(Q)]\leq [[P,\Omega_1(Q)]\cap N_G(\Omega_1(Q)),\Omega_1(Q)]\leq [C_P(\Omega_1(Q)),\Omega_1(Q)]=1$$ by Lemma \ref{coprime}.
 We conclude that $\Omega_1(Q)^x=\Omega_1(Q)^y=Q_0\leq N_X(\Omega_1(Q)^x)\leq X$.

Now, conversely, suppose that $\Omega_1(Q)^x\leq X$. Then (d) provides some $z \in P_0$ such that $X=(P\cap X)(N_X(\Omega_1(Q)^z))$.
In the paragraph above we have shown that $\Omega_1(Q)^z\leq X$.
We apply (c) to $X=(X\cap N)(X\cap K^g)$, which is a group in $\FL$ by Lemma \ref{lem:uG}, and we obtain some $y\in P\cap X$ such that $\Omega_1(Q)^{zy}=\Omega_1(Q)^x$.
We conclude that
$$X=X^y=(P\cap X)^y(N_{X^y}(\Omega_1(Q)^{zy}))=(P\cap X)(N_X(\Omega_1(Q)^x)),$$
because $P\cap X\unlhd X$.\end{proof}

\begin{lemma}\label{lem:powerneu}
Let $G \in \FL$ be of type $(N,K)$ and let $p\in\pi(N)$ such that $K$ induces non-trivial power automorphisms on $P:=O_{p}(N)$.

Then  for all $X,\ Y\leq G$,  there is some $i\in\{0,1\}$ such that $|\erz{X,Y}\cap P|=|(P\cap X)(P\cap Y)|\cdot p^i$.

In addition $i=0$ if and only if there is some $g\in P$ such that for both $Z\in\{X,Y\}$ we have $Z\leq (Z\cap P)O_{p'}(N)K^g$.
\end{lemma}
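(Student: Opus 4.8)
The plan is to reduce to the situation $N=P$ and then treat $P$ as an $\mathbb{F}_p$-vector space on which $K$ acts by scalars. First I would record the structural input: since $K$ induces non-trivial power automorphisms on $P$, condition (\FL 2) and Corollary~\ref{cor:p=2nK}(c) give that $P=[P,K]$ is elementary abelian of odd order and $C_P(K)=1$; moreover, as $P$ is abelian, these power automorphisms are universal by 1.5.4 of \cite{Schmidt}, so each $k\in K$ acts on $P$ as multiplication by a scalar $\lambda(k)\in\mathbb{F}_p^\times$, and non-triviality forces $\lambda(k)\neq 1$ for some $k$. Next I would pass to $\bar G:=G/O_{p'}(N)$. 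Since $N$ is nilpotent, $N\cap H=(P\cap H)\times(O_{p'}(N)\cap H)$ for every $H\le G$, and a short Dedekind computation shows $|\bar H\cap\bar P|=|H\cap P|$ for all such $H$; hence both $|\langle X,Y\rangle\cap P|$ and $|(P\cap X)(P\cap Y)|$ are unchanged, and the condition ``$Z\le(Z\cap P)O_{p'}(N)K^{g}$'' corresponds exactly to ``$\bar Z\le(\bar Z\cap\bar P)\bar K^{\bar g}$''. So I may assume $O_{p'}(N)=1$, i.e. $G=P\rtimes K$ with $P$ an $\mathbb{F}_p$-space and $K$ acting by the scalars $\lambda$.

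For the structure of subgroups I would first prove the auxiliary fact that every $X\le G$ has the form $X=(X\cap P)\,K_X^{a}$ with $K_X\le K$ and $a\in P$: indeed $X\cap P$ is a normal Hall $p$-subgroup of $X$, a complement $C$ exists by Schur--Zassenhaus, $C$ is a $p'$-subgroup of $G=P\rtimes K$ and hence lies in a conjugate $K^{a}$ of $K$ by Hall's theorem, whence $C=K_X^{a}$ with $K_X:=C^{a^{-1}}\le K$. Writing $X=W_1K_1^{a}$, $Y=W_2K_2^{b}$ with $W_1:=P\cap X$, $W_2:=P\cap Y$ and setting $W_0:=W_1+W_2=(P\cap X)(P\cap Y)$, I would then quotient by the $K$-invariant subspace $W_0$. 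In $\hat G:=G/W_0=\bar V\rtimes K$ the images become $\hat X=K_1^{\bar a}$, $\hat Y=K_2^{\bar b}$ (the $P$-parts die), and one checks $|\langle X,Y\rangle\cap P|=|W_0|\cdot|\langle\hat X,\hat Y\rangle\cap\bar V|$. Thus everything comes down to bounding $L:=\langle K_1^{\bar a},K_2^{\bar b}\rangle\cap\bar V$.

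The key computation is that $L$ has order $1$ or $p$. Conjugating by $-\bar a$ (which preserves $|L|$) I may assume $\hat X=K_1$ and $\hat Y=K_2^{\bar d}$ with $\bar d:=\bar b-\bar a$. Using the affine description $K_2^{\bar d}=\{((\lambda(k)-1)\bar d,k):k\in K_2\}$, every element of $K_2^{\bar d}$ has its $\bar V$-component in the line $\mathbb{F}_p\bar d$, while $K_1\le K_0:=\langle K_1,K_2\rangle$; hence $\langle K_1,K_2^{\bar d}\rangle\le(\mathbb{F}_p\bar d)\rtimes K_0$ and therefore $L\le\mathbb{F}_p\bar d$, of order at most $p$. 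This yields $|\langle X,Y\rangle\cap P|=|W_0|\,p^{i}$ with $i\in\{0,1\}$, which is the first assertion.

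For the final equivalence I would show $i=0$ iff a common conjugator exists. The direction ``$\Leftarrow$'' is immediate: if $X\le W_1K^{g}$ and $Y\le W_2K^{g}$, then $\langle X,Y\rangle\le W_0K^{g}$, and intersecting with $P$ (Dedekind, using that $K^{g}$ is a $p'$-group) gives $\langle X,Y\rangle\cap P=W_0$, so $i=0$. For ``$\Rightarrow$'' assume $i=0$, so $W_0=\langle X,Y\rangle\cap P$ is a normal Hall $p$-subgroup of $J:=\langle X,Y\rangle$; a Schur--Zassenhaus complement of it lies in some $K^{g}$, giving $X,Y\le W_0K^{g}$. I then improve $W_0$ to $W_1$ and $W_2$: passing to $P/W_1$ shows $K_1^{\bar a}$ lies in a complement conjugate to $K^{\bar g}$ by an element of $W_0/W_1$, i.e. $X\le W_1K^{g+u}$ with $u\in W_0$, and likewise $Y\le W_2K^{g+u'}$ with $u'\in W_0$. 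The main obstacle is that the two corrections $u,u'$ need not agree; this is resolved by the absorption observation that $W_iK^{g'}$ is unchanged when $g'$ is shifted by any element of $W_i$ (since such an element normalises $W_iK^{g'}$). As $u-u'\in W_0=W_1+W_2$, I can write $u-u'=w_1+w_2$ with $w_i\in W_i$ and take the common conjugator $g':=g+u-w_1=g+u'+w_2$, for which both $X\le W_1K^{g'}$ and $Y\le W_2K^{g'}$ hold. Translating back through $G/O_{p'}(N)$ reinstates the factor $O_{p'}(N)$ and completes the proof.
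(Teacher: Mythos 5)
Your proof is correct, and at its core it rests on the same mechanism as the paper's: since $K$ induces power automorphisms on the elementary abelian group $P$ and every subgroup of $P$ is normal in $N$ (Lemma \ref{lem:Nhami}), every subgroup of $P$ is normal in $G$, so the $p$-part of $\erz{X,Y}$ can exceed $(P\cap X)(P\cap Y)$ only by a cyclic group of order at most $p$ generated by the ``difference'' of the two conjugating elements. The paper gets this in one step, writing $X\le(X\cap P)O_{p'}(N)K^x$ and $Y\le(Y\cap P)O_{p'}(N)K^y$ with $x,y\in P$ via Lemma \ref{lem:merlonsub} and bounding $\erz{X,Y}\le(X\cap P)(Y\cap P)\erz{x^{-1}y}O_{p'}(N)K^x$; your version packages the same computation as a reduction to $G/O_{p'}(N)$ followed by a quotient by $W_0=(P\cap X)(P\cap Y)$ and a scalar/affine calculation showing the residual intersection lies in the line $\mathbb{F}_p\bar d$. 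Where you genuinely deviate is the direction ``$i=0$ implies a common conjugator'': the paper deduces $x^{-1}y\in\erz{X,Y}\cap P=(P\cap X)(P\cap Y)$, writes $x^{-1}y=x_0y_0$ and absorbs $x_0$ into the conjugator for $X$ and $y_0^{-1}$ into that for $Y$, whereas you apply Schur--Zassenhaus and Hall's theorem to the join (whose intersection with $P$ is exactly $W_0$ when $i=0$) to obtain a single $K^g$ containing complements of both, and then reconcile the two corrections $u,u'\in W_0$ by the same absorption trick. Your route has the small advantage of not relying on the membership $x^{-1}y\in\erz{X,Y}$, which the paper asserts without comment; otherwise the two arguments deliver the same conclusion with comparable effort.
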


\begin{proof} We first remark that Lemma \ref{lem:Nhami} gives that every subgroup of $P$ is normal in $N$, and hence in $G$, because $K$ normalises every subgroup of $P$ as well. In addition $P=[P,K]$ is elementary abelian by Corollary \ref{cor:p=2nK} (c).

Let  $X,\ Y\leq G$.
Then Lemma \ref{lem:merlonsub} provides $x,y\in P$ such that $X\leq (X\cap P)O_{p'}(N)K^x$
and
$Y\leq (Y\cap P)O_{p'}(N)K^y$.

This implies that $\erz{X,Y}\leq (X\cap P)(Y\cap P)\erz{x^{-1}y}O_{p'}(N)K^x$, bearing in mind that $X \cap P$ and $Y \cap P$ are normal subgroups of $G$, and therefore $\erz{X,Y}\cap P=(P\cap X)(P\cap Y)\erz{x^{-1}y}.$

Since $P$ is elementary abelian, we
see that $o(xy^{-1}) \in \{1,p\}$ and we
deduce the first assertion.

If it is possible to choose $x=y$, then $\erz{X,Y}\cap P=(P\cap X)(P\cap Y)\erz{x^{-1}y}=(P\cap X)(P\cap Y)$ and in particular $i=0$ in the statement of the lemma.

For the converse we suppose that $i=0$, i.e. $|\erz{X,Y}\cap P|=|(P\cap X)(P\cap Y)|$. Then $x^{-1}y\in \erz{X,Y}\cap P=(P\cap X)(P\cap Y)$ and thus we find $x_0\in X\cap P$ and $y_0\in Y\cap P$ such that $x^{-1}y=x_0y_0$.
Then $g:=yy_0^{-1}=xx_0\in P \cap X \cap Y$.
We note that $x_0$ normalises $X$, centralises $P \cap X$ and normalises $O_{p'}(N)$, which implies that
$X=X^{x_0}\leq ((X\cap P)O_{p'}(N)K^x)^{x_0}
=(X\cap P)O_{p'}(N)K^{xx_0}$.
Similarly $Y\leq(Y\cap P)O_{p'}(N)K^{yy_0^{-1}}$.
\end{proof}

\begin{lemma}\label{lem:neuproperties}
Let $G \in \FL$ be of type $(N,K)$.
Suppose that $X$ and $Y$ are subgroups of $G$ such that $\erz{X,Y}=G$ and let $B$ is a batten of $K$. Suppose that $K$ has a normal $q$-complement $H$. Then one of the following hold:
\begin{enumerate}
\item  $q\nmid |G:X|$,
\item $q\nmid |G:Y|$, or
\item $q=2$, $K$ has a section isomorphic to $Q_8$ and $4$ divides $(|X|,|Y|)$.
\end{enumerate}
\end{lemma}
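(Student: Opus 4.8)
The plan is to push everything into the $q$-group $K/H$ via a suitable normal subgroup of $G$, turning the generation hypothesis into an elementary statement. Throughout I would assume $q\in\pi(K)$, which is the only case with content: since $N$ is a Hall subgroup and $K$ a complement, $|N|$ and $|K|$ are coprime, so $q\in\pi(K)$ gives $q\nmid|N|$ and hence any $Q\in\syl_q(K)$ is a Sylow $q$-subgroup of $G$. Because $K$ is a batten group, every one of its Sylow subgroups is cyclic or isomorphic to $Q_8$ (immediate from Definition \ref{defi:batten}), so $Q$ is cyclic or $Q\isom Q_8$, the latter forcing $q=2$. This dichotomy is the only structural input I need from the batten hypothesis.

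First I would set up the reduction. As $H\unlhd K$ and $K$ normalises both $N$ and $H$, while $N\unlhd G$, the product $NH$ is normal in $G$. Since $N\cap H\le N\cap K=1$ and $K\cap NH=H(K\cap N)=H$ by Dedekind's law, the isomorphism theorems give $G/NH\isom K/H\isom Q$. Thus $\bar G:=G/NH$ is a $q$-group isomorphic to $Q$, whereas $NH$ is a $q'$-group. Writing $\bar X,\bar Y$ for the images of $X,Y$, the hypothesis $\erz{X,Y}=G$ yields $\erz{\bar X,\bar Y}=\bar G$. The key bookkeeping is that $|G:X|=|\bar G:\bar X|\cdot|NH:NH\cap X|$, where the second factor is a $q'$-number and $|\bar G:\bar X|$ is a power of $q$; hence $q\nmid|G:X|$ if and only if $\bar X=\bar G$, and likewise for $Y$.

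It then remains to analyse when two subgroups generate $Q$. If $Q$ is cyclic, its subgroups form a chain, so $\erz{\bar X,\bar Y}=\bar G$ forces $\bar X=\bar G$ or $\bar Y=\bar G$, which is exactly (a) or (b). If $Q\isom Q_8$, then either one of $\bar X,\bar Y$ equals $\bar G$, again giving (a) or (b), or both are proper. In the latter case the subgroup structure of $Q_8$ shows that two proper subgroups generate $Q_8$ only when both are distinct maximal subgroups, i.e. cyclic of order $4$; hence $|\bar G:\bar X|=|\bar G:\bar Y|=2$. Since $NH$ is a $q'$-group, $|X|_2=|\bar X|=4$ and $|Y|_2=|\bar Y|=4$, so $4$ divides $(|X|,|Y|)$. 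Together with $q=2$ and the fact that $Q\isom Q_8$ is a subgroup, hence a section, of $K$, this is precisely case (c).

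I expect the only real subtlety to lie in the clean identification $G/NH\isom Q$ and the accompanying observation that the $q$-part of an index is preserved on passing to $\bar G$, both of which rest on $NH$ being a $q'$-group, plus the elementary but indispensable input that a batten group has only cyclic or quaternion Sylow subgroups. Once these are in place the argument is a pure generation question inside a cyclic group or inside $Q_8$, requiring no deeper machinery; in particular the transitivity and normaliser results of Lemma \ref{lem:newPropertiesZinN} are not needed here.
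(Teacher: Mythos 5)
Your proposal is correct and follows essentially the same route as the paper: both arguments hinge on $NH\unlhd G$ with $G/NH\isom K/H\isom Q$ cyclic or isomorphic to $Q_8$, and on the fact that two subgroups generating such a group must include the whole group (cyclic case) or two distinct maximal subgroups of order $4$ ($Q_8$ case). The only cosmetic difference is that you work directly in the quotient $G/NH$ via the correspondence theorem, whereas the paper invokes Lemma \ref{lem:merlonsub} to place $X$ and $Y$ inside $NHM_X$ and $NHM_Y$ for maximal subgroups $M_X,M_Y$ of $Q$; the content is identical.
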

\begin{proof}Let $Q\in \syl_q(K)$ and
suppose that $q$ divides neither $|G:X|$ nor $q\mid |G:Y|$.
Then Lemma \ref{lem:merlonsub} gives maximal subgroups $M_X$ and $M_Y$ of $Q$ such that $X\leq NHM_X$ and $Y\leq NHM_Y$.

If $Q$ is cyclic, then $M_Y=M_X=\Phi(Q)$ and it follows that $G=\erz{X,Y}\leq NH\Phi(Q)\neq NHQ=G$. This is impossible.
We conclude that $Q$ is not cyclic and then, since $K$ is a batten group, it follows that $Q\cong Q_8$.
We assume for a contradiction that $4\nmid |X|$. Then $X\leq NH\Phi(Q)$ and hence $G=\erz{X,Y}\leq NHM_Y\neq NHQ=G$, which is again a  contradiction.
\end{proof}

\begin{lemma}\label{lem:irredu}
Let $G\in \FL$ be  of type $(N,K)$ such that $K$ acts irreducibly on $[O_p(N),K]/\Phi([O_p(N),K])$ for some prime $p\in\pi(N)$.
If $X,Y\leq G$ are such that $\erz{X,Y}=G$, then $X$ or $Y$ acts irreducibly on $[O_p(N),K]/\Phi([O_p(N),K])$.
\end{lemma}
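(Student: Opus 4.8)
The plan is to show that the conjugation action of $G$ on $V:=[O_p(N),K]/\Phi([O_p(N),K])$ factors through $G/N\cong K$, so that irreducibility of a subgroup is detected by its image modulo $N$; the statement then reduces to an easy fact about the single group $K$. Write $P:=O_p(N)$ and $P_0:=[P,K]$, so $V=P_0/\Phi(P_0)$. Since $P\unlhd G$ we have $P_0=[P,K]\unlhd PK$, and as $O_{p'}(N)$ centralises $P$ it normalises $P_0$; hence $P_0\unlhd G$ and $\Phi(P_0)\unlhd G$, so every subgroup of $G$ acts on $V$ by conjugation. As $K$ acts irreducibly on $V$ by hypothesis, $V\neq 0$, i.e. $P_0\neq 1$, and then property $(\FL 2)$ shows that $K$ acts on $P$ avoiding $L_9$ in the sense of Definition \ref{defi:avoidbattengroup}, so the results of Section~5 are available. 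If $\dim_{\mathbb F_p}V=1$ then every subgroup of $G$ acts irreducibly on $V$ (a one-dimensional module has no proper nonzero submodule) and there is nothing to prove; I would therefore assume $\dim_{\mathbb F_p}V\geq 2$ from now on.

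The first real step is to prove that $N\leq C_G(V)$. Since $O_{p'}(N)$ centralises $P$, hence $V$, it suffices to treat $P$. Choosing a batten $B$ of $K$ with $[P,B]\neq 1$, Lemma \ref{lem:Schmidt}~(a) gives $[P,B]=[P,K]=P_0$, and Lemma \ref{lem:avoid1prop}~(b) yields a direct decomposition $P=P_0\times J$ with $[J,P_0]=1$ and $P_0$ elementary abelian or isomorphic to $Q_8$. In either case $[P,P_0]=[P_0,P_0]\leq\Phi(P_0)$, so $P$ centralises $V$. Consequently $N\leq C_G(V)$ and the action of $G$ on $V$ is inflated from an action of $G/N\cong K$; in particular, for any $H\leq G$ the $H$-invariant and the $HN/N$-invariant subspaces of $V$ coincide, so $H$ acts irreducibly on $V$ if and only if its image in $G/N\cong K$ does.

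The second step is a clean irreducibility criterion inside $K$, after which the proof concludes by a generation argument. Set $T:=\mathrm{Pot}_G(P)\cap K$, the subgroup of elements of $K$ inducing power automorphisms on $P$. For $H\leq K$, Lemma \ref{lem:laction} gives the dichotomy that $H$ either induces power automorphisms on $P$ or acts irreducibly on $V=[P,K]/\Phi([P,K])$; since $\dim V\geq 2$, a power automorphism acts as a scalar on $V$ and hence fixes a proper nonzero subspace, so the two alternatives are mutually exclusive. Thus $H\leq K$ acts irreducibly on $V$ if and only if $H\not\leq T$. Applying the dichotomy to $K$ itself (which is irreducible) gives $K\not\leq T$, so $T\lneq K$ and therefore $TN/N\lneq G/N$. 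Now suppose for contradiction that neither $X$ nor $Y$ acts irreducibly on $V$. By the translation of the first step together with this criterion, the images $XN/N$ and $YN/N$ both lie in $TN/N$, whence $G/N=\erz{X,Y}N/N=\erz{XN/N,\,YN/N}\leq TN/N\lneq G/N$, a contradiction. Hence $X$ or $Y$ acts irreducibly on $V$.

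The main obstacle is the reduction carried out in the first two steps: recognising that $N$ lies in the kernel of the action on $V$, so that the entire problem collapses onto the single group $K$, and then pinning down the exact equivalence between acting irreducibly and failing to induce power automorphisms. The generation argument itself is immediate once $T\lneq K$ is known. The one point requiring genuine care is the degenerate case $\dim_{\mathbb F_p}V=1$, which must be handled separately because there power automorphisms do act irreducibly and the criterion of the second step breaks down.
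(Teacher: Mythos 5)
Your proof is correct and follows essentially the same route as the paper: both arguments rest on the dichotomy of Lemma \ref{lem:laction} (a subgroup of $K$ either induces power automorphisms on $P$ or acts irreducibly on $[P,K]/\Phi([P,K])$) and conclude by a generation argument once both $X$ and $Y$ are assumed reducible. The only difference is cosmetic: the paper splits $X$ and $Y$ via Lemma \ref{lem:merlonsub} and uses Lemma \ref{lem:Nhami} to see that the $N$-parts normalise every subgroup of $P$, whereas you show that $N$ centralises the module $[P,K]/\Phi([P,K])$ and pass to $G/N\cong K$; both devices neutralise the $N$-contribution equally well.
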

\begin{proof}Let $X, Y\leq G$ be such that $\erz{X,Y}=G$ and let $P:=O_p(N)$. We assume for a contradiction that neither $X$ nor $Y$ act irreducibly on $[P,K]/\Phi([P,K])$.
Lemma \ref{lem:Nhami} implies that  $N$ normalises every subgroup of $P$ ($\ast$).
Moreover, by Lemma \ref{lem:merlonsub}, there are $x,y\in N$ such that $X=(X\cap N)(X\cap K^x)$ and $Y=(X\cap N)(Y\cap K^y)$ and, by assumption, neither $X\cap K^x$ nor $Y\cap K^y$ act irreducibly on $[P,K]/\Phi([P,K])$.
It follows from Lemma \ref{lem:laction} that $X\cap K^x$ and $Y\cap K^y$ both induce power automorphisms on $P$ and that $|P|\neq p$. Thus ($\ast$) yields that $X$ and $Y$ normalise every subgroup of $P$. Then also
 $G=\erz{X,Y}$ normalises every subgroup of $P$, which contradicts the irreducible action of $K$.
\end{proof}

\section{The main result}

\begin{thm}\label{thm:Rueck}
If $G \in \FL$, then $G$ is $L_9$-free.
\end{thm}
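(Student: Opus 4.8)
The plan is to argue by contradiction, taking $G$ of minimal order with $G\in\FL$ of type $(N,K)$ but with $L(G)$ containing a sublattice $\{A,B,C,D,E,F,S,T,U\}\cong L_9$, labelled as in Lemma~\ref{lem:L9Char}; I choose $(N,K)$ with $|N|$ as large as possible, so that Lemma~\ref{lem:Nmaxgewaehlt} gives $\pi(K)=\pi(K/C_K(N))$ and, in particular, no nontrivial normal Sylow subgroup of $K$ centralises $N$. Since subgroups of $\FL$-groups lie again in $\FL$ (Lemma~\ref{lem:uG}) and the $L_9$ lives inside $L(F)$, minimality forces $F=G$. If $[N,K]=1$, then every normal Sylow subgroup of the batten group $K$ centralises $N$, so by maximality $K=1$ and $G=N$ is nilpotent; but then $G$ is a direct product of its modular, hence $L_9$-free (Lemma~\ref{lem:pGrModularÄqu}), Sylow subgroups of pairwise coprime order, and Lemma~\ref{lem:L9GdirProdTfOrd} makes $G$ itself $L_9$-free, a contradiction. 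Hence $[N,K]\neq1$. For the bottom element I would use Lemma~\ref{lem:faktorStar} together with the congruence dichotomy of Lemma~\ref{lem:L9SubdirIrred}: for $1\neq M\unlhd G$ the quotient map sends the $L_9$ either isomorphically into $L(G/M)$, contradicting minimality, or to a proper quotient, which by Lemma~\ref{lem:L9SubdirIrred} collapses $E$ and $D$ and thus pins down $D$ relative to $M$. Applying this to $M=C_K(N)$ (which is normal in $G$ since $N\unlhd G$) lets me reduce to the clean situation $C_K(N)=1$ with $E$ containing no nontrivial normal subgroup of $G$.

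Next I would localise the configuration to a single prime. Working with $C_K(N)=1$, Lemma~\ref{lem:newunique} shows that for $Q\in\syl_q(K)$ the commutator $[N,\Omega_1(Q)]=[O_p(N),\Omega_1(Q)]$ is a $p$-group for one prime $p\in\pi(N)$, and Lemma~\ref{lem:newPropertiesZinN} then supplies the exact shape of $N_G(\Omega_1(Q))$, the transitivity of $P_0:=[P,K]$ on the set of subgroups of order $q$, and the normal form $X=(X\cap P)\,N_X(\Omega_1(Q)^x)$ with an explicit $x\in P_0$, valid for every $X\le G$. The strategy is to put each of $A,B,C,D,S,T,U$ into the coordinate form $(X\cap N)(X\cap K^{g_X})$ provided by Lemma~\ref{lem:merlonsub}, with the conjugating elements governed by Lemma~\ref{lem:newPropertiesZinN}, and then to read off the $L_9$-relations in these coordinates. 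Recall that Lemma~\ref{lem:Nhami} makes every subgroup of $O_{\pi([N,K])}(N)$ normal, which is what allows the products to be controlled.

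The two join relations $\erz{A,B}=\erz{B,C}=G$ feed into Lemma~\ref{lem:neuproperties}, forcing index constraints on the coatoms (and, when $K$ has a $Q_8$-section, a divisibility-by-$4$ constraint on $|A|,|B|,|C|$); and when $K$ acts irreducibly on $[O_p(N),K]/\Phi([O_p(N),K])$, Lemma~\ref{lem:irredu} shows that of any two subgroups generating $G$ at least one must act irreducibly, which determines which coatom carries the action. The contradiction is then extracted from the meet relations $A\cap B=A\cap C=B\cap C=D$ and the atom relations of Lemma~\ref{lem:L9Char}: using the counting of Lemma~\ref{lem:powerneu} in the power-automorphism case and the centraliser identities of Lemma~\ref{lem:newPropertiesZinN}, these equalities force exactly the fixed-point and centraliser coincidences excluded by the axioms of $\FL$ — a cyclic $Q\le K$ normalising two distinct Sylow subgroups of $N$ with $C_Q(P)=C_Q(R)$, forbidden by (\FL 3) through Lemma~\ref{lem:exam}, or the richer configurations of Lemma~\ref{example} and Lemma~\ref{lem:merlonOhneStar}, forbidden by (\FL 4). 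In each case the $L_9$-sublattice reproduces one of these construction lemmas inside $G$, contradicting $G\in\FL$.

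I expect the main obstacle to be organisational rather than conceptual. The batten group $K$ may combine battens acting with the different behaviours catalogued in Definitions~\ref{defi:avoidcyclic}, \ref{defi:avoidQ8} and \ref{defi:avoidbatten} (std, cent, hamil, Cy, NN), and $N$ may involve several primes, so the hardest step is proving that the $L_9$-relations genuinely localise, i.e.\ that the nontrivial part of the configuration sits over one prime $p\in\pi(N)$ and one batten (or cyclic factor) of $K$, so that it can be matched against the hypotheses of precisely one forbidden-pattern lemma. The technical heart is deducing the required centraliser coincidence from the three equal pairwise meets $A\cap B=A\cap C=B\cap C=D$; once that coincidence is established, axioms (\FL 3) and (\FL 4) deliver the contradiction and close the induction.
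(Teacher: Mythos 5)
Your overall strategy coincides with the paper's: minimal counterexample with $|N|$ maximal, reduction to $F=G$, $C_K(N)=1$ and $E=1$, coordinates for the lattice elements via Lemma \ref{lem:merlonsub} and Lemma \ref{lem:newPropertiesZinN}, the counting of Lemma \ref{lem:powerneu} in the power-automorphism case, and a final contradiction against the axioms of $\FL$. However, the proposal stops exactly where the real work begins, and this is a genuine gap rather than a matter of organisation. You assert that the meet relations $A\cap B=A\cap C=B\cap C=D$ ``force exactly the fixed-point and centraliser coincidences excluded by the axioms of $\FL$,'' but you never derive such a coincidence, and in fact no single forbidden-pattern lemma is reproduced directly inside $G$. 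What the paper actually does is \emph{build} a witness for the failure of $(\FL 4)$ over several pages: it defines $\pi:=\pi(K)^*\cap\pi(A)$, a Hall $\pi$-subgroup $L_1$ of $A\cap K$, the prime set $\sigma$ of $N$-primes moved by $L_1$, then $H:=O_\sigma(N)$; dually it defines $\tilde\pi$ from the Sylow subgroups of $U$ acting nontrivially on $H$, a Hall $\tilde\pi$-subgroup $L_2$ of a conjugate of $U\cap K$, $\rho$, and $J:=[O_\rho(N),K]$, and sets $L:=\erz{L_1,L_2}$. Each hypothesis of $(\FL 4)$ then requires its own argument from the lattice relations: that $A\cap K$ centralises no $O_p(N)$ (so $\pi\neq\varnothing$); that the Sylow subgroups of $H$ are elementary abelian and non-cyclic with $K\leq\mathrm{Pot}_K(H)$; that $\sigma\cap\rho=\varnothing$, so $(|H|,|J|)=1$; that $L$ is cyclic with $\pi(L)=\pi(L/C_L(H))$; that $C_H(C_L(J))=1$; and that $\pi$ is precisely $\{q\in\pi(L)\mid C_{O_q(L)}(H)<C_{O_q(L)}(J)\}$. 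These steps rest on intermediate facts you do not establish, notably that $O_p(N)\cap D\unlhd G$ for all $p$, the dichotomy ``$[N,\Omega_1(Q)]\le D$ or $K$ induces power automorphisms on it and it meets $A$'' for $Q\in\syl_q(D)$, and the atom arithmetic $[N,Q]\cap T=[N,Q]\cap S=1\neq[N,Q]\cap U$. Without this construction the claim that the configuration ``reproduces one of these construction lemmas inside $G$'' is unsupported.

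Two smaller corrections. First, in the paper the terminal contradiction comes only from $(\FL 4)$; axiom $(\FL 3)$ is not violated directly by the lattice but is consumed earlier, through Lemma \ref{lem:newunique}, to localise $[N,\Omega_1(Q)]$ to a single prime — so your suggested alternative endpoint via Lemma \ref{lem:exam} does not occur. Second, the reduction to $C_K(N)=1$ is not obtained by the congruence dichotomy of Lemma \ref{lem:L9SubdirIrred} (which concerns homomorphisms of the abstract lattice $L_9$ and is used for direct products); the paper instead finds, via Lemma \ref{lem:neuproperties}, two coatoms of the sublattice both containing $O_q(C_K(N))$, so that this normal subgroup lies in $E$ and can be factored out using Lemma \ref{lem:faktorStar} and minimality.
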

\begin{proof}
Assume for a contradiction that the statement is false and let $G$ be a minimal counterexample.
Then there is a sublattice $\mathcal L = \{E, S, T, D, U, A, B, C, F\}$  of $L(G)$ isomorphic to $L_9$, and in particular $\mathcal L$ satisfies the relations in Definition \ref{defi:L10}.

We let $G$ be of type $(N,K)$ where, among the minimal counterexamples, we choose $G$ such that $|N|$ is as large as possible and we set
$$\pi(K)^*:=\pi(K)\setminus\{|\mathcal B(H)|\mid H \text{ is a non-nilpotent batten of }K\}.$$
Then $K$ has a normal $q$-complement for every $q\in \pi(K)^*$ and Lemma \ref{lem:neuproperties} is applicable.

We will first analyse how $\mathcal L$ fits into the subgroup lattice of $G$.

\smallskip (1) $F=G$, $C_K(N)=1$ and every subgroup of $N$ is normal in $N$.

\begin{subproof}The group $F$ is a  subgroup of $G$
that is not $L_9$-free, and
Lemma \ref{lem:uG} yields that $F \in \FL$.
Hence the minimal choice of $G$ implies that $F=G$.
\\Similarly, it follows from Lemma \ref{lem:L9GdirProdTfOrd} that $G$ is not a direct product of two non-trivial groups of coprime order.
Let $p\in\pi(N)$. Then $N=O_{p'}(N)\times O_p(N)$ because $N$ is nilpotent. If $K$ centralises $O_p(N)$, then $G=O_{p'}(N)K\times O_p(N)$,
where the direct factors have coprime order by ($\mathcal L 1$).
But we just saw above that such a direct decomposition of $G$ is not possible.
Therefore $[O_p(N),K] \neq 1$ and
 $p$ divides $|[O_p(N),K]|$, which divides  $|[N,K]|$.
We conclude from Lemma \ref{lem:Nhami} that every subgroup of $O_p(N)$ is normal in $N$.
This implies that every subgroup of $N$ is a normal subgroup of $N$, because $N$ is nilpotent.

Since we have chosen $N$ as large as possible, Lemma \ref{lem:Nmaxgewaehlt} implies that $\pi(K)=\pi(K/C_K(N))$.
Let $q\in \pi(C_K(N))$. Then
the previous equation forces $q \in \pi(K/C_K(N))$, and therefore
a Sylow $q$-subgroup of $K$ has order at least $q^2$. In particular, for all non-nilpotent battens $V$ of $K$, we have that $\mathcal B(V)\nleq C_K(N)$.
It follows that $q\in \pi(K)^*$.
Now Lemma \ref{lem:neuproperties} provides $X,Y\in\{T,U,B\}\subseteq \mathcal L$ such that $X\neq Y$ and $O_q(C_K(N))\leq X\cap Y=E$.
Since $O_q(C_K(N))$ is characteristic in $C_K(N) \unlhd NK=G$,
it follows that $G/O_q(C_K(N))$ is not $L_{10}$-free. Moreover, $G/O_q(C_K(N))\in \FL$ by
Lemma \ref{lem:faktorStar}.
Since $G$ is a minimal counterexample, we conclude that $O_q(C_K(N))=1$.
We recall that $G$ is soluble, by Lemma \ref{merlonsol}, hence $C_K(N)$ is soluble, and then there must exist a prime $q \in \pi(C_K(N))$ such that
$O_q(C_K(N)) \neq 1$. This gives a contradiction, and therefore
$C_K(N)=1$.
\end{subproof}

We remark that, by (1), we may apply Lemmas \ref{lem:newunique} and \ref{lem:newPropertiesZinN}.

\medskip (2) For all $p \in \pi(N)$ we have that $O_p(N)\cap D\unlhd G$. In particular $N \cap D \unlhd G$ and $N \cap E=1$.

\begin{subproof}
Let $H\in\{E,D\}$, let $p\in \pi(N)$ and set $P:=O_{p}(N)$.
If $H=E$, then we set $\mathcal M:=\{U, T, B\}$ and otherwise we set $\mathcal M:=\{A, B, C\}$.
Then for all distinct $X,Y\in \mathcal M$, we have that $X \cap Y=H$ and $\erz{X,Y}=F$.

Assume for a first contradiction that $H\cap P$ is not a normal subgroup of $G$.
Since every subgroup of $N$ is normal in $N$ by  (1), it follows that  $K$ does not induce power automorphism on $P$.
Then Lemma \ref{lem:laction} implies that $K$ acts irreducibly on $\tilde P:=[P,K]/\Phi([P,K])$.
We apply Lemma \ref{lem:irredu} twice to find some $X\in\mathcal M$ and some $Y\in \mathcal M\setminus\{X\}$  such that $X$ and $Y$ act irreducibly on $\tilde P$.
In particular $[P,K]=[H\cap P,Y]=[H\cap P,X]\leq X\cap Y=H$.
It follows that $[P,K]\leq H\cap P\leq [P,K] C_P(K)$, which yields  that $H\cap P$ is normalised by $K$ and hence $H\cap P\unlhd NK=G$. This is a contradiction.
We deduce that $P \cap D \unlhd G$ as stated,
in particular $N \cap D \unlhd G$ and also
$N \cap E \unlhd G$.

For the final statement in (2) we use that $G/(N\cap E)$ is not $L_9$-free. Then the minimality of $G$ and Lemma \ref{lem:faktorStar} give that $N\cap E=1$.
\end{subproof}

(3) For every $q \in \pi(K)$ such that $1  \neq Q \in \syl_q(D)$, one of the following holds:

$[N,\Omega_1(Q)] \le D \le A$
or $K$ induces power automorphisms on  $[N,\Omega_1(Q)]$ and $[N,\Omega_1(Q)] \cap A \neq 1$.

Moreover $E=1$.

\begin{subproof}
We adopt the same notation as in the previous step, which means that
$H\in\{E,D\}$, $p\in \pi(N)$ and $P:=O_{p}(N)$.
If $H=E$, then $\mathcal M:=\{U, T, B\}$, and otherwise $\mathcal M:=\{A, B, C\}$.
Whenever $X,Y\in \mathcal M$ are distinct, then $X \cap Y=H$ and $\erz{X,Y}=F$.

Let $q \in\pi(K) \cap \pi(H)$ and  $Q\in\syl_q(D)$.
By conjugation we may suppose that $Q\leq K$.
Then $\Omega_1(Q)=\Omega_1(Q_0)$ for some Sylow $q$-subgroup $Q_0$ of $K$ by Lemma \ref{battenremark} and Lemma \ref{lem:newunique} provides some $p\in \pi(N)$ such that $1\neq [N,\Omega_1(Q)]$ is a $p$-group.
Now Lemma \ref{lem:newPropertiesZinN}~(e) states that $X=  (X\cap P)N_X(\Omega_1(Q))$ for all
 subgroups $X\in \mathcal M$ ($\ast$).

Let $X$ and $Y$ be distinct elements of $\mathcal M$ and assume for a contradiction that $X\cap P$ and $Y\cap P$ are subgroups of $C_P(K)$.
Then
$$G\stackrel{(1)}{=}F=\erz{X,Y} \stackrel{(\ast)}{\le} \erz{C_P(K), N_X(\Omega_1(Q)),N_Y(\Omega_1(Q))}\leq C_P(K) N_G(\Omega_1(Q))=N_G(\Omega_1(Q)),$$
which contradicts (1).

For the remainder of this proof we let $X$ and $Y$ in $\mathcal M$ be such that
their intersection with $P$ is not contained in $C_P(K)$.
We note that $C_P(K)=C_P(\Omega_1(Q))$ by Lemma \ref{lem:avoidL9Cen}.
Then it follows that $1\neq [P\cap X,\Omega_1(Q)]=[[P\cap X,\Omega_1(Q)],\Omega_1(Q)]$ and hence $[P,K]\cap X\nleq C_P(K)$. In a similar way we observe that $[P,K]\cap Y\nleq C_P(K)$.

If $K$  acts irreducibly on $[P,K]/\Phi([P,K])$, then Lemma \ref{lem:irredu} yields that $X$ or $Y$, say $X$, acts irreducibly on  $[P,K]/\Phi([P,K])$.
It follows that $[P,K]\leq X$ and  $[P,K]\cap Y\leq X\cap Y=H$.
Let $Z\in \mathcal M\setminus\{X,Y\}$. Then Lemma \ref{lem:irredu} yields that $Z$ or $Y$, say $Y$, acts irreducibly on $[P,K]/\Phi([P,K])$ and contains $[P,K]\cap Y$. Since $[P,K]\cap Y\nleq C_P(K)$, it follows that $[P,K]\leq V\cap X=H$.
By (1), this is only possible if $H=D$, in other words
$\pi(K) \cap \pi(E)=\varnothing$.
Then the fact that $E \cap N=1$ (see (1) once more) forces $E=1$.

The previous paragraph also gives that $[N,\Omega_1(Q)]=[P,\Omega_1(Q)]=[P,K]\leq D\leq A$, by Lemma \ref{lem:laction}, in the case where $K$ acts irreducibly on $[P,K]/\Phi([P,K])$.

Otherwise
Lemma \ref{lem:laction} gives that $K$ and hence $\Omega_1(Q)$ induce power automorphisms on $P$.
Together with (1) this means that every subgroup of $P$ is normal in $G$.
Now, if $V,W\in \mathcal M$
are distinct, then
$$PN_G(P)=G=\erz{V,W}\stackrel{(\ast)}{\leq} (V\cap P)(W\cap P)N_G(\Omega_1(Q))$$ and it follows that $P=(V\cap P)(W\cap P)$.
We deduce that $|P|=\frac{|V\cap P|\cdot |W\cap P|}{|H\cap P|}$ for all $W,V\in \mathcal M$. In particular $|W\cap P|=|X\cap P|\neq 1$ for all $W\in \mathcal M$.
This implies all our claims about $D$, because $P=[P,K]=[P,\Omega_1(Q)]=[N,\Omega_1(Q)]$ by Lemma \ref{lem:laction} and $A\in \mathcal M$.

If, still in the power automorphism case, we have that $H=E$, then we recall that $E\cap P=1$ by (1).
Therefore
$$|(U\cap P)|\cdot |(T\cap P)|=\frac{|(U\cap P)|\cdot |(T\cap P)|}{|E\cap P|}=|(U\cap P)(T\cap P)|=|P|=|(U\cap P)(A\cap P)|$$
$$=\frac{|U\cap P|\cdot |A\cap P|}{|P\cap E|}=|U\cap P|\cdot |A\cap P|.$$
This implies that $A\cap P=T\cap P\neq 1$. But in this case we may interchange $T$ by $S$ in $\mathcal M$. Since $1\neq T\cap P=A\cap P=S\cap P$, we arrive at the contradiction that $1\neq S\cap T\cap P=E\cap P \le E \cap N=1$ (by (1)). Thus, we have that $\pi(K) \cap \pi(E)= \varnothing$ in this case as well. Again it follows that $E=1$.
\end{subproof}

\medskip
The remainder of the proof is dedicated to constructing a  subgroup of $G$ that violates Property ($\FL 4$).
\medskip

(4) If $p\in \pi(N)$, then $A\cap K$ does not centralise $O_p(N)$.
In particular $A\cap K\neq 1$.

\begin{subproof}
We assume for a contradiction that $A\cap K$ centralises $P:=O_p(N)$ for some $p\in \pi(N)$.

It follows from Lemma  \ref{lem:merlonsub} that, for every subgroup $X$ of $G=PO_{p'}(N)K$, there is some $x\in P$ such that $X\leq (X\cap P)O_{p'}(N)K^x$.
Since $[P,A\cap K]=1$, we further have THAT $X\leq (X\cap P)O_{p'}(N)K^y$ for all $X\leq A$ and $y\in P$ ($\ast$).

Assume that $A\cap P=1$. Then for both $X\in \{U,B\}$,
we see that $P\leq G\stackrel{(1)}{=}F=\erz{U,A}\stackrel{(\ast)}{\leq} (P\cap X)O_{p'}(N)K^x$ and therefore $P=P\cap U=P\cap B\leq U\cap B=E$. But this contradicts (3).

Thus $A\cap P\neq 1$ and then (1), together with our assumption at the beginning of the proof, imply that every subgroup of $A\cap P$ is normal in $A$.
Suppose that $X,Y\in\{S,T,D\}$ are distinct. We recall that $A\cap P\leq A=\erz{X,Y}\leq (X\cap P)(Y\cap P)O_{p'}(N)K$, and then it follows that $A\cap P=(X\cap P)(Y\cap P)$.
Since $X\cap Y=E\stackrel{(2)}=1$, we know more: $T\cap P\cong S\cap P\cong D\cap P$ and $|T\cap P|^2=|A\cap P|$.

If $K$ induces power automorphisms on $P$ and if $X\in \{A,T,S\}$, then $(X\cap P)$ and $(U\cap P)$ are normal subgroups of $G$ by (1).
In addition there is some $u\in P$ such that $U\leq (U\cap P)O_{p'}(N)K^u$ and then $X\leq (X\cap P)O_{p'}(N)K^u$  by ($\ast$).
We apply Lemma \ref{lem:powerneu} to see that $|P\cap A|=|P:P\cap U|=|P\cap S|=\sqrt{|P\cap A|}$. Now it follows that $P\cap A=1$, which is a contradiction.

We conclude that $K$ does not induce power automorphisms on $P$. Then  Lemma \ref{lem:laction} gives that $K$ acts irreducibly on $[P,K]/\Phi([P,K])$.
Since $P\cap D\unlhd G$ by (2), we see that either $[P,K]\leq D$ or that $1\neq P\cap D\leq  C_P(K)$.

In the first case $T\cap P\cong D\cap P\geq [P,K]$ and $T\cap D=E\stackrel{(2)}{=}1$. This implies, together with Lemma \ref{coprime}, that $C_P(K)$ has a subgroup isomorphic to $[P,K]$.
We apply Lemma \ref{lem:laction}, in combination with Part (b) of Lemma \ref{lem:avoid1prop}, and we deduce that $[P,K]$ is cyclic of order $2$ and that, therefore, $K$ centralises it. This is impossible.

It follows that the second case holds, i.e. $1\neq P\cap D\leq  C_P(K)$. Then $p=2$ by Corollary \ref{cor:p=2nK}~(a).
If $[P,K]$ is not elementary abelian, then Part (b) of Lemma \ref{lem:avoid1prop} and  Lemma \ref{lem:laction} give that $[P,K]\cong Q_8$ and $P=[P,Q]\times I$, where $I$ is a subgroup of $P$ of order at most $2$.
We note that $T\cap P\cong D\cap P$ and $T\cap D=E\stackrel{(3)}{=}1$, and then we conclude  that $T\cap P$ and $D\cap P$ are cyclic of order $2$. Consequently $A\cap P=(T\cap P)\cdot (D\cap P)=\Omega_1(P)\unlhd G$.
Now $1\stackrel{(3)}{=}E=U\cap A\geq U\cap \Omega_1(P)$ and therefore $U\leq O_{\pi'}(N)K^u$.
We arrive at a contradiction:  $P\leq G=\erz{U,A}\leq (P\cap A) O_{\pi'}(N)K^u$, because $[P,K]\cong Q_8$ is not elementary abelian.

So we finally have that $[P,K]$ is elementary abelian.
Then Lemma \ref{lem:avoid1prop}~(b) gives that $C_P(K)$ is cyclic and Lemma \ref{coprime}  shows that
$T\cap P\cong D\cap P\leq C_P(K)$.

Together with the fact that $T\cap D=E\stackrel{(3)}{=}1$, we obtain that $D\cap P$ is cyclic of order $2$.
It follows that $P\cap D$,  $T\cap P$ and $P\cap S$ have order $2$ and hence $P\cap A$ is elementary abelian of order $4$. Moreover $A\cap [P,K]$ is cyclic of order $2$, and therefore it equals one of the subgroups $T\cap P$, $S\cap P$ or $D\cap P$. The last case is not possible because $D\cap P\leq C_P(K)$.
By symmetry between $S$ and $T$ in the lattice we may suppose that $T\cap P\leq [P,K]$.
Recall that $K$ acts irreducibly on $[P,K]$ while $A\cap K$ centralises $P$.
In particular (1) implies that $A$ does not act irreducibly on $[P,K]$.
Thus Lemma \ref{lem:irredu}, together with the fact that $G\stackrel{(1)}{=}F=\erz{A,U}$, gives that $U$ acts irreducibly on $[P,K]$.
We also know that $T\cap U=E\stackrel{(3)}{=}1$, and this implies that $[P,K]\nleq U$. Then Lemma \ref{lem:MQvar} yields that $U\cap P\leq C_P(K)$. But we recall that $C_P(K)$ is cyclic, and its unique involution is contained in $D$. Then the fact that $U\cap D=E\stackrel{(3)}{=}1$ implies that $U\cap P=1$.
Finally, we see that $G\stackrel{(1)}{=}F=\erz{U,T}\stackrel{(\ast)}{\leq} [P,K]O_{p'}(N)K^u$, which gives a contradiction.\end{subproof}

\medskip
By conjugation and by Lemma \ref{lem:merlonsub} we may suppose that $A=(A\cap N)(A\cap K)$.
\\We set $\pi:=\pi(K)^* \cap \pi(A)$ and we let $L_1$ be a Hall $\pi$-subgroup of $A\cap K$.
\\Then we let
$$\sigma:=\{p\in \pi(N)\mid [O_p(N),Q]\neq 1 \text{ for some }Q\leq L_1,\text{ where }|Q|\in \pi\}.$$

\medskip (5) If $p\in\pi(N)$ and $[O_p(A),L_1]=1$, then $O_p(N)\leq D$.
Moreover $\pi \neq \varnothing \neq \sigma$.

\begin{subproof}
First suppose that $L_1$ centralises $P:=O_p(N)$ for some $p\in \pi(N)$.
Then  (4) implies that $A\cap K\neq L_1$ and then there is a non-nilpotent batten $V$ of $K$ such that $Q:=\mathcal B(V)\leq A$ and $[P,Q]\neq 1$.
Now $|Q|=q$ is a prime and $[N,Q]=[P,Q]$ by Lemma \ref{lem:newunique}.
In addition we see, from Definition \ref{defi:avoidbatten}, that $Q$ does not induce power automorphisms on $P$ and that $P=[P,Q]$.
If $q$ divides $|D|$, then (3) implies that $P=[N,Q]\leq D$, as stated.

Now we suppose that $q$ does not divide $|D|$.
Let $R\leq K$ be such that $V=QR$. Up to conjugation we may suppose that $R\cap L_1$ is a Sylow subgroup of $L_1$.
Then $R$ does not centralise $P$ by Definition \ref{defi:avoidbattengroup} and hence $R\nleq L_1$.
It follows that $R\nleq A$, from the definition of $L_1$.
Since $Q\cdot \Phi(R)=Q\cdot Z(V)$ is nilpotent by Lemma \ref{lem:batten}, we deduce that $A$ has a normal $q$-complement. Moreover $A\in \FL$, by Lemma \ref{lem:uG}, whence we may apply Lemma \ref{lem:neuproperties} to $A$. Then we see that $S$ and $T$ have orders divisible by $q$, because $q\notin \pi(D)$.
Let $t,s\in [P,Q]$ be such that $Q^s\leq S$ and $Q^t\leq T$.
The irreducible action of $Q$ on $P$, together with the fact that $P\cap T\cap S\leq E\stackrel{(3)}{=}1$, implies that $P\cap T=1$ or $P\cap S=1$.
Without loss $P\cap T=1$.
Then $T\leq N_G(Q^t)$ by Lemma \ref{lem:newPropertiesZinN}~(e).
Assume that $A$ normalises $Q^t$. Then $Q^s\leq N_G(Q^t)$, which implies that $[ts^{-1},Q]Q^s=\erz{Q^s,Q^t}\leq N_G(Q^t)$ by Lemma 4.1.1~(b) of \cite{Schmidt}. Then the irreducible action of $Q^t$ on $P$ forces $t=s$, contradicting the fact that $T\cap S=E\stackrel{(3)}{=}1$.

Thus $A$ does not normalise $Q^t$ and $\erz{T,D}=A\nleq N_G(Q^t)$, which yields that $D\nleq N_G(Q^t)=(O_{p'}(N)K)\times C_{P}(K)=O_{p'}(N)K$ by Part (a) of Lemma \ref{lem:newPropertiesZinN}.
It follows that $D\cap P\neq 1$ and therefore $P\leq D$, because $P\cap D\unlhd G$ by (2) and $K$ acts irreducibly on $P$.

\smallskip  We turn to the second statement and assume for a contradiction that $\pi=\pi(A)\cap \pi(K)=\varnothing$.
Then $A \cap K=1$, contrary to (4).
Hence if $\pi=\pi(A)\cap \pi(K)^*=\varnothing$, then
we can draw two conclusions:
First $L_1=1$ and the statement we just proved gives that $N \le D$.
Second, there must be a prime in $\pi(K) \setminus \pi(K)^*$ dividing $|A|$.
By definition of $\pi(K)^*$, such a prime is $|\mathcal B(V)|$ for some non-nilpotent batten $V$ of $K$.
We choose such a non-nilpotent batten $V$ and set $Q:=\mathcal B(V)$.
Then there are some prime $r$ and an $r$-subgroup $R\leq K$ such that $QR=V$, and $r \in \pi(K)^*$
because of the structure of non-nilpotent battens (Definition 2.1).
In particular $r\nmid |A|$ by assumption and Lemma \ref{lem:neuproperties} yields that $B$ and $U$ contain a conjugate of $R$.
We recall that $N\leq D\leq B$
by the first consequence of our assumption and because of the structure of the lattice $L_9$.
Then Lemma \ref{lem:newPropertiesZinN}~(c) gives that $\Omega_1(R)^G\subseteq B$.
Thus $B\cap U=E\stackrel{(3)}{=}1$, which is a contradiction.
This proves that $\pi \neq \varnothing$.

If $\sigma =\varnothing$, then for all
$p \in \pi(N)$, all $q \in \pi$ and all $q$-subgroups $Q$ of $L_1$, we have that $[O_p(N),Q]=1$.
Then $[N,L_1]=1$ by definition of $L_1$, and $L_1 \neq 1$ because $\pi \neq \varnothing$.
But then $1 \neq L_1 \le C_K(N)$, contrary to (1).
\end{subproof}

Next we set $H:=O_\sigma(N)$
and we prove that $H$ is a candidate for the desired properties in ($\FL 4$).

\smallskip
(6) The non-trivial  Sylow subgroups of $H$ are elementary abelian (in particular $N$ is abelian), but not cyclic, and $K\leq  \mathrm{Pot}_K(H)$.

\begin{subproof}By definition $H\leq N$ is nilpotent.
If, for all $p\in\sigma$, the group $K$ does not act irreducibly on $[O_p(N),K]/\Phi([O_p(N),K])$, then
 Lemma \ref{lem:laction} gives that $K\leq  \mathrm{Pot}_K(H)$. In particular $O_p(N)$ is not cyclic.
Moreover, Corollary \ref{cor:p=2nK} yields that $O_p(N)=[O_p(N),K]$ is elementary abelian.
Therefore, our claim is satisfied in this case.

Let us assume for a contradiction that there is some $p\in\sigma$ such that $K$ acts irreducibly on the group $[O_p(N),K]/\Phi([O_p(N),K])$.
We set $P:=O_p(N)$ and we choose $Q\leq L_1$ of order $q\in \pi$ such that $[P,Q]\neq 1$, by the definition of $\sigma$.

\textbf{Case 1:} $[P,Q]\leq A$.

Then Lemma \ref{lem:newPropertiesZinN}~(c) implies that $Q^g\leq A$ for every $g\in G$.
We recall that $U\cap A=E\stackrel{(3)}{=}1$, and then it follows first that $q \notin \pi(U)$ and then that $q\mid |B|$, by Lemma \ref{lem:neuproperties}. Here we use that $G=F=\erz{B,U}$ by (1).
Thus we find some $g\in G$ such that $Q^g\leq D$, because $D=A\cap B$.
We apply (3) to observe that  $[P,Q]=[N,Q]\leq D$ and then $D$ contains every conjugate of $Q$ by Lemma \ref{lem:newPropertiesZinN}~(c).
Recall that $q\notin \pi(U)$, which implies that $q\in \pi(T)$ by Lemma \ref{lem:neuproperties}. Again we use that $G=F=\erz{T,U}$. But this contradicts the fact that $T\cap D=E=1$ by (3).

\textbf{Case 2:} $[P,Q]\nleq A$.

Then $[P,Q]\nleq D$, in particular $P \nleq D$, and $P\cap D\unlhd G$ by (2).
This means that $K$ stabilises the subgroup $[P \cap D,K]/$
of $[P,K]/\Phi([P,K])$, while acting irreducibly. This forces
$[P \cap D,K] \le \Phi([P,K])$, and together with coprime action (Lemma \ref{coprime}) we see that $K$ centralises $P \cap D$.

By Lemma \ref{lem:merlonsub} we know that $A\in \FL$, with type $(A\cap N,A\cap K)$. Additionally, since $q\in \pi(K)^*$, the group $K$ has a normal $q$-complement.
Then $A\cap K$ also has a normal $q$-complement.
We may apply Lemma \ref{lem:neuproperties} to  $A=\erz{T,S}=\erz{T,D}=\erz{S,D}$.
It yields that at least two of the groups $D$, $T$, $S$ have a subgroup of order $|Q|$.
As $|Q|\nmid |E|$ by (2), there is some $g\in G$ such that $Q^g\neq Q$ and $Q^g\leq A$.
Then Part (e) of Lemma \ref{lem:newPropertiesZinN} implies that $P\cap A\nleq C_P(Q)=C_P(K)$ by Lemma \ref{lem:avoidL9Cen}.
In the present case we have that $[P,A]\nleq A$, and then Lemma \ref{lem:MQvar} gives that $A$ does not act irreducibly on $[P,K]/\Phi([P,K])$.
Thus $A\cap K$ induces power automorphism on $P$ by Lemma \ref{lem:laction}, and these automorphisms are not trivial because $Q\leq A\cap K$.
Corollary \ref{cor:p=2nK}~(c) implies that $P=[P,A\cap K]\stackrel{\ref{lem:laction}}{=}[P,K]$ is elementary abelian and in particular that $D\cap P\leq C_P(K)\stackrel{\ref{lem:avoidL9Cen}}{=}C_P(A\cap K)=1$. Hence there is some $d\in [P,K]$ such that $D=N_D(Q^d)$, by Lemma \ref{lem:newPropertiesZinN}~(b).
In addition we see from Lemma \ref{lem:irredu} that $B$ and $C$ act irreducibly on $P$.
If it was true that $P\le B$, then it would follow that $1\neq P\cap A\leq P\cap B\cap A\leq P\cap D\leq C_P(K)=1$, which is a contradiction.
We conclude that $P \nleq B$  and hence $P\cap B=1$ because of the irreducible action of $B$ on $P$, and similarly
$P\cap C=1$.

Then Lemma \ref{lem:newPropertiesZinN}~(b) provides $b,c\in P$ such that $B=N_B(Q^b)$ and $C=N_C(Q^c)$.
If $Q^b=Q^c$, then $G=F=\erz{B,C}\leq N_G(Q^b)$, which is false.

Consequently $Q^b \neq Q^c$, and we can use Lemma \ref{lem:newPropertiesZinN}~(a), Dedekind's modular law  and Lemma \ref{lem:Hilfslemma}~(c).
Together this shows that
$$D\leq N_B(Q^b)\cap N_C(Q^c)=O_{p'}(N)K^b\cap O_{p'}(N)K^c=O_{p'}(N)(K^b\cap O_{p'}(N)K^c)$$
$$\leq
O_{p'}(N)C_K(bc^{-1})
\leq C_G(bc^{-1}),$$ because $N$ is nilpotent and because $\erz{(bc^{-1})^K}\cap O_{p'}(N)\leq P\cap O_{p'}(N)=1$.

We recall that $P=[P,K]$ is abelian, hence it is contained in $Z(N)$, and then it follows that
$D$ centralises $1\neq bc^{-1}\in [P,K]$. Here we also use Lemma \ref{lem:laction}, i.e. that $D$ centralises $P$.
We conclude that $D=N_D(Q^h)$ for all $h\in P$.
 Again Lemma \ref{lem:newPropertiesZinN}~(b) provides $t,s\in P$ such that $T=(T\cap P)N_T(Q^t)$ and $S=(T\cap S)N_C(Q^s)$.
Let $X\in \{T,S\}$.
Then  $P\cap A\leq A=\erz{D,X}\leq (X\cap P)N_G(Q^x)$, which implies that $P\cap X=P\cap A$.
But now $P\cap A\leq P\cap S\cap T=P\cap E=1$, by (3), which gives a final contradiction.
\end{subproof}

(7) For all $p\in\sigma$ we have that $O_{p}(N)\cap A\neq 1$.

\begin{subproof} We set $P:=O_p(N)$ for some $p\in \sigma$. Then there is some $Q\leq L_1$ of order $q\in \pi$ such that $[P,Q]\neq 1$, by the definition of $\sigma$.
Then $q\in \pi(K)^*$ and we see that $K$ as well as $K\cap A$ have a normal $q$-complement.
Since $A\in \FL$ by Lemma \ref{lem:uG} ,we may apply Lemma \ref{lem:neuproperties}.
We notice that $A=\erz{D,T}=\erz{D,S}$, and then it follows that $q\in \pi(D)$ or that $q \in \pi(T) \cap \pi(S)$.
In the first case (3) implies our assertion.
In the second case we use Lemma \ref{lem:newPropertiesZinN}~(c). It gives that $Q^s\leq S$ and $Q^t\leq T$ for some $t,s\in [P,K]$.
We deduce that $[t^{-1}s,Q]\leq \erz{Q^s,Q^t}\leq A$ by Lemma 4.1.1 of \cite{Schmidt}.
In conclusion, our assertion is true or $t^{-1}s$ centralises $Q$. But in the second case, we see that $Q^t=Q^s\leq T\cap S=E\stackrel{(3)}{=}1$, and this gives a contradiction.
\end{subproof}

(8)  For all $q\in \pi$ we have that $q$ divides $|S|$, $|T|$ and $|B|$.
\\Furthermore, if $Q\leq L_1$ and $|Q|=q$, then
$[N,Q]\cap T=[N,Q]\cap S=1$, but $[N,Q] \cap U\neq 1$.

\begin{subproof}Suppose that $Q\leq L_1$ has order $q$. Then $[N,Q]$ is a $p$-group by Lemma \ref{lem:newunique}, for some prime $p\in \sigma$. We set $P=O_p(N)$.
Then for every $X\in \{T,S\}$ there is some $x\in P$ such that  $X=(X\cap P)N_X(Q^x)$, by Lemma \ref{lem:newPropertiesZinN}~(b).
Using (6) we see that $K$ induces power automorphisms on $P$.
Thus $P=[P,K]$ is elementary abelian by Corollary \ref{cor:p=2nK}~(c), and then Lemma \ref{lem:powerneu} is applicable.

Assume for a contradiction that $P\gneq (P\cap A)(P\cap U)$.
Then, for all $X\in\{T,S,A\}$, we have that
$$P\cap \erz{X,U}=P\cap F\stackrel{(1)}{=}P >(P\cap A)(P\cap U)\geq (P\cap X)(P\cap U).$$
Hence Lemma \ref{lem:powerneu} yields that $|P| = |(P\cap X)(P\cap U)| p\stackrel{(2)}{=}|(P\cap X)||(P\cap U)| p$ and, for all $X\in \{T,S,A\}$, we see that $p\cdot |P\cap X|=|P:P\cap U|$.
In particular, we have that $|P\cap A|=|P\cap S|=|P\cap T|$. Therefore, the fact that $T,S\leq A$ implies that $P\cap A=P\cap T=P\cap S\leq P\cap (T\cap S)=P\cap E\stackrel{(3)}{=}1$. This contradicts (7).

We conclude that $P=(P\cap A)(P\cap U)$ and hence Lemma \ref{lem:powerneu} provides some
element $g\in P$ such that,
for both $X\in\{A,U\}$, it is true that
$X\leq (X\cap P)O_{p'}(N)K^g=(X\cap P)N_G(Q)^g$.

 Assume for a contradiction that $q\in \pi(U)$. Then Part (e) of Lemma \ref{lem:newPropertiesZinN}  yields that $Q^g\in A\cap U=E\stackrel{(3)}{=}1$. This is impossible.
Thus $q\notin \pi(U)$  and we deduce
that $q\in \pi(X)$ for all $X\in\{T,S,B\}$. Here we use Lemma \ref{lem:neuproperties} and the fact that $G\stackrel{(1)}{=}F=\erz{U,X}$.
Next we apply Lemma \ref{lem:powerneu}, which gives that $(P\cap A)\neq (P\cap S)(P\cap T)$.
Then the same lemma yields, for both combinations of $\{X,Y\}\in \{T,S\}$, that
$p$ divides
$$|P\cap Y||P\cap X||P\cap U|=|P\cap A||P\cap U|=|P|\leq |P\cap X||P\cap U|\cdot p.$$
It follows that $p\cdot |P\cap Y|\leq p$ and then that
$S\cap P=T\cap P=1$.
Finally (6) and Lemma \ref{lem:powerneu} yield that $p^2\leq |P|\leq |P\cap T||P\cap U|\cdot p=|P\cap U|\cdot p$. We conclude that $P\cap U\neq 1$.
\end{subproof}

(9) For every $p\in \sigma$ there is some  $r\in\pi(K)^*$ such that a Sylow $r$-subgroup of $U$ does not centralise $O_p(N)$.

\begin{subproof}We set $P:=O_{p}(N)$. Since $p\in\sigma$, there is some $Q\leq L_1$ such that $|Q|=q\in \pi$.
In addition (6) implies that $P$ is elementary abelian, and then  $P\leq Z(N)$ because $N$ is nilpotent.
By Lemma \ref{lem:newPropertiesZinN}~(b) and (8) there is some $s\in P$ such that $S=N_S(Q^s)$.

Assume for a contradiction that  $U$ centralises $P$.
Then $U=U^s=(U\cap P)N_U(Q^s)$ by Lemma \ref{lem:newPropertiesZinN}~(d) and hence $G\stackrel{(1)}{=}F=\erz{U,S}\leq (U\cap P)N_G(Q^s)$ implies that $U\cap P=P$. With (7) we obtain the contradiction that $1\neq P\cap A\leq U\cap A=E\stackrel{(3)}{=}1$.

It follows that $U$ does not centralise $P$. We recall that $P\leq Z(N)$ and then we obtain  a prime  $r\in\pi(K)$ such that a Sylow $r$-subgroup of $U$ does not centralise $P$.
Since $K$ induces power automorphisms on $P$ by (6), Definition \ref{defi:avoidbattengroup} gives, for every non-nilpotent batten $V$ of $K$, that $\mathcal B(V)$ centralises $P$.
This implies that $r\in \pi(K)^*$.
\end{subproof}

We are now able to define $L$ and $J$. Let $u\in N$ be such that $U=(U\cap N)(U\cap K^u)$.

We  set $\tilde \pi:=\{r\in \pi(K)^*\mid [H,R]\neq 1 \text{ for some }R\in\syl_r(U)\}$ and we let $L_2$ be a Hall $\tilde \pi$-subgroup of $(U^{u^{-1}}\cap K)$.
Then $L_2\leq K$ and $L=\erz{L_1,L_2}$ is a subgroup of $K$.
In addition (5) and (9) show that $\tilde \pi \neq \varnothing$.
\\Next we set $\rho:=\{p\in \pi(N)\mid [O_p(N),R]\neq 1 \text{ for some }R\leq L_2\text{ with }|R|\in \tilde \pi\}$ and $J:=[O_\rho(N),K]$.
Then $\rho\neq \varnothing$ by Lemma \ref{lem:newunique}, because $\tilde \pi\neq \varnothing$, and hence $J\neq 1$.
Finally, we note that $J$ is $L$-invariant by construction.

\smallskip
(10) $\sigma\cap \rho=\varnothing$, i.e. $|H|$ and $|J|$ are coprime.

\begin{subproof} We assume for a contradiction that the prime $p$ divides $|H|$ and $|J|$.
Then by definition there are $q\in \pi$ and $r\in \tilde \pi$ and subgroups $Q\leq L_1$ and $R\leq L_2$ such that the following hold:

$|Q|=q$, $|R|=r$, $1\neq [N,Q]\leq O_p(N)=:P$ and $1\neq  [N,R]\leq P$.

In particular (6) shows that $K$ induces power automorphisms on $P$.
It follows from Corollary \ref{cor:p=2nK}~(c) and Lemma \ref{lem:laction} that $P=[P,K]=[P,Q]=[N,Q]=[N,R]$, and then (8) shows that $P\cap S=1$.
Moreover $P\cap A\neq 1$ by (7).
We apply Lemma \ref{lem:powerneu} to obtain some $i,j\in\{1,0\}$ such that
$$p^i|P\cap U|=|(P\cap U)(P\cap S)|p^i=|P|=|(P\cap U)(P\cap A)|p^j\stackrel{(2)}{=}|(P\cap U)||(P\cap A)|p^j.$$
This implies that $|P\cap A|p^j=p^i$, and we obtain that $i=1$ and $j=0$.

In particular we see that $P=(U\cap P)(A\cap P)$.
Then Lemma \ref{lem:powerneu} and the fact that
$A\cap U=E\stackrel{(3)}{=}1$
give some element $g\in P$ such that $X\leq (P\cap X)K^g=(P\cap X)N_G(R)^g$, where $X\in\{A,U\}$.
Assume for a contradiction that $r\in \pi(A)$. Then \ref{lem:newPropertiesZinN}~(d) forces $R^g\leq U\cap A=E\stackrel{(3)}{=}1$. This is impossible, hence
$r\notin\pi(A)$ and we apply Lemma \ref{lem:neuproperties} to  $G\stackrel{(1)}{=}F=\erz{A,B}$. Then it follows that $r\in \pi(B)$.

Moreover $q\in \pi(B) \cap \pi(T)$ by (8).
Since $B\cap T=E\stackrel{(3)}{=}1$, this is not possible, hence Lemma \ref{lem:powerneu} and (8) give that $|P|=|P\cap B|\cdot |P\cap T|\cdot p=|P\cap B|\cdot p$.
On the other hand we have that $r\in \pi (B) \cap \pi(U)$, which is also impossible because $B\cap U=E\stackrel{(3)}{=}1$. Now Lemma \ref{lem:powerneu} implies that $|P\cap U|\cdot |P\cap B|\cdot p=|P|=|P\cap B|\cdot p$.
In particular we see that $P\cap U=1$, and this contradicts  (8).
\end{subproof}

We summarize:

The definitions of $\sigma$ and $H$ imply that $H\leq N$, and (6) shows that the non-trivial  Sylow subgroups of $H$ are not cyclic.
In addition $L\leq K$ induces power automorphisms on $H$.

From (10) we deduce that $\pi\cap \tilde\pi=\varnothing$.
A Hall $\pi(K)^*$-subgroup of $K$ is nilpotent by Lemma \ref{lem:battensub}, which means that $L$ is nilpotent. In particular we see that $L=L_1\times L_2$.
Let $r\in \pi(L)=\pi\cup\tilde \pi$ and $R=O_r(L)$.
If $r\in \pi$, then $[H,R]\geq [[N,\Omega_1(R)],R]\neq 1$, and if $r\in \tilde p$, then $[H,R]\neq 1$ by (9).
These arguments show that $\pi(L)=\pi(L/C_L(H))$ ($\ast$).

We assume for a contradiction that $L$ is not cyclic.
Then, since $L$ is nilpotent and a batten group by Lemma \ref{lem:battensub}, we deduce that
$O_2(L)\cong Q_8$.
Definition \ref{defi:avoidQ8} implies that, for all $p\in\pi(N)$, the group $O_2(L)$ does not induce non-trivial power automorphisms on $O_p(N)$.
In particular $O_2(L)$ centralises $H$,
which contradicts ($\ast$).
Thus $L$ is cyclic.

Furthermore, we already saw that $J$ is $L$-invariant and that $1\neq J$. Then (10) gives that
$(|H|, |J|)=1$, and since $N$ is nilpotent, this forces $[H,J]=1$.

Assume for a contradiction that $J$ is not abelian.
Then Lemma \ref{lem:avoid1prop}~(b) and Lemma \ref{lem:laction} show that $O_2(J)=[O_2(N),K]\cong Q_8$ and therefore $2\in\rho$.
We let $r\in \tilde \pi$ be such that a Sylow $r$-subgroup $R$ of $K$ acts faithfully on $O_2(J)$. Then we must have that $|R|=3$ because
$O_2(J)\cong Q_8$.
Moreover Lemma \ref{lem:newPropertiesZinN}~(a) yields that $R$ centralises $O_{2'}(N)\geq H$. This contradicts ($\ast$).

Hence $J$ is abelian.
For every $q\in\pi$ and every subgroup $Q\leq L_1$ of order $q$, the definition of $\sigma$ and Lemma \ref{lem:newunique} provide some $p\in \sigma$ such that $[N,Q]\leq O_p(N)=P$.
Then we see, using (6) and Corollary \ref{cor:p=2nK}, that $C_P(Q)=1$.
Furthermore (10) yields that $Q$ centralises $J=O_{\rho}(N)$, and then
it follows that $1=C_P(Q)\geq C_P(C_L(J))$.
Since $H=O_{\sigma}(N)$ is abelian, we conclude that $C_H(C_L(J))=1$.

The previous argument also yields that $\{q\in \pi(L)\mid  C_{O_q(L)}(H) < C_{O_q(L)}(J)\}\supseteq \pi$.
Let $r\in \tilde \pi$ and suppose that $R\leq L$ has order $r$. We recall the definition of $\rho$ and apply Lemma \ref{lem:newunique}: Then we see that $1\neq [N,R]\leq O_{\rho}(N)=J$.
Thus $r\notin \{q\in \pi(L)\mid  C_{O_q(L)}(H) < C_{O_q(L)}(J)\}$ and it follows that $\{q\in \pi(L)\mid  C_{O_q(L)}(H) < C_{O_q(L)}(J)\}=\pi$.

\medskip Since $G\in \FL$, we can use Property $(\FL 4)$, which gives some $g\in (HJ)^\#$ that centralises  $O_\pi(L)=:L_1$ or $O_{\pi'}(L)=:L_2$.
Let $i \in \{1,2\}$ be such that
$[L_i,g]=1$.
We may suppose that $g$ has prime order $p$. Then $p\in \sigma\cup \rho$ and hence there is a subgroup $Q\leq L$ of prime order such that $[N,Q]\leq O_p(N)=:P$.
Lemma \ref{lem:avoidL9Cen} gives  that $g\in C_P(L_i)=C_P(K)$ or that $L_i$ centralises $P$.

In the first case Corollary \ref{cor:p=2nK} yields that $p=2$, and then $K$ does not induce power automorphisms on $P$.
Using (6) we deduce that $p\in \rho$ and $g\in O_p(J)\cap C_P(K)=[P,K] \cap C_P(K)\cap J$.
Since $J$ is abelian, we obtain a contradiction in this case.

It follows that the second case above holds, i.e. $[P,L_i]=1$. This means that $i=1$ if $p\in \rho$ and $i=2$ if $p\in \sigma$.
In addition we see, from (9), that $i\neq 2$.
We conclude that $L_i=L_1$, $p\in \rho$ and $q\notin \pi$.
In particular $q\notin \pi(A)$.
Since $G\stackrel{(1)}{=}F=\erz{A,U}=\erz{A,B}$, Lemma \ref{lem:neuproperties} implies that $B$ and $U$ contain a conjugate of $Q$.
In addition (5) shows that $P\leq D\leq B$ and therefore Lemma \ref{lem:newPropertiesZinN}~(c)  gives that $Q^G\subseteq B$.
Finally, we obtain a contradiction, because $B\cap U=E=1$ by (3). This concludes the proof.
\end{proof}

\begin{mainthm}
A finite group is in  $\FL$ if and only if it is $L_9$-free.
\end{mainthm}

\begin{proof}Let $G$ be a finite group.
If $G$ is $L_9$-free, then Theorem \ref{main} shows that $G \in \FL$.

Conversely, if $G \in \FL$, then $G$ is $L_9$-free by Theorem \ref{thm:Rueck}.
\end{proof}


\begin{thebibliography}{99}

\bibitem{L8M8}
S. Andreeva, R. Schmidt and I. Toborg:
Lattice-defined classes of finite groups with modular Sylow subgroups.
\textit{Journal of Group Theory},
\textbf{14} (2011),
747--764.


\bibitem{BS}
C. Baginski and A. Sakowicz: Finite groups with globally permutable lattice of subgroups.
\textit{Colloq. Math.} \textbf{82} (1998), 
65--77.


\bibitem{Huppert}
B. Huppert:
\textit{Endliche Gruppen I}, Springer, 1967.

\bibitem{Iwasawa}
K. Iwasawa:
Ueber endliche Gruppen und die Verbaende ihrer Untergruppen.
\textit{J. Fac. Sci. Imp. Univ. Tokyo}
\textbf{I.4} (1941), 
171--199.

\bibitem{KurzStell}
H. Kurzweil and B. Stellmacher:
\textit{Theorie der endlichen Gruppen}, Springer, 1998.


\bibitem{M9}
J. P\"olzing and R. Waldecker: $M_9$-free groups. \textit{Journal of Group Theory}
\textbf{18} (2015), 155--190.



\bibitem{Schmidt}
R. Schmidt:
\textit{Subgroup Lattices of Groups},
de Gruyter, 1994.


\bibitem{Lfree}
R. Schmidt: $L$-FREE GROUPS.
\textit{Illinois Journal of Mathematics} \textbf{47}, No. 1/2 (2003), 515--528.



\bibitem{L10}
R. Schmidt:
$L_{10}$-free groups.
\textit{Journal of Group Theory} \textbf{10} (2007), 613--631.



\bibitem{L10pq}
R. Schmidt:
$L_{10}$-Free $\{p,q\}$-groups.
\textit{Note die Matematica} \textbf{00}, No. 0 (2010), 1--18.





\end{thebibliography}
\end{document}